\documentclass[11pt]{amsart}

\usepackage{amssymb}
\usepackage[all]{xy}
\usepackage[colorlinks=true, urlcolor=rltblue, citecolor=drkgreen, linkcolor=drkred] {hyperref}
\usepackage{xcolor}
\usepackage{pdfsync}
\definecolor{rltblue}{rgb}{0,0,0.4}
\definecolor{drkred}{rgb}{0.6,0,0}
\definecolor{drkgreen}{rgb}{0,0.4,0}
\usepackage{graphicx}
\usepackage[mathscr]{eucal}

\usepackage{thmtools}
\usepackage{thm-restate}

\usepackage{lmodern}
\usepackage[capitalize]{cleveref}
\usepackage{amssymb,amsmath,amsthm}
\usepackage{mathtools, MnSymbol}
\usepackage{setspace}
\usepackage{tikz-cd}
\usepackage[T1]{fontenc}
\usepackage[utf8]{inputenc}
\usepackage{textcomp} % provide euro and other symbols
\usepackage{stmaryrd}
\usepackage{datetime}
\usepackage{todonotes}
\usepackage{mdframed}
\usepackage[all]{xy}
%\onehalfspacing

%\usepackage[ backend=bibtex,
%    style=alphabetic, maxnames=10]{biblatex}
%\addbibresource{References.bib}

%%%%%%%%%%%%%%%%%%%%%%%%%%%%%%%%%%%%%%%%%%%%%%%%%%%%%%%%%%%%%%%%%%%%%%%%%%%%%%%%%%%%%%%%%%%%%%%%%%%%%%%%%%%%%%%%%%%%%%%%%%%%%%%%%%%%%%%%%%%%%%%%%%%%%%%%%%%%%%%%%%%%%%%%%%%%%%%%%%%%%%%%%%%%%%%%%%%%%%%%%%%%%%%%%%%%%%%%%%%%%%%%%%%%%%%%%%%%%%%%%%%%%%%%%%%%%%%%%%%%%%%%%%%%%%%%%%%%%%%%%%%%%%%%%%%%%%%%%%%%%%%%%%%%%%%%%%%%%%%%%%%%%%%%%%%%%%%%%%%%%%%%%%%%%%%%%%%%%%%%%%%%%%%%%%%%%%%%%%%%%%%%%%%%%%%%%%%%%%%%%%% DAVID'S DEFINITIONS

\setlength{\emergencystretch}{3em} % prevent overfull lines
\providecommand{\tightlist}{%
	\setlength{\itemsep}{0pt}\setlength{\parskip}{0pt}}
\usepackage{thmtools}
\usepackage{enumitem}

\declaretheorem[numberwithin=section]{theorem}
\declaretheorem[sibling=theorem]{lemma}
\declaretheorem[sibling=theorem]{proposition}
\declaretheorem[sibling=theorem]{corollary}
\declaretheorem[sibling=theorem]{definition}
\declaretheorem[sibling=theorem]{question}
\declaretheorem[numberwithin=theorem]{claim}
\declaretheorem[numberwithin=theorem]{sublemma}
%\declaretheorem[style=definition]{definition}

\newcommand{\B}{\mathcal{B}}

\newcommand{\SR}{\text{SR}}

\renewcommand{\L}{\mathcal{L}}
\newcommand{\mc}[1]{\mathcal{#1}}

%\renewcommand{\bar}[1]{\overline{#1}}

       % "double vee"
 % "double wedge"
\renewcommand{\phi}{\varphi}
\newcommand{\bigwwedge}{%
	\mathop{
		\mathchoice{\bigwedge\mkern-15mu\bigwedge}
		{\bigwedge\mkern-12.5mu\bigwedge}
		{\bigwedge\mkern-12.5mu\bigwedge}
		{\bigwedge\mkern-11mu\bigwedge}
	}
}

\newcommand{\bigvvee}{%
	\mathop{
		\mathchoice{\bigvee\mkern-15mu\bigvee}
		{\bigvee\mkern-12.5mu\bigvee}
		{\bigvee\mkern-12.5mu\bigvee}
		{\bigvee\mkern-11mu\bigvee}
	}
}
\newmdtheoremenv[backgroundcolor=cyan]{theorem-prove}{Theorem}[theorem]
\newmdtheoremenv[backgroundcolor=cyan]{lemma-prove}{Lemma}[theorem]
\newmdtheoremenv[backgroundcolor=cyan]{proposition-prove}{Proposition}[theorem]

\newmdtheoremenv[backgroundcolor=yellow!40]{theorem-check}{Theorem}[theorem]
\newmdtheoremenv[backgroundcolor=yellow!40]{lemma-check}{Lemma}[theorem]
\newmdtheoremenv[backgroundcolor=yellow!40]{proposition-check}{Proposition}[theorem]

%%%%%%%%%%%%%%%%%%%%%%%%%%%%%%%%%%%%%%%%%%%%%%%%%%%%%%%%%%%%%%%%%%%%%%%%%%%%%%%%%%%%%%%%%%%%%%%%%%%%%%%%%%%%%%%%%%%%%%%%%%%%%%%%%%%%%%%%%%%%%%%%%%%%%%%%%%%%%%%%%%%%%%%%%%%%%%%%%%%%%%%%%%%%%%%%%%%%%%%%%%%%%%%%%%%%%%%%%%%%%%%%%%%%%%%%%%%%%%%%%%%%%%%%%%%%%%%%%%%%%%%%%%%%%%%%%%%%%%%%%%%%%%%%%%%%%%%%%%%%%%%%%%%%%%%%%%%%%%%%%%%%%%%%%%%%%%%%%%%%%%%%%%%%%%%%%%%%%%%%%%%%%%%%%%%%%%%%%%%%%%%%%%%%%%%%%%%%%%%%%%% ANTONIO'S DEFINITIONS

%%%%%%%%%  DEFINITIONS

\addtolength{\voffset}{-10mm}
\addtolength{\textheight}{20mm}
\addtolength{\hoffset}{-15mm}
\addtolength{\textwidth}{30mm}

\DeclareMathOperator{\Mod}{Mod}
\DeclareMathOperator{\Image}{Im}

%%%%%%%%%%%%%%%%%%%%%%%%   BARS

\def\hbar{{\bar{h}}}

%%%%%%%%%%%%%%%%%%%%%% Greek

\def\om{\omega}

%%%%%%%%%%%%%%%%%%%%%  Ma

\def\B{\mathcal{B}}

\def\K{\mathcal K}
\def\L{\mathcal L}

%%%%%%%%%%%%   INFINITARY LANGUAGES

%\newcommand{\Sio}[1]{\Si^{c,#1}}
%\newcommand{\Pio}[1]{\Pi^{c,#1}}

\def\Lomom{\L_{\om_1,\om}}

\newcommand{\Res}{\upharpoonright}

\newcommand{\newmacro}[1]{\mathfrak{#1}}
\newcommand{\A}{\newmacro{A}}
\newcommand{\E}{\newmacro{E}}
\newcommand{\vwE}{\overline{\E}}
\newcommand{\vwA}{\overline{\A}}

%%%%%%%%%%%%%%%%%%%   New Theorems.
\newtheorem{thm}{Theorem}[section]

\theoremstyle{remark}

\newtheorem{remark}[thm]{Remark}

%\newtheorem{definition}[thm]{Definition}
%\newtheorem{claim}{Claim}[thm]
%\newtheorem{question}{Question}
%\newtheorem{convention}[thm]{Convention}
%\newtheorem{conjecture}{Conjecture}

%%% Recursion Theory

\newcommand{\bfPi}{\boldsymbol{\Pi}}
\newcommand{\bfSigma}{\boldsymbol{\Sigma}}

%%%% Strings

%\def\ul{\mathclose{\ulcorner}}$
%%% Logic and sets
\def\and{\mathrel{\&}}

%\def\stgeq{\succcurlyeq}x

%%%%%%%MISC

\begin{document}
	
	\title{Scott Spectral Gaps are Bounded for Linear Orderings}
	\author{David Gonzalez}
	\address{Department of Mathematics, University of California, 970 Evans Hall, Berkeley, CA, 94720}
	\email{david\_gonzalez@berkeley.edu}
	\author{Matthew Harrison-Trainor}
	\address{Department of Mathematics, Statistics, and Computer Science, University of Illinois Chicago, 851 S Morgan St, Chicago, IL, 60607}
	\email{mht@uic.edu}
	\thanks{The first author is the corresponding author. The second author was supported by a Sloan Research Fellowship and by the National Science Foundation under Grant No.\ \mbox{DMS-2419591}. We thank Garrett Ervin for some discussion on results of Lindenbaum and Tarski.}

\begin{abstract}
	We demonstrate that any $\Pi_\alpha$ sentence of the infinitary logic $\mathcal{L}_{\omega_1 \omega}$ extending the theory of linear orderings has a model with a $\Pi_{\alpha+4}$ Scott sentence and hence of Scott rank at most $\alpha+3$.
	In other words, the gap between the complexity of the theory and the complexity of the simplest model is always bounded by $4$.
	This contrasts the situation with general structures where for any $\alpha$ there is a $\Pi_2$ sentence all of whose models have Scott rank $\alpha$. We also give new lower bounds, though there remains a small gap between our lower and upper bounds: For most (but not all) $\alpha$, we construct a $\Pi_\alpha$ sentence extending the theory of linear orderings such that no models have a $\Sigma_{\alpha+2}$ Scott sentence and hence no models have Scott rank less than or equal to $\alpha$.
\end{abstract}

\keywords{Scott analysis, linear orderings, Scott spectrum, faithful Borel embedding}

	\maketitle
	
	\section{Introduction}
	
	Consider some class of mathematical structures, such as linear orders (which will be the main class of interest in this paper) or graphs, groups, or Boolean algebras, or even subclasses of these such as the dense linear orders or the connected graphs. Each class has a \textit{language} $\mc{L}$, which consists of specified functions, relations, and constant symbols---for linear orders, this is a binary relation symbol ``$\leq$''---and certain properties that these must satisfy. These properties should be specific in some particular formal language, which for us will be the infinitary logic $\mc{L}_{\omega_1 \omega}$.  This infinitary logic allows us to use countable infinite conjunctions and disjunctions, so that we 
	can express properties like the connectedness of a graph $(G,E)$ as ``for all $u,v$ there is $n$ and $w_1,\ldots,w_n$ such that $u E w_1 E w_2 E \cdots E w_n E v$''. Connectedness cannot be expressed in the standard first-order logic, nor can other properties of interest such as whether a group is finitely generated.
	
	In this paper, we consider the following question: Given a class of linear orders, how does the complexity of the individual linear orders in the class compare to the complexity of the description of the class? In particular, given a relatively simply described class of linear orders, must it contain a relatively simple linear order? We show that this is the case, which contrasts with general structures such as graphs where there can be simply described classes of graphs all of which are complicated. To measure complexity, we use the \textit{Scott analysis} to which we now give a short introduction before stating our results.
	
	We will be interested in countable structures, which in a language $\mc{L}$ form a Polish topological space $\Mod(\mc{L})$. Each countable $\mc{L}$-structure is isomorphic to one whose domain is $\mathbb{N}$, and so we can identify, e.g., a linear order $\mc{A} = (\mathbb{N},\precsim)$ with the infinite binary string
	\[ \sigma_{\mc{A}}(i) = \begin{cases}
		1 & a_i \precsim b_i \\
		0 & \text{otherwise}
	\end{cases} \]
	where $(a_i,b_i)$ is some fixed listing of the pairs of natural numbers. The topology on $\Mod(\mc{L})$ is the same, via this identification, as the topology on Cantor space $2^\mathbb{N}$. For other languages $\Mod(\mc{L})$ may be of a different form, such as Baire space $\mathbb{N}^\mathbb{N}$. There is also a continuous action of $S_\infty$, the permutation group of $\mathbb{N}$, on $\Mod(\mc{L})$ by permuting the underlying domain of the structure (and hence induces isomorphisms). A set $X \subseteq \Mod(\mc{L})$ is invariant if whenever $\mc{A}$ and $\mc{B}$ are isomorphic, $\mc{A} \in X$ if and only if $\mc{B} \in X$, or equivalently, if $X$ is invariant under the action of $S_\infty$. Lopez-Escobar \cite{Lop66} showed that the infinitary logic $\mc{L}_{\omega_1 \omega}$ is a natural logic to consider in this context, because a subset of $\Mod(\mc{L})$ is definable by a sentence of $\mc{L}_{\omega_1 \omega}$ if and only if it is invariant and Borel.
	
	Fixing a particular structure $\mc{A}$, consider the set $\text{Copies}(\mc{A}) = \{ \mc{B} \in \Mod(\mc{L}): \mc{B} \cong \mc{A}\}$ of isomorphic copies of $\mc{A}$. Naively, this is analytic. A surprising result of Scott \cite{Sco65} is that it is always Borel, and so by the Lopez-Escobar theorem there is a sentence $\varphi$ of the infinitary logic $\mc{L}_{\omega_1 \omega}$ that characterizes $\mc{A}$ up to isomorphism among countable structures: For any countable structure $\mc{B}$, $\mc{B} \cong \mc{A}$ if and only if $\mc{B} \models \varphi$.
	Such a sentence is called a Scott sentence for $\mc{A}$. From the dynamic perspective, Scott showed that all of the orbits of the action of $S_\infty$ on $\Mod(\mc{L})$ are Borel.
	
	The Borel sets can be stratified into the Borel hierarchy: the open sets are called $\bfSigma^0_1$, and the closed sets $\bfPi^0_1$, while the $\bfSigma^0_\alpha$ sets are countable unions of $\bfPi^0_\beta$ sets for $\beta < \alpha$ and the $\bfPi^0_\alpha$ sets are the countable intersections of $\bfSigma^0_\beta$ sets for $\beta < \alpha$. Similarly, the formulas of $\mc{L}_{\omega_1 \omega}$ are stratified in terms of their complexity as measured by counting the number of alternations between existential and universal quantifiers.
	\begin{itemize}
		\item A formula is $\Sigma_0$ and $\Pi_0$ if it is finitary quantifier-free.
		\item A formula is $\Sigma_{\alpha}$ if it is of the form
		\[ \bigdoublevee_i \exists \bar{x}_i \psi_i(\bar{x}_i)\]
		where each $\psi_i$ is $\Pi_{\beta}$ for some $\beta < \alpha$.
		\item A formula is $\Pi_{\alpha}$ if it is of the form
		\[ \bigdoublewedge_i \forall \bar{x}_i \psi_i(\bar{x}_i)\]
		where each $\psi_i$ is $\Pi_{\beta}$ for some $\beta < \alpha$.
	\end{itemize}
	By Vaught's version of the Lopez-Escobar theorem \cite{Vaught}, an invariant set is Borel $\bfSigma^0_\alpha$ if and only if it is defined by a $\Sigma^0_\alpha$ sentence of $\mc{L}_{\omega_1 \omega}$, and similarly for $\bfPi^0_\alpha$ and $\Pi_\alpha$.
	
	To each structure $\mc{A}$ we can assign an ordinal-valued Scott rank, which measures the least complexity of a Scott sentence for $\mc{A}$, essentially, the complexity of describing $\mc{A}$ up to isomorphism.
	There are a number of different non-equivalent definitions of Scott rank, which are technically different but coarsely the same. 
	The definition we will use is due to Montalb\'an  in \cite{MonSR}, but the best reference is in Montalb\'an's book \cite{MBook}.
	The (parameterless) Scott rank, $\SR(\mc{A})$, is the least $\alpha$ such that $\mc{A}$ has a $\Pi_{\alpha+1}$ Scott sentence, or equivalently the least $\alpha$ such that $\text{Copies}(\mc{A})$ is $\bfPi^0_{\alpha+1}$.  
	Not only does this definition of Scott rank have a tight connection to Scott sentences, it is also a robust measure of complexity, as there are also internal characterizations in terms of the definability of orbits and the difficulty of computing isomorphisms between copies of the structures \cite{MonSR}.
	
	Consider some reasonably defined class of structures, i.e., one which is an invariant Borel subset of $\Mod(\mc{L})$, or equivalently, a class of structures defined by an infinitary sentence $T$ which we think of as a theory defining its class of models. The notion of Scott rank (and the related back-and-forth relations) does not just measure the complexity of individual structures, but also gives insight into the properties of infinitary theories. (For example, Vaught's conjecture is really about the structure of the tree of back-and-forth types of infinitary theories.) In this paper, given an infinitary sentence $T$, we can consider the set of Scott ranks of the models of $T$.
	\begin{definition}
	Let $T$ be a $\Lomom$ sentence. The \emph{Scott spectrum} of $T$ is the set
	\[SS(T)=\{\alpha\in\omega_1 \;\; | \;\; \text{there is $\mc{A} \models T$ of Scott rank $\alpha$}\}.\]
	\end{definition} 
	\noindent The Scott spectrum is the set of complexities of structures in the class defined by $T$, or in the dynamic interpretation, the set of Borel complexities of the individual orbits making up the class of models of $T$. The interesting questions about Scott spectra are generally about relating the complexity of $T$, the definition of the class, to the complexities of its models. In this paper, we will focus on the question of gaps in the Scott spectrum, particularly the gap between the complexity of $T$ and the minimal complexity of a model of $T$. Must $T$ have a model that is around the same complexity as $T$? Or is it possible for all of the models of $T$ to be much more complex than $T$?

	The history of such questions starts in 2013 when Montalb\'an was developing his notion of Scott rank and its many equivalent characterizations. He asked the following question at the BIRS 2013 Workshop in Computable Model Theory.
	\begin{question}\label{question:BIRS}
	If $T$ is a $\Pi_2$ sentence, must $T$ have a model with a $\Pi_{3}$ Scott sentence and hence of Scott rank $2$ or less?
	\end{question}
	\noindent More generally, one can ask whether if $T$ is a $\Pi_\alpha$ sentence, must every model of $T$ have Scott rank $\alpha$ or less? This was surprisingly answered negatively in a very strong way by Harrison-Trainor \cite{HTScott}.
	
	\begin{theorem}[Harrison-Trainor]\label{thm:BigGaps}
		For any countable ordinal $\alpha$, there is a satisfiable $\Pi_2$ sentence $T$ such that all of the models of $T$ have Scott rank $\geq \alpha$.
	\end{theorem}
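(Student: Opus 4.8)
The plan is to use the back-and-forth characterization of Scott rank to convert the statement into a combinatorial problem, and then to force the required complexity into \emph{every} model by means of $\Pi_2$ extension axioms. Recall that $\SR(\mc{A}) \geq \alpha$ is witnessed by the existence of tuples $\bar a, \bar b$ in $\mc{A}$ that are $\gamma$-back-and-forth equivalent for every $\gamma < \alpha$ but that lie in distinct automorphism orbits; equivalently, the $\alpha$-back-and-forth relation fails to coincide with the automorphism relation. So it suffices to build a satisfiable $\Pi_2$ sentence $T$ such that \emph{every} model of $T$ contains such a pair of tuples for the prescribed $\alpha$.

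First I would fix a base family of structures coding ordinal ranks below $\alpha$ — for instance well-founded labeled trees, or structures assembled from a system $(\mc{A}_\beta)_{\beta < \alpha}$ of increasing back-and-forth complexity. The naive sentence describing rank-$\alpha$ configurations is $\Pi_\alpha$, not $\Pi_2$, so the central tension is to lower the \emph{syntactic} complexity of the axioms to $\Pi_2$ while \emph{preserving} the back-and-forth complexity of the models. To do this I would code the rank via a labeling relation whose correctness is a local condition: each node's label is determined by the labels of its children, expressed by axioms of the form ``for every node there exist children witnessing its label, and no child contradicts it.'' Such fixpoint/extension axioms are genuinely $\forall\exists$ quantifier-free, hence $\Pi_2$. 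Where a quantifier alternation resists this, I would absorb it into the language by a Marker-style auxiliary predicate witnessing the offending existential, being careful that the predicate is canonically determined by the underlying structure so that no spurious low-complexity expansions appear as extra models and collapse the Scott rank.

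The crux, and the step I expect to be the main obstacle, is the lower-bound argument: showing that \emph{every} model $\mc{B} \models T$ satisfies $\SR(\mc{B}) \geq \alpha$, rather than merely some model. Forcing all models to be complicated is exactly what distinguishes this from the easy task of exhibiting one complicated structure. For this I would impose $\Pi_2$ \emph{fullness} axioms — each of the form ``for every tuple there exist elements realizing a prescribed quantifier-free configuration,'' in the spirit of the extension axioms for the random graph or for dense linear orders. These axioms guarantee the ``forth'' moves at every stage of a back-and-forth game of length $\alpha$: in any model, two tuples with the same coded local rank up to level $\gamma$ can always be extended compatibly, so they are $\gamma$-equivalent for all $\gamma < \alpha$. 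Simultaneously, the rank coding ensures that tuples of different \emph{global} rank can never be automorphic. Running this uniform back-and-forth analysis produces, in each $\mc{B} \models T$, the required non-automorphic tuples that are $\gamma$-equivalent for all $\gamma < \alpha$, yielding $\SR(\mc{B}) \geq \alpha$.

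Finally, satisfiability is confirmed by exhibiting a single explicit model built by the intended transfinite recursion of length $\alpha$, verifying directly that it satisfies the tree axioms, the labeling axioms, and the fullness axioms. The delicate points to watch throughout are that the auxiliary predicates remain uniquely determined (so the syntactic reduction to $\Pi_2$ does not introduce unwanted models) and that the fullness axioms are strong enough to drive the back-and-forth yet weak enough to be consistent with the rank coding; balancing these two requirements is where I expect the construction to demand the most care.
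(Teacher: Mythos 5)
First, note that the paper does not prove this theorem; it is quoted from \cite{HTScott}, so there is no internal proof to compare against. Judged on its own terms, your proposal is a strategy outline rather than a proof: no sentence $T$ is actually written down, and the two steps you yourself flag as the crux --- that \emph{every} model contains the required pair of tuples, and that the syntactic reduction to $\Pi_2$ does not admit unintended models --- are exactly the steps that are missing. Your opening reduction is fine: a pair of tuples in $\mc{A}$ that are $\gamma$-back-and-forth equivalent for all $\gamma<\alpha$ but non-automorphic does force $\SR(\mc{A})\geq\alpha$, since the orbit of either tuple then has no $\Sigma_\gamma$ definition for any $\gamma<\alpha$.

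The concrete gap is the well-foundedness problem in your rank-coding scheme. You propose to code ordinal ranks below $\alpha$ by a labeling whose correctness is a local $\forall\exists$ condition (``each node's label is witnessed by its children and contradicted by none''). But a $\Pi_2$ sentence cannot express well-foundedness, so $T$ will have models in which the coding structure is ill-founded and the ``global rank'' you invoke in the lower bound (``tuples of different global rank can never be automorphic'') is simply undefined. Your lower-bound argument only addresses the intended, well-founded models; the entire difficulty of the theorem --- and the reason Montalb\'an's Question~\ref{question:BIRS} was expected to have a positive answer --- is that the nonstandard models must \emph{also} be shown to have Scott rank $\geq\alpha$. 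Likewise, the claim that $\forall\exists$ fullness axioms ``guarantee the forth moves at every stage of a back-and-forth game of length $\alpha$'' requires an inductive argument converting one-step extension properties into $\gamma$-equivalence for all $\gamma<\alpha$ uniformly in an arbitrary model, and that induction is not supplied; the Marker-predicate step also needs an argument that the canonical expansion is axiomatizable in $\Pi_2$ without admitting non-canonical expansions. Until the construction is pinned down and the lower bound is carried out for arbitrary (including ill-founded) models, this is a plan for a proof, not a proof.
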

	
	\noindent Thus it is possible for a class of structures with a very simple description to have only very complex models. This paper is about the same question in the context of linear orders.
	
	\begin{question}\label{question:SpectralGap}
		Given $\alpha$, does there exist $\beta$ such that if $T$ is a $\Pi_\alpha$ sentence expanding the theory of linear orders, then $T$ has a model of Scott rank at most $\beta$? If so, what is the least such $\beta$?
	\end{question}
	
	\noindent Linear orders are a particularly interesting class of examples in which to ask this question. First of all, the isomorphism relation for linear orders is Borel complete in the sense of Friedman and Stanley \cite{FS89}, so there are linear orders of arbitrarily high Scott ranks. On the other hand, linear orders are not universal, meaning that they cannot bi-interpret any other class of structures (see \cite{HKSS,HTMM,HTMMM} and VI.3.2 of \cite{MBook}). Because they are not universal, whether linear orders have unbounded Scott gaps does not immediately follow from Theorem \ref{thm:BigGaps} (as it would for universal classes such as the class of graphs). Moreover, the proof of Theorem \ref{thm:BigGaps} uses complicated relationships between large tuples of elements, whereas it has become increasingly clear that in the back-and-forth analysis of linear orders only relationships between pairs of elements matter (see \cite{GM23,GR23,GHTH}). We prove:
	
	\begin{theorem}\label{thm:SmallGaps}
		Given a satisfiable $\Pi_{\alpha}$ sentence $T$ of linear orders, there is a linear order $\mc{B} \models T$ with a $\Pi_{\alpha + 4}$ Scott sentence and hence Scott rank at most $\alpha + 3$.
	\end{theorem}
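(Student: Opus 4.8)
The plan is to reduce the statement to a pure back-and-forth fact and then realize it by a maximally homogeneous order. The starting point is that whether a linear order models $T$ depends only on its level-$\alpha$ back-and-forth class: since $T$ is $\Pi_\alpha$, its truth is invariant under $\equiv_\alpha$, so any two $\equiv_\alpha$-equivalent linear orders agree on $T$. As $T$ is satisfiable we may fix $\mc{A} \models T$ and let $\tau$ be its $\equiv_\alpha$-class. It then suffices to prove the following: \emph{every back-and-forth class $\tau$ of linear orders contains a representative of Scott rank at most $\alpha + 3$}, since such a representative is $\equiv_\alpha \mc{A}$, hence a model of $T$, and has a $\Pi_{\alpha+4}$ Scott sentence. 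This isolates the real content. A $\Pi_\alpha$ theory can pin down the level-$\alpha$ type of its models, and (as the paper's lower bounds show) can even force the Scott rank strictly above $\alpha$; but it has no control over the structure \emph{strictly above} level $\alpha$, and we remain free to pick the most symmetric realization of the given type.

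For the construction I would build, from $\tau$, a canonical realization $\mc{B}$ that is as homogeneous as possible, leaning on the principle that for linear orders only relationships between pairs of elements matter in the back-and-forth analysis (\cite{GM23,GR23,GHTH}): the $\gamma$-type of a tuple $a_1 < \cdots < a_n$ is determined by, and recoverable from, the $\gamma$-types of the $n+1$ intervals it cuts $\mc{B}$ into, so the whole analysis reduces to $\gamma$-types of intervals regarded as linear orders in their own right. I would define $\mc{B}$ by recursion along the condensation tower dictated by $\tau$, at each stage making the freest choice compatible with $\tau$: where $\tau$ allows a dense arrangement, insert a $\mathbb{Q}$-shuffle of canonical realizations of the admissible lower types; where it forces a discrete step, insert $\mathbb{Z}$ or a finite block. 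Every choice being the most symmetric available, $\mc{B}$ realizes $\tau$ while being homogeneous in the strong sense that two intervals with equal $\alpha$-type can be matched after descending only boundedly many further back-and-forth levels.

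The remaining step, and the one I expect to be hardest, is to bound the Scott rank of $\mc{B}$ by $\alpha + 3$. By the standard characterization of Scott rank (\cite{MonSR,MBook}) it suffices to show $\mc{B}$ is $(\alpha+3)$-homogeneous, i.e. that any two tuples with the same level-$(\alpha+3)$ back-and-forth type lie in a common automorphism orbit, and that these orbits are definable at the corresponding level; the pairs principle reduces both points to a single interval analyzed over its endpoints. The delicate issue is that the condensation tower of $\tau$ can have height comparable to $\alpha$, so a naive ``one extra level per stage'' accounting would give a bound like $\alpha + \alpha$ rather than $\alpha + 3$. The crux is therefore to show that the overhead does \emph{not} accumulate: the entire tower up to level $\alpha$ is already encoded in the type $\tau$, and the homogenization adds only a constant cost in passing from ``same $\alpha$-type of a tuple'' to ``same automorphism orbit.'' I would establish this by a simultaneous induction on condensation rank showing that the shuffle, sum, and $\mathbb{Z}$-building operations each raise the categoricity level by an amount that telescopes rather than sums, pinning the total overhead at the claimed $3$.
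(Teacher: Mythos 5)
Your opening reduction is sound: since $T$ is $\Pi_\alpha$, truth of $T$ is preserved upward along $\leq_\alpha$ (Theorem \ref{thm:Karp}), so it would indeed suffice to show that the $\equiv_\alpha$-class of some fixed $\mc{A} \models T$ contains a structure of Scott rank at most $\alpha+3$. But note that this is a strictly \emph{stronger} statement than the theorem: the class $\{\mc{B} : \mc{B} \equiv_\alpha \mc{A}\}$ is in general not $\Pi_\alpha$-definable (one needs roughly $\Pi_{2\alpha}$, or $\vwE_\alpha \wedge \A_\alpha$ in the paper's new hierarchy), so you cannot recover it from the theorem and must prove it from scratch. That is where the proposal has a genuine gap. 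The ``canonical freest realization of $\tau$ along its condensation tower'' is not a well-defined object: a level-$\alpha$ back-and-forth class of linear orders does not determine a condensation tower, nor a set of ``admissible lower types'' at each level, and different models of the same $\equiv_\alpha$-class can have wildly different condensation behaviour above the information visible at level $\alpha$. More importantly, the step you yourself flag as hardest --- that the homogenization overhead ``telescopes rather than sums'' so that the total cost is $3$ rather than something like $\alpha$ --- is precisely the entire content of the theorem, and the proposal offers only the assertion that a simultaneous induction would show it. Nothing in the shuffle/$\mathbb{Z}$-block construction explains why two \emph{overlapping} intervals that are $\equiv_{\alpha+1}$-equivalent must be isomorphic; in the paper's argument this is exactly the point where the Lindenbaum--Tarski absorption theorem (Proposition \ref{Tarski}) is invoked, to extract an initial segment of type $(N+1)\cdot\omega$ and conclude $N + 1 + \mc{M} \cong \mc{M}$. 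The proposal has no mechanism playing this role.

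For comparison, the paper does not build a canonical homogeneous realization at all. It runs a Henkin-style forcing over a countable existential fragment of $\E_\alpha$ formulas (Lemma \ref{lem:main2}), arranging genericity and a splitting/unity dichotomy (Property $\mathbf{(*)}$) interval by interval, and then verifies (Lemma \ref{lem:main1}) that in such a generic order any two $\equiv_{\alpha+1}$-equivalent tuples are automorphic. Two ingredients carry the load and have no counterpart in your plan: the interval-decomposition Lemma \ref{lem:intervalFormulas}, which requires leaving the $\Sigma_\alpha/\Pi_\alpha$ hierarchy for $\E_\alpha/\A_\alpha$ (Proposition \ref{prop:must-use-new} shows this is unavoidable, so any ``reduce everything to intervals at the same level'' heuristic must be handled with care), and the Lindenbaum--Tarski theorem for the overlapping-interval case. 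Your instinct that only pairs of elements matter is correct and is also the paper's starting point (Lemma \ref{lem:combine-bf}), but by itself it does not yield the constant bound.
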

	
	\noindent This is the first non-trivial example in which such a theorem is known. This theorem gives another perspective on why linear orderings are not universal:
	Their Scott spectra are not as rich as those of a general structure.
	This means that there can be no sufficiently well-behaved interpretation of general structures into linear orderings, as every such interpretation must shift the Scott spectra enough to collapse large gaps.
	
	\medskip
	
	The study of gaps in Scott spectra has connections with the theory of Borel reducibility for classes of countable structures as introduced by Friedman and Stanley \cite{FS89}. Let $\mathbb{K}$ and $\mathbb{L}$ be Borel classes of countable structures with domain $\mathbb{N}$ in the Polish spaces $\Mod(\mc{L})$ for their respective languages. We say that $\mathbb{K}$ is Borel reducible to $\mathbb{L}$ if there is a Borel map $\Phi: \mathbb{K} \to \mathbb{L}$ such that
	\[ \mc{A} \cong \mc{B} \Longleftrightarrow \Phi(\mc{A}) \cong \Phi(\mc{B}).\]
	In particular, we say that $\mathbb{L}$ is \textit{Borel complete} if every Borel class of structures (or, sufficiently, graphs) is Borel reducible to $\mathbb{L}$. Friedman and Stanley showed that various classes of structures are Borel complete, including linear orders, trees, and graphs. Camerlo and Gao \cite{CamerloGao} showed that Boolean algebras are Borel complete, and recently Paolini and Shelah \cite{PaoliniShelah}, with another proof by Laskowski and Ulrich \cite{LaskowskiUlrich}, showed that torsion-free abelian groups are Borel complete.
	
	We can strengthen Borel reducibility to ask that given a Borel set of the domain, the closure under isomorphism equivalence of the image is Borel, or equivalently, that the closure under isomorphism of the range of the reduction is Borel. We call such a reduction \textit{faithful}. If $\mathbb{L}$ is Borel complete via a faithful Borel reduction, we say that $\mc{L}$ is faithfully Borel complete. If $\mc{L}$ is faithfully Borel complete, then Vaught's conjecture for is true for $\mathbb{L}$ if and only if Vaught's conjecture is true in general---indeed this was one of Friedman and Stanley's motivations for introducing Borel reducibility. Steel \cite{Steel} proved Vaught's conjecture for linear orders, and later Gao \cite{Gao01} showed that linear orders are not Borel complete via a faithful reduction. He showed that there is a strengthening of Vaught's conjecture---the Glimm-Effros dichotomy---which is true for linear orders but not true in general (and which is maintained by faithful Borel reductions).
	
	In Section \ref{sec:faithful}, we show that Theorem \ref{thm:SmallGaps} also implies that linear orders cannot be Borel complete via a faithful reduction. In general, we show that if we have a faithful Borel reduction from $\mathbb{K}$ to $\mathbb{L}$, then if $\mathbb{K}$ has Scott spectra gaps of unbounded sizes, then so does $\mathbb{L}$. This is, as far as we know, the only known reasonable method (i.e., other than ``brute force'') for proving that a Borel complete class of structures is not faithfully Borel complete without using some strengthening of Vaught's conjecture.
	
	Boolean algebras are the obvious class of structures in which to attempt to apply this. They are Borel complete, but the Borel reduction is not known to be faithful. In correspondence with Paolini, he has asked the following question:
	\begin{question}[Paolini]
		Are Boolean algebras faithfully Borel complete?
	\end{question}
	\noindent Vaught's conjecture is not known for Boolean algebras---despite many attempts---and so one might look to other methods to answer Paolini's question. We suggest that Scott spectral gaps are one possible method, and indeed the only method that we know of which would not first require proving Vaught's conjecture.
	
	Many of the key properties of linear orders that we use to prove Theorem \ref{thm:SmallGaps} correspond to analogous properties of Boolean algebras. However there is one fact we use about linear orders whose analog is not true for Boolean algebras: Lindenbaum and Tarski showed that if a linear order $M$ has initial segments of order type $L \cdot n$ for every $n$, then in fact $M \cong L + M$ and $M$ has an initial segment of order type $L \cdot \omega$ (see\footnote{Our understanding is that Tarski worked with Lindenbaum throughout the 20s and 30s to produce many results like this one on properties of linear orderings. Many of these results were announced without proof, e.g., in \cite{LindenbaumTarski}, or were only known informally for decades. They were not published in full until Tarski wrote this book after Lindenbaum's death. Despite the fact that we only cite a work of Tarski, we will associate Lindenbaum's name to this result as well.} Theorem 1.44 of \cite{TBook}). This is not true for Boolean algebras (and follows from \cite{Ket}). Thus, we know no bounds on the least Scott rank of the models of a theory of Boolean algebras.
	
	\begin{question}\label{question:Boolean}
		Given $\alpha$, does there exist $\beta$ such that if $T$ is a $\Pi_\alpha$ sentence expanding the theory of Boolean algebras, then $T$ has a model of Scott rank at most $\beta$? If so, what is the least such $\beta$?
	\end{question}
	
	\noindent In particular, we do not know the answer to the following question: Is there a bound $\beta$ depending on $\alpha$ such that if $\mc{A}$ and $\mc{B}$ are Boolean algebras, with $\mc{A}$ of Scott rank $\alpha$, and $\mc{A} + \mc{B} \equiv_\beta \mc{B}$, then $\mc{A} + \mc{B} \cong \mc{B}$. This is the analogue, for Boolean algebras, of the specific consequence we use of the result of Lindenbaum and Tarski described above.
	
	\medskip
	
	The proof of Theorem \ref{thm:SmallGaps} is a Henkin construction using formulas of bounded complexity, but crucially one must use a new complexity hierarchy of formulas which we call the $\A_\alpha$/$\E_\alpha$ formulas which are intimately tied to the back-and-forth analysis, but as far as we are aware, have not before been used. In particular, the classes satisfy $\Pi_\alpha \subsetneq \A_\alpha \subsetneq \forall_\alpha$ where the $\Pi_\alpha$ counts alternations of quantifiers including $\bigdoublewedge$ and $\bigdoublevee$ while $\forall_{\alpha}$ counts alternations of quantifiers but not counting $\bigdoublewedge$ and $\bigdoublevee$. These formulas are introduced in more detail in \cite{CGHT} which was written by the authors of this article together with Chen. These formulas seem necessary for the clear exposition of these results due to their presence in our key Lemma \ref{lem:intervalFormulas}, in which they are unavoidable as shown in Proposition \ref{prop:must-use-new}.
		
	We do not know if the bound $\alpha + 3$ on the Scott rank in the main theorem is optimal. Our best-known lower bound is $\alpha + 1$ (which, the reader should note, still gives a counterexample to Montalb\'an's Question \ref{question:BIRS}). For the case of $\lambda$ a limit ordinal, Gonzalez, Harrison-Trainor, and Ho \cite{GHTH} showed\footnote{In fact, this result was discovered while working on this paper, but uses the heavy machinery developed in that paper so it appears there instead.} that there is a $\Pi_\lambda$ sentence $T$ of linear orders all of whose models have Scott complexity $\Pi_{\lambda + 2}$ and hence Scott rank $\lambda+1$; we improve that here to include many successor ordinals as well as the limit case.
	
	\begin{theorem}
		For $\alpha = n \geq 4$, or $\alpha = \lambda$ a limit ordinal, or $\alpha = \lambda + n$ for $\lambda$ a limit ordinal and $n \geq 4$: There is a satisfiable $\Pi_{\alpha}$ sentence $T$ of linear orders such that every model of $T$ has no $\Sigma_{\alpha + 2}$ Scott sentence, and hence has Scott rank at least $\alpha + 1$.
	\end{theorem}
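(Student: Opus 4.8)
The plan is to reduce the nonexistence of a $\Sigma_{\alpha+2}$ Scott sentence to a back-and-forth non-rigidity property, and then to build a $\Pi_\alpha$-axiomatizable class all of whose models enjoy this property. First I would record the following standard reduction. Recall that for each tuple $\bar{a}$ in a countable structure $\mc{M}$ there is a canonical $\Pi_{\alpha+1}$ formula $\rho_{\bar{a}}(\bar{x})$, the level-$(\alpha+1)$ back-and-forth formula of $\bar{a}$, such that for every linear order $\mc{N}$ and tuple $\bar{b}$ we have $\mc{N}\models\rho_{\bar{a}}(\bar{b})$ iff every $\Pi_{\alpha+1}$ formula true of $\bar{a}$ in $\mc{M}$ is true of $\bar{b}$ in $\mc{N}$ (see \cite{MBook}). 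Using that a $\Sigma_{\alpha+2}$ sentence is a disjunction $\bigvee_i\exists\bar{x}_i\,\psi_i(\bar{x}_i)$ with each $\psi_i$ a $\Pi_{\alpha+1}$ formula, one checks: if for every finite tuple $\bar{a}\in\mc{M}$ there is a linear order $\mc{N}\not\cong\mc{M}$ and a tuple $\bar{b}\in\mc{N}$ with $\mc{N}\models\rho_{\bar{a}}(\bar{b})$, then $\mc{M}$ has no $\Sigma_{\alpha+2}$ Scott sentence. Indeed, a purported $\Sigma_{\alpha+2}$ Scott sentence $\sigma$ would have a disjunct $\exists\bar{x}\,\psi(\bar{x})$ witnessed by some $\bar{a}$, and the corresponding $\mc{N}$ would satisfy $\psi(\bar{b})$, hence $\sigma$, forcing $\mc{N}\cong\mc{M}$. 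Since every $\Pi_{\alpha+1}$ Scott sentence is in particular $\Sigma_{\alpha+2}$, the absence of a $\Sigma_{\alpha+2}$ Scott sentence gives $\SR(\mc{M})\ge\alpha+1$. So it is enough to produce a satisfiable $\Pi_\alpha$ sentence $T$ of linear orders, each of whose models $\mc{M}$ is non-rigid at level $\alpha+1$ in this sense.

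For $\alpha$ a limit the class produced in \cite{GHTH} already does this: its models have Scott complexity exactly $\Pi_{\alpha+2}$, so none has a $\Sigma_{\alpha+2}$ Scott sentence. For the successor cases I would build the models as coded sums $\mc{M}=\sum_{k\in\omega}\bigl(\mc{C}_{g(k)}+R_k\bigr)$, where $g\colon\omega\to\{0,1\}$ is arbitrary, $R_k$ is a rigid marker block (increasing in a $\Pi_\alpha$-detectable invariant) whose role is to make the position $k$ recoverable from the isomorphism type, and $\mc{C}_0\not\cong\mc{C}_1$ are two linear orders with $\mc{C}_0\equiv_{\alpha+1}\mc{C}_1$ but distinguishable only at level $\alpha+2$. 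The sentence $T$ axiomatizes that the order is a sum of this marked form with each block satisfying the relevant constraint. Given any model $\mc{M}$ and tuple $\bar{a}$, choose $k_0$ so large that $\mc{C}_{g(k_0)}$ is disjoint from $\bar{a}$, and let $\mc{N}$ be the result of replacing that one block by the opposite type. A block-replacement back-and-forth argument, additivity of the relations over sums when a single summand is replaced by an $\equiv_{\alpha+1}$ summand far from the parameters, gives $(\mc{M},\bar{a})\equiv_{\alpha+1}(\mc{N},\bar{a})$, hence $\mc{N}\models\rho_{\bar{a}}(\bar{a})$, while the rigidity of the markers forces $\mc{N}\not\cong\mc{M}$ since the coded sequence $g$ has changed. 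This yields exactly the required non-rigidity. The case $\alpha=\lambda+n$ is obtained by the same template, using the limit-case blocks from \cite{GHTH} as the seed and inserting $n$ further levels of the marker/block apparatus.

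The main obstacle is complexity bookkeeping: I must keep $T$ at complexity $\Pi_\alpha$ while the two block types remain indistinguishable precisely up to level $\alpha+1$. Because one cannot pin a block to a specific ordinal in $\mc{L}_{\omega_1\omega}$ (well-foundedness is not Borel), the block constraint can only be an $\mc{L}_{\omega_1\omega}$ condition, and the tension is that expressing that an interval has the intended back-and-forth type naively costs more than $\Pi_\alpha$. This is exactly where the $\A_\alpha/\E_\alpha$ formula classes of \cite{CGHT} and the interval-formula analysis of Lemma \ref{lem:intervalFormulas} are needed: the marker and block conditions must be phrased using these finer classes, as Proposition \ref{prop:must-use-new} shows is unavoidable, so that they sit inside $\Pi_\alpha$ yet still carry the separation one level higher. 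A second, related difficulty is that $T$ is only a sentence, so I must verify that arbitrary ``wild'' models of $T$, not merely those literally of the coded-sum form, still contain an infinite supply of interchangeable blocks far from any fixed tuple; this is ensured by making the marker axioms force the global sum decomposition up to $\equiv_{\alpha+1}$.

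Finally, the restriction $n\ge4$ reflects the fixed number of quantifier alternations consumed by the marker-plus-block coding: the apparatus needs at least four levels to simultaneously encode positions rigidly and hide the block distinction one level above the complexity of $T$, which is why the finite and $\lambda+n$ cases begin at $4$ while $n\le3$ falls outside the method. I expect the delicate verification that the block constraint is genuinely $\Pi_\alpha$ in the $\A_\alpha/\E_\alpha$ sense, while still admitting an $\equiv_{\alpha+1}$-but-non-isomorphic pair, to be the crux of the argument, and the uniform treatment of wild models to be the most technical remaining point.
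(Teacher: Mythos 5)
Your reduction of ``no $\Sigma_{\alpha+2}$ Scott sentence'' to producing, for each tuple $\bar{a}\in\mc{M}$, a non-isomorphic $(\mc{N},\bar{b})$ with $(\mc{M},\bar{a})\leq_{\alpha+1}(\mc{N},\bar{b})$ is correct, and the coded-sum/block-swap template is a reasonable way to pass from a parameterless ambiguity to the with-parameters statement. The problem is that the entire mathematical content of the theorem is concentrated in the step you defer: exhibiting a $\Pi_\alpha$-expressible block constraint that (i) admits non-isomorphic models $\mc{C}_0\equiv_{\alpha+1}\mc{C}_1$, and (ii) is such that \emph{every} linear order satisfying it --- not just $\mc{C}_0$ and $\mc{C}_1$, since an arbitrary model of $T$ may instantiate its blocks by anything meeting the constraint --- has a non-isomorphic $\equiv_{\alpha+1}$-partner that also satisfies it. The natural candidate ``this interval is $\equiv_{\alpha+1}$ to $\mc{C}_0$'' is not $\Pi_\alpha$: by Theorem \ref{thm:bnfFormulas} even one direction of the $(\alpha+1)$-back-and-forth type already sits at level $\alpha+1$, and nothing in your proposal produces a weaker constraint with properties (i)--(ii). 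So the argument reduces the theorem to a statement essentially as hard as the theorem itself and stops there; the appeal to the $\A_\alpha/\E_\alpha$ classes and to Proposition \ref{prop:must-use-new} gestures at the right tools but does not supply the construction, and the threshold $n\geq 4$ is asserted rather than derived.

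For comparison, the paper takes a different route. The base case is a concrete $\Pi_2$ theory $S$ of linear orders with unary predicates: a dense order whose $V$-points are densely and \emph{uniquely} labelled by reals (forcing rigidity, hence a $\Pi_4$ Scott sentence) while the $U$-points are dense and unlabelled; any two tuples with the same quantifier-free type over parameters are $\equiv_1$, so no $U$-point is $\Sigma_2$-definable over parameters, yet the unique labels prevent any automorphism from moving it --- killing every $\Sigma_4$ Scott sentence (Proposition \ref{prop:unaryScottSkip}). The non-rigidity mechanism is thus local indistinguishability of unlabelled points, with density supplying the ``swappable witness far from any tuple'' that your markers were meant to provide. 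This is transported to pure linear orders by the $2$-almost bi-interpretation $\Phi$ of Definition \ref{def:PhiBiInt}, giving the $\Pi_4$ base case (Theorem \ref{thm:skipLO}), and then moved up the hierarchy by further almost bi-interpretations --- multiplication by $\eta+2+\eta$ for one level and by $\zeta^\delta$ for $2\delta$ levels (Propositions \ref{prop:etaBiint} and \ref{prop:zetaBiint}); the limit case is quoted from \cite{GHTH}, as you do. To complete your approach you would in effect have to rebuild such a base example inside your block class, at which point the markers and the coded sum become superfluous.
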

	
	\noindent Thus, the second part of Question \ref{question:SpectralGap}---of determining the optimal value of $\beta$---remains unsolved, but the gap between the lower and upper bounds has been reduced to $2$.
	
	What does one need to know to close this gap? There is a key question on which this seems to turn. If $\mc{A}$ has a $\Pi_\alpha$ Scott sentence, then if $\mc{A} \leq_\alpha \mc{B}$ then $\mc{A} \equiv_\alpha \mc{B}$; thus if $\mc{A}$ has a $\Pi_\alpha$ Scott sentence, then the $\alpha$-back-and-forth type of $\mc{A}$ is maximal. Is the converse true? That is, if the $\alpha$-back-and-forth type of $\mc{A}$ is maximal, must $\mc{A}$ have a $\Pi_\alpha$ Scott sentence?
	
	This question splits into two sub-questions, both of which were already known to the authors before this work but which we consider even more important now.
	
	\begin{question}
		Suppose that $\mc{A}$ has the property that whenever $\mc{B}$ is a countable structure and $\mc{A} \leq_\alpha \mc{B}$ then $\mc{B} \equiv_\alpha \mc{A}$. Then is it true that whenever $\mc{A} \leq_\alpha \mc{B}$ then $\mc{B} \cong \mc{A}$?
	\end{question}
	
	\begin{question}
		Suppose that $\mc{A}$ has the property that whenever $\mc{B}$ is a countable structure and $\mc{A} \leq_\alpha \mc{B}$ then $\mc{B} \cong \mc{A}$. Does $\mc{A}$ have a $\Pi_\alpha$ Scott sentence?
	\end{question}
	
	\noindent In \cite{CGHT} the authors and Chen show that in the second question $\mc{A}$ must have a $\Pi_{\alpha + 2}$ Scott sentence. Otherwise the best progress is a theorem of Montalb\'an \cite{MonSR} who showed that if there is a $\Pi_\alpha$ sentence $\varphi$ such that (a) $\mc{A} \models \varphi$ and (b) if $\mc{B} \models \varphi$ then $\mc{A} \equiv_\alpha \mc{B}$, then we can conclude that $\varphi$ is a $\Pi_\alpha$ Scott sentence for $\mc{A}$. 
	
	\medskip
	
	Finally, we note that though this theorem is about gaps between the complexity of the theory and the minimal element of the Scott spectrum, as a consequence we also get that all gaps are bounded. Suppose that $T$ is a $\Pi_\alpha$ sentence expanding the theory of linear orders. Let $\beta \geq \alpha$ and suppose that $T$ has a model $\mc{B}$ of Scott rank $\geq \beta$. Let $\varphi$ be the $\Pi_{2\beta}$ sentence which expresses of a model $\mc{A}$ that $\mc{A} \leq_{\beta} \mc{B}$. Then any $\mc{A} \models \varphi$ is a model of $T$, and has Scott rank $\geq \beta$. By Theorem \ref{thm:SmallGaps} $\varphi$ has a model $\mc{A}$ of Scott rank at most $2\beta+3$, and thus $\beta \leq SR(\mc{A}) \leq 2\beta + 3$. If, in notation to be introduced in the next section, we note that $\varphi$ is in fact $\overline{\E}_{\beta}$ hence $\E_{\beta+2}$ and use the form of the main theorem given in Theorem \ref{thm:main-inline} we get the following bound which is stronger except for at limit ordinals.
	
	\begin{corollary}
		Let $T$ be a $\Pi_\alpha$ sentence expanding the theory of linear orders, and let $\beta \geq \alpha$. If $T$ has a model of Scott rank $\geq \beta$, then it has a model $\mc{A}$ of Scott rank $\beta \leq SR(\mc{A}) \leq \beta + 5$.
	\end{corollary}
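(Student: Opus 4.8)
The plan is to apply the main theorem not to $T$ but to an auxiliary sentence $\varphi$ built from a fixed complicated model of $T$. Fix $\mc{B} \models T$ with $\SR(\mc{B}) \geq \beta$, and let $\varphi$ be the sentence whose models are exactly the structures $\mc{A}$ with $\mc{A} \leq_\beta \mc{B}$. Since $\mc{B} \leq_\beta \mc{B}$, the sentence $\varphi$ is satisfiable. I would then establish three facts: (i) every model of $\varphi$ is a model of $T$ (and in particular a linear order, so that $\varphi$ extends the theory of linear orders and the main theorem applies); (ii) every model of $\varphi$ has Scott rank at least $\beta$; and (iii) $\varphi$ is simple enough in the fine hierarchy that the main theorem produces a model of Scott rank at most $\beta + 5$. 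Given these, the model $\mc{A} \models \varphi$ supplied by the main theorem is a model of $T$ with $\beta \leq \SR(\mc{A}) \leq \beta + 5$, as required.

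Fact (i) is immediate from the back-and-forth analysis: since $\alpha \leq \beta$, $\mc{A} \leq_\beta \mc{B}$ gives $\mc{A} \leq_\alpha \mc{B}$, and as $T$ is a $\Pi_\alpha$ sentence holding in $\mc{B}$ it transfers to $\mc{A}$. Fact (ii) is the heart of the argument. Suppose toward a contradiction that some $\mc{A} \models \varphi$ had $\SR(\mc{A}) = \gamma < \beta$; then $\mc{A}$ has a $\Pi_{\gamma+1}$, hence $\Pi_\beta$, Scott sentence $\psi$. A structure with a $\Pi_\beta$ Scott sentence is $\beta$-back-and-forth maximal, so from $\mc{A} \leq_\beta \mc{B}$ we obtain $\mc{A} \equiv_\beta \mc{B}$. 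Since $\psi$ is a $\Pi_\beta$ sentence true in $\mc{A}$, it is then also true in $\mc{B}$, so $\mc{B} \cong \mc{A}$ and $\SR(\mc{B}) = \gamma < \beta$, contradicting the choice of $\mc{B}$. Hence every model of $\varphi$ has Scott rank at least $\beta$.

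For (iii), the relation $\mc{A} \leq_\beta \mc{B}$ to the fixed $\mc{B}$ is expressible by a $\Pi_{2\beta}$ sentence, and applying Theorem \ref{thm:SmallGaps} to this already yields a model of Scott rank at most $2\beta + 3$. The improvement to $\beta + 5$ comes from placing $\varphi$ more carefully: it lies in $\overline{\E}_\beta$, and therefore in $\E_{\beta+2}$, so the $\E$-form of the main theorem in Theorem \ref{thm:main-inline} produces a model of Scott rank at most $(\beta + 2) + 3 = \beta + 5$. Combined with (i) and (ii) this gives the desired model. I expect the main obstacle to be exactly this fine-hierarchy bookkeeping in (iii): the gain from $2\beta + 3$ down to $\beta + 5$ depends on recognizing that the back-and-forth condition $\mc{A} \leq_\beta \mc{B}$ sits in $\overline{\E}_\beta$ rather than merely in $\Pi_{2\beta}$, a distinction invisible to the classical $\Pi$/$\Sigma$ hierarchy and available only through the $\A_\alpha$/$\E_\alpha$ formulas.
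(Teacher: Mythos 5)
Your proposal follows essentially the same route as the paper's own (inline) derivation: fix $\mc{B}\models T$ with $\SR(\mc{B})\geq\beta$, let $\varphi$ be the sentence expressing $\mc{A}\leq_\beta\mc{B}$, observe that $\varphi$ is $\vwE_\beta$ rather than merely $\Pi_{2\beta}$, and feed it to Theorem \ref{thm:main-inline}. Your argument for (ii) is correct and fills in what the paper leaves implicit, and your arithmetic in (iii) is if anything too generous to yourself in the wrong direction: Theorem \ref{thm:main-inline} bounds the Scott rank of a model of an $\E_\gamma$ sentence by $\gamma+2$, not $\gamma+3$, so an $\E_{\beta+2}$ sentence would actually give $\beta+4\leq\beta+5$.

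There is, however, a genuine problem with your justification of (i), one the paper's own sketch shares. Karp's theorem as stated (Theorem \ref{thm:Karp}) says that $\mc{A}\leq_\alpha\mc{B}$ transfers $\Pi_\alpha$ sentences \emph{from $\mc{A}$ to $\mc{B}$} and $\Sigma_\alpha$ sentences from $\mc{B}$ to $\mc{A}$; a $\Pi_\alpha$ sentence true in $\mc{B}$ does not automatically descend to $\mc{A}$ along $\mc{A}\leq_\alpha\mc{B}$. When $\beta>\alpha$ your conclusion survives for a different reason: $T$, being $\Pi_\alpha$, is $\Sigma_{\alpha+1}\subseteq\Sigma_\beta$, and $\Sigma_\beta$ sentences do pass from $\mc{B}$ to $\mc{A}$. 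But at $\beta=\alpha$ the step genuinely fails: for example $1+\mathbb{Q}\leq_2\mathbb{Q}$, yet the $\Pi_2$ sentence axiomatizing dense linear orders without endpoints holds in $\mathbb{Q}$ and not in $1+\mathbb{Q}$, so a model of your $\varphi$ need not be a model of $T$. The repair is to take $\varphi$ to express $\mc{A}\equiv_\beta\mc{B}$ instead: the extra conjunct $\mc{A}\geq_\beta\mc{B}$ is $\A_\beta$ by Theorem \ref{thm:bnfFormulas}, it gives $\mc{B}\leq_\alpha\mc{A}$ and hence $\mc{A}\models T$ directly, and the conjunction lies in $\vwE_{\beta+1}\subseteq\E_{\beta+3}$, so Theorem \ref{thm:main-inline} still returns a model of Scott rank at most $(\beta+3)+2=\beta+5$ --- exactly the stated bound. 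Your argument for (ii) goes through verbatim for the strengthened $\varphi$, since it only uses the $\mc{A}\leq_\beta\mc{B}$ half.
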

	\noindent This analysis transferring our main result to talk about the gaps in the Scott spectrum is quite coarse.
	It may be interesting to improve it in future work.
	
	The paper is organized into seven further sections. In Section \ref{EAComplexity}, we introduce the new notion $\A_\alpha$/$\E_\alpha$ of syntactic complexity needed for our main result. In Section \ref{LOBackground}, we establish background and results on linear orderings. In Section \ref{forcing}, we set up the machinery needed for the forcing construction in the main proof, which we prove in Section \ref{mainThm}. The next two sections construct example theories of linear orderings with non-trivial Scott spectrum gaps. In particular, in Section \ref{exampleLOs}, we construct base case examples, while in Section \ref{UpTheHeirarcy}, we develop a general mechanism for shifting examples in linear orderings to different levels of the hyperarithmetic hierarchy and apply these tools to our constructed examples. Finally, in Section \ref{sec:faithful} we have our results on faithful Borel reductions.
	
	\section{$\E_\alpha$ and $\A_\alpha$ formulas}\label{EAComplexity} 
	We introduce a new measure of syntactic complexity for infinitary formulas of $\Lomom$.
	We call our new complexity classes $\E_\alpha$ and $\A_\alpha$ for $\alpha\in\omega_1$.
	These syntactic classes are meant to line up well with the $\alpha$ back-and-forth relations in a manner that is analogous to the classical $\Sigma_\alpha$ and $\Pi_\alpha$ complexity classes.
	That said, the class of $\E_\alpha$ and $\A_\alpha$ formulas are far larger than $\Sigma_\alpha$ and $\Pi_\alpha$.
	Unlike these more traditional classes, our new classes do not line up with the hyperarithmetic hierarchy.
	Most importantly, the notion of $\E_\alpha$ and $\A_\alpha$ formulas is seemingly indispensable to the methods of this article, so we must introduce them here.

	These classes have many interesting properties that are independently interesting and that will be explored by the authors together with Chen in a distinct paper \cite{CGHT} with that focus.
	We will only presently cover the basic definitions and properties of these complexity classes that are needed for this paper.
	
	\begin{definition}
		All connectives below are countable.
		\begin{itemize}
			\item  $\A_1 := \Pi_1$
			\item $\E_1 := \Sigma_1$
			\item  $\A_\alpha :=$ closure of $\bigcup_{\beta < \alpha} \vwE_\beta$ under $\forall$ and $\bigwwedge$
			\item  $\E_\alpha :=$ closure of $\bigcup_{\beta < \alpha} \vwA_{\beta}$ under $\exists$ and $\bigvvee$
			\item  $\vwE_\alpha :=$ closure of $\E_\alpha$ under $\bigvvee, \bigwwedge$
			\item  $\vwA_\alpha :=$ closure of $\A_\alpha$ under $\bigvvee, \bigwwedge$
		\end{itemize}
		Each $\A_\alpha$ formula can be written in the form
		\[ \bigdoublewedge_{i \in I} \forall \bar{y}_i \varphi_i(\bar{x},\bar{y}_i)\]
		where the formulas $\varphi_i$ are $\vwE_\beta$ for some $\beta < \alpha$, and similarly a $\E_\alpha$ formula can be written in the form
		\[ \bigdoublevee_{i \in I} \exists \bar{y}_i \varphi_i(\bar{x},\bar{y}_i)\]
		where each $\varphi_i$ is $\vwA_\beta$ for some $\beta < \alpha$.
	\end{definition}
	
	In words, the classes $\E_\alpha$ and $\A_\alpha$ are similar to $\Sigma_\alpha$ and $\Pi_\alpha$ except for the fact that before adding a new alternation of quantifiers, they are allowed to insert an arbitrary alternation of countable conjunctions and disjunctions.
	We enumerate some of the basic properties of $\E_\alpha$ and $\A_\alpha$ below.
	We omit the proofs of these properties, as they are all straightforward transfinite inductions, or they follow immediately from the definitions.

	\begin{remark}\label{rmk:containments}
		Up to equivalence in countable structures:
		\begin{itemize}
			\item $\lnot \E_\alpha = \A_\alpha$,  $\lnot \vwE_\alpha = \vwA_\alpha$
			\item  $\Sigma_\alpha \subseteq \E_\alpha \subseteq\vwE_\alpha$
			\item
			$\bigvvee \E_\alpha \subseteq \E_\alpha$,
			$\bigwwedge \E_\alpha \subseteq \vwE_\alpha$,
			$\exists \E_\alpha \subseteq \E_\alpha$,
			$\forall \E_\alpha \subseteq \A_{\alpha+1}$
			\item
			$\bigvvee \vwE_\alpha \subseteq \vwE_\alpha$,
			$\bigwwedge \vwE_\alpha \subseteq \vwE_\alpha$,
			$\exists \vwE_\alpha \subseteq \E_{\alpha+2}$,
			$\forall \vwE_\alpha \subseteq \A_{\alpha+1}$
			\item
			for limit $\alpha$: $\vwE_\alpha = \vwA_\alpha =$ closure of $\bigcup_{\beta < \alpha} (\vwE_\beta \cup \vwA_\beta)$ under $\bigvvee, \bigwwedge$
		\end{itemize}
	\end{remark}

As indicated above, the key properties of this hierarchy lie in the way that it interacts with the $\alpha$ back-and-forth relations.
When we refer to the $\alpha$ back-and-forth relations, we mean the \textit{standard asymmetric back-and-forth relations} defined as follows.

	\begin{definition}\label{def:bfasym}
	  $\leq_\alpha$, for $\alpha < \omega_1$, are defined by:
		\begin{itemize}
			\item $(\mc{M},\bar{a}) \leq_0 (\mc{N},\bar{b})$ if $\bar{a}$ and $\bar{b}$ satisfy the same quantifier-free formulas from among the first $|\bar{a}|$-many formulas.
			\item For $\alpha > 0$, $(\mc{M},\bar{a}) \leq_\alpha (\mc{N},\bar{b})$ if for each $\beta < \alpha$ and $\bar{d} \in \mc{N}$ there is $\bar{c} \in \mc{M}$ such that $(\mc{N},\bar{b} \bar{d}) \leq_\beta (\mc{M},\bar{a} \bar{c})$.
		\end{itemize}
		We define $\bar{a} \equiv_\alpha \bar{b}$ if $\bar{a} \leq_\alpha \bar{b}$ and $\bar{b} \leq_\alpha \bar{a}$.
	\end{definition}
	\noindent The interpretation of $(\mc{M},\bar{a}) \leq_\alpha (\mc{N},\bar{b})$ is that in the back-and-forth game between $\mc{M}$ and $\mc{N}$, starting with the partial isomorphism $\bar{a} \mapsto \bar{b}$ and with the first player \textsf{Spoiler} to play next in $\mc{N}$, the second player \textsf{Duplicator} can play without losing along an ordinal clock $\alpha$. Recall that if \textsf{Duplicator} can continue playing forever, then $\mc{M} \cong \mc{N}$.

We show first that the $\alpha$ back-and-forth relations guarantee agreement on $\vwE_\alpha$ and $\vwA_\alpha$ formulas the same way that they guarantee agreement on $\Sigma_\alpha$ and $\Pi_\alpha$ formulas, even though there are more $\vwE_\alpha$ and $\vwA_\alpha$ formulas.
In particular, we aim to extend the following theorem of Karp (see e.g. \cite{MBook}, Theorem II.36).

\begin{theorem}[Karp \cite{Karp}]\label{thm:Karp}
	For any $\alpha \geq 1$, structures $\mc{A}$ and $\mc{B}$, and tuples $\bar{a}\in\mc{A}$ and $\bar{b}\in\mc{B}$, the following are equivalent:
	\begin{enumerate}
		\item $(\mc{A},\bar{a})\leq_\alpha (\mc{B},\bar{b})$.
		\item Every $\Pi_\alpha$ formula true about $\bar{a}$ in $\mc{A}$ is true about $\bar{b}$ in $\mc{B}$.
		\item Every $\Sigma_\alpha$ formula true about $\bar{b}$ in $\mc{B}$ is true about $\bar{a}$ in $\mc{A}$.
	\end{enumerate} 
\end{theorem}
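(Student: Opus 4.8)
The plan is to prove the three conditions equivalent by transfinite induction on $\alpha$, establishing the chain $(1)\Leftrightarrow(2)$ directly while obtaining $(2)\Leftrightarrow(3)$ for free by duality. For the duality, recall that in $\mc{L}_{\omega_1\omega}$ the negation of a $\Pi_\alpha$ formula is logically equivalent to a $\Sigma_\alpha$ formula and conversely; hence if every $\Pi_\alpha$ formula true of $\bar a$ holds of $\bar b$, then by contraposition every $\Sigma_\alpha$ formula true of $\bar b$ holds of $\bar a$, and symmetrically. Thus $(2)\Leftrightarrow(3)$ needs no induction, and all the work is in $(1)\Leftrightarrow(2)$.

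For $(1)\Rightarrow(2)$ I would argue directly. Given $(\mc A,\bar a)\leq_\alpha(\mc B,\bar b)$ and a $\Pi_\alpha$ formula $\psi(\bar x)=\bigwwedge_i \forall \bar y_i\, \chi_i(\bar x,\bar y_i)$ true of $\bar a$, where each $\chi_i$ is $\Sigma_{\beta_i}$ for some $\beta_i<\alpha$ (so each $\lnot\chi_i$ is $\Pi_{\beta_i}$), suppose toward a contradiction that $\mc B\not\models\psi(\bar b)$. Then some conjunct fails: there are $i$ and $\bar d\in\mc B$ with $\mc B\models\lnot\chi_i(\bar b,\bar d)$, so the $\Pi_{\beta_i}$ formula $\lnot\chi_i$ is true of $\bar b\bar d$. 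Applying the definition of $\leq_\alpha$ with $\beta=\beta_i$ and this $\bar d$ yields $\bar c\in\mc A$ with $(\mc B,\bar b\bar d)\leq_{\beta_i}(\mc A,\bar a\bar c)$; by the induction hypothesis $(1)\Rightarrow(2)$ at level $\beta_i$, the formula $\lnot\chi_i$ transfers from $\bar b\bar d$ to $\bar a\bar c$, giving $\mc A\models\lnot\chi_i(\bar a,\bar c)$, which contradicts $\mc A\models\forall\bar y_i\,\chi_i(\bar a,\bar y_i)$.

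The heart of the argument, and the step I expect to be the main obstacle, is $(2)\Rightarrow(1)$, which I would prove in contrapositive form: assuming $(\mc A,\bar a)\not\leq_\alpha(\mc B,\bar b)$, I must manufacture a single $\Pi_\alpha$ formula true of $\bar a$ and false of $\bar b$. Unwinding the definition, there are $\beta<\alpha$ and $\bar d\in\mc B$ such that for every $\bar c\in A^{|\bar d|}$ we have $(\mc B,\bar b\bar d)\not\leq_\beta(\mc A,\bar a\bar c)$. By the induction hypothesis at level $\beta$ (using $(3)\Rightarrow(1)$ in contrapositive form: failure of the relation produces a witness on the $\Sigma$ side), for each such $\bar c$ there is a $\Sigma_\beta$ formula $\sigma_{\bar c}(\bar x,\bar y)$ with $\mc A\models\sigma_{\bar c}(\bar a,\bar c)$ and $\mc B\models\lnot\sigma_{\bar c}(\bar b,\bar d)$. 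The decisive move is then to assemble these into
\[ \Phi(\bar x)\ :=\ \forall\bar y\ \bigvvee_{\bar c\in A^{|\bar d|}}\sigma_{\bar c}(\bar x,\bar y). \]
Since $\mc A$ is countable and each $\sigma_{\bar c}$ is $\Sigma_\beta$, the disjunction is again $\Sigma_\beta$ and the single universal quantifier makes $\Phi$ a $\Pi_{\beta+1}$, hence $\Pi_\alpha$, formula; this is precisely where complexity control is delicate, the key being that \emph{disjoining} over the countably many tuples of the fixed structure $\mc A$ (rather than conjoining) keeps us at the $\Sigma_\beta$ level. One checks $\mc A\models\Phi(\bar a)$ by choosing, for any prescribed $\bar e\in A^{|\bar d|}$, the disjunct indexed by $\bar c=\bar e$; and $\mc B\not\models\Phi(\bar b)$ by instantiating $\bar y$ with $\bar d$, since every disjunct then fails at $\bar b\bar d$. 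This $\Phi$ contradicts $(2)$, which completes the inductive step. The base case $\alpha=1$ is the same construction with $\beta=0$, producing quantifier-free $\sigma_{\bar c}$ distinguishing $\bar a\bar c$ from $\bar b\bar d$ (here one must respect the ``first $|\bar a|$-many formulas'' convention in the definition of $\leq_0$, a routine bookkeeping matter), and limit stages require no special treatment since both $\leq_\alpha$ and the formula classes are defined uniformly by quantifying over all $\beta<\alpha$.
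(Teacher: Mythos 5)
Your argument is correct, but note that the paper does not prove this statement at all: it is quoted as a classical theorem of Karp and cited to Montalb\'an's book, so there is no in-paper proof to compare against. What you have written is the standard textbook proof, and its skeleton is exactly the one the paper itself reuses when sketching Theorem \ref{thm:bnfFormulas}: the easy direction by chasing a failing conjunct through the definition of $\leq_\alpha$, the duality $(2)\Leftrightarrow(3)$ via negation normal form, and the hard direction by the contrapositive, disjoining over all tuples $\bar{c}\in A^{|\bar d|}$ and prefixing a single $\forall\bar y$. The one place you should tighten the argument is the base case (and more generally any step where $\beta=0$): a genuinely countable disjunction of quantifier-free formulas is only $\Sigma_1$, so $\forall\bar y\,\bigvvee_{\bar c}\sigma_{\bar c}$ would a priori be $\Pi_2$, not $\Pi_1$, and the inductive step for $\alpha=1$ would fail. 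The construction is saved by the observation that although the index set $A^{|\bar d|}$ is countably infinite, the formulas $\sigma_{\bar c}$ range over only finitely many Boolean combinations of the first $|\bar b\bar d|$-many atomic formulas, so the disjunction is finite up to logical equivalence and the whole formula is honestly $\Pi_1$. This is precisely the point the paper flags in its proof idea for Theorem \ref{thm:bnfFormulas} (``the inner disjunction looks like it is infinite, but it is not''); your parenthetical about the ``first $|\bar a|$-many formulas'' convention gestures at the right issue but does not quite isolate this finiteness claim, which is the load-bearing step rather than mere bookkeeping.
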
 
	
\begin{lemma}[Chen, Gonzalez, and Harrison-Trainor \cite{CGHT}]\label{prop:transfer-over-bf}
	Suppose that $(\mc{A},\bar{a}) \leq_\alpha (\mc{B},\bar{b})$ for countable structures $\mc{A}$, $\mc{B}$, and $\alpha \geq 1$. Then given a $\vwE_\alpha$ formula $\varphi(\bar{x})$ and a $\vwA_\alpha$ formula $\psi(\bar{x})$,
	\[ \mc{B} \models \varphi(\bar{b}) \Rightarrow \mc{A} \models \varphi(\bar{a})\]
	and
	\[ \mc{A} \models \psi(\bar{a}) \Rightarrow \mc{B} \models \psi(\bar{b})\]
\end{lemma}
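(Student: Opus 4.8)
The plan is to prove the two implications simultaneously by transfinite induction on $\alpha \geq 1$, establishing at each level both \emph{backward transfer} (whenever $(\mc{A},\bar{a}) \leq_\alpha (\mc{B},\bar{b})$, every $\vwE_\alpha$ formula true of $\bar{b}$ passes to $\bar{a}$) and \emph{forward transfer} (under the same hypothesis, every $\vwA_\alpha$ formula true of $\bar{a}$ passes to $\bar{b}$). By Remark~\ref{rmk:containments} we have $\lnot\vwE_\alpha = \vwA_\alpha$ up to equivalence, and since the relation $\leq_\alpha$ occurs with the same orientation in both implications, forward transfer for a $\vwA_\alpha$ formula $\psi$ is literally the contrapositive of backward transfer for the $\vwE_\alpha$ formula $\lnot\psi$; so the two halves are dual and it is enough to keep track of either one at each level.

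First I would isolate the one genuinely new point, which is trivial: transfer is preserved under arbitrary countable conjunctions and disjunctions. If each $\varphi_i$ backward-transfers and $\mc{B} \models \bigvvee_i \varphi_i(\bar{b})$, some disjunct holds of $\bar{b}$ and hence of $\bar{a}$, so $\mc{A} \models \bigvvee_i \varphi_i(\bar{a})$; if $\mc{B} \models \bigwwedge_i \varphi_i(\bar{b})$ every conjunct transfers, so $\mc{A} \models \bigwwedge_i \varphi_i(\bar{a})$; the forward statement is identical. Since $\vwE_\alpha$ and $\vwA_\alpha$ are by definition the closures of $\E_\alpha$ and $\A_\alpha$ under $\bigvvee$ and $\bigwwedge$, this reduces the whole problem, at every level, to proving backward transfer for $\E_\alpha$ formulas and forward transfer for $\A_\alpha$ formulas.

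For the base case $\alpha = 1$ we have $\E_1 = \Sigma_1$ and $\A_1 = \Pi_1$, and backward transfer of $\Sigma_1$ formulas together with forward transfer of $\Pi_1$ formulas under $\leq_1$ is precisely the content of Karp's Theorem~\ref{thm:Karp}; the closure observation then lifts this to $\vwE_1,\vwA_1$. For the inductive step $\alpha > 1$, assume both statements for all $\beta < \alpha$ and write an $\E_\alpha$ formula in normal form $\bigvvee_{i\in I}\exists\bar{y}_i\,\varphi_i(\bar{x},\bar{y}_i)$ with each $\varphi_i \in \vwA_{\beta_i}$, $\beta_i < \alpha$. If $(\mc{A},\bar{a})\leq_\alpha(\mc{B},\bar{b})$ and $\mc{B}$ satisfies this formula at $\bar{b}$, fix $i$ and a witness $\bar{d}\in\mc{B}$ with $\mc{B}\models\varphi_i(\bar{b},\bar{d})$; as $\beta_i < \alpha$, the defining clause of $\leq_\alpha$ produces $\bar{c}\in\mc{A}$ with $(\mc{B},\bar{b}\bar{d})\leq_{\beta_i}(\mc{A},\bar{a}\bar{c})$, and forward transfer for the $\vwA_{\beta_i}$ formula $\varphi_i$ (the inductive hypothesis, in the orientation $\mc{B}\to\mc{A}$) gives $\mc{A}\models\varphi_i(\bar{a},\bar{c})$, so $\mc{A}$ satisfies the whole formula at $\bar{a}$. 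The dual argument handles an $\A_\alpha$ formula $\bigwwedge_i\forall\bar{y}_i\,\varphi_i$ with $\varphi_i\in\vwE_{\beta_i}$: for every $i$ and every $\bar{d}\in\mc{B}$ the same clause yields $\bar{c}\in\mc{A}$ with $(\mc{B},\bar{b}\bar{d})\leq_{\beta_i}(\mc{A},\bar{a}\bar{c})$, and backward transfer at level $\beta_i$ carries $\mc{A}\models\varphi_i(\bar{a},\bar{c})$ to $\mc{B}\models\varphi_i(\bar{b},\bar{d})$. The closure observation upgrades both to $\vwE_\alpha$ and $\vwA_\alpha$, closing the induction.

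The step I expect to demand the most care is the alignment of quantifier orientation with the asymmetric relation $\leq_\alpha$: peeling an existential off an $\E_\alpha$ formula exposes a $\vwA_\beta$ subformula (not a $\vwE_\beta$ one) at a strictly smaller level, so the single instance of $\leq_\beta$ extracted from the definition of $\leq_\alpha$ must be matched with the \emph{forward}-transfer half of the hypothesis, read in the direction $\mc{B}\to\mc{A}$; getting these dualities and the bound $\beta_i < \alpha$ to line up is the whole game. Conceptually there is no obstacle beyond Karp's theorem: the point of the new classes is exactly that inserting a block of $\bigvvee$/$\bigwwedge$ between two quantifier alternations is free, since, unlike a quantifier, a conjunction or disjunction transfers without descending a level of the back-and-forth relation.
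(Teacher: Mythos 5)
Your proof is correct and follows essentially the same route as the paper's (the paper only sketches "induction on complexity," deferring details to \cite{CGHT}, but its intended argument is exactly yours): Karp's theorem at $\alpha=1$, the trivial closure of transfer under $\bigvvee$/$\bigwwedge$, and the quantifier step that peels off $\exists$ or $\forall$, invokes the defining clause of $\leq_\alpha$ to obtain a $\leq_{\beta}$ instance, and applies the dual half of the inductive hypothesis. The orientation bookkeeping you flag as the delicate point is handled correctly.
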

	\begin{proof}[Proof idea]
				 Argue by induction on complexity. 
			\if{false}
				Our base case is $\alpha = 1$ and $\A_1$ and $\E_1$ formulas, which are just $\Pi_1$ and $\Sigma_1$ formulas. For these, the proposition follows directly from Theorem \ref{thm:Karp}.	Now we have the inductive cases. In the inductive argument, we use the fact that we can group the outside quantifiers in a $\A_\alpha$ or $\E_\alpha$ sentence, as described above.
		
		Suppose that $\varphi(\bar{x})$ is $\E_\alpha$, say $\varphi(\bar{x}) = \bigdoublevee_i \exists \bar{y}_i \theta_i(\bar{x},\bar{y}_i)$ where the $\theta_i(\bar{x},\bar{y}_i)$ are $\vwA_\beta$ for some $\beta < \alpha$. Suppose that $\mc{B} \models \varphi(\bar{b})$; then there is $i$ and $\bar{b}'$ such that $\mc{B} \models \theta_i(\bar{b},\bar{b}')$. Then there is $\bar{a}'$ such that $(\mc{B},\bar{b}\bar{b}') \leq_\beta (\mc{A},\bar{a}\bar{a}')$. By the induction hypothesis, $\mc{A} \models \theta_i(\bar{a},\bar{a}')$. Thus $\mc{A} \models \varphi(\bar{a})$ as desired.
		
		Suppose that $\varphi(\bar{x})$ is $\A_\alpha$, say $\varphi(\bar{x}) = \bigdoublewedge_i \forall \bar{y}_i \theta_i(\bar{x},\bar{y}_i)$ where the $\theta_i(\bar{x},\bar{y}_i)$ are $\vwE_\beta$ for some $\beta < \alpha$. Suppose that $\mc{A} \models \varphi(\bar{a})$; then for every $\bar{a}'\in\mc{A}$ and $i$,  $\mc{A} \models \theta_i(\bar{a},\bar{a}')$.  Suppose there is some $\bar{b'}\in\mc{B}$ and $i$ such that $\mc{B}\not\models \theta_i(\bar{b},\bar{b}')$. Then there is $\bar{a}'$ such that $(\mc{B},\bar{b}\bar{b}') \leq_\beta (\mc{A},\bar{a}\bar{a}')$. By the induction hypothesis, $\mc{A} \not\models \theta_i(\bar{a},\bar{a}')$, a contradiction. Thus $\mc{B} \models \varphi(\bar{b})$ as desired.
		
		Suppose that $\varphi(\bar{x})$ is $\vwE_\alpha$, and say $\varphi(\bar{x}) = \bigdoublevee_i \theta_i(\bar{x})$ where each $\theta_i(\bar{x})$ is $\vwE_\alpha$. Suppose that $\mc{B} \models \varphi(\bar{b})$; then for some $i$, $\mc{B} \models \theta_i(\bar{b})$, and so by the induction hypothesis, $\mc{A} \models \theta_i(\bar{a})$. Thus $\mc{A} \models \varphi(\bar{a})$. Similarly, suppose that $\varphi(\bar{x})$ is $\vwE_\alpha$, and say $\varphi(\bar{x}) = \bigdoublewedge_i \theta_i(\bar{x})$ where each $\theta_i(\bar{x})$ is $\vwE_\alpha$. Suppose that $\mc{B} \models \varphi(\bar{b})$; then for all $i$, $\mc{B} \models \theta_i(\bar{b})$, and so by the induction hypothesis, $\mc{A} \models \theta_i(\bar{a})$ for all $i$. Thus $\mc{A} \models \varphi(\bar{a})$.
		
		The cases where $\varphi(\bar{x})$ is $\vwA_\alpha$, and either of the form $\varphi(\bar{x}) = \bigdoublevee_i \theta_i(\bar{x})$ or $\varphi(\bar{x}) = \bigdoublewedge_i \theta_i(\bar{x})$ where each $\theta_i(\bar{x})$ is $\vwA_\alpha$, is identical to the previous case.
		\fi
		\end{proof}

The following is immediate from the above lemma, along with Karp's theorem.
		
\begin{corollary}[Chen, Gonzalez, and Harrison-Trainor \cite{CGHT}]\label{cor:karp}
	For any $\alpha \geq 1$, structures $\mc{A}$ and $\mc{B}$, and tuples $\bar{a}\in\mc{A}$ and $\bar{b}\in\mc{B}$, the following are equivalent:
	\begin{enumerate}
		\item $(\mc{A},\bar{a})\leq_\alpha (\mc{B},\bar{b})$.
		\item Every $\Pi_\alpha$ formula true about $\bar{a}$ in $\mc{A}$ is true about $\bar{b}$ in $\mc{B}$.
		\item Every $\Sigma_\alpha$ formula true about $\bar{b}$ in $\mc{B}$ is true about $\bar{a}$ in $\mc{A}$.
		\item Every $\A_\alpha$ formula true about $\bar{a}$ in $\mc{A}$ is true about $\bar{b}$ in $\mc{B}$.
		\item Every $\E_\alpha$ formula true about $\bar{b}$ in $\mc{B}$ is true about $\bar{a}$ in $\mc{A}$.
		\item Every $\vwA_\alpha$ formula true about $\bar{a}$ in $\mc{A}$ is true about $\bar{b}$ in $\mc{B}$.
		\item Every $\vwE_\alpha$ formula true about $\bar{b}$ in $\mc{B}$ is true about $\bar{a}$ in $\mc{A}$.
	\end{enumerate} 
\end{corollary}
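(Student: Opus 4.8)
The plan is to deduce all seven equivalences by assembling the three facts already in hand: Karp's Theorem~\ref{thm:Karp}, the transfer Lemma~\ref{prop:transfer-over-bf}, and the class containments recorded in Remark~\ref{rmk:containments}. The seven conditions organize into two parallel chains. Conditions (2), (4), and (6) each assert that truth transfers from $\bar{a}$ in $\mc{A}$ to $\bar{b}$ in $\mc{B}$, ranging over the nested classes $\Pi_\alpha \subseteq \A_\alpha \subseteq \vwA_\alpha$; conditions (3), (5), and (7) each assert that truth transfers from $\bar{b}$ in $\mc{B}$ to $\bar{a}$ in $\mc{A}$, ranging over the nested classes $\Sigma_\alpha \subseteq \E_\alpha \subseteq \vwE_\alpha$. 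I will show that each chain collapses onto condition (1).

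First, Theorem~\ref{thm:Karp} already supplies $(1) \Leftrightarrow (2) \Leftrightarrow (3)$, so it suffices to fit (4), (5), (6), and (7) into this equivalence. For the forward direction, assume (1), that is, $(\mc{A},\bar{a}) \leq_\alpha (\mc{B},\bar{b})$. Applying Lemma~\ref{prop:transfer-over-bf} to $\vwA_\alpha$ and $\vwE_\alpha$ formulas yields precisely (6) and (7): every $\vwA_\alpha$ formula true of $\bar{a}$ is true of $\bar{b}$, and every $\vwE_\alpha$ formula true of $\bar{b}$ is true of $\bar{a}$. Thus (1) implies the two largest conditions.

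For the reverse direction I invoke only the set-theoretic containments of the formula classes. Since $\Pi_\alpha \subseteq \A_\alpha \subseteq \vwA_\alpha$, a statement asserting a transfer property for every $\vwA_\alpha$ formula immediately entails the same statement restricted to the subclasses $\A_\alpha$ and $\Pi_\alpha$; hence $(6) \Rightarrow (4) \Rightarrow (2)$. Dually, from $\Sigma_\alpha \subseteq \E_\alpha \subseteq \vwE_\alpha$ we obtain $(7) \Rightarrow (5) \Rightarrow (3)$. Combining these gives the cycles $(1) \Rightarrow (6) \Rightarrow (4) \Rightarrow (2) \Rightarrow (1)$ and $(1) \Rightarrow (7) \Rightarrow (5) \Rightarrow (3) \Rightarrow (1)$, where the closing implications $(2) \Rightarrow (1)$ and $(3) \Rightarrow (1)$ are again Karp's Theorem. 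This closes every loop and establishes the seven-fold equivalence.

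There is no substantial obstacle here, since the genuine content lives in Lemma~\ref{prop:transfer-over-bf}; the only points demanding care are bookkeeping. One must keep the two transfer directions straight, namely that the universal-side classes $\Pi_\alpha, \A_\alpha, \vwA_\alpha$ push truth from $\bar{a}$ to $\bar{b}$ while the existential-side classes $\Sigma_\alpha, \E_\alpha, \vwE_\alpha$ push truth from $\bar{b}$ to $\bar{a}$. One should also note that the containment $\Pi_\alpha \subseteq \A_\alpha \subseteq \vwA_\alpha$ is not stated verbatim in Remark~\ref{rmk:containments} but follows from the stated chain $\Sigma_\alpha \subseteq \E_\alpha \subseteq \vwE_\alpha$ by applying the negation dualities $\lnot \E_\alpha = \A_\alpha$ and $\lnot \vwE_\alpha = \vwA_\alpha$ together with $\lnot \Sigma_\alpha = \Pi_\alpha$.
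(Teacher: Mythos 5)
Your proof is correct and follows exactly the route the paper intends: the paper dispatches this corollary as ``immediate from the above lemma, along with Karp's theorem,'' which is precisely your combination of Theorem~\ref{thm:Karp}, Lemma~\ref{prop:transfer-over-bf}, and the containments from Remark~\ref{rmk:containments}. Your explicit bookkeeping of the two transfer directions and the dualized containments is just a careful spelling-out of what the paper leaves implicit.
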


This means that when it comes to interactions with back-and-forth relations, our new notions of complexity act at least as nicely as the classical notions.
That said, our new notions have properties that the classical ones do not.
The key difference is distilled in the following theorem.
Classically, there may be no maximal  $\Pi_\alpha$ or $\Sigma_\alpha$ formula, i.e. one which captures the entire $\Pi_\alpha$ or $\Sigma_\alpha$ theory of a given structure or tuple.
In particular, one may need a $\Pi_{2\alpha}$ formula to describe theories at this level (see \cite{MBook} Lemma VI.14).
Our larger classes contrast with this; they are always able to capture an entire theory at level $\alpha$ with a formula at that same level.
	
	\begin{theorem}[Chen, Gonzalez, and Harrison-Trainor \cite{CGHT}]\label{thm:bnfFormulas}
		For $\mc{A}$ a countable structure, $\bar{a} \in \mc{A}$, and $\alpha \geq 1$, there are $\vwE_\alpha$ formulas $\varphi_{\bar{a},\mc{A},\alpha}(\bar{x})$ and $\A_\alpha$ formulas $\psi_{\bar{a},\mc{A},\alpha}(\bar{x})$ such that for $\mc{B}$ any structure,
		\[ \mc{B} \models \varphi_{\bar{a},\mc{A},\alpha}(\bar{b}) \Longleftrightarrow (\mc{B},\bar{b}) \leq_\alpha (\mc{A},\bar{a})\]
		and
		\[ \mc{B} \models \psi_{\bar{a},\mc{A},\alpha}(\bar{b}) \Longleftrightarrow (\mc{B},\bar{b}) \geq_\alpha (\mc{A},\bar{a}).\]
	\end{theorem}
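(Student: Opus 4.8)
The plan is to construct $\varphi_{\bar a,\mc A,\alpha}$ and $\psi_{\bar a,\mc A,\alpha}$ simultaneously by transfinite recursion on $\alpha$, mirroring the recursive clause of Definition \ref{def:bfasym}, and then verify the two equivalences by a parallel induction. It is convenient to fix, for each tuple $\bar e$, a single finitary quantifier-free formula $q_{\bar e}(\bar x)$ expressing the truncated quantifier-free type governing $\leq_0$, and to set $\varphi_{\bar e,\mc A,0}=\psi_{\bar e,\mc A,0}:=q_{\bar e}$. Unwinding the definition of $\leq_\alpha$ then suggests, for $\alpha\geq 2$,
\[ \varphi_{\bar a,\mc A,\alpha}(\bar x) := \bigwwedge_{\beta<\alpha}\ \bigwwedge_{\bar d\in\mc A}\ \exists \bar z\ \psi_{\bar a\bar d,\mc A,\beta}(\bar x,\bar z) \]
and
\[ \psi_{\bar a,\mc A,\alpha}(\bar x) := \bigwwedge_{\beta<\alpha}\ \forall \bar z\ \bigvvee_{\bar c\in\mc A}\ \varphi_{\bar a\bar c,\mc A,\beta}(\bar x,\bar z). \]
Since $\mc A$ is countable and $\alpha<\omega_1$, all of these connectives are countable, so the formulas are legitimate $\Lomom$ formulas; and both formulas at level $\alpha$ depend only on formulas at levels $\beta<\alpha$, so the recursion is well founded.

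Correctness I would prove by simultaneous induction on $\alpha$, directly from the recursive clause of $\leq_\alpha$ and the induction hypothesis, without needing Karp's theorem in the inductive step. For $\varphi$: the statement $\mc B\models\varphi_{\bar a,\mc A,\alpha}(\bar b)$ says exactly that for every $\beta<\alpha$ and every $\bar d\in\mc A$ there is $\bar c\in\mc B$ with $\mc B\models\psi_{\bar a\bar d,\mc A,\beta}(\bar b,\bar c)$, which by the induction hypothesis means $(\mc B,\bar b\bar c)\geq_\beta(\mc A,\bar a\bar d)$, i.e.\ $(\mc A,\bar a\bar d)\leq_\beta(\mc B,\bar b\bar c)$; and this is verbatim the condition defining $(\mc B,\bar b)\leq_\alpha(\mc A,\bar a)$. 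The argument for $\psi$ is dual: the outer $\forall\bar z$ and inner $\bigvvee_{\bar c\in\mc A}$ encode the ``for all $\bar d\in\mc B$ there is $\bar c\in\mc A$'' clause of $(\mc A,\bar a)\leq_\alpha(\mc B,\bar b)$.

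The complexity bookkeeping is where the new classes do their work, and this is the step I expect to be the real obstacle. By induction $\psi_{\bar a\bar d,\mc A,\beta}\in\A_\beta\subseteq\vwA_\beta$ for $\beta<\alpha$, so each $\exists\bar z\,\psi_{\bar a\bar d,\mc A,\beta}$ lies in $\E_\alpha$ by the closure of $\E_\alpha$ under $\exists$ (for $\beta=0$, using $q_{\bar a\bar d}\in\Pi_1\subseteq\vwA_1$), and the outer $\bigwwedge$ keeps us in $\vwE_\alpha$ since $\bigwwedge\E_\alpha\subseteq\vwE_\alpha$ by Remark \ref{rmk:containments}; hence $\varphi_{\bar a,\mc A,\alpha}\in\vwE_\alpha$. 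Dually $\varphi_{\bar a\bar c,\mc A,\beta}\in\vwE_\beta$, so $\bigvvee_{\bar c}\varphi_{\bar a\bar c,\mc A,\beta}\in\vwE_\beta$ (closure under $\bigvvee$), each $\forall\bar z\,\bigl(\bigvvee_{\bar c}\varphi_{\bar a\bar c,\mc A,\beta}\bigr)$ is then of the shape $\forall\,\vwE_\beta$ with $\beta<\alpha$, and the outer $\bigwwedge_{\beta<\alpha}$ is precisely the normal form of an $\A_\alpha$ formula; hence $\psi_{\bar a,\mc A,\alpha}\in\A_\alpha$. The crucial point is that the inner $\bigvvee_{\bar c\in\mc A}$ (and $\bigwwedge_{\bar d}$) is absorbed for free into $\vwE_\beta$/$\vwA_\beta$, whereas in the classical hierarchy that same disjunction sitting under a $\forall$ would force an extra alternation and push the formula up to $\Pi_{2\alpha}$; it is exactly to license this absorption that the classes $\A_\alpha,\E_\alpha$ were defined.

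The one genuinely delicate case is the base stage $\alpha=1$, where the displayed recursion for $\psi$ produces $\forall\bar z\,\bigvvee_{\bar c}q_{\bar a\bar c}(\bar x,\bar z)$, which is only $\A_2$, not the required $\A_1=\Pi_1$. Here I would instead invoke Karp's Theorem \ref{thm:Karp} directly and take $\psi_{\bar a,\mc A,1}$ to be the conjunction of all $\Pi_1$ formulas true of $\bar a$ in $\mc A$; up to equivalence this is a \emph{countable} conjunction of formulas $\forall\bar y\,\theta(\bar x,\bar y)$ with $\theta$ finitary quantifier-free, hence genuinely $\Pi_1$, and by Karp it defines $\geq_1(\mc A,\bar a)$. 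Equivalently, one observes that the disjunction collapses: the tuples $\bar c$ realize only finitely many truncated quantifier-free types over $\bar a$, so $\bigvvee_{\bar c}q_{\bar a\bar c}$ reduces to a finite disjunction of quantifier-free formulas and the whole formula is already $\Pi_1$. (The formula $\varphi_{\bar a,\mc A,1}=\bigwwedge_{\bar d\in\mc A}\exists\bar z\,q_{\bar a\bar d}$ is unproblematic: it is a $\bigwwedge$ of $\Sigma_1$ formulas, hence $\vwE_1$.) With the base case so adjusted and the recursion used for $\alpha\geq2$, both families have the stated complexities and, by the induction above, the stated semantics.
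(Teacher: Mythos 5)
Your proposal is correct and follows essentially the same route as the paper: the same recursive definitions $\varphi_{\bar a,\mc A,\alpha}=\bigwwedge_{\beta<\alpha}\bigwwedge_{\bar d\in\mc A}\exists\bar z\,\psi_{\bar a\bar d,\mc A,\beta}$ and $\psi_{\bar a,\mc A,\alpha}=\bigwwedge_{\beta<\alpha}\forall\bar z\,\bigvvee_{\bar c\in\mc A}\varphi_{\bar a\bar c,\mc A,\beta}$, with the same identification of $\alpha=1$ as the delicate case and the same resolution (the inner disjunction over $\bar c$ collapses to a finite one because only finitely many truncated quantifier-free types occur, so $\psi_{\bar a,\mc A,1}$ is genuinely $\Pi_1=\A_1$). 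The complexity bookkeeping via the closure properties of $\E_\alpha$, $\A_\alpha$, $\vwE_\alpha$, $\vwA_\alpha$ matches the paper's intended argument.
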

	\begin{proof}[Proof idea]
		We argue by induction, with the key case being $\alpha = 1$. For $\psi_{\bar{a},\mc{M},1}$, for each $m$ there are only finitely many possible atomic $m$-types in $\mc{M}$, only finitely many of which are realized in $\mc{M}$. Let $\theta_{\bar{a}}(\bar{x})$ be the finitary quantifier-free formula which says that $\bar{x}$ and $\bar{a}$ satisfy the same atomic formulas (from among the first $|\bar{a}|$-many formulas; in a finite relational language, like that of linear orders, we can take $\theta_{\bar{a}}(\bar{x})$ to say that $\bar{a}$ and $\bar{x}$ have the same atomic type, i.e., are ordered in the same way.) Then $(\mc{N},\bar{b}) \geq_1 (\mc{M},\bar{a})$ if and only if
		\[ \mc{N} \models \bigdoublewedge_{n \in \mathbb{N}} \forall \bar{y}^n \bigdoublevee_{\bar{a}' \in \mc{M}^n} \theta_{\bar{a}\bar{a}'}(\bar{b},\bar{y}).\]
		The inner disjunction looks like it is infinite, but it is not; this is because there are only finitely many possible formulas $\theta_{\bar{a}\bar{a}'}(\bar{x})$. Thus, this formula is $\A_1$. The remainder of the induction is straightforward following \cite{MBook} Lemma VI.14.
		\if{false}
		For $\varphi_{\bar{a},\mc{M},1}$, we have $(\mc{N},\bar{b}) \leq_1 (\mc{M},\bar{a})$ if and only if
		\[ \mc{N} \models \bigdoublewedge_{\bar{a}' \in \mc{M}} \exists \bar{y}  \theta_{\bar{a}\bar{a}'}(\bar{b},\bar{y}).\]
		This is $\vwE_1$.

		Now, given $\alpha > 1$, suppose that we have $\vwE_\beta$ formulas $\varphi_{\bar{a},\mc{M},\beta}$ and $\A_\beta$ $\psi_{\bar{a},\mc{M},\beta}$ for $\beta < \alpha$. Then $(\mc{N},\bar{b}) \leq_{\alpha} (\mc{M},\bar{a})$ if and only if
		\[ \bigdoublewedge_{\beta < \alpha} \bigdoublewedge_{\bar{a}' \in \mc{M}} \exists \bar{y} \psi_{\bar{a}\bar{a}',\mc{M},\beta}(\bar{b},\bar{y}) \]
		and
		$(\mc{N},\bar{b}) \geq_{\alpha} (\mc{M},\bar{a})$ if and only if
		\[ \bigdoublewedge_{\beta < \alpha} \bigdoublewedge_{m \in \mathbb{N}} \forall \bar{y}^m \bigdoublevee_{\bar{a}' \in \mc{M}} \varphi_{\bar{a}\bar{a}',\mc{M},\beta}(\bar{b},\bar{y}).\]
		These define our desired $\vwE_\alpha$ formulas $\varphi_{\bar{a},\mc{M},\alpha}$ and $\A_\alpha$ formulas $\psi_{\bar{a},\mc{M},\alpha}$.
		\fi
	\end{proof}

	\section{Linear orders and splitting into intervals}\label{LOBackground}
	
	Our aim in this section is to prove an important technical lemma about linear orderings.
	This lemma describes how to take a formula that is about a tuple of elements in a linear ordering and find a stronger formula that only refers to particular intervals inside of the linear ordering.
	In essence, the lemma helps us eliminate parameters from formulas about linear orderings in a manner that is particularly expedient for the proof of the main theorem.
	
	We begin by introducing some notation. Given a linear order $\mc{A}$ and a tuple $\bar{a} = (a_1,\ldots,a_n) \in \mc{A}$, listed in increasing order, we will frequently split $\mc{A}$ up into the intervals $(-\infty,a_1),(a_1,a_2),\ldots,(a_n,\infty)$.	Throughout the paper, will make use of this division into intervals, and so we will adopt the convention that we use $a_0$ for $-\infty$ and $a_{n+1}$ for $\infty$; thus, the intervals $(a_i,a_{i+1})$ for $i = 0,\ldots,n$ are the intervals into which $\bar{a}$ splits $\mc{A}$. There will be a few places where we define notation to take advantage of this so that we do not need to split into cases based on whether or not an interval has a bound at infinity.
	
	Given an interval $(a,b)$ in a linear order $\mc{A}$, $(a,b)$ is a linear order in its own right, and hence we can write $(a,b) \models \varphi$ for a sentence $\varphi$. The fact that $\varphi$ is true in $(a,b)$ can also be witnessed in $\mc{A}$ by relativizing the formula $\varphi$ to the interval $(a,b)$. Let $\varphi \Res_{(x,y)}$ be the formula in two variables, which relativizes all of the quantifiers of $\varphi$ to the interval $(x,y)$. Then
	\[ \mc{A} \models \varphi \Res_{(a,b)} \;\; \Longleftrightarrow (a,b) \models \varphi.\]
	The quantifier complexity of the relativized formula will be the same as that of the original formula. We can also relativize $\varphi$ to intervals $(-\infty,x)$ and $(y,\infty)$. This will be useful using the convention above that given elements $a_1 < \ldots < a_n$, we let $a_0$ represent $-\infty$ and $a_{n+1}$ represent $\infty$, so that, e.g., given sentences $\theta_0,\ldots,\theta_n$,
	\[ \mc{A} \models \bigwedge_{i=0}^n \theta_i \Res_{(a_i,a_{i+1})} \]
	expresses that after dividing $\mc{A}$ into $n+1$ intervals $(-\infty,a_1),(a_1,a_2),\ldots,(a_n,\infty)$, each interval satisfies the corresponding sentence $\theta_i$. Note that $\varphi \Res_{(-\infty,x)}$ and $\varphi \Res_{(y,\infty)}$ are formulas with only one free variable. Sometimes it will make more sense to write $\varphi \Res_{< x}$ for $\varphi \Res_{(-\infty,x)}$ and $\varphi \Res_{>y}$ for $\varphi \Res_{(y,\infty)}$. We will particularly use this latter notation when working within an interval in a larger linear order, e.g., given an interval $(a,b) \in \mc{A}$, and $c \in (a,b)$, we will tend to write $(a,b) \models \varphi \Res_{< c}$ rather than $(a,b) \models \varphi \Res_{(-\infty,c)}$ though they mean the same thing.
	
	It is well-known that, given elements $a_1 < a_2 < \cdots < a_n$ in a linear order, the back-and-forth type of the tuple $\bar{a}$ is determined by the back-and-forth types of the intervals between the elements, namely the intervals $(a_i,a_{i+1})$ for $i = 0,1,\ldots,n$ (recalling our notation above that these are the intervals $(-\infty,a_1),(a_1,a_2),\dots,(a_n,\infty)$). More formally:
	
	\begin{lemma}[\cite{AK00} Lemma 15.8]\label{lem:combine-bf}
		Given linear orders $(\mc{A},\bar{a})$ and $(\mc{B},\bar{b})$ (with the tuples in increasing order), $(\mc{A},\bar{a}) \geq_\alpha (\mc{B},\bar{b})$ if and only if for each $i = 0,\ldots,n$ we have $(a_i,a_{i+1})^{\mc{A}} \geq_\alpha (b_i,b_{i+1})^{\mc{B}}$.
	\end{lemma}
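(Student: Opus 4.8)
The plan is to prove both directions by a simultaneous transfinite induction on $\alpha$, using the definition of $\leq_\alpha$ directly and repeatedly invoking the decomposition of a linear order by a tuple into its constituent intervals. The statement is symmetric under swapping $\geq$ for $\leq$ and swapping the roles of $\mc{A}$ and $\mc{B}$, so I will focus on proving $(\mc{A},\bar{a}) \geq_\alpha (\mc{B},\bar{b})$ from the interval hypothesis and the converse. The base case $\alpha = 0$ (or $\alpha = 1$, depending on where the induction is anchored) is essentially bookkeeping: $\bar{a} \leq_0 \bar{b}$ asserts agreement on quantifier-free formulas, which for linear orders amounts to saying the tuples are ordered the same way, and this is exactly the assertion that every interval $(a_i,a_{i+1})$ matches $(b_i,b_{i+1})$ in whether it is empty or nonempty at the coarsest level.

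First I would unwind the definition of $\geq_\alpha$. To show $(\mc{A},\bar{a}) \geq_\alpha (\mc{B},\bar{b})$, I must take an arbitrary $\beta < \alpha$ and an arbitrary element $c \in \mc{A}$ (playing the ``forth'' move in $\mc{A}$), and produce a response $d \in \mc{B}$ so that $(\mc{A},\bar{a}c) \leq_\beta (\mc{B},\bar{b}d)$. The new element $c$ lands in exactly one of the intervals $(a_i,a_{i+1})$. By the interval hypothesis we have $(a_i,a_{i+1})^{\mc{A}} \geq_\alpha (b_i,b_{i+1})^{\mc{B}}$, and therefore, playing the back-and-forth game \emph{inside that single interval}, the move $c \in (a_i,a_{i+1})$ can be answered by some $d \in (b_i,b_{i+1})$ with $((a_i,c)^{\mc{A}}, (c,a_{i+1})^{\mc{A}}) \leq_\beta ((b_i,d)^{\mc{B}}, (d,b_{i+1})^{\mc{B}})$ in the appropriate interval-wise sense. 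The other intervals $(a_j,a_{j+1})$ for $j \neq i$ are untouched by the move and still satisfy the hypothesis at level $\alpha$, hence a fortiori at level $\beta$. Now the key point is that $c$ subdivides the single interval it fell into, so the tuple $\bar{a}c$ induces exactly the same interval decomposition as $\bar{a}$ except that one interval is split into two; the response $d$ subdivides the matching interval in $\mc{B}$ correspondingly. Thus for the enlarged tuples, every interval of $\bar{a}c$ bears the relation $\geq_\beta$ to the corresponding interval of $\bar{b}d$, and by the induction hypothesis applied at level $\beta$ we conclude $(\mc{A},\bar{a}c) \geq_\beta (\mc{B},\bar{b}d)$, i.e.\ $(\mc{A},\bar{a}c) \leq_\beta$ is witnessed in the direction required. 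This closes the inductive step.

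The main obstacle, and the step that requires the most care, is exactly the bookkeeping around how a single added element interacts with the interval decomposition: I must be precise that playing a move $c$ in $\mc{A}$ only affects the one interval containing $c$, that the induced subintervals $(a_i,c)$ and $(c,a_{i+1})$ together with the unchanged intervals form the decomposition of $\mc{A}$ by $\bar{a}c$, and that the relation at level $\beta$ must be established interval-by-interval before the induction hypothesis can repackage it into the global $\geq_\beta$ relation on tuples. In other words, the subtlety is that the inductive hypothesis is invoked twice and at two granularities --- once ``locally'' to obtain the correct response $d$ within the relevant interval (using that the interval itself satisfies $\geq_\alpha$, hence supports a forth move at every $\beta < \alpha$), and once ``globally'' to assemble the interval-wise $\geq_\beta$ relations for the tuple $\bar{a}c$ into the desired conclusion. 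I would make sure the induction is set up so that the statement of the lemma is itself the hypothesis available at all smaller ordinals, which makes this double use legitimate.

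A convenient way to organize this is to cite Lemma~\ref{lem:combine-bf}'s content as an equivalence between the global relation and the conjunction of interval relations, and then observe that the forth-move analysis above shows the interval conjunction at level $\alpha$ implies it can be maintained at every $\beta < \alpha$ after one move; invoking the definition of $\geq_\alpha$ and the induction hypothesis then yields the forward direction, and the symmetric argument (swapping $\mc{A} \leftrightarrow \mc{B}$ and back-moves for forth-moves) yields the converse. The routine transfinite-induction verification at limit stages is immediate since $\geq_\alpha$ is defined uniformly by quantifying over all $\beta < \alpha$, and no special limit case arises.
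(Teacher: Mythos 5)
Your overall strategy---simultaneous transfinite induction on $\alpha$, decomposing each move across the intervals cut out by the tuples---is the standard proof of this fact; the paper itself gives no proof, quoting it directly as Lemma 15.8 of \cite{AK00}, so there is nothing in-paper to diverge from. However, the execution has gaps that matter precisely because the relations of Definition~\ref{def:bfasym} are asymmetric. First, the definition quantifies over \emph{tuples} $\bar{d}$, not single elements; the one-element-at-a-time game defines a strictly weaker relation at a fixed level (already at $\alpha=1$, where the tuple version captures arbitrary blocks of existential quantifiers). The fix is routine---distribute the played tuple among the intervals and answer each piece inside its own interval---but it must be said, and your bookkeeping ``one interval is split into two'' becomes ``each interval is split into finitely many pieces.'' Second, for the untouched intervals $j \neq i$ you need $(a_j,a_{j+1}) \leq_\beta (b_j,b_{j+1})$ in order to assemble the required conclusion $(\mc{A},\bar{a}\bar{c}) \leq_\beta (\mc{B},\bar{b}\bar{d})$, whereas your hypothesis gives $(a_j,a_{j+1}) \geq_\alpha (b_j,b_{j+1})$. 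Passing between these is not ``a fortiori'' monotonicity in the same direction: it is the interleaving property that $\bar{u} \geq_\alpha \bar{v}$ and $\beta < \alpha$ imply $\bar{u} \leq_\beta \bar{v}$, obtained by letting Spoiler play the empty tuple in the definition. Relatedly, your final assembly asserts the intervals are pairwise $\geq_\beta$ and concludes $(\mc{A},\bar{a}c) \geq_\beta (\mc{B},\bar{b}d)$, then glosses this as the needed $\leq_\beta$; these are different relations, and the induction hypothesis you must invoke at level $\beta$ is the $\leq$-version of the lemma applied to the refined decomposition.

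Third, the converse implication (global $\geq_\alpha$ implies interval-wise $\geq_\alpha$) is announced but never argued: swapping $\mc{A}$ and $\mc{B}$ and reversing the inequality yields the mirrored instance of the \emph{same} implication, not the converse. The converse needs its own short argument: given a Spoiler move $\bar{c}$ confined to the interval $(a_i,a_{i+1})$, the Duplicator response $\bar{d}$ guaranteed by $(\mc{B},\bar{b}) \leq_\alpha (\mc{A},\bar{a})$ must land in $(b_i,b_{i+1})$ because $\bar{b}\bar{d}$ must realize the same atomic order type as $\bar{a}\bar{c}$, and the induction hypothesis at level $\beta$ then lets you read off the interval relation $(a_i,a_{i+1}) \geq_\alpha (b_i,b_{i+1})$. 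None of these gaps requires a new idea, but as written the proof does not go through without them.
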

	
	We would like a version of this for individual sentences. This is a more precise intuition for what our lemma achieves. We had hoped that the following statement is true, but we now know it is false: Given a $\Sigma_\alpha$ formula $\varphi(x_1,\ldots,x_n)$, there are $\Sigma_\alpha$ sentences $\theta^j_0,\ldots,\theta^j_n$ ($j \in J$) such that, for all linear orders $\mc{A}$ and $a_1<\cdots<a_n$, $\mc{A} \models \varphi(a_1,\ldots,a_n)$ if and only if, for some $j \in J$, and each $i = 0,\ldots,n$, $(a_i,a_{i+1}) \models \theta^j_i$. In fact, there seem to be two issues. First, if one tries to prove this as written by induction, one gets completely stuck at the case of a universal quantifier. This leads one to give up on a two-way implication and instead find a sufficient condition on the intervals $(a_i,a_{i+1})$---though non-vacuous, i.e., satisfiable in some model---to have $\varphi$ hold. Even in proving this, one finds that one cannot keep the same complexity as the original formulas if one measures complexity using the $\Sigma_\alpha$/$\Pi_\alpha$ hierarchy. Instead, one is naturally led to use the $\E_\alpha$/$\A_\alpha$ hierarchy. (In Section \ref{exampleLOs}, we give an explicit example showing that one cannot remain within the $\Sigma_\alpha$/$\Pi_\alpha$ hierarchy.)
	
	\begin{lemma}\label{lem:intervalFormulas}
		Let $\mc{L}$ be a countable linear order and $a_1<\cdots<a_n$ elements of $\mc{L}$. Suppose that $\mc{L}\models\varphi(a_1,\ldots,a_n)$ with $\varphi$ a $\E_\alpha$ formula in the language of linear orders. Then there are $\E_\alpha$ sentences $\theta_0,\ldots,\theta_n$ such that (a) for every $k = 0,\ldots,n$ we have $(a_k,a_{k+1}) \models \theta_k$, and (b) if $\mc{B}$ is any linear order and $b_1 < \cdots < b_n$, if for every $k = 0,\ldots,n$ we have $(b_k,b_{k+1}) \models \theta_k$ then $\mc{B} \models \varphi(b_1,\ldots,b_n)$. 
	\end{lemma}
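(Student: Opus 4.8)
The plan is to give up on any two-way equivalence (which the preceding discussion shows is impossible) and instead build $\theta_k$ recording \emph{just enough} of the back-and-forth type of each interval to force a single fixed disjunct of $\varphi$. First I would put $\varphi$ into its normal form
\[ \varphi(\bar x)=\bigvvee_{i\in I}\exists\bar y_i\,\chi_i(\bar x,\bar y_i),\qquad \chi_i\in\vwA_{\beta_i},\ \beta_i<\alpha. \]
Since $\mc L\models\varphi(\bar a)$, I fix one index $i$ and a witnessing tuple $\bar c$ with $\mc L\models\chi_i(\bar a,\bar c)$, and let $\bar d$ be the increasing reordering of $\bar a\bar c$, which refines the coarse partition of $\mc L$ induced by $\bar a$ into the finer partition induced by $\bar d$. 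Every output disjunct of $\varphi$ other than $i$ is simply discarded; only this one branch is needed to make $\varphi$ true of $\bar b$ in part (b).

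The heart of the argument is to treat the inner $\vwA_{\beta_i}$ formula $\chi_i$ \emph{wholesale} rather than descending into it by induction---this is exactly the universal-quantifier situation where a naive recursion stalls. For each fine interval $(d_j,d_{j+1})^{\mc L}$ I take $\eta_j$ to be the $\A_{\beta_i}$ sentence supplied by Theorem \ref{thm:bnfFormulas} that characterizes being $\geq_{\beta_i}(d_j,d_{j+1})^{\mc L}$. The key sublemma is: if $\mc B$ carries a tuple $\bar d'$ all of whose induced intervals satisfy the corresponding $\eta_j$, then $\mc B\models\chi_i(\bar d')$. This follows by combining the interval-wise back-and-forth relations through Lemma \ref{lem:combine-bf} to obtain $(\mc L,\bar d)\leq_{\beta_i}(\mc B,\bar d')$, and then transferring the $\vwA_{\beta_i}$ formula $\chi_i$ up this relation via Lemma \ref{prop:transfer-over-bf}.

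Finally I fold the fine-interval data back onto the coarse intervals. Writing $r_k$ for the number of elements of $\bar c$ lying in the coarse interval $(a_k,a_{k+1})$, I set
\[ \theta_k:=\exists z_1<\cdots<z_{r_k}\Big(\eta_{(k,0)}\Res_{<z_1}\ \wedge\ \bigwwedge_{l=1}^{r_k-1}\eta_{(k,l)}\Res_{(z_l,z_{l+1})}\ \wedge\ \eta_{(k,r_k)}\Res_{>z_{r_k}}\Big), \]
with $\theta_k:=\eta_{(k,0)}$ when $r_k=0$. Property (a) holds by taking the $z_l$ to be the actual elements of $\bar c$ in that interval and invoking reflexivity of $\geq_{\beta_i}$; property (b) holds by extracting, from witnesses to each $\theta_k$ inside $\mc B$, one combined tuple $\bar d'$ as in the sublemma, whence $\mc B\models\chi_i(\bar b,\bar z')$ and therefore $\mc B\models\varphi(\bar b)$. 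For the complexity bookkeeping I note that each $\eta_{(k,l)}$ is $\A_{\beta_i}\subseteq\vwA_{\beta_i}$, that relativization preserves this class, that finite conjunctions remain in $\vwA_{\beta_i}$, and that prefixing a block of existential quantifiers lands in $\E_\alpha$ by the very definition of $\E_\alpha$ as the closure of $\bigcup_{\beta<\alpha}\vwA_\beta$ under $\exists$ and $\bigvvee$.

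The main obstacle---and the reason the $\Sigma_\alpha/\Pi_\alpha$ hierarchy will not suffice---is precisely this last point: the sufficient condition on an interval is a back-and-forth type at level $\beta_i$, which is genuinely $\A_{\beta_i}$ (equivalently $\vwA_{\beta_i}$) and \emph{not} $\Pi_{\beta_i}$, so wrapping it in existential witnesses forces us into $\E_\alpha$ rather than $\Sigma_\alpha$. I would also dispatch the routine degeneracies---repeated or coinciding entries of $\bar a\bar c$, empty intervals, the base level $\alpha=1$ where the matrix is quantifier-free, and the outer endpoints of a coarse interval serving as the $\pm\infty$ bounds for its internal relativizations---but none of these affect the structure of the argument.
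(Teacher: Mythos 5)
Your proposal is correct and follows essentially the same route as the paper's proof: select one disjunct and witness tuple, characterize each fine interval by the $\A_\beta$ formula from Theorem \ref{thm:bnfFormulas} expressing $\geq_\beta$ that interval, package these into existential sentences $\theta_k$ on the coarse intervals, and verify (b) by recombining via Lemma \ref{lem:combine-bf} and transferring the $\vwA_\beta$ matrix along the resulting back-and-forth relation using Lemma \ref{prop:transfer-over-bf}. Your explicit complexity bookkeeping and the remark about why this forces the $\E_\alpha$/$\A_\alpha$ hierarchy are accurate and, if anything, slightly more detailed than the paper's own write-up.
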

	
	Using the notation established above, we can also write (b) in the conclusion as saying that
	\[ \models \left[ (x_1 < x_2 < \cdots < x_n) \wedge \bigwedge_{i=0}^n \theta_i \Res_{(x_i,x_{i+1})} \right] \longrightarrow \varphi(x_1,\ldots,x_n). \]
	The purpose of (a) is to say that the antecedent is consistent and, indeed satisfied by $\bar{a}$ in $\mc{A}$, so that (b) is not vacuous.
	
%	\begin{proof}[Proof of slightly weaker result]
%		Using Lemma \ref{}, let $\theta_0,\ldots,\theta_n$ be $\vwE_\alpha$ sentences such that $\mc{L} \models \theta_i$ if and only if $(a_i,a_{i+1}) \geq_\alpha \mc{L}$. Then $(a_{i},a_{i+1}) \models \theta_i$ for each $i = 0,\ldots,n$. If $\mc{B}$ is any linear order with elements $b_1 < \cdots < b_n$, and $(b_{i},b_{i+1}) \models \theta_i$ for $i = 0,\ldots,n$, then $(a_i,a_{i+1}) \geq_\alpha (b_i,b_{i+1})$ for each $i$. Thus by Lemma \ref{} $(\mc{A},a_1,\ldots,a_n) \geq_\alpha (\mc{B},b_1,\ldots,b_n)$. Since $\mc{A} \models \varphi(a_1,\ldots,a_n)$, and $\varphi$ is $\vwE_\alpha$, by Lemma \ref{} $\mc{B} \models \varphi(b_1,\ldots,b_n)$.
%	\end{proof}

	\begin{proof}%[Proof of slightly stronger result]
		Write $\varphi(\bar{x})$ as $\bigvvee_k \exists \bar{y} ~ \psi_k(\bar{a},\bar{y})$ with $\psi_k\in\vwA_\beta$ for $\beta<\alpha$.
		As $\mc{L}\models \varphi(\bar{a})$, there is an index $k$ and witness $\bar{c}$ for which $\mc{L}\models \psi_k(\bar{a},\bar{c})$.
		Say that $\bar{a},\bar{c}$ can be written in order with the following indexing:
		\[ c_{0,1} < \cdots < c_{0,m_0} < a_1 < c_{1,1} < \cdots < c_{1,m_1} < a_2 < \cdots < a_3 < \cdots < a_n < c_{n,1} < \cdots < c_{n,m_n}. \]
		By convention, let $c_{0,0}=-\infty$, $c_{i,0}=a_{i-1}$ for $i>0$,  $c_{i,m_i+1}=a_{i}$ for $i<n$ and $c_{n,m_n+1}=\infty$.
		Using Theorem \ref{thm:bnfFormulas}, for each interval represented above, there is a $\A_\beta$ formula such that $\mc{N}\models\chi_{i,j}$ if and only if $\mc{N}\geq_\beta (c_{i,j},c_{i,j+1})$.
		For each $i = 0,\ldots,n$ we can then define an $\E_\alpha$ formulas as follows:
		\[\theta_i := \exists x_1,\cdots, x_{m_i} \bigwedge_{j\leq m_j} \chi_{i,j}\Res (x_j,x_{j+1}).\]
		We claim these formulas satisfy properties (a) and (b) from the statement of the lemma.
		First, it is clear that $c_{i,1} < \cdots < c_{i,m_i}$ are appropriate witnesses in each $(a_i,a_{i+1})$, and therefore (a) holds.
		Next, we show (b).
		Say that $(\mc{B},\bar{b})$ has the property that each $(b_i,b_{i+1})\models \theta_i$.
		We can write out the witnesses $\bar{d}$ for each $\theta_i$ in order with the following indexing:
		\[ d_{0,1} < \cdots < d_{0,m_0} < b_1 < d_{1,1} < \cdots < d_{1,m_1} < b_2 < \cdots < b_3 < \cdots < b_n < d_{n,1} < \cdots < d_{n,m_n}, \]
		and similarly take the convention that $d_{0,0}=-\infty$, $d_{i,0}=b_{i-1}$ for $i>0$,  $d_{i,m_i+1}=b_{i}$ for $i<n$ and $d_{n,m_n+1}=\infty$.
		Because $(d_{i,j},d_{i,j+1})\models \chi_{i,j}$, we have that $(d_{i,j},d_{i,j+1})\geq_\beta (c_{i,j},c_{i,j+1})$.
		In particular, by Lemma \ref{lem:combine-bf}, we have that $(\bar{b},\bar{d})\geq_\beta (\bar{a},\bar{c})$.
		This means that $\mc{B}\models \psi_k(\bar{b},\bar{d})$ and therefore, $\mc{B}\models \bigvvee_k\exists\bar{y} ~\psi_k(\bar{b},\bar{y})$ and $\mc{B}\models \varphi(\bar{b})$, as desired.
	\end{proof}
	
	The example demonstrating the necessity of using the $\E_\alpha$ and $\A_\alpha$ hierarchies for the above result is as follows.
	
	\begin{restatable}{proposition}{usenew}\label{prop:must-use-new}
	%\begin{proposition}
		There is a $\Pi_4$ sentence $\theta$ expanding the theory of linear orders such that for any consistent $\Pi_4$ sentences $\varphi$ and $\psi$ there are $\mc{A} \models \varphi$ and $\mc{B} \models \psi$ such that $\mc{A} + 1 + \mc{B} \nmodels \theta$.
	%\end{proposition}
	\end{restatable}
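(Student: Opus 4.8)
The proposition is exactly the assertion that Lemma~\ref{lem:intervalFormulas} fails if one replaces its $\E_\alpha$ interval sentences by $\Pi_\alpha$ ones, and I would prove it by isolating \emph{why}. Unwinding the statement, it suffices to produce a $\Pi_4$ sentence $\theta$ such that no pair of consistent $\Pi_4$ sentences $\varphi,\psi$ can \emph{force} $\theta$ across the cut, i.e. so that ``$\mc{A}\models\varphi$ and $\mc{B}\models\psi$ imply $\mc{A}+1+\mc{B}\models\theta$'' never holds nonvacuously. The starting observation is that, marking the middle element as a one-element tuple and applying Lemma~\ref{lem:combine-bf}, the $\leq_4$-type of $\mc{A}+1+\mc{B}$ is determined by the $\leq_4$-types of $\mc{A}$ and $\mc{B}$; hence by Corollary~\ref{cor:karp} the truth of any fixed $\Pi_4$ sentence on the sum depends only on those two types and is monotone upward along $\leq_4$. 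In particular the model class of each of $\theta,\varphi,\psi$ is $\leq_4$-upward closed, and $\mc{A}\equiv_4\mc{A}'$ together with $\mc{B}\equiv_4\mc{B}'$ gives $\mc{A}+1+\mc{B}\equiv_4\mc{A}'+1+\mc{B}'$. So a successful $\Pi_4$ decomposition would have to pin down the summand types finely enough to decide $\theta$, and the whole game is to choose $\theta$ so that the requisite information lives strictly above $\Pi_4$.

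The guiding principle, read off from the proof of Lemma~\ref{lem:intervalFormulas}, is that reconstructing $\theta$ from the two sides requires knowing each side's $\geq_3$ back-and-forth type, captured by the $\A_3$ formulas $\psi_{\bar a,\mc A,3}$ of Theorem~\ref{thm:bnfFormulas}. By Remark~\ref{rmk:containments} we have $\Pi_3\subsetneq\A_3$, and a $\geq_3$-type is in general not $\Pi_4$-definable; this is the separation I intend to exploit. Accordingly I would build $\theta$ as a genuinely $\Pi_4$ sentence whose behaviour on sums is governed by a level-$3$ comparison of the two halves around a distinguished interior point (the ``$1$''), engineered so that deciding $\theta$ on $\mc{A}+1+\mc{B}$ amounts to knowing the $\geq_3$-types of $\mc{A}$ and $\mc{B}$, while $\theta$ itself stays inside $\Pi_4$ rather than slipping into $\A_4$. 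Keeping $\theta$ on the $\Pi_4$ side of this line is delicate precisely because the naive comparison ``$(-\infty,x)\leq_3(x,\infty)$'' is itself only $\A_4$; this is where the explicit examples of Section~\ref{exampleLOs}, together with the two-level complexity shift coming from the finite-distance condensation, would be used to realize a level-$3$ test by plain $\Pi_4$ quantifier alternations.

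Granting such a $\theta$, the argument against an arbitrary consistent pair $\varphi,\psi$ is a model-surgery: choose any $\mc{A}_0\models\varphi$ and $\mc{B}_0\models\psi$ and replace them by $\leq_4$-larger orders $\mc{A}\geq_4\mc{A}_0$, $\mc{B}\geq_4\mc{B}_0$ — which still satisfy $\varphi,\psi$ because $\Pi_4$ sentences climb $\leq_4$ by Corollary~\ref{cor:karp} — whose level-$3$ types are deliberately \emph{mismatched} so that the comparison encoded by $\theta$ fails at the interior point, yielding $\mc{A}+1+\mc{B}\not\models\theta$. The point the construction must secure is that a consistent $\Pi_4$ theory cannot confine its models to the relevant ``$\geq_3$'' cone: since that cone is the model class of an $\A_3$ sentence which, by the choice of the underlying hard order, is not $\Pi_4$-definable, every consistent $\Pi_4$ sentence admits models on either side of the threshold that $\theta$ is testing, and I can always select the side that defeats $\theta$.

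The main obstacle is the simultaneous construction and verification of $\theta$. I must exhibit an explicit $\Pi_4$ sentence whose sum-behaviour genuinely decodes the $\geq_3$-types of the two halves, prove it is satisfiable, and — hardest — establish the inexpressibility that makes the mismatch always available, namely that the level-$3$ comparison underlying $\theta$ is decided by no consistent pair of $\Pi_4$ side-conditions. This reduces to a concrete $\A_3$-versus-$\Pi_4$ separation realized inside linear orders, which I expect to extract from the explicit back-and-forth computations and the condensation machinery of Section~\ref{exampleLOs} rather than from soft arguments; the bookkeeping that $\theta$ lands in $\Pi_4$ and not merely $\A_4$ is the remaining technical point.
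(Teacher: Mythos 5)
Your proposal does not yet contain a proof: the sentence $\theta$ is never constructed, and the reduction you outline has a logical gap that cannot be repaired within your framework. The crucial step is your claim that, because the relevant ``$\geq_3$-cone'' is $\A_3$-definable but not $\Pi_4$-definable, \emph{every} consistent $\Pi_4$ sentence must have models on both sides of the threshold that $\theta$ tests, so that you can always ``select the side that defeats $\theta$.'' Non-definability of a class $X$ by a $\Pi_4$ sentence only says that no $\Pi_4$ sentence has model class exactly $X$; it does not prevent a consistent $\Pi_4$ sentence from having \emph{all} of its models inside $X$ (or all outside it). The proposition quantifies over all consistent $\Pi_4$ sentences $\varphi,\psi$, including $\Pi_4$ Scott sentences, for which the isomorphism types of $\mc{A}$ and $\mc{B}$ --- and hence their $\leq_3$- and $\leq_4$-types --- are completely determined and leave no freedom to ``mismatch'' the two halves. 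Any satisfiable $\theta$ whose failure on $\mc{A}+1+\mc{B}$ is governed by a type comparison of the summands would then have to fail on the sum of every pair of $\Pi_4$-categorical orders, and you give no reason such a $\theta$ exists; this is exactly the part you defer as ``the hardest'' step.

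The paper takes a different route that sidesteps this. It first works in the language with unary predicates $R_i$ and takes $\theta^*$ to be the $\Pi_2$ theory $T$ of Definition \ref{def:T}: each point carries a ``real'' label, labels are injective, and every finite label pattern is dense. Given any consistent $\Pi_2$ sentences $\varphi,\psi$, one builds $\mc{A}\models\varphi$ and $\mc{B}\models\psi$ \emph{jointly}, by a finite-extension construction interleaving the $\Sigma_1$ witnesses demanded by $\varphi$ and $\psi$ with the requirement that fixed elements $a_0\in\mc{A}$ and $b_0\in\mc{B}$ receive the same label (morally, two dense $\mathbf{\Pi}^0_2$ sets intersect); the resulting label collision violates $\theta^*$ in $\mc{A}+1+\mc{B}$. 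This works uniformly, even for categorical $\varphi,\psi$, because the obstruction is a genericity/injectivity condition rather than a comparison of the two halves that one could hope to ``decide.'' Finally, the $2$-almost-bi-interpretation $\Phi$ of Definition \ref{def:PhiBiInt}, via Proposition \ref{prop:DinoAntonio}, pushes $\theta^*$ forward to a $\Pi_4$ sentence $\theta$ about pure linear orders and pulls arbitrary consistent $\Pi_4$ sentences back to consistent $\Pi_2$ ones (after accounting for the block containing the separator), yielding the stated result. If you want to pursue your own line, you would need to replace the ``choose the defeating side'' step with a mechanism that still works when both summands are rigidly determined; the paper's label-collision argument is precisely such a mechanism.
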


	To prove this, it is easier to work in the language with additional unary relations $\{ R_i : i \in \omega\}$. One can then eliminate these relations at the cost of adding a few quantifiers by using the techniques of Section \ref{exampleLOs} to obtain the above proposition. We delay the proof of Proposition \ref{prop:must-use-new} to Section \ref{exampleLOs} where we will have established the described techniques precisely.

	\section{The Forcing Machinery}\label{forcing}
	
	In this section, we set up the forcing machinery needed to construct the models of appropriate Scott ranks.
	In particular, we can think of our model construction as a sort of model-theoretic forcing and our constructed model as a generic element with respect to this forcing.
	We aim to make this intuition precise in this section.
	
	We start by making the following definition of an existential fragment.
	
	\begin{definition}
		Let $\tau$ be a signature. A set of $\mc{L}_{\omega_1 \omega}$ formulas $\mathbb{A}$ is an existential fragment if there is an infinite set of variables $V$ such that all formulas in $\mathbb{A}$ have all variables coming from $V$, and $\mathbb{A}$ satisfies the following closure properties:
		\begin{enumerate}
			\item All atomic and negated atomic formulas using only the constant symbols of $\tau$ and variables from $V$ are in $\mathbb{A}$.
			\item If $\varphi \in \mathbb{A}$ and $\psi$ is a subformula of $\varphi$ then $\psi \in \mathbb{A}$.
			\item If $\varphi \in \mathbb{A}$, $v$ is free in $\varphi$, and $t$ is a term where every variable is in $V$, then the formula obtained by substituting $t$ into all free occurrences of $v$ is in $\mathbb{A}$.
			\item $\mathbb{A}$ is closed under finite conjunctions and disjunctions $\wedge$ and $\vee$ and under existential quantification $\exists v$ for $v \in V$.
		\end{enumerate}
	\end{definition}
	
	\noindent For clarity, we will use the term full fragment for the traditional notion of a fragment. The difference between the two is that a full fragment is closed under formal negation $\sim$ and universal quantification $\forall v$.
	
	While every formula is contained in a countable full fragment, the formulas in the fragment may have greater quantifier complexity than the original formula. Existential fragments are designed specifically to maintain the same level of complexity. Given a $\E_\alpha$ formula $\varphi$, there is a countable existential fragment $\mathbb{A}$ containing $\varphi$ such that every formula of $\mathbb{A}$ is $\E_\alpha$. Moreover, we also want the fragment to be ``closed under Lemma \ref{lem:intervalFormulas}'' in the following sense.
	
	\begin{lemma}\label{lem:CountableSet}
		Let $\tau$ be the signature of linear orders together with countably many constant symbols. Given a $\E_\alpha$ formula $\varphi$ with $\mc{L}\models\varphi$, there is a countable existential fragment $\mathbb{A}$ containing $\varphi$, such that every formula of $\mathbb{A}$ is $\E_\alpha$, and a countable class $\mathbb{C}$ of countable linear orders such that:
		\begin{enumerate}
			\item whenever $\varphi \in \mathbb{A}$, and $x,y \in V$, $\varphi \Res_{(x,y)}$, $\varphi \Res_{< x}$, and $\varphi \Res_{> y}$ are in $\mathbb{A}$;
			\item whenever $\varphi(\bar{x}) \in \mathbb{A}$ is consistent there is $\mc{A} \in \mathbb{C}$ and $\bar{a} \in \mc{A}$ such that $\mc{A} \models \varphi(\bar{a})$;
			\item if $\mc{A} \in \mathbb{C}$ and $\varphi(\bar{x}) \in \mathbb{A}$ and $\mc{A} \models \varphi(\bar{a})$ with $a_1 < \cdots < a_n$ then there are sentences $\theta_0,\ldots,\theta_n \in \mathbb{A}$ satisfying the conclusion of Lemma \ref{lem:intervalFormulas}, i.e., such that (a) for every $k = 0,\ldots,n$ we have $(a_k,a_{k+1}) \models \theta_k$, and (b) if $\mc{B}$ is any linear order and $b_1 < \cdots < b_n$, if for every $k = 0,\ldots,n$ we have $(b_k,b_{k+1}) \models \theta_k$ then $\mc{B} \models \varphi(b_1,\ldots,b_n)$.
		\end{enumerate}
	\end{lemma}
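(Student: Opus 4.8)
The plan is to build $\mathbb{A}$ and $\mathbb{C}$ together by a simultaneous transfinite-free bootstrapping construction: start with the formula $\varphi$, iteratively close off under the required operations, and at each stage use the earlier results (the existence of an $\E_\alpha$ existential fragment, the satisfiability witnesses, and Lemma \ref{lem:intervalFormulas}) to add only countably many new objects, so that after $\omega$ stages everything is both countable and closed. Concretely, I would define an increasing chain $\mathbb{A}_0 \subseteq \mathbb{A}_1 \subseteq \cdots$ of countable sets of $\E_\alpha$ formulas and an increasing chain $\mathbb{C}_0 \subseteq \mathbb{C}_1 \subseteq \cdots$ of countable classes of linear orders, and set $\mathbb{A} = \bigcup_n \mathbb{A}_n$, $\mathbb{C} = \bigcup_n \mathbb{C}_n$.

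First I would address the existential-fragment closure and condition (1) together. Begin with $\mathbb{A}_0$ a countable existential fragment containing $\varphi$ all of whose formulas are $\E_\alpha$ — this exists by the remark preceding the lemma, since the defining closure operations for an existential fragment (substitution, subformulas, finite $\wedge,\vee$, and $\exists$) preserve the $\E_\alpha$ class. To build $\mathbb{A}_{n+1}$ from $\mathbb{A}_n$, I would throw in all relativizations $\psi\Res_{(x,y)}$, $\psi\Res_{<x}$, $\psi\Res_{>y}$ for $\psi \in \mathbb{A}_n$ and $x,y \in V$ (using the stated fact that relativization preserves quantifier complexity, so these stay $\E_\alpha$), then re-close under the existential-fragment operations to get a new countable $\E_\alpha$ existential fragment. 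Since $\mathbb{A}_n$ is countable, only countably many relativized formulas are added, and the existential-fragment closure of a countable set is countable; so $\mathbb{A}_{n+1}$ is countable. Passing to the union secures condition (1).

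Next I would interleave conditions (2) and (3), which is where the classes $\mathbb{C}_n$ enter. For condition (2), at stage $n+1$ I would add to $\mathbb{C}_n$, for each consistent $\varphi(\bar x) \in \mathbb{A}_n$, a single witnessing linear order $\mc{A} \in \mathbb{C}_{n+1}$ with a tuple $\bar a$ realizing $\varphi$; there are only countably many such formulas, so only countably many orders are added. For condition (3), I must ensure that whenever $\mc{A}\in\mathbb{C}$, $\varphi(\bar x)\in\mathbb{A}$, and $\mc{A}\models\varphi(\bar a)$, the interval-sentences $\theta_0,\ldots,\theta_n$ produced by Lemma \ref{lem:intervalFormulas} already lie in $\mathbb{A}$. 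The key point is that the $\theta_i$ built in the proof of Lemma \ref{lem:intervalFormulas} are $\E_\alpha$ sentences assembled from back-and-forth formulas $\chi_{i,j}$ (themselves $\A_\beta$, $\beta<\alpha$, via Theorem \ref{thm:bnfFormulas}) relativized to intervals and existentially quantified — exactly the operations under which an $\E_\alpha$ existential fragment is closed, provided the $\chi_{i,j}$ are present. So at each stage I would explicitly add these finitely many $\theta_i$ (and the underlying $\chi_{i,j}$) to the fragment and re-close; since at each stage there are only countably many triples $(\mc{A},\varphi,\bar a)$ with $\mc{A}\in\mathbb{C}_n$, $\varphi\in\mathbb{A}_n$, only countably many $\theta_i$ are added.

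The main obstacle — and the step requiring the most care — is verifying that these mutually recursive closure demands genuinely stabilize at a \emph{countable} fixed point rather than forcing uncountably many additions. This is why the $\E_\alpha$/$\A_\alpha$ hierarchy, rather than the $\Sigma_\alpha$/$\Pi_\alpha$ hierarchy, is indispensable: closing under relativization and under the Lemma \ref{lem:intervalFormulas} operation stays inside $\E_\alpha$, so the fragment never needs to climb the hierarchy, and each closure step adds only countably many formulas of bounded complexity. I would therefore organize the construction as a standard dovetailing over the (countably many) pending obligations — satisfiability witnesses, relativizations, and interval-decompositions — discharging countably many at each finite stage, so that the union $\mathbb{A} = \bigcup_n \mathbb{A}_n$ and $\mathbb{C} = \bigcup_n \mathbb{C}_n$ are countable and simultaneously closed under all three conditions. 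The final verification that each condition holds for the union is then routine: any witness of an obligation about $\mathbb{A}$ or $\mathbb{C}$ appears at some finite stage, so the corresponding objects were added by the next stage and persist in the union.
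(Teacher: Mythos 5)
Your proposal is correct and matches the paper's own proof: both build $\mathbb{A}=\bigcup_s\mathbb{A}_s$ and $\mathbb{C}=\bigcup_s\mathbb{C}_s$ by dovetailing the countably many closure obligations (existential-fragment operations, relativizations for (1), Löwenheim--Skolem witnesses for (2), and the interval sentences from Lemma \ref{lem:intervalFormulas} for (3)), observing that each step adds only countably many new $\E_\alpha$ formulas or countable structures. The only cosmetic difference is that the paper seeds $\mathbb{A}_0$ with just $\varphi$ and the atomic formulas rather than a full fragment, which changes nothing.
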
	
	\begin{proof}
		We construct $\mathbb{A}$ and $\mathbb{C}$ by stages, $\mathbb{A} = \bigcup \mathbb{A}_s$ and $\mathbb{C} = \bigcup \mathbb{C}_s$. Let $V$ be a countable set of variables including those in $\varphi$. We begin with $\mathbb{A}_0$ consisting of $\varphi$ together with all atomic and negated atomic formulas using only the constant symbols of $\tau$ and the variables from $V$, and $\mathbb{C}_0 = \{\mc{L}\}$. At each stage, we close under one of the closure conditions, either one of the three closure conditions to be an existential fragment, or (1), (2), or (3). At each stage, $\mathbb{A}_s$ and $\mathbb{C}_s$ will be countable, and we add only countably many new formulas or structures at each stage so that $\mathbb{A}$ and $\mathbb{C}$ are countable unions of countable sets and hence countable. For the conditions to be an existential fragment, it is clear how to take one step towards closure, e.g., to take a step towards being closed under subformulas, given $\mathbb{A}_s$, we can take $\mathbb{A}_{s+1}$ to consist of $\mathbb{A}_s$ together with all subformulas of formulas in $\mathbb{A}_s$, and $\mathbb{C}_{s+1} = \mathbb{C}_s$. It is also easy to see how to take a step towards (1). It is important to note that in each of these cases, the new formulas we add are all $\E_\alpha$.
		
		To close under (2), given $\mathbb{A}_s$ and $\mathbb{C}_s$, let $\mathbb{A}_{s+1} = \mathbb{A}_s$. For each $\varphi(\bar{x}) \in \mathbb{A}_s$ which is consistent, choose a countable structure $\mc{A}$ and $\bar{a} \in \mc{A}$ such that $\mc{A} \models \varphi(\bar{a})$. Let $\mathbb{C}_{s+1}$ be $\mathbb{C}_s$ together with these structures $\mc{A}$ for all choices of $\varphi(\bar{x}) \in \mathbb{A}_s$. (Note that we use downwards Lowenheim-Skolem for $\mc{L}_{\omega_1 \omega}$ here to get countable structures.)
		
		To close under (3), given $\mathbb{A}_s$ and $\mathbb{C}_s$, let $\mathbb{C}_{s+1} = \mathbb{C}_s$. For each $\mc{A} \in \mathbb{C}_s$ and $\varphi(\bar{x}) \in \mathbb{A}_s$ and $a_1 < \cdots < a_n$ in $\mc{A}$ such that $\mc{A} \models \varphi(\bar{a})$, by Lemma \ref{lem:intervalFormulas} choose sentences $\theta_0,\ldots,\theta_n$ $\varphi(\bar{x}) \in \mathbb{A}_s$ such that (a) for every $k = 0,\ldots,n$ we have $(a_k,a_{k+1}) \models \theta_k$, and (b) if $\mc{B}$ is any linear order and $b_1 < \cdots < b_n$, if for every $k = 0,\ldots,n$ we have $(b_k,b_{k+1}) \models \theta_k$ then $\mc{B} \models \varphi(b_1,\ldots,b_n)$. Let $\mathbb{A}_{s+1}$ be $\mathbb{A}_s$ together with all of these sentences $\theta_0,\ldots,\theta_n$ for all choices of $\mc{A}$, $\varphi(\bar{x})$, and $\bar{a}$.
	\end{proof}
	
	Fix for now a countable existential fragment $\mathbb{A}$ consisting of $\E_\alpha$ formulas. Let $\mathbb{A}^*$ be the satisfiable sentences of $\mathbb{A}$; for the remainder of this section, all sentences are assumed to be in $\mathbb{A}^*$. We define a forcing relation on $\mathbb{A}^*$. Put $\varphi \leq \psi$ ($\varphi$ is an extension of $\psi$) if $\varphi \models \psi$. Note that if $\varphi$ and $\psi$ are consistent, then $\varphi \wedge \psi \in \mathbb{A}$ is a condition with $\varphi \wedge \psi \leq \varphi$ and $\varphi \wedge \psi \leq \psi$. (Thus, we might equivalently force with finite consistent sets of sentences from $\mathbb{A}$.)
	
	We say that $\psi_1 \leq \varphi$ and $\psi_2 \leq \varphi$ \textit{split over} $\varphi$ if $\psi_1 \wedge \psi_2$ is not consistent; equivalently, there is no common extension of $\psi_1$ and $\psi_2$. We say that $\varphi$ \textit{forces unity} if there are no splits over $\varphi$; thus, given any $\psi_1 \leq \varphi$ and $\psi_2 \leq \varphi$, there is a common extension $\psi_1 \wedge \psi_2$, and $\psi_1 \wedge \psi_2 \wedge \varphi$ is consistent. We say that $\varphi$ \textit{forces splitting} if it forces the negation of unity, that is, for any $\varphi' \leq \varphi$, there are $\psi_1 \leq \varphi'$ and $\psi_2 \leq \varphi'$ such that $\psi_1$ and $\psi_2$ split over $\varphi'$.
	
	As usual, any sentence $\varphi$ has an extension that either forces unity or forces splitting.
	
	\begin{lemma}
		Given $\varphi \in \mathbb{A}^*$, there is either a $\psi \leq \varphi$ that forces unity or a $\psi \leq \varphi$ that forces splitting.
	\end{lemma}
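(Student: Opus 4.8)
The plan is to prove this as an immediate dichotomy by splitting on whether unity is forceable below $\varphi$. The essential observation is that the property ``forces splitting'' has been \emph{defined} to be the downward-persistent failure of ``forces unity,'' so once the quantifiers are unwound the two alternatives in the statement are literally complementary and the proof reduces to reading off the definitions. I would make this explicit rather than leaving it implicit, since that is the only content of the argument.

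First I would dispose of the easy case: suppose there is some $\psi \leq \varphi$ that forces unity. Then $\psi$ directly witnesses the first alternative and there is nothing more to do. So I would then assume, toward the second alternative, that \emph{no} condition $\psi \leq \varphi$ forces unity.

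In that case I would argue that $\varphi$ itself forces splitting, so that $\psi = \varphi$ witnesses the second alternative. By definition, a condition $\psi$ fails to force unity exactly when there are $\psi_1, \psi_2 \leq \psi$ that split over $\psi$, i.e.\ with $\psi_1 \wedge \psi_2$ inconsistent. Hence the assumption that no $\psi \leq \varphi$ forces unity says precisely that for every $\psi \leq \varphi$ there is a split over $\psi$. Since $\leq$ is a preorder, the conditions $\psi \leq \varphi$ range over the same set as the conditions $\varphi' \leq \varphi$, so this is verbatim the definition of $\varphi$ forcing splitting. Thus $\varphi$ forces splitting, completing the dichotomy.

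I expect no genuine obstacle here: the only thing requiring care is confirming that ``forces splitting below $\varphi$'' is exactly the negation of ``some extension of $\varphi$ forces unity,'' so that the two cases are exhaustive. This amounts to checking that $\leq$ is reflexive and transitive (immediate, since $\psi \leq \varphi$ means $\psi \models \varphi$) and then matching the quantifier structure of the two definitions, which coincide once one notes that ``no split over $\psi$'' is the definition of unity and ``a split over every $\varphi' \leq \varphi$'' is the definition of splitting.
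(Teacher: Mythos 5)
Your proof is correct and is exactly the argument the paper has in mind: the paper states this lemma without proof (prefacing it with ``as usual''), since, just as you observe, ``forces splitting'' is by definition the statement that no extension forces unity, so the dichotomy is immediate from unwinding the definitions. Nothing further is needed.
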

	
%	Given $\varphi$ that forces unity, we let $[\varphi]$ be the set of sentences of $\mathbb{A}^*$ consistent with $\varphi$. One can see immediately that any finite set of sentences in $[\varphi]$ is consistent. Indeed, we will later see that $[\varphi]$ is consistent.
	
	\section{Proof of the Main Theorem}\label{mainThm}
	
	We are now ready to prove the main result of the paper.
	Our proof has two stages: the construction and verification.
	To aid the exposition, we isolated two particular properties of linear orderings: being generic relative to an existential fragment $\mathbb{A}^*$ and the so-called Property $\mathbf{(*)}$.
	In the construction stage, we show that we can create linear orderings that are generic relative to an existential fragment $\mathbb{A}^*$, have Property $\mathbf{(*)}$, and satisfy a given $\E_\alpha$ formula.
	In the verification stage, we show that linear orderings with these properties also have small Scott ranks.
	We present our work in the following order, which we feel to be the most clear: first, we formally define the properties of linear orderings that we are dealing with, then we demonstrate the verification stage, and finally, we demonstrate the construction stage.
	
	We will make use of the following classical result about linear orders. Together with Lemma \ref{lem:intervalFormulas}, these are the only facts about linear orders that we use.

	\begin{proposition}[Lindenbaum and Tarski, see Theorem 1.44 of \cite{TBook}]\label{Tarski}
		If $N$ and $L$ are order types, and $N \cdot k$ is an initial segment of $L$ for all $k$, then $N \cdot \omega$ is an initial segment of $L$.
	\end{proposition}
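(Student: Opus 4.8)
The plan is to reduce the statement to an \emph{absorption} fact and then build the initial segment of type $N\cdot\omega$ by an $\omega$-step iteration. We may assume $N\neq\emptyset$, since otherwise $N\cdot\omega=\emptyset$ is trivially an initial segment. The target of the iteration is an increasing chain of initial segments $I_0\subseteq I_1\subseteq I_2\subseteq\cdots$ of $L$ with $I_k\cong N\cdot k$ and with each layer $I_{k+1}\setminus I_k\cong N$; because a union of a chain of initial segments is again an initial segment, and the layers are consecutive intervals, $\bigcup_k I_k$ is then an initial segment of $L$ of order type $N+N+N+\cdots=N\cdot\omega$, which is what we want.

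The engine producing such a chain is the assertion that some initial segment $J$ of $L$ (possibly $L$ itself) satisfies $J\cong N+J$; this is the stronger conclusion recorded in the introduction, and it clearly suffices, since an initial segment of $J$ of type $N\cdot\omega$ is also one of $L$. Granting an isomorphism $\varphi\colon N+J\to J$, the image $P_0:=\varphi[N]$ of the first summand is an initial segment of $J$ isomorphic to $N$, and the complementary final segment $J\setminus P_0=\varphi[J]$ is isomorphic to $J$. Applying the same fact inside $J\setminus P_0\cong J$ gives an initial segment $P_1$ of $J\setminus P_0$ with $P_1\cong N$ and $(J\setminus P_0)\setminus P_1\cong J$; since $P_1$ is an initial segment of the final segment $J\setminus P_0$, the set $P_0\cup P_1$ is an initial segment of $J$ of type $N\cdot 2$ with both layers equal to $N$. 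Iterating over $k\in\omega$ produces consecutive copies $P_0,P_1,P_2,\dots$ of $N$ whose partial unions serve as the $I_k$, and $\bigcup_k P_k$ is the desired initial segment of type $N\cdot\omega$.

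It remains to prove that such a $J$ exists, and this is where the real work lies. I would take $J:=\bigcup\mathcal{C}$, where $\mathcal{C}$ is the family of all initial segments of $L$ of type $N\cdot k$ for some $k\in\omega$; since any two initial segments of a linear order are comparable (one contains the other), $\mathcal{C}$ is a chain and $J$ is an initial segment of $L$ containing cofinally the approximating segments $B_n\cong N\cdot n$, which satisfy $N+B_n\cong B_{n+1}$. The goal is then to amalgamate these approximate isomorphisms into a single isomorphism $N+J\cong J$. The main obstacle---and the reason the elementary approach of ``peeling off one copy of $N$ at a time'' does not go through directly---is that initial segments of linear orders do not admit left cancellation: from a nesting $B_k\subseteq B_{k+1}$ of initial segments of types $N\cdot k$ and $N\cdot(k+1)$ one cannot conclude that the intervening layer is a copy of $N$, and in fact the segment realizing $N\cdot(k+1)$ may be a proper subset of the one realizing $N\cdot k$ (as with $\mathbb{Q}$, where $\mathbb{Q}\cong\mathbb{Q}+X$ for many $X$ and every finite multiple is again $\mathbb{Q}$). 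Overcoming this requires a coherent, back-and-forth style choice of the segments $B_n$ and their isomorphisms so that the layers are genuinely copies of $N$; this is the crux, and it is exactly the place where one uses that all finite multiples $N\cdot k$ occur as initial segments, not merely that $L$ is long. This coherent amalgamation is the classical content of the Lindenbaum--Tarski theorem.
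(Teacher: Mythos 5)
The paper offers no proof of this proposition; it is imported verbatim as Theorem 1.44 of Tarski's book, so there is no in-paper argument to compare against. Judged on its own terms, your proposal contains a genuine gap, and it sits exactly where you say the ``real work lies.'' What you do establish is correct: if some initial segment $J$ of $L$ satisfies $J\cong N+J$, then iterating the isomorphism peels off consecutive initial copies $P_0,P_1,\dots$ of $N$ whose union is an initial segment of $L$ of type $N\cdot\omega$; and your diagnosis of the obstruction is accurate --- the initial segments realizing $N\cdot k$ and $N\cdot(k+1)$ need not be nested compatibly, and one cannot cancel $N$ on the left (your $\mathbb{Q}$ example is exactly the right one).

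But the existence of such a $J$ --- equivalently, of an increasing chain of initial segments $I_0\subseteq I_1\subseteq\cdots$ whose successive layers are genuinely copies of $N$ --- is never proved. You name a candidate, $J=\bigcup\mathcal{C}$, observe that the approximating segments $B_n\cong N\cdot n$ do not obviously cohere, and then close with the sentence that the required ``coherent amalgamation is the classical content of the Lindenbaum--Tarski theorem.'' That sentence \emph{is} the theorem: everything preceding it is a (correct) reformulation of the statement, and the single step that needs an argument is deferred to the result being proved. As written, the proposal reduces the proposition to itself rather than proving it. To close the gap you would have to actually construct the coherent chain or the isomorphism $J\cong N+J$ --- for instance via a Cantor--Schr\"oder--Bernstein-type argument for initial-segment embeddings, or the absorption argument Tarski gives for Theorem 1.44 --- and none of that is present.
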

	
Our goal is to prove the following theorem.
	
	\begin{theorem}\label{thm:main-inline}
		Given a satisfiable $\E_{\alpha}$ sentence $T$ of linear orders, there is a linear order $\mc{B} \models \varphi$ with a $\Pi_{\alpha + 3}$ Scott sentence and hence Scott rank at most $\alpha + 2$.
	\end{theorem}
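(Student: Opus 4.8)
The plan is to follow the two-stage strategy announced in Section \ref{mainThm}: build a generic model via the forcing of Section \ref{forcing} that satisfies $T$ and an auxiliary absorption/homogeneity condition (Property $\mathbf{(*)}$), and then verify that any such model is pinned down up to isomorphism by a $\Pi_{\alpha+3}$ sentence. First I would apply Lemma \ref{lem:CountableSet} to $T$ (which is $\E_\alpha$) to fix a countable existential fragment $\mathbb{A} \ni T$ all of whose formulas are $\E_\alpha$, closed under relativization to intervals and under the interval-decomposition of Lemma \ref{lem:intervalFormulas}, together with the countable witnessing class $\mathbb{C}$. All forcing is then carried out inside $\mathbb{A}^*$, with $\varphi \leq \psi$ iff $\varphi \models \psi$. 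The generic object decides the order relation among the countably many constants of $\tau$ and hence builds a countable linear order $\mc{B}$; arranging that $T$ lies in the generic filter guarantees $\mc{B} \models T$.

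For the construction stage I would build $\mc{B}$ meeting a countable list of dense requirements so that two things hold: a genericity condition, namely that every consistent condition which is forced densely gets realized inside every relevant interval (this is what will yield homogeneity), and Property $\mathbf{(*)}$. The content of $\mathbf{(*)}$ is an absorption condition powered by Proposition \ref{Tarski}: whenever a local interval-type $N$ occurs as an initial segment of some interval with multiplicity $k$ for all $k$, the generic is forced to contain $N \cdot \omega$ as well. The unity/splitting dichotomy from the last lemma of Section \ref{forcing} is the tool organizing which interval-types are rigidly pinned down (unity) and which genuinely vary and must be shuffled densely (splitting). Each requirement is dense, so a single generic $\mc{B}$ meets all of them.

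The verification stage is the crux. I would show $\mc{B}$ is $(\alpha+2)$-homogeneous: if $\bar{a} \equiv_{\alpha+2} \bar{a}'$ in $\mc{B}$ then some automorphism carries $\bar{a}$ to $\bar{a}'$. By Lemma \ref{lem:combine-bf} this reduces to showing that the corresponding intervals, which are pairwise $\equiv_{\alpha+2}$, are in fact isomorphic, and I would build the isomorphism by a back-and-forth made available by genericity: the formulas $\varphi_{\bar{a},\mc{B},\gamma}$ and $\psi_{\bar{a},\mc{B},\gamma}$ of Theorem \ref{thm:bnfFormulas} read off an interval's back-and-forth type at the needed level, Lemma \ref{lem:intervalFormulas} together with the closure of $\mathbb{A}$ under it keeps all the relevant local conditions inside $\mathbb{A}^*$ at level $\alpha$, and genericity supplies matching witnesses on both sides. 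Property $\mathbf{(*)}$ is precisely what rules out the obstruction in which one interval contains $N \cdot k$ for all $k$ but not $N \cdot \omega$ while the $\equiv$-equal interval does contain $N \cdot \omega$, since Proposition \ref{Tarski} forces the two sides to agree. Given $(\alpha+2)$-homogeneity, Montalbán's characterization of Scott rank yields a $\Pi_{\alpha+3}$ Scott sentence $\Phi$: one takes $\Phi$ to assert $\mc{B} \models T$ together with the $\Pi_{\alpha+2}$ statement that every tuple's intervals realize one of the countably many $\equiv_{\alpha+1}$-types occurring in $\mc{B}$, plus the matching existential density clauses, and one checks via Remark \ref{rmk:containments}, Theorem \ref{thm:bnfFormulas}, and the complexity-preservation of Lemma \ref{lem:intervalFormulas} that $\Phi$ lands in $\Pi_{\alpha+3}$ and that every model of $\Phi$ is $\equiv_{\alpha+3} \mc{B}$, hence isomorphic to it.

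The hard part will be the verification, and within it the isomorphism-of-intervals step: one must show that $\equiv_{\alpha+2}$ genuinely forces isomorphism rather than mere back-and-forth equivalence at that level, and this is exactly where genericity and Property $\mathbf{(*)}$ have to be combined with care, since the passage from identical back-and-forth type to identical isomorphism type is what fails without the Lindenbaum--Tarski absorption. The second delicate point is the complexity bookkeeping: keeping every auxiliary formula inside the $\E_\alpha/\A_\alpha$ classes, so that relativizing to intervals and decomposing via Lemma \ref{lem:intervalFormulas} does not inflate the quantifier count, is precisely what pins the final Scott sentence at $\Pi_{\alpha+3}$ rather than something larger; this is the role the new hierarchy plays, and Proposition \ref{prop:must-use-new} shows it cannot be avoided.
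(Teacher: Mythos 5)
Your proposal follows the paper's strategy essentially step for step: fix the existential fragment via Lemma \ref{lem:CountableSet}, Henkin-force a generic model of $T$ with Property $\mathbf{(*)}$, and verify homogeneity interval-by-interval using Lemma \ref{lem:combine-bf}, the unity/splitting dichotomy, and the Lindenbaum--Tarski absorption. Two points need correction, one of which affects the stated bound.

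First, the off-by-one: you propose to show that $\bar{a} \equiv_{\alpha+2} \bar{a}'$ implies $\bar{a}$ and $\bar{a}'$ are automorphic. That yields only $\SR(\mc{B}) \leq \alpha+3$ and a $\Pi_{\alpha+4}$ Scott sentence, which is Theorem \ref{thm:SmallGaps} but not the statement at hand. The paper's verification (Lemma \ref{lem:main1}) establishes homogeneity already at level $\alpha+1$: the witness sentences distinguishing a non-Scott interval from a Scott one are $\E_{\alpha+1}/\A_{\alpha+1}$ (one existential or universal quantifier over a subinterval wrapped around $\E_\alpha$ data), so $\equiv_{\alpha+1}$ suffices, giving $\SR(\mc{B})\leq\alpha+2$ and the $\Pi_{\alpha+3}$ Scott sentence. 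Second, you describe Property $\mathbf{(*)}$ as an absorption condition that the forcing must arrange so that $N\cdot k$ for all $k$ yields $N\cdot\omega$. That absorption is Proposition \ref{Tarski} and holds for \emph{every} linear order; nothing about it needs to be forced, and it is invoked only in the verification's overlapping-intervals cases. The actual content of $\mathbf{(*)}$ is a dichotomy for pairs of disjoint intervals: either they satisfy incompatible sentences of $\mathbb{A}^*$, or they share a common sentence forcing unity. This is what powers Sublemma \ref{splittingFree}, showing that an interval that is not Scott cannot be $\equiv_{\alpha+1}$ to a disjoint interval; without it the reduction from back-and-forth equivalence to isomorphism breaks in exactly the case your proposal identifies as the crux. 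With the homogeneity level corrected to $\alpha+1$ and $\mathbf{(*)}$ stated as the disjoint-interval dichotomy, your outline matches the paper's proof.
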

	
	Note that this theorem immediately implies our initially reported Theorem \ref{thm:SmallGaps}.
	In particular, every $\Pi_\alpha$ sentence is also an $\E_{\alpha+1}$ sentence.
	The above theorem then gives a model with a $\Pi_{\alpha + 4}$ Scott sentence, or what is the same a model of Scott rank $\alpha+3$, as reported in Theorem \ref{thm:SmallGaps}.
	
	\subsection{Definitions of Key Properties}
	
	Let $\mathbb{A}$ be a countable existential fragment of $\E_\alpha$ formulas containing $T$ and satisfying the conclusion of Lemma \ref{lem:CountableSet}, together with a class $\mathbb{C}$ of countable structures. Recall that $\mathbb{A}^*$ is the set of satisfiable sentences in $\mathbb{A}$ and that each of them is satisfied by a structure from $\mathbb{C}$.

Given a linear order $\mc{L}$, we say that $\mc{L}$ is \textit{generic} if
\begin{enumerate}
	\item for all $-\infty \leq a < b \leq \infty$ there is a formula $\varphi \in \mathbb{A}^*$ that either forces unity or splitting and such that $\mc{L} \models \varphi \Res_{(a,b)}$, and
	\item for all $-\infty \leq a < b \leq \infty$ and sentences $\psi \in \mathbb{A}^*$, either $\mc{L} \models \psi \Res_{(a,b)}$ or there is $\varphi \in \mathbb{A}^*$ such that $\mc{L} \models \varphi \Res_{(a,b)}$ and $\psi$ and $\varphi$ are incompatible ($\varphi \wedge \psi$ is not satisfiable).
\end{enumerate}
We also isolate the following property of a linear ordering $\mc{L}$, which we call Property $\mathbf{(*)}$.

\begin{description}
	\item[($*$)] $\forall ~ a,b,c,d\in\mc{L}\cup\{-\infty,\infty\}$ with $-\infty \leq a < b \leq c < d \leq \infty$, either:
\begin{description}
	\item[(a)] there are formulas $\psi_1,\psi_2 \in \mathbb{A}^*$ such that $\mc{L} \models \psi_1 \Res_{(a,b)}$ and $\mc{L} \models \psi_2 \Res_{(c,d)}$ but $\psi_1(x,y) \wedge \psi_2(x,y)$ is not satisfiable.
	\item[(b)] there is a formula $\varphi \in \mathbb{A}^*$ that forces unity such that $\mc{L} \models \varphi \Res_{(a,b)} \wedge \varphi \Res_{(c,d)}$.
\end{description} 
\end{description} 
In these properties above, we write, e.g., $-\infty \leq a  < b \leq \infty$ to denote that we include the unbounded intervals $(-\infty,b)$ and $(a,\infty)$. In general, we apply the conventions established previously to always include such intervals, even if not explicitly mentioned.

	\subsection{Verification Stage}

We prove the theorem in two steps.
First, we show that any generic linear order $\mc{L}$ with Property $\mathbf{(*)}$ has Scott rank at most $\alpha+2$.
To show this, we demonstrate that for all $\bar{a},\bar{b}\in\mc{L}$ if $\bar{a} \equiv_{\alpha+1} \bar{b}$, then $\bar{a}$ and $\bar{b}$ are in the same automorphism orbit.
It is known, see for example \cite{MBook} Section II.9, that this gives that $\SR(\mc{L})\leq\alpha+2$ where $\SR$ denotes Montalb\'an's parameterless Scott rank.
In particular, any $\mc{L}$ that is generic and with Property $\mathbf{(*)}$ has a $\Pi_{\alpha+3}$ Scott sentence as desired.

\begin{lemma}\label{lem:main1}
	If $\mc{A}$ is a generic linear ordering with Property $\mathbf{(*)}$, then given any two tuples $\bar{a}$ and $\bar{b}$, if $\bar{a} \equiv_{\alpha+1} \bar{b}$, then $\bar{a}$ and $\bar{b}$ are in the same automorphism orbit. In particular, $\mc{A}$ has a $\Pi_{\alpha + 3}$ Scott sentence.
\end{lemma}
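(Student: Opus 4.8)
The plan is to build an automorphism of $\mc{A}$ carrying $\bar a$ to $\bar b$; the final ``in particular'' is then immediate from the cited characterization of Scott rank (\cite{MBook} Section II.9), which says that if every pair of $\equiv_{\alpha+1}$-equivalent tuples lies in a single automorphism orbit, then $\SR(\mc{A}) \leq \alpha+2$ and hence $\mc{A}$ has a $\Pi_{\alpha+3}$ Scott sentence. So the whole content is producing the automorphism. First I would reduce to intervals. Writing $\bar a = (a_1 < \cdots < a_n)$ and $\bar b = (b_1 < \cdots < b_n)$ and using the convention $a_0 = b_0 = -\infty$, $a_{n+1} = b_{n+1} = \infty$, Lemma \ref{lem:combine-bf} gives that $\bar a \equiv_{\alpha+1} \bar b$ holds if and only if $(a_i,a_{i+1}) \equiv_{\alpha+1} (b_i,b_{i+1})$ for every $i = 0,\ldots,n$. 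An automorphism of $\mc{A}$ respecting $a_j \mapsto b_j$ is exactly a family of order isomorphisms $(a_i,a_{i+1}) \cong (b_i,b_{i+1})$ glued together with the point maps, so it suffices to produce one such isomorphism per corresponding pair of intervals. I would first record that genericity and Property $\mathbf{(*)}$ are inherited by intervals: every subinterval of $(a_i,a_{i+1})$ is a subinterval of $\mc{A}$, and relativization of an $\mathbb{A}^*$-formula is computed inside the interval itself, so the defining clauses restrict directly. Thus the problem reduces to: if $I$ and $J$ are generic linear orders with Property $\mathbf{(*)}$ and $I \equiv_{\alpha+1} J$, then $I \cong J$.

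The key preliminary observation is that $\equiv_{\alpha+1}$ (indeed already $\equiv_\alpha$) forces $I$ and $J$ to satisfy the same sentences of $\mathbb{A}^*$: since $\mathbb{A}$ consists of $\E_\alpha$ formulas, Corollary \ref{cor:karp} shows that $\equiv_\alpha$-equivalent structures have the same $\E_\alpha$-theory. I would then construct the isomorphism by a back-and-forth, maintaining the invariant that the partial order isomorphism built so far induces a correspondence of subintervals $I' \leftrightarrow J'$ in which corresponding subintervals satisfy the same sentences of $\mathbb{A}^*$ (so, in particular, the two full intervals start matched).

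The extension step is where genericity and Property $\mathbf{(*)}$ do their work. Given a new point $c$ splitting a matched interval $I' = I'_L + \{c\} + I'_R$, I want a point $d$ splitting $J' = J'_L + \{d\} + J'_R$ with $I'_L, J'_L$ and $I'_R, J'_R$ again of equal $\mathbb{A}^*$-type. By genericity clause (1), $I'$ is witnessed to force either unity or splitting, and since $J'$ has the same $\mathbb{A}^*$-type it is in the same regime. In the splitting regime the crucial point is that, because we work with the $\E_\alpha$/$\A_\alpha$ hierarchy, the statement ``there is a point splitting this interval into a left piece of the $\E_\alpha$-type of $I'_L$ and a right piece of the $\E_\alpha$-type of $I'_R$'' can itself be taken to be a sentence of $\mathbb{A}^*$---this is exactly the content of Lemma \ref{lem:intervalFormulas} together with the fragment closure of Lemma \ref{lem:CountableSet}. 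Since this sentence lies in the $\mathbb{A}^*$-type of $I'$ it transfers to $J'$ and produces a suitable $d$, with genericity clause (2) used to decide and match the remaining $\mathbb{A}^*$-sentences on the two new pieces. In the unity regime $I'$ is homogeneous, and Property $\mathbf{(*)}$ is what supplies this: for two subintervals on the two sides, either they are separated by incompatible $\mathbb{A}^*$-formulas (which is ruled out once equal $\mathbb{A}^*$-types have been arranged) or they share a unity-forcing formula, so any split of $I'$ can be reproduced inside $J'$.

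I expect the extension step to be the main obstacle, and specifically the requirement that it preserve the invariant \emph{at the same level} $\alpha$ rather than degrading by a quantifier alternation each time a point is added. A naive back-and-forth using only $\equiv_{\alpha+1}$ would drop to $\equiv_\alpha$ after one move and could not be iterated to build a full isomorphism; it is precisely the closure of the $\E_\alpha$/$\A_\alpha$ hierarchy under the ``splitting into intervals'' operation (Lemma \ref{lem:intervalFormulas}, whose unavoidability is the point of Proposition \ref{prop:must-use-new}) that keeps the split-existence statements inside $\mathbb{A}^*$ and lets genericity reproduce the split on the other side without loss of level. Organizing the two regimes---homogeneity via Property $\mathbf{(*)}$, splitting via genericity and the interval formulas---and checking that together they make the back-and-forth close up is the heart of the argument.
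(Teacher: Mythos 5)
Your reduction to intervals and the final appeal to \cite{MBook} are fine, but the core of your argument has two genuine gaps. First, the back-and-forth in the ``splitting regime'' does not close up. Transferring the single sentence $\exists x\,(\theta_L\Res_{<x}\wedge\theta_R\Res_{>x})$ from $I'$ to $J'$ only guarantees that the two new left pieces share the one sentence $\theta_L$ (and likewise on the right); it does not make them agree on all of $\mathbb{A}^*$, and genericity clause (2) gives no mechanism for choosing the single witness $d$ so as to match the countably many remaining $\mathbb{A}^*$-sentences simultaneously. Such matching is automatic only when the pieces satisfy a common \emph{unity}-forcing sentence (then any $\psi$ consistent with it must hold of both, by genericity plus unity — this is the paper's Sublemma \ref{ScottImpliesIso}); in the splitting regime it genuinely fails, since splitting means there are incompatible refinements. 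The paper never runs a back-and-forth through a splitting interval. Instead it proves (Sublemma \ref{splittingFree}) that if some subinterval of $(a,b)$ forces splitting and is disjoint from $(c,d)$, then $(a,b)\not\equiv_{\alpha+1}(c,d)$; the distinguishing formula is $\E_{\alpha+1}/\A_{\alpha+1}$, which is exactly where the extra $+1$ and Property $\mathbf{(*)}$ are used. Your sketch never uses the $+1$ in an essential way, which is a symptom of this gap. Note also that Property $\mathbf{(*)}$ compares two \emph{disjoint} subintervals of the single structure $\mc{A}$; once you abstract to two separate structures $I$ and $J$ you can no longer invoke it across them.

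Second, you have overlooked that $\bar a$ and $\bar b$ live in the same $\mc{A}$, so the corresponding intervals $(a_i,a_{i+1})$ and $(b_i,b_{i+1})$ may overlap. In that case the disjointness needed for the splitting analysis is unavailable on the overlap, and no back-and-forth between the two intervals suffices: the paper handles this with a separate absorption argument, showing via the Lindenbaum--Tarski theorem (Proposition \ref{Tarski}) that if, say, $a\le c<b\le d$ and $(a,c)$ is Scott with order type $N$, then $(N+1)\cdot\omega$ is an initial segment of $(c,b)$, whence $(a,b)\cong(c,b)\cong(c,d)$. This case, and the use of Lindenbaum--Tarski, is entirely absent from your proposal and cannot be recovered from the back-and-forth you describe.
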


\begin{proof}
Let $\bar{a} = (a_1,\ldots,a_n)$ and $\bar{b} = (b_1,\ldots,b_n)$.
Without loss of generality, we may assume that $a_1 < a_2 < \cdots < a_n$ and $b_1 < b_2 < \cdots < b_n$.
As $\bar{a} \equiv_{\alpha+1} \bar{b}$, we have $(a_i,a_{i+1}) \equiv_{\alpha+1} (b_i,b_{i+1})$ for $i = 0,\ldots,n$.
To show that $\bar{a}$ and $\bar{b}$ are in the same automorphism orbit, it suffices to show that $(a_i,a_{i+1}) \cong (b_i,b_{i+1})$ for each $i$.
Thus it suffices to show that given two intervals $(a,b)$ and $(c,d)$ in $\mc{A}$ with $(a,b) \equiv_{\alpha+1} (c,d)$, $(a,b) \cong (c,d)$.
Without loss of generality, we will assume throughout that $a\leq c$. 

We will make use of several lemmas that describe what happens in an interval depending on whether any subinterval satisfies a formula forcing unity or whether there is a subinterval satisfying a formula forcing splitting. In all of the lemmas to follow, we include intervals with limits $-\infty$ or $\infty$.

\begin{sublemma}\label{IntervalTransfer}
	Let $\varphi \in \mathbb{A}^*$ be a sentence that forces unity. Let $\mc{A}$ and $\mc{B}$ be generic linear orders. Let $(a,b)$ be an interval in $\mc{A}$ and $(c,d)$ an interval in $\mc{B}$ such that $\mc{A} \models \varphi \Res_{(a,b)}$ and $\mc{B} \models \varphi \Res_{(c,d)}$. Suppose that for every subinterval $(a',b')$ of $(a,b)$, $(a',b')$ satisfies a sentence that forces unity. Then, every subinterval $(c',d')$ of $(c,d)$ satisfies a sentence that forces unity.
\end{sublemma}
\begin{proof}
	For the sake of contradiction, say that there is a subinterval $(c',d')$ of $(c,d)$ that does not force unity.
	By genericity, $(c',d')$ must satisfy a sentence $\psi$ that forces splitting.
	This means the sentence $\exists x < y ~ \psi\Res_{(x,y)}$ is consistent with $\varphi$.
	In fact, because $\varphi$ forces unity, $\exists x < y ~ \psi\Res_{(x,y)}$ is also consistent with any $\chi\in\mathbb{A}^*$ that extends $\varphi$.
	By genericity, this means that $(a,b)$ must satisfy $\exists x < y ~ \psi\Res_{(x,y)}$.
	Let $a',b'$ witness the above existential quantifier.
	Note that $(a',b')$ is a subinterval of $(a,b)$, so it satisfies some $\varphi'$ that forces unity.
	However, this means that it is consistent to satisfy both $\psi$ and $\varphi'$ and, therefore to force both unity and splitting simultaneously, a contradiction.
\end{proof}

The above lemma together with Lemma \ref{ScottImpliesIso} to follow justifies the importance of the following definition. 

\begin{definition}
	We say that a sentence $\varphi$ is \emph{Scott-like} if $\varphi$ forces unity and for all $\mc{L} \models \varphi$, for every subinterval $(a,b)$ of $\mc{L}$, $(a,b)$ satisfies a sentence that forces unity (or, equivalently by the previous lemma, there is $\mc{L} \models \varphi$ such that for every subinterval $(a,b)$ of $\mc{L}$, $(a,b)$ satisfies a sentence that forces unity). We say that a linear order $\mc{L}$ is \emph{Scott} if there is a Scott-like sentence $\varphi$ such that $\mc{L} \models \varphi$ (i.e., $\mc{L}$ has a Scott-like sentence).
\end{definition}

We use the terminology Scott-like because, among generic linear orderings, Scott-like sentences act like Scott sentences.
In addition, Scott linear orderings act like linear orderings with a small Scott rank.

\begin{sublemma}\label{ScottImpliesIso}
	Let $\varphi \in \mathbb{A}^*$ be Scott-like. Let $\mc{A}$ and $\mc{B}$ be generic linear orders. Then for all intervals $(a,b)$ in $\mc{A}$ and $(c,d)$ in $\mc{B}$, if $(a,b) \models \varphi$ and $(c,d) \models \varphi$ then $(a,b) \cong (c,d)$. 
\end{sublemma}
\begin{proof} 
	We will build an isomorphism $(a,b) \cong (c,d)$ using a back-and-forth argument. Recall that a back-and-forth family is a set $\mc{F}$ of tuples $(\bar{u},\bar{v})$ such that if $(\bar{u},\bar{v}) \in \mc{F}$ then for any $u'$ there is a $v'$ (and for any $v'$ there is a $u'$) such that $(\bar{u}u',\bar{v}v') \in \mc{F}$. We claim that the following set $\mc{F}$ is a back-and-forth family between $(a,b)$ and $(c,d)$ and thus $(a,b) \cong (c,d)$. Let $\mc{F}$ consist of tuples $(\bar{u},\bar{v})$ which, rearranged in increasing order $u_1 < \cdots < u_n$ in $(a,b)$ and $v_1<\ldots<v_n \in (c,d)$, satisfy that for each $i$ the intervals $(u_i,u_{i+1})$ and $(v_i,v_{i+1})$ both satisfy a common sentence $\varphi_i \in \mathbb{A}^*$ that forces unity. (As usual, we allow $i=0$ and $i = n$, giving the intervals $(a,u_1)$ and $(c,v_1)$, and $(u_n,b)$ and $(v_n,d)$.) Given $u'$, say $u_i < u' < u_{i+1}$, let $\varphi_i \in \mathbb{A}^*$ be the sentence above. By hypothesis, there are sentences $\psi,\theta \in \mathbb{A}^*$ which force unity such that $(u_i,u') \models \psi$ and $(u',u_{i+1}) \models \theta$. Then as witnessed by $u'$, we have $(u_i,u_{i+1}) \models \exists x \; \psi \Res_{<x} \wedge \theta \Res_{>x}$. Since this is a formula of $\mathbb{A}^*$ consistent with $\varphi_i$, by genericity we get that $(v_i,v_{i+1}) \models \exists x \; \psi \Res_{<x} \wedge \theta \Res_{>x}$. Let $v'$ be a witness; then $(v_i,v') \models \psi$ and $(v',v_{i+1}) \models \theta$. Thus $(\bar{u}u',\bar{v}v') \in \mc{F}$.
\end{proof}

\begin{sublemma}\label{finsum}
	Let $(a,b)$ and $(c,d)$ be Scott. Then:
	\begin{enumerate}
		\item The sum $(a,b) + 1 + (c,d)$ is Scott, so in particular any finite sum $(a,b)+1+(a,b)+1+\cdots+1+(a,b)$ is Scott.
		\item Any subinterval of $(a,b)$ is Scott.
	\end{enumerate}
\end{sublemma}
\begin{proof}
	We begin by proving item (1).
	Any subinterval of $(a,b) + 1 + (c,d)$ that is not simply a subinterval of $(a,b)$ or $(c,d)$ is of the form $(p,b)+1+(c,q)$ for some $p\in(a,b)$ and $q\in(c,d)$.
	Let $\psi_1$ be the sentence that forces unity for $(p,b)$ and let $\psi_2$ be the sentence that forces unity for $(c,q)$.
	Consider the following sentence $\theta\in\mathbb{A}^*$
	\[\theta : = ~ \exists x ~ \psi_1\Res_{< x}\land  \psi_2\Res_{> x}.\]
	We claim that $\theta$ forces unity for $(p,b)+1+(c,q)$.
	If it does not, there are inconsistent extensions $\chi_1$ and $\chi_2$ of $\theta$ in $\mathbb{A}^*$.
	Pick a model $\mc{M}_1 + 1 + \mc{N}_1\in\mathbb{C}$ of $\theta \wedge \chi_1$ with $\mc{M}_1 \models \psi_1$ and $\mc{N}_1 \models \psi_2$, and a model $\mc{M}_2 + 1 + \mc{N}_2\in \mathbb{C}$ of $\theta \wedge \chi_2$ with $\mc{M}_2 \models \psi_1$ and $\mc{N}_2 \models \psi_2$. By Lemma \ref{lem:intervalFormulas} we can find $\rho_{k,j}$ in $\mathbb{A}^*$ with $1\leq k\leq 2$ and $1\leq j\leq 2$ such that:
	\begin{align*}
		\mc{M}_1 \models \rho_{1,1} && \mc{N}_1 \models \rho_{1,2} \\
		\mc{M}_2 \models \rho_{2,1} && \mc{N}_2 \models \rho_{2,2}
	\end{align*}
	and such that if
	\[ \mc{L} \models \exists x \; \big(\rho_{k,1} \Res_{< x}\wedge \rho_{k,2}\Res_{> x}\big) \]
	then $\mc{L} \models \chi_k$.
	In other words,
	\[ \models \exists x \big(\rho_{k,1} \Res_{< x}\wedge \rho_{k,2}\Res_{> x}\big) \longrightarrow \chi_k. \]
	Note that if for each $j$ if the sentence $\rho_{1,j}$ is consistent with $\rho_{2,j}$, say with model $\mc{L}_j$, then we would have $\mc{L}_1 + 1 + \mc{L}_2 \models \chi_1 \wedge \chi_2$.
	This means that for some $j$, $\rho_{1,j}$ is inconsistent with $\rho_{2,j}$, and both are consistent with $\psi_j$.
	This is a contradiction to the fact that $\psi_1$ and $\psi_2$ force unity. Thus $(a,b)+1+(c,d)$ is Scott. By straightforward induction we can prove the second statement of the lemma starting ``in particular...''.
	
	We now prove item (2).
	Being Scott is hereditarily defined based on the properties of all of the subintervals of $(a,b)$.
	As $(c,d)$ is a subinterval of $(a,b)$ all subintervals of $(c,d)$ are also subintervals of $(a,b)$.
	This means that if $(a,b)$ is Scott then $(c,d)$ must be Scott.
\end{proof}

We will now explain what happens for intervals that are not Scott in a generic linear ordering. This is where we use ($*$).

\begin{sublemma}\label{splittingFree}
	Given a generic linear order satisfying ($*$), suppose that $a <b$ and $c < d$ and that $(a,b)$ is not Scott as witnessed by some subinterval $(a',b')$ that is disjoint from $(c,d)$. Then $(a,b) \nequiv_{\alpha+1} (c,d)$.
\end{sublemma}
\begin{proof}
	Suppose towards a contradiction that $(a,b) \equiv_{\alpha+1} (c,d)$. Let $\varphi$ be the sentence true of $(a',b')$ which forces splitting. By ($*$), for every subinterval $(c',d')$ of $(c,d)$, with $c \leq c' < d' \leq d$ and with $\mc{L} \models \varphi \Res_{(c',d)}$, there is an $\E_\alpha$ sentence $\psi_{c',d'} \leq \varphi$ such that $(c',d') \models \psi_{c',d'}$ and such that $(a',b') \nmodels \psi_{c',d'}$. Then
	\[ \mc{L} \models \varphi \Res_{(a',b')} \wedge \bigdoublewedge_{\substack{ < c' < d' < d \\ \mc{L} \models \varphi \Res_{(c',d')}}} \neg \psi_{c',d'} \Res_{(a',b')}.\]
	In particular,
	\[ (a,b) \models \exists x < y \;\;\; \left[\varphi \Res_{(x,y)} \wedge \bigdoublewedge_{\substack{c < c' < d' < d \\ \mc{L} \models \varphi \Res_{(c',d')}}} \neg \psi_{c',d'} \Res_{(x,y)} 
	\right] \]
	but
	\[ (c,d) \models \forall x < y \;\;\; \left[ \varphi \Res_{(x,y)} \;\longrightarrow\; \bigdoublevee_{\substack{c < c' < d' < d \\ \mc{L} \models \varphi \Res_{(c',d')}}} \psi_{c',d'} \Res_{(x,y)}\right].\]
	This sentence on which $(a,b)$ and $(c,d)$ disagree is a $\E_{\alpha+1}$/$\A_{\alpha+1}$ formula. Thus, using Corollary \ref{cor:karp}, $(a,b) \nequiv_{\alpha+1} (c,d)$.
\end{proof}

\medskip

We now use the lemmas to finish the argument. Recall that our goal is to show that given two intervals $(a,b)$ and $(c,d)$ in $\mc{A}$ with $(a,b) \equiv_{\alpha+1} (c,d)$, $(a,b) \cong (c,d)$. We may assume, without loss of generality, that $a \leq c$. There will be a few different cases, depending on how $b$ and $d$ compare to each other and to $c$ and whether or not certain intervals are Scott.

In the case that $a < b \leq c < d$, we may directly apply Lemmas \ref{ScottImpliesIso} and \ref{splittingFree}. If both of the intervals $(a,b)$ and $(c,d)$ are Scott, then as $(a,b) \equiv_{\alpha+1} (c,d)$ they have the same Scott-like sentence and hence are isomorphic. Otherwise, if they are not Scott, Lemma \ref{splittingFree} would imply that $(a,b) \nequiv_{\alpha+1} (c,d)$, contradicting our original assumption.

Otherwise, $a\leq c<b$ so that the two intervals $(a,b)$ and $(c,d)$ overlap. We first consider the subcase that the interval $(a,c)$ is not Scott as witnessed by $(a',b')$. Note that $(a',b')$ is disjoint from $(c,d)$ as it is contained in $(a,c)$. Lemma \ref{splittingFree} gives that $(a,b) \nequiv_{\alpha+1} (c,d)$, a contradiction to our assumption. Note that a symmetrical argument handles the case that either $(b,d)$ if $b < d$, or $(d,b)$, if $d < b$, is not Scott. Thus, we may assume that the intervals $(a,c)$ and either $(b,d)$ or $(d,b)$ are Scott. We split into the two cases $a\leq c<b\leq d$ and $a\leq c< d \leq b$.
	
First, consider the case where $a\leq c<b\leq d$ and the intervals $(a,c)$ and $(b,d)$ are Scott. Let $N$ be the order type of $(a,c)$ and let $\psi$ be the Scott-like sentence for $(a,c)$. If $(a,b)$ is Scott, then so is $(c,d)$, and $(a,b) \cong (c,d)$ by Lemma \ref{ScottImpliesIso} (and vice versa). So, we may assume that $(a,b)$ and $(c,d)$ are not Scott. 

\[
\xymatrix@R=0.1pc{
\ar[rrrrrr] & |^a & |^c & & |^{b} & |^d &   & \mc{A} \\
& \ar@{}^{N\models \psi}[r]  & & & & & &
} \]

We will argue that $(N + 1) \cdot \omega$ is an initial interval of $(c,b)$. By Lemma \ref{Tarski}, to show that $(N + 1) \cdot \omega$ is an initial interval of $(c,b)$, it suffices to show that $(N + 1) \cdot k$ is an initial segment of $(c,b)$ for every $k$. Suppose that, starting with $k = 0$, $(N + 1) \cdot k$ is an initial segment of $(c,b)$; we will argue that $(N + 1) \cdot (k+1)$ is an initial segment of $(c,b)$. Let $c = u_0 < u_1 < u_2 < \cdots < u_k$ be such that $(u_{i-1},u_{i}) \cong N$ for $i = 1,\ldots,k$. Note that also $(a,c) \cong N$. Then, as witnessed by $x_i = u_{i-1}$,
\[ (a,b) \models \exists x_1 < \cdots < x_{k+1} \;\; \bigwedge_{i=1}^n (x_{i-1},x_i) \models \psi \]
where, by convention, $x_0$ is $a$/$-\infty$. This is $\E_\alpha$, and so
\[ (c,d) \models \exists x_1 < \cdots < x_{k+1} \;\; \bigwedge_{i=1}^n (x_{i-1},x_i) \models \psi.\]
Let $v_0 = c < v_1 < \cdots < v_{k+1}$ be the witnesses; since each interval satisfies $\psi$, we have $(v_{i-1},v_i) \cong N$ for each $i = 1,\ldots,k+1$ by Lemma \ref{ScottImpliesIso}. Thus $(N+1) \cdot (k+1)$ is an initial segment of $(c,d)$. It must be that each $v_i \in (c,b)$, and $(N+1) \cdot (k+1)$ is an initial interval of $(c,b)$ as desired; otherwise, $(c,b)$ is an initial interval of $(N+1) \cdot (k+1)$, and so would be Scott by Lemma \ref{finsum}. Since we assumed this was not the case, we are done and have shown that $(N+1) \cdot \omega$ is an initial interval of $(c,b)$. 

\[
\xymatrix@R=0.1pc{
\ar[rrrrrrrr] & |^a & |^c & |^{v_1} & |^{v_2\cdots} && |^{b} & |^d &   & \mc{A} \\
& \ar@{}^{N}[r] & \ar@{}^{N}[r]  & \ar@{}^{N}[r]  & & \\
& \ar@{(-)}^{N\cdot\omega}[rrrr] &&&& \\
& & \ar@{(-)}_{N\cdot\omega}[rrr] &&&&
} \]

Thus $(a,b) \cong N + 1 + (c,d) \cong (c,b)$. A similar argument on the other side, using the fact that $(b,d)$ is Scott, shows that $(c,d) \cong (c,b)$. Thus $(a,b) \cong (c,d)$.
To be explicit, note that nothing changes in this argument if we let some of the end points be $\infty$ or $-\infty$.

Finally, we consider the case where $a\leq c<d<b$ and the intervals $(a,c)$ and $(d,b)$ are Scott. We use a variation on the above argument. Let $N$ be the isomorphism type of $(a,c)$ and $\psi$ the Scott-like sentence. We argue that $(N+1) \cdot \omega$ is an initial interval of $(c,d)$. Again, given that $(N+1) \cdot k$ is an initial interval of $(c,d)$, we show that $(N+1) \cdot (k+1)$ is. We have
\[ (a,b) \models \exists x_1 < \cdots < x_{k+1} \;\; \bigwedge_{i=1}^n  \psi \Res_{(x_{i-1},x_i)} \]
where, by convention, $x_0$ is $a$/$-\infty$. This is $\E_\alpha$, and so
\[ (c,d) \models \exists x_1 < \cdots < x_{k+1} \;\; \bigwedge_{i=1}^n \psi \Res_{(x_{i-1},x_i)}.\]
Thus $(N+1) \cdot (k+1)$ is an initial interval of $(c,d)$ by Lemma \ref{ScottImpliesIso}. We conclude that $(N+1) \cdot \omega$ is an initial interval of $(c,d)$, and so $(a,d) \cong (c,d)$. Similarly, on the other side, we argue that $(c,b) \cong (c,d)$. Then $(a,b) \cong (a,d) + 1 + (d,b) \cong (c,d) + 1 + ( d,b) \cong (c,b) \cong (c,d)$.

\[
\xymatrix@R=0.1pc{
\ar[rrrrrrrrrrr] & |^a & |^c & |^{v_1} & |^{v_2\cdots} &&&& {}^{\cdots w_1}| & {}^{d}| &  {}^{b}| &  \mc{A} \\
& \ar@{}^{N}[r] & \ar@{}^{N}[r]  & \ar@{}^{N}[r]  &&&&& \ar@{}^{M}[r] & \ar@{}^{M}[r] &  \\
& \ar@{(-)}^{N\cdot\omega}[rrrr] &&&&& \ar@{(-)}^{M\cdot\omega}[rrrr] &&&&& \\
& & \ar@{(-)}_{N\cdot\omega}[rrr] &&&& \ar@{(-)}_{M\cdot\omega}[rrr] &&&&&
} \]

\end{proof}

\subsection{Construction Stage}

We now demonstrate that it is always possible to find a generic linear order with Property $\mathbf(*)$ satisfying the $\E_\alpha$ sentence $T$.

\begin{lemma}\label{lem:main2}
Given the satisfiable $\E_\alpha$ sentence $T$ of linear orders, there is a generic linear ordering $\mc{A}$ with Property $\mathbf(*)$ such that $\mc{A}\models \varphi$.
\end{lemma}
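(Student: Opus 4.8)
The plan is to run a model-theoretic forcing construction over the poset $\mathbb{A}^*$, building $\mc{A}$ as the increasing union of finite approximations and meeting countably many density requirements that encode genericity, Property $\mathbf{(*)}$, and $\mc{A}\models T$. \textbf{The forcing.} A condition is a finite configuration: a finite linear order $e_1<\cdots<e_k$ together with a label $\lambda_I\in\mathbb{A}^*$ attached to each of the $k+1$ gaps $I$ (including the two unbounded end gaps), subject to the coherence invariant that whenever a gap has been split by a point, the relativized conjunction of the two child labels implies the parent label. Refinement proceeds by two moves: \emph{strengthening} a gap label $\lambda_I$ to some $\lambda_I'\le\lambda_I$ in $\mathbb{A}^*$, and \emph{inserting} a new point into a gap $I$, splitting $\lambda_I$ into two coherent child labels. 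The engine for both moves is Lemma~\ref{lem:intervalFormulas}, made available inside the countable fragment by Lemma~\ref{lem:CountableSet}: given a model $\mc{M}\in\mathbb{C}$ of a target $\chi\le\lambda_I$ together with a choice of internal cut points in $\mc{M}$, the lemma returns sentences $\theta_j\in\mathbb{A}^*$ for the pieces whose relativized conjunction forces $\chi$; conjoining these $\theta_j$ with the existing labels yields coherent, satisfiable, strengthened or child labels. I initialize with the single gap $(-\infty,\infty)$ labeled $T$ (satisfiable, hence in $\mathbb{A}^*$), so that the truth lemma below will deliver $\mc{A}\models T$.

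\textbf{Requirements and density.} Since $\mc{A}$ is countable and $\mathbb{A}^*$ is countable, there are only countably many requirements, which I meet by bookkeeping: place every natural number (so $\mc{A}$ is infinite); for each pair of endpoints $a<b$ from $\mc{A}\cup\{-\infty,\infty\}$, condition (1) of genericity; for each such pair and each $\psi\in\mathbb{A}^*$, condition (2); and for each disjoint pair of intervals, the two alternatives of $\mathbf{(*)}$. A requirement about an interval $(a,b)$ becomes actionable once its endpoints are placed, and is met by strengthening the labels of the finitely many current sub-gaps of $(a,b)$ so as to realize a chosen extension of the committed interval theory $\Lambda_{(a,b)}:=\exists\bar z\,\bigwedge_j \lambda_j\Res_{(z_{j-1},z_j)}$. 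Genericity (1) and the $\mathbf{(*)}$-requirements are met using the dichotomy that every condition has an extension forcing unity or forcing splitting: for (1) I strengthen $(a,b)$ to realize such an extension of $\Lambda_{(a,b)}$; for $\mathbf{(*)}$ applied to disjoint $(a,b),(c,d)$ I examine $\Lambda_{(a,b)}\wedge\Lambda_{(c,d)}$, landing in case (a) if it is already inconsistent, and otherwise passing to an extension that forces unity (giving case (b) by committing both intervals to it) or forces splitting (giving case (a) by committing the two intervals to an incompatible pair $\psi_1,\psi_2$ lying below it). Genericity (2) is met by strengthening $(a,b)$ to $\Lambda_{(a,b)}\wedge\psi$ when consistent, and otherwise observing that $\Lambda_{(a,b)}$ itself witnesses incompatibility with $\psi$. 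The crucial bookkeeping point is monotonicity: every commitment has the form ``interval $J$ satisfies formula $\mu$'', which is inherited by any later $\mu'\le\mu$ and hence preserved under all refinements; so each requirement need only be met once, and no two requirements conflict. In particular a unity-forcing formula committed for $\mathbf{(*)}$(b) stays true, and one only needs to check that the strengthenings never render a gap label unsatisfiable, which is guaranteed by their conjunction-with-$\theta_j$ form.

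\textbf{Truth lemma and the main obstacle.} Finally I would prove the Truth Lemma that for every interval $(a,b)$ and every $\psi\in\mathbb{A}^*$, $\mc{A}\models\psi\Res_{(a,b)}$ if and only if $\psi$ lies in the limit commitment of $(a,b)$; genericity, Property $\mathbf{(*)}$, and $\mc{A}\models T$ then read off immediately. This reduces to showing that each interval genuinely satisfies every label ever assigned to it, which I would prove by induction on $\E_\alpha/\A_\alpha$-rank: the interval-decomposition Lemma~\ref{lem:intervalFormulas} converts ``$(a,b)\models\lambda$'' into a statement about the committed sub-gap labels, whose truth is supplied inductively, while the outermost existential witnesses demanded by an $\E_\alpha$ label are exactly the points inserted by the second refinement move. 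I expect this truth lemma to be the main obstacle: unlike ordinary finitary forcing, the labels are $\E_\alpha$ formulas of unbounded quantifier complexity, so one cannot simply induct on logical structure. The resolution is that the existential fragment is engineered so that positive commitments are self-witnessing, and that Lemma~\ref{lem:intervalFormulas} (through Theorem~\ref{thm:bnfFormulas}) replaces the dangerous universal content of a label by $\ge_\beta$-comparisons against the fixed countable family $\mathbb{C}$, thereby reducing satisfaction to the back-and-forth relations that the construction directly controls.
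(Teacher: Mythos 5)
Your handling of the genericity and Property $\mathbf{(*)}$ requirements is essentially the paper's: decompose the currently committed interval theory via Lemma \ref{lem:intervalFormulas}, replace the relevant interval's sentence by a strengthening that forces unity or splitting (or by an incompatible pair for the two intervals in $\mathbf{(*)}$), and reglue using a model from $\mathbb{C}$. The paper does exactly this for its requirements (6) and (7). The genuine gap is in the truth lemma, which you correctly flag as the main obstacle but do not close. The paper sidesteps the problem by running a Henkin construction: it builds a set $S$ of sentences over new constants, closed under the conditions that if $\bigvvee\psi_i\in S$ then some $\psi_i\in S$, if $\exists\bar y\,\psi\in S$ then $\psi(\bar c)\in S$ for some constants, if $\bigwwedge\psi_i\in S$ then \emph{every} $\psi_i\in S$, and, crucially, if $\forall\bar y\,\psi(\bar y)\in S$ then $\psi(\bar c)\in S$ for \emph{every} tuple of constants, while maintaining at each stage a witnessing model $\mc{B}\in\mathbb{C}$ and constant assignment. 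Since every element of the limit structure is named by a constant, these closure conditions yield satisfaction of all of $S$ by the standard induction; no separate argument about limits of interval labels is needed.

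Your gap-labelling forcing has no analogue of the universal-instantiation and infinite-conjunction requirements, and the substitute you sketch does not work. A label is an $\E_\alpha$ sentence $\bigvvee_k\exists\bar y\,\psi_k$ with $\psi_k\in\vwA_\beta$; only the outermost existential layer is ``self-witnessing'' via inserted points. Lemma \ref{lem:intervalFormulas} converts the inner content into conditions of the form $(d_j,d_{j+1})\geq_\beta(c_j,c_{j+1})$ against intervals of a fixed witness model, but $\geq_\beta$ is itself a $\A_\beta$ condition: it quantifies universally over all tuples of the final gap, including points inserted at arbitrarily late stages, and demands $\leq_\gamma$-responses that again carry universal content. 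Joint satisfiability of every finite stage's configuration (your coherence invariant) does not imply that the limit satisfies the labels; that implication is precisely what fails for formulas with universal content unless one adds dedicated density requirements discharging each universal instance at each element eventually placed --- which is what the paper's Henkin conditions (3) and (4) do, visited infinitely often per formula rather than ``met once'' as your monotonicity remark suggests. As written, nothing in your construction rules out a limit structure in which some gap fails its own label, and hence fails $T$. Repairing this would mean adding, for every committed $\geq_\beta$-obligation and every tuple eventually placed in the relevant gap, a requirement realizing the back-and-forth response --- at which point you have rebuilt the paper's Henkin machinery in different notation.
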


\begin{proof}
	We use the same pair $\mathbb{A}$, $\mathbb{C}$ that was fixed above. We note once more than when considering intervals involving constants, we allow $\infty$ and $-\infty$. Let $C = \{c_0,c_1,\ldots\}$ be a new set of constants. Using a Henkin-style construction we build a set $S$ of $\E_{\alpha}$ sentences over the language $\{ < \} \cup C$, with each sentence being a substitution of the constants $C$ into a formula of $\mathbb{A}$, such that:
	\begin{enumerate}
		\item If $\bigdoublevee \psi_i \in S$, then for some $i$, $\psi_i \in S$.
		\item If $\exists \bar y \psi(\bar y) \in S$, then $\psi(\bar c) \in S$ for some constants $\bar c \in C$.
		\item If $\bigdoublewedge \psi_i \in S$, then for all $i$, $\psi_i \in S$.
		\item If $\forall \bar y \psi(\bar y) \in S$, then $\psi(\bar c) \in S$ for all $\bar c \in C$.
		\item For every atomic sentence $\psi$ over $\tau \cup C$, either $\psi \in S$ or $\neg \psi \in S$.
		\item The model is generic:
		\begin{enumerate}
			\item for any set of constants $a,b$ with $a < b$ in $S$ there is a a sentence $\varphi \in \mathbb{A}^*$ that either forces unity or splitting and such that $\varphi \Res_{(a,b)} \in S$, and
			\item for all constants $a,b$ with $a < b$ in $S$ and sentences $\psi \in \mathbb{A}^*$, either $\psi \Res_{(a,b)} \in S$ or there is $\varphi \in \mathbb{A}^*$ such that $\varphi \Res_{(a,b)} \in S$ and $\psi$ and $\varphi$ are incompatible.
		\end{enumerate}
		\item Property ($*$): For any set of constants $a,b,c,d$ with $a < b \leq c < d \in S$, either (a)  there are formulas $\psi_1,\psi_2 \in \mathbb{A}^*$ such that $\psi_1 \Res_{(a,b)} \in S$ and $\psi_2 \Res_{(c,d)} \in S$ but $\psi_1\Res_{(x,y)} \wedge \psi_2 \Res_{(x,y)}$ is not satisfiable, or (b) there is a formula $\varphi \in \mathbb{A}^*$ that forces unity such that $\varphi \Res_{(a,b)} \wedge \varphi \Res_{(c,d)} \in S$.
	\end{enumerate}
	Once we have such a set $S$, we can build a structure $\mc{A}$ as usual. As indicated above, Property (6) will guarantee that $\mc{A}$ is generic, and Property (7) will guarantee that $\mc{A}$ has Property ($*$). As usual, the sentences in $S$ will hold of the constants mentioned in them. In particular, if we guarantee that $T\in S$ (as we will), $\mc{A}\models T$ as desired. Therefore, we only need to construct such a set $S$ to complete the proof.

	We build $S$ stage-by-stage, with at each stage $S_n$ consisting of finitely many $\E_{\alpha}$ sentences in the language $\{<\} \cup C$, with each sentence mentioning at most finitely many of the constants from $\bar c$. Then, in the end, we let $S = \bigcup S_n$. We will make sure that at each stage $n$ there is a linear order $\mc{B}\in\mathbb{C}$ and an assignment $v_n$ of values from $\mc{B}$ to the constants that appear in $S_n$, such that $S_n$ holds in $\mc{B}$. Moreover, for any two constants $c$ and $d$ which appear in $S_n$, either $c=d$ or $c < d$ or $d < c$ will be in $S_n$. We begin with $S_0 = \{ T\}$.

	At each stage, we take care of a new instance of one of the requirements. For dealing with (1)-(5), we do not have to change the structure $\mc{B}$ which witnessed the satisfiability at the previous stage. In other words, these items are dealt with in a standard way (see, e.g., \cite{MonSR}). In any case, we will describe how to meet these requirements.  % While often in such a construction, we would not explicitly describe how to meet these requirements, as it is such a standard construction, here it is important that all of the sentences we add to $S = \bigcup S_n$ are in the chosen existential fragment.
	
	Suppose that at stage $n+1$ we must deal with an instance of one of these requirements. We assign to each instance infinitely many stages; if, e.g., $\bigdoublevee \psi_i \in S_n$ and $n+1$ is a stage at which we are to deal with the corresponding instance of (1), then we must ensure that for some $i$, $\psi_i \in S_{n+1}$. For (3) and (4), the instances are not just for the particular formula $\bigdoublewedge \psi_i$ or $\forall \bar{y} \psi(\bar{y})$, but also for the $k$ for which we must put $\psi_k \in S$, or the constant $\bar{c}$ for which we must put $\psi(\bar{c}) \in S$. (That is, we will not put all $\psi_k$ or $\psi(\bar{c})$ in $S$ at one time, but one by one over the whole construction.) 
	
	\begin{enumerate}
		\item Suppose that we deal with an instance of (1) corresponding to $\bigdoublevee \psi_i \in S_n$ at stage $n+1$. There is a linear order $\mc{B}\in\mathbb{C}$ and an assignment $v_n$ of values from $\mc{B}$ to the constants that appear in $S_n$ such that $S_n$ holds in $\mc{B}$. In particular, some $\psi_i$ must hold in $\mc{B}$. Let $S_{n+1} = S_n \cup \{\psi_i\}$ and keep $v_n$ and $\mc{B}$ unchanged.
	
		\item For an instance of (2) corresponding to $\exists \bar y \psi(\bar y) \in S$. There is a linear order $\mc{B}\in\mathbb{C}$ and an assignment $v_n$ of values from $\mc{B}$ to the constants that appear in $S_n$ such that $S_n$ holds in $\mc{B}$, and in particular, $\exists \bar y \psi(\bar y)$ holds in $\mc{B}$. Thus, there must be witnessing elements $\bar{b}$. Choosing a new tuple of constants $\bar{d}$ not appearing in $S_n$, let $S_{n+1}$ be $S_n$ together with $\psi(\bar{d})$ as well as the sentences describing the ordering of $\bar{d}$ compared to the constants already appearing in $S_n$ (as obtained from $\bar{b}$). This is satisfied in $\mc{B}$ under the assignment $v_{n+1}$ which modifies $v_n$ by assigning the tuple $\bar{b}$ to the constants $\bar{d}$.
	
		\item For the $k$th instance of (3) corresponding to $\bigdoublewedge \psi_i \in S_n$ (for which we must put $\psi_k \in S$), there is a linear order $\mc{B}\in\mathbb{C}$ and an assignment $v_n$ of values from $\mc{B}$ to the constants that appear in $S_n$ such that $S_n$ holds in $\mc{B}$, and particular $\psi_k$ holds in $\mc{B}$. Put $S_{n+1} = S_n \cup \{\psi_k\}$ and keep $v_n$ and $\mc{B}$ unchanged.
	
		\item For the instance of (4) corresponding to $\forall \bar{y} \psi(\bar{y})$ and $\bar{c}$, there is a linear order $\mc{B}\in\mathbb{C}$ and an assignment $v_n$ of values from $\mc{B}$ to the constants that appear in $S_n$ such that $S_n$ holds in $\mc{B}$, and in particular, extending this assignment to the new constants in $\bar{c}$, $\psi(\bar{c})$ holds in $\mc{B}$. Let $S_{n+1}$ be $S_n$ together with $\{\psi(\bar{c})\}$ and the sentences describing the ordering of the constants (as obtained from $\mc{B}$).
		
		\item For the instance of (5) corresponding to the atomic sentence $\psi$, there is a linear order $\mc{B}\in\mathbb{C}$ and an assignment $v_n$ of values from $\mc{B}$ to the constants that appear in $S_n$ such that $S_n$ holds in $\mc{B}$. If $\mc{B} \models_{v_n} \psi$ put $S_{n+1} = S_n \cup \{\psi\}$, and otherwise if $\mc{B} \models_{v_n} \neg \psi$ put $S_{n+1} = S_n \cup \{\neg \psi\}$ and keep $v_n$ and $\mc{B}$ unchanged.
	\end{enumerate}
	
	It is the instances of (6), both (a) and (b), and (7) where we must do most of the interesting work.
	
	\begin{enumerate}[resume*]
		\item 
		
		\begin{enumerate}
			\item Let $a< b$ be the constants corresponding to this instance. Let $\theta(a,b,\bar d) = \bigwedge S_n$ where $\bar d$ are the additional constants that appear in $S_n$ other than $a,b$. Then $\exists \bar z \theta(x,y, \bar z)$ is a $\E_{ \alpha}$ formula realized in $\mc{B}\in\mathbb{C}$ by $x=a, y=b$. By Lemma \ref{lem:intervalFormulas} there are $\E_{\alpha}$ sentences $\chi_1,\chi_2,\chi_3$ in $\mathbb{A}$ such that
			\[
				(-\infty,a) \models \chi_1, \quad (a,b) \models \chi_2, \quad (b,\infty) \models \chi_3\]
			and such that if $\mc{L}$ is any linear order with elements $c < d$ and
			\[ \mc{L} \models \chi_1 \Res_{< c} \wedge \chi_2 \Res_{(c,d)} \wedge \chi_3 \Res_{>d} \]
			then $\mc{L} \models \exists \bar z \theta(c,d, \bar z)$.
			There is a formula $\psi \in \mathbb{A}^*$, $\psi \leq \chi_2$, such that $\psi$ either forces unity or forces splitting. 
			Let $\mc{P}\models \psi$.
			Note that $\mc{B}_{\leq a}+\mc{P}+\mc{B}_{\geq b}$ satisfies the formula $\exists x < y ~  \left(\chi_1 \Res_{<x} \wedge \psi \Res_{(x,y)} \wedge \chi_3 \Res_{>y}\right)$.
			As $\psi \leq \chi_2$, this linear ordering also satisfies $\exists \bar z \theta(x,y, \bar z)$.
			By Property (2) of Lemma \ref{lem:CountableSet}, there is a $\mc{B}^*\in\mathbb{C}$ that also satisfies
			\[ \mc{B}^*\models\exists x < y ~  \left(\chi_1 \Res_{<x} \wedge \psi \Res_{(x,y)} \wedge \chi_3 \Res_{>y} \land \exists \bar z \theta(x,y, \bar z)
			\right).\]
			We move to interpreting the constants as elements of $\mc{B}^*$.
			The new values of $a,b,\bar{d}$ are given by the witnesses for $x,y,\bar{z}$ respectively.
			With this assignment, it is consistent to put
			\[ S_{n+1} = S_n \cup \{ \chi_1 \Res_{<a} \wedge \chi_2 \Res_{(a,b)} \wedge \chi_3 \Res_{>b}\} \cup \{\psi \Res_{(a,b)}\},\]
			which has the desired property that $(a,b)$ is assigned a formula that either forces splitting or unity. Also if $a$ and $b$ are new constants, add to $S_{n+1}$ their ordering relative to the other constants.
			Note that this argument only changes superficially if either $a=-\infty$ or $b=\infty$.
			
			\item Let $a< b$ be the constants corresponding to this instance and let $\psi$ be the sentence. Let $\theta(a,b,\bar d) = \bigwedge S_n$ where $\bar d$ are the additional constants that appear in $S_n$ other than $a,b$. Then $\exists \bar z \theta(x,y, \bar z)$ is a $\E_{ \alpha}$ formula realized in $\mc{B}$ by $x=a, y=b$. By Lemma \ref{lem:intervalFormulas} there are $\E_{\alpha}$ sentences $\chi_1,\chi_2,\chi_3$ in $\mathbb{A}$ such that
			\[
			(-\infty,a) \models \chi_1, \quad (a,b) \models \chi_2, \quad (b,\infty) \models \chi_3\]
			and such that if $\mc{L}$ is any linear order with elements $c < d$ and
			\[ \mc{L} \models \chi_1 \Res_{< c} \wedge \chi_2 \Res_{(c,d)} \wedge \chi_3 \Res_{>d} \]
			then $\mc{L} \models \exists \bar z \theta(c,d, \bar z)$.
			If $\psi$ is consistent with $\chi_2$, then there is $\mc{P}\models \psi\land\chi_2$.
			Note that 
			\[\mc{B}_{\leq a}+\mc{P}+\mc{B}_{\geq b}\models\exists x <y \left( \chi_1 \Res_{<x} \wedge \big(\psi\land\chi_2\big) \Res_{(x,y)} \wedge \chi_3 \Res_{>y}\right).\]
			By Property (2) of Lemma \ref{lem:CountableSet}, there is a $\mc{B}^*\in\mathbb{C}$ that also satisfies
			\[ \mc{B}^*\models\exists x < y \left(  \chi_1 \Res_{<x} \wedge \psi\land\chi_2 \Res_{(x,y)} \wedge \chi_3 \Res_{>y} \land \exists \bar z \theta(x,y, \bar z)\right).\]
			 We move to interpreting the constants as elements of $\mc{B}^*$.
			The new values of $a,b,\bar{d}$ are given by the witnesses for $x,y,\bar{z}$ respectively.
			With this assignment, it is consistent to put
			\[ S_{n+1} = S_n \cup \{ \chi_1 \Res_{<a} \wedge \chi_2 \Res_{(a,b)} \wedge \chi_3 \Res_{>b}\} \cup \{\psi \Res_{(a,b)}\}.\]

			Otherwise, $\psi$ is not consistent with $\chi_2$.
			In this case, we keep interpreting our constants in $\mc{B}$ and let 
			\[ S_{n+1} = S_n \cup \{ \chi_1 \Res_{<a} \wedge \chi_2 \Res_{(a,b)} \wedge \chi_3 \Res_{>b}\}.\]
			$\chi_2$ is now the witness to $\psi$ not being satisfied by the interval $(a,b)$. Also if $a$ and $b$ are new constants, add to $S_{n+1}$ their ordering relative to the other constants.
			Note that this argument only changes superficially if either $a=-\infty$ or $b=\infty$.
		\end{enumerate}
		
		\item 
		Let $\bar{c} = (c_1,c_2,c_3,c_4)$ be the set of constants for this instance of (7).
		Let $\theta(\bar c,\bar d) = \bigwedge S_n$ where $\bar d$ are the additional constants that appear in $S_n$ but are not already in $\bar c$.
		
		Then $\exists \bar y \theta(\bar x, \bar y)$ is a $\E_{\alpha}$ formula realized in $\mc{B}\in\mathbb{C}$ by $\bar{x} = \bar{c}$. By Lemma \ref{lem:intervalFormulas}, there are $\E_{\alpha}$ formulas $\chi_1,\chi_2,\chi_3,\chi_4,\chi_5$ such that
		\[ \mc{B} \models \chi_1 \Res_{(-\infty,c_1)} \wedge \chi_2 \Res_{(c_1,c_2)} \wedge \cdots \wedge \chi_5 \Res_{(c_4,\infty)}\]
		and such that if $\mc{L}$ is a linear order with elements $d_1 < \cdots < d_4$ and
		\[ \mc{L} \models \chi_1 \Res_{<d_1} \wedge \chi_2 \Res_{(d_1,d_2)} \wedge \cdots \wedge \chi_5 \Res_{> d_4}\]
		then $\mc{L} \models \exists \bar y \theta(\bar x, \bar y)$.
		Consider the case where there are $\E_{\alpha}$ sentences $\psi_1$ and $\psi_2$ such that $\psi_1 \wedge \psi_2$ is inconsistent but $\chi_2 \wedge \psi_1$ and $\chi_4 \wedge \psi_2$ are consistent.
		Then, there is $\mc{P}\models \psi_1\land\chi_2$ and $\mc{Q}\models \psi_2\land\chi_4$.
		Note that,
		\begin{align*}\mc{B}_{\leq c_1}+\mc{P}+[c_2,c_3]+ \mc{Q}+\mc{B}_{\geq b}\models\exists x_1,&x_2,x_3,x_4 \;\;\big( \chi_1 \Res_{<x_1} \wedge \big(\psi_1\land\chi_2\big) \Res_{(x_1,x_2)}  \\& \wedge \chi_3 \Res_{(x_2,x_3)}\wedge \big(\psi_2\land\chi_4\big)\Res_{(x_3,x_4)}\wedge \chi_5\Res_{> x_4}\big).\end{align*}
		By Property (2) of Lemma \ref{lem:CountableSet}, there is a $\mc{B}^*\in\mathbb{C}$ that also satisfies this formula.
		We move to interpreting the constants as elements of $\mc{B}^*$.
		We let $\bar{c}$ be the witnesses to $x_1,x_2,x_3,x_4$.
		As the intervals satisfy the $\chi_i$, we know $\mc{B}^*\models \exists \bar{y} \theta(\bar c,\bar y)$.
		Call $\bar{d}$ the witnesses to $\bar{y}$.
		With this assignment, it is consistent to put
		\[ S_{n+1} = S_n \cup \{\chi_1 \Res_{<c_1}, \chi_2 \Res_{(c_1,c_2)}, \chi_3 \Res_{(c_2,c_3)}, \chi_4 \Res_{(c_3,c_4)}, \chi_5 \Res_{> c_4}\} \cup \{\psi_1 \Res_{(c_1,c_2)}, \psi_2 \Res_{(c_3,c_4)}\}.\]
		This forever ensures that the quadruple $\bar{c} = (c_1,c_2,c_3,c_4)$ is in case (a) of (7).
		
		If there are no such $\E_{\alpha}$ sentences $\psi_1$ and $\psi_2$, then first of all $\chi_2 \wedge \chi_4$ must be consistent.
		Moreover, for every $\E_{\alpha}$ sentences $\psi_1$ and $\psi_2$ in $\mathbb{A}^*$ which are consistent with $\chi_2 \wedge \chi_4$, the sentence $\chi_2 \wedge \chi_4 \wedge \psi_1 \wedge \psi_2$ must be consistent.
		This means that $\chi_2 \wedge \chi_4$ forces unity. 
		There is $\mc{P}\models \chi_2\land\chi_4$.
		Note that,
		\begin{align*}\mc{B}_{\leq c_1}+\mc{P}+[c_2,c_3]+ \mc{P}+\mc{B}_{\geq b}\models\exists x_1,&x_2,x_3,x_4 \;\;\big(
		\chi_1 \Res_{<x_1} \wedge \big(\chi_4\land\chi_2\big) \Res_{(x_1,x_2)} \\&\wedge \chi_3 \Res_{(x_2,x_3)}\wedge \big(\chi_2\land\chi_4\big)\Res_{(x_3,x_4)}\wedge \chi_5\Res_{>x_4}\big).\end{align*}
		By Property (2) of Lemma \ref{lem:CountableSet}, there is a $\mc{B}^*\in\mathbb{C}$ that also satisfies this formula.
		We move to interpreting the constants as elements of $\mc{B}^*$.
		We let $\bar{c}$ be the witnesses to $x_1,x_2,x_3,x_4$.
		As the intervals satisfy the $\chi_i$, we know $\mc{B}^*\models \exists \bar{y} \theta(\bar c,\bar y)$.
		Call $\bar{d}$ the witnesses to $\bar{y}$.
		With this assignment, it is consistent to put
		\[ S_{n+1} = S_n \cup \{\chi_1 \Res_{<c_1}, (\chi_2 \wedge \chi_4) \Res_{(c_1,c_2)}, \chi_3 \Res_{(c_2,c_3)}, (\chi_2 \wedge \chi_4) \Res_{(c_3,c_4)}, \chi_5 \Res_{>c}\}.\]
		This ensures that the quadruple $\bar{c} = (c_1,c_2,c_3,c_4)$ is in case (b) of (7).
		
		In both cases, we should also add to $S_{n+1}$ the ordering of any new constants. Note that this argument only changes superficially if $c_1=-\infty$ and/or $c_4=\infty$.
	\end{enumerate}
	This ends the construction, proving the lemma.
\end{proof}

Putting the full construction together, given the satisfiable $\E_\alpha$ sentence $T$ of linear orders, by Lemma \ref{lem:main2} there is a generic linear ordering $\mc{A}$ with Property $\mathbf(*)$ such that $\mc{A}\models \varphi$.  By Lemma \ref{lem:main2} given any two tuples $\bar{a}$ and $\bar{b}$ in $\mc{A}$, if $\bar{a} \equiv_{\alpha+1} \bar{b}$, then $\bar{a}$ and $\bar{b}$ are in the same automorphism orbit. In particular, $\mc{A}$ has a $\Pi_{\alpha + 3}$ Scott sentence.

\section{Examples of Scott Skipping in Linear Orderings}\label{exampleLOs}

Our main result indicates that the complexity of a theory of linear orderings and its simplest model is small and finite.
In this section, we construct some examples of a non-trivial gap between the complexity of a theory of linear orderings and its simplest model.
In particular, we construct examples at limit and non-limit levels of $\Pi_\alpha$ sentences with structures of Scott rank at least $\alpha+1$.
This means that linear orderings do not have the strongest tameness property once conjectured in Question \ref{question:BIRS} (i.e. that all $\Pi_\alpha$ sentences have a model of Scott rank $\alpha$).
We demonstrate examples of this phenomenon at limit ordinals and away from limit ordinals.
In the subsequent section, we will demonstrate that we can take $\alpha$ to be essentially any countable ordinal by iterating our constructions up the hyperarithmetic hierarchy. 
We do not construct an example that is optimal in the sense that it is exactly an $\E_\alpha$ sentence with models of Scott rank at least $\alpha+2$.
This is left as an open question.

\subsection{Scott Skipping at Limit Ordinals}

In \cite{GHTH} the authors together with Ho give constructions for linear orders of Scott complexity $\Sigma_{\lambda + 1}$ where $\lambda$ is a limit ordinal. Moreover, there are structures $\mc{A}$ such that all $\mc{B} \equiv_\lambda \mc{A}$ have Scott complexity $\Sigma_{\lambda + 1}$. By taking an $\omega$-sum of one of these examples, one gets:

\begin{theorem}[Gonzalez, Harrison-Trainor, and Ho, Corollary 6.11  of \cite{GHTH}]
	Let $\lambda$ be a limit ordinal. There is a $\Pi_\lambda$ theory $T$ extending the axioms of linear orders such that no model of $T_\lambda$ has a $\Sigma_{\lambda + 2}$ Scott sentence. In particular, all models of $T_\lambda$ have Scott rank $\geq \lambda + 1$.
\end{theorem}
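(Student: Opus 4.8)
The plan is to take one of the structures $\mc{A}$ furnished by \cite{GHTH}---of Scott complexity $\Sigma_{\lambda+1}$, and with the robust property that \emph{every} $\mc{B}\equiv_\lambda\mc{A}$ again has Scott complexity $\Sigma_{\lambda+1}$---and to let $\mc{M}=\mc{A}\cdot\omega$ be the $\omega$-sum of copies of $\mc{A}$. I would take $T$ to be a $\Pi_\lambda$ sentence whose models are exactly the linear orders $\mc{N}$ with $\mc{M}\leq_\lambda\mc{N}$; that the class $\{\mc{N}:\mc{M}\leq_\lambda\mc{N}\}$ is cut out by a single $\Pi_\lambda$ sentence is part of the construction in \cite{GHTH} (for limit $\lambda$ one unfolds $\leq_\lambda$ as a countable conjunction over $\beta<\lambda$ and over the countably many tuples of the \emph{fixed} structure $\mc{M}$). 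Since $\mc{M}$ is a linear order and the axioms of linear orders are $\Pi_1\subseteq\Pi_\lambda$, the sentence $T$ extends the theory of linear orders.

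The engine is a criterion for the absence of a $\Sigma_{\lambda+2}$ Scott sentence that follows directly from Karp's theorem (Theorem \ref{thm:Karp}). If a structure $\mc{P}$ had a $\Sigma_{\lambda+2}$ Scott sentence $\Phi=\bigvvee_i\exists\bar{x}_i\,\psi_i$ with each $\psi_i\in\Pi_{\lambda+1}$, then fixing the disjunct and tuple witnessing $\mc{P}\models\Phi$ yields a single $\Pi_{\lambda+1}$ formula $\psi(\bar{x})$ and a tuple $\bar{p}$ with $\mc{P}\models\psi(\bar{p})$ such that $\mc{Q}\models\psi(\bar{q})$ forces $\mc{Q}\cong\mc{P}$. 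But if $(\mc{P},\bar{p})\leq_{\lambda+1}(\mc{Q},\bar{q})$ then by Theorem \ref{thm:Karp} every $\Pi_{\lambda+1}$ formula true of $\bar{p}$ in $\mc{P}$---in particular $\psi$---is true of $\bar{q}$ in $\mc{Q}$. Hence it suffices to prove: \emph{for every finite tuple $\bar{p}$ in $\mc{P}$ there are $\mc{Q}\not\cong\mc{P}$ and $\bar{q}$ with $(\mc{P},\bar{p})\leq_{\lambda+1}(\mc{Q},\bar{q})$.} Together with $\Pi_{\lambda+1}\subseteq\Sigma_{\lambda+2}$, the failure of a $\Sigma_{\lambda+2}$ Scott sentence also gives $\SR(\mc{P})\geq\lambda+1$, so the ``in particular'' is immediate.

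It then remains to verify this criterion for every $\mc{N}\models T$. Given a tuple $\bar{p}$, I would split $\mc{N}$ into the intervals determined by $\bar{p}$; by Lemma \ref{lem:combine-bf} it is enough to replace a single interval $I$ by an interval $I'$ with $I'\geq_{\lambda+1}I$, leaving the others fixed, in such a way that the substitution changes the isomorphism type of the whole order. The point of the hypothesis $\mc{M}\leq_\lambda\mc{N}$ is that $\mc{N}$ is back-and-forth above $\mc{A}\cdot\omega$, so its interval structure must contain pieces that are $\geq_\beta$-above copies of $\mc{A}$ for every $\beta<\lambda$; it is in such a piece that I would work. Here one uses the robustness of $\mc{A}$: since all $\mc{B}\equiv_\lambda\mc{A}$ have Scott complexity $\Sigma_{\lambda+1}$ and hence no $\Pi_{\lambda+1}$ Scott sentence, there is $\mc{A}'\equiv_\lambda\mc{A}$ with $\mc{A}'\not\cong\mc{A}$, and the $\omega$-many copies give room to insert such a non-isomorphic but $\equiv_\lambda$-indistinguishable defect and ``absorb'' it so that it is not detected before level $\lambda+1$.

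The main obstacle is exactly this last back-and-forth bookkeeping: one must splice $\mc{A}'$ into the $\omega$-sum so that the resulting order is genuinely non-isomorphic, yet the spliced interval remains $\geq_{\lambda+1}$ (not merely $\equiv_\lambda$) over the original---and this must be arranged uniformly over \emph{all} models $\mc{N}$ of $T$, not only over $\mc{M}$ itself. Controlling the interaction between the appended copy and the surrounding copies (for instance, ensuring that no endpoint or cofinality-type invariant betrays the change below level $\lambda+1$, which is why $\mc{A}$ should be chosen without the offending endpoints) is precisely where one needs both the \emph{exact} Scott complexity $\Sigma_{\lambda+1}$ of $\mc{A}$ and the robust $\equiv_\lambda$-invariance of that complexity. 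This verification is the technical heart and is the content carried out in \cite{GHTH}.
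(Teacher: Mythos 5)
Your plan is exactly the route the paper takes: the paper itself gives only the one-line indication ``by taking an $\omega$-sum of one of these examples'' from \cite{GHTH} and defers the entire verification to that reference, and your proposal is a correct elaboration of that same strategy (the $\omega$-sum $\mc{A}\cdot\omega$ of a robustly $\Sigma_{\lambda+1}$ example, the Karp-theorem criterion for ruling out a $\Sigma_{\lambda+2}$ Scott sentence, and the interval-splicing argument), with the same technical core deferred to \cite{GHTH}. So the approach matches; there is nothing further to compare.
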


\noindent Though our interest in this theorem was due to the work in this paper, the proof appears in \cite{GHTH} since it relies in large part on the constructions, definitions, and lemmas there.

\subsection{Almost-bi-interpretations}

As referred to in Proposition \ref{prop:must-use-new2} earlier, and in other constructions below, it will be convenient to consider simpler constructions (e.g., in linear order with unary relations) and then transfer results to more complicated structures. We take a brief interlude to introduce almost-bi-interpretations that formalize these transfer results. We begin with an important definition from \cite{MonRice}.

\begin{definition}
	Given a class of $\tau$-structures $\mathbb{C}$ with only countably many back-and-forth $\alpha$-equivalence classes for tuples, let $(\bar{a}_i)_{i\in\omega}$ be an enumeration of representatives of these classes.
	Define $\tau_{(\alpha)}$ to be the language $\tau$ along with relations $R_i$ of arity $\vert \bar{a}_i \vert$ for each $i$.
	In each structure $\mc{A}\in\mathbb{C}$ let $\mc{A}_{(\alpha)}$ be the $\tau_{(\alpha)}$ structure where $R^i_{\mc{A}}(\bar{b})\iff \bar{a}_i\leq_\alpha \bar{b}$.
	We call $\mc{A}_{(\alpha)}$ the \textit{the canonical $\alpha$-jump of the structure}.
	We will let $\mathbb{C}_{(\alpha)}$ denote the set of the canonical $\alpha$-jumps of the structures in $\mathbb{C}$.
\end{definition}

\cite{MR23} provides tools to demonstrate how this concept interacts with effective bi-interpretation.
We show that the results in Section 2.2 of \cite{MR23} can be generalized to a setting that is more favorable for our uses.

\begin{proposition}\label{prop:DinoAntonio}
	If 
	\begin{itemize}
		\item $\mathbb{D}$ is computably interpretable in $\mathbb{C}_{(\alpha)}$ via $\mc{A}\mapsto(\Phi(\mc{A}))_{(\alpha)}$,
		\item  and $\mathbb{C}_{(\alpha)}$ is computably interpretable in $\mathbb{D}$ via $\mc{B}\mapsto(\Psi(\mc{B}))_{(\alpha)}$,
		\item $\Phi$ and $\Psi$ are inverses of each other up to isomorphism,
		\item the closure, under isomorphism, of the image of $\Phi$ is $\Pi_\gamma$,
	\end{itemize}
	then
	\begin{enumerate}
		\item For every $\Pi_{\alpha+\beta}$ formula $\varphi$ there is a $\Pi_{\beta}$ formula $\varphi^*$ such that $\mc{M}\models \varphi^* \iff \Phi(\mc{M})\models \varphi$.
		\item For every $\Pi_{\beta}$ formula $\psi$ there is a $\Pi_{\alpha+\beta}$ formula $\psi_*$ such that $\mc{M}\models \psi \iff \Phi(\mc{M})\models \psi_*$.
		\item A structure $\mc{A}\in\mathbb{D}$ has Scott complexity $\Gamma_\delta$ where $\Gamma_{\alpha+\delta}$ is at least $\Pi_\gamma$ then the Scott complexity of $\Phi(\mc{A})$ is $\Gamma_{\alpha+\delta}$.
	\end{enumerate}
\end{proposition}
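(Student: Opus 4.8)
The plan is to prove parts (1) and (2) first, as statements about definability transfer, and then read off the Scott-complexity statement (3). The whole argument rests on chaining together two standard ``engines'' from \cite{MonRice,MR23}: first, the \emph{jump-inversion theorem}, which says that (for $\beta\geq 1$) a relation is $\Pi_{\alpha+\beta}$-definable in a structure $\mc{N}$ if and only if it is $\Pi_\beta$-definable in its canonical $\alpha$-jump $\mc{N}_{(\alpha)}$, and dually for $\Sigma$; and second, the fact that a computable interpretation is \emph{level preserving} on the infinitary hierarchy, so that if $\mc{Y}$ is computably interpreted in $\mc{X}$ then every $\Pi_\beta$ formula about $\mc{Y}$ pulls back to a $\Pi_\beta$ formula about $\mc{X}$ with the same meaning. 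Both engines already underlie Section~2.2 of \cite{MR23}; the content of Proposition~\ref{prop:DinoAntonio} is that they survive the weakening from an honest effective bi-interpretation to the pair of one-sided interpretations in the hypotheses, whose composites are the identity only up to isomorphism. Since every $\Lomom$ formula is isomorphism invariant, this weakening costs nothing, which is the one place one must be a little careful.

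For part (1), start from a $\Pi_{\alpha+\beta}$ formula $\varphi$ about $\Phi(\mc{M})$. First apply jump inversion to replace $\varphi$ by an equivalent $\Pi_\beta$ formula $\tilde\varphi$ evaluated over $(\Phi(\mc{M}))_{(\alpha)}$; the hypothesis that the interpretation lands on the $\alpha$-jump $(\,\cdot\,)_{(\alpha)}$ is exactly what licenses this step. Then apply level preservation to the interpretation of $(\Phi(\mc{M}))_{(\alpha)}$ inside $\mc{M}_{(\alpha)}$ supplied by the first hypothesis, pulling $\tilde\varphi$ back without changing its level to a $\Pi_\beta$ formula $\varphi^*$ over $\mathbb{C}_{(\alpha)}$, and obtaining $\mc{M}\models\varphi^*\Leftrightarrow\Phi(\mc{M})\models\varphi$ (reading $\mc{M}\models\varphi^*$ as satisfaction of the $\tau_{(\alpha)}$-formula $\varphi^*$ in $\mc{M}_{(\alpha)}$). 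Part (2) runs the same chain in reverse: a $\Pi_\beta$ formula $\psi$ over $\mathbb{C}_{(\alpha)}$ pulls back through the interpretation of $\mc{M}_{(\alpha)}=(\Psi(\Phi(\mc{M})))_{(\alpha)}$ inside $\Phi(\mc{M})$ (the second hypothesis, using $\Phi,\Psi$ inverse up to isomorphism) to a $\Pi_\beta$, hence \emph{a fortiori} $\Pi_{\alpha+\beta}$, formula $\psi_*$ over $\mathbb{D}$ with the asserted equivalence.

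For part (3), combine (1) and (2) with two further facts from \cite{MR23}: that effectively bi-interpretable structures have the same Scott complexity, and that passing to the canonical $\alpha$-jump shifts Scott complexity by exactly $\alpha$. The two hypotheses together make the relevant structure effectively bi-interpretable with the canonical $\alpha$-jump of its partner under the interpretation, so their Scott complexities coincide; composing with the $\alpha$-shift converts a Scott complexity $\Gamma_\delta$ on one side into $\Gamma_{\alpha+\delta}$ on the other, which is the content of (3). This produces a $\Gamma_{\alpha+\delta}$ sentence isolating the isomorphism type only \emph{relative to the image} of the interpretation; to promote it to a genuine Scott sentence one conjoins the $\Pi_\gamma$ sentence defining the isomorphism-closed image. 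This is precisely where the hypothesis $\Gamma_{\alpha+\delta}\geq\Pi_\gamma$ is used: since $\Pi_\gamma$ lies below $\Gamma_{\alpha+\delta}$ and the classes $\Sigma$, $\Pi$, $d\text{-}\Sigma$ are closed under conjunction with lower classes, the conjunction stays $\Gamma_{\alpha+\delta}$, giving the upper bound, while the matching lower bound follows by running (1) backwards: a strictly simpler Scott sentence would transfer to one for the partner structure strictly below $\Gamma_\delta$, contradicting optimality.

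The main obstacle is bookkeeping rather than a single hard idea. One must keep straight the two $\alpha$-shifts---the explicit one from jump inversion and the one implicit in the interpretations being between $\alpha$-jumps---and verify at each step that the pullback genuinely preserves the infinitary level instead of merely bounding it. The subtle points are checking that replacing honest bi-interpretation by interpretations inverse only up to isomorphism leaves the Scott-complexity transfer intact, and handling the interaction of the $\Pi_\gamma$ image hypothesis with the closure properties of the classes $\Gamma$ in part (3); everything else is a faithful adaptation of Section~2.2 of \cite{MR23}.
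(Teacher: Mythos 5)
Your proposal is correct and follows essentially the same route as the paper: parts (1) and (2) are obtained by combining jump inversion for the canonical $\alpha$-jump (Proposition 11 of \cite{MR23}) with the level-preserving pull-back through a computable interpretation (Knight--Miller--Vanden Boom), and part (3) pushes the $\Gamma_\delta$ Scott sentence forward, conjoins the $\Pi_\gamma$ image definition using $\Gamma_{\alpha+\delta}\geq\Pi_\gamma$, and gets the lower bound by pulling a hypothetical simpler Scott sentence back to contradict optimality for $\mc{A}$. The only quibble is a bookkeeping slip in part (1) -- the pull-back of $\tilde\varphi$ uses the interpretation of $(\Phi(\mc{M}))_{(\alpha)}$ inside $\mc{M}$ supplied by the \emph{second} hypothesis, landing in a formula over $\mathbb{D}$ rather than over $\mathbb{C}_{(\alpha)}$ -- which does not affect the substance of the argument.
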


When applying (1), the formula $\varphi^*$ is referred to as the \textit{pull-back} of $\varphi$.
When applying (2), the formula $\psi_*$ is referred to as the \textit{push-forward} of $\psi$.

\begin{proof}
	The first two points are immediate consequences of Proposition 11 of \cite{MR23} along with the pull-back theorem of Knight, Miller, and Vanden Boom \cite{KnightMillervB}.
	
	Therefore, we focus on the third point.
	Note that the $\Gamma_\delta$ Scott sentence of $\mc{A}$ pushes forward to a $\Gamma_{\alpha+\delta}$ sentence true of $\Phi(\mc{A})$.
	This push-forward sentence, along with the $\Pi_{\gamma}$ definition of the image of $\Phi$, gives a Scott sentence for $\Phi(\mc{A})$, as no other isomorphism class in the image satisfies this sentence.
	If there were a simpler Scott sentence for $\Phi(\mc{A})$, it would pull back to a sentence simpler than $\Gamma_\delta$ that would be a Scott sentence for $\mc{A}$.
	This is an immediate contradiction, so $\Phi(\mc{A})$ has Scott complexity $\Gamma_{\alpha+\delta}$ as desired.
\end{proof}

\begin{definition}
	If $\mathbb{C}$ and $\mathbb{D}$ satisfy the above conditions for some $\gamma$ we will say that $\mathbb{C}$ $\alpha$-\textit{almost bi-interprets} $\mathbb{D}$ and that the embedding $\Phi$ is an $\alpha$-\textit{almost bi-interpretation}.
\end{definition}

These interpretations are useful because they preserve Scott rank up to a constant and predictable increase in quantifier complexity.
In this way, they are similar to the effective bi-interpretations used in \cite{MR23}.
That said, there are far more almost bi-interpretations, which makes the tool more widely applicable (which is key for our purposes).
For example, almost bi-interpretations do not need to preserve the automorphism group of the structure in the way that an effective bi-interpretation must. 
The notion of an almost bi-interpretation interacts with other already studied concepts.
For example, the fourth condition is equivalent to the descriptive set-theoretic notion of a map being \textit{faithful}, or that it brings Borel sets to sets whose $\cong$-saturation is Borel (see, for example \cite{FS89}).
The above proposition can be seen as saying that $\alpha$-almost bi-interpretations are uniformly faithful in a strong sense.
In particular, the complexity of the Borel set $\alpha$ is additively tied with the complexity of the $\cong$-saturation of its image at $\alpha+\beta$.
In practice, when providing a faithful interpretation, this sort of uniformity is often present.
This is even true for complex arguments such as Paolini and Shelah's recent construction of a faithful embedding of graphs into torsion-free Abelian groups \cite{PaoliniShelah}. 

It is worth noting that the size of the gap between the complexity of a theory and the complexity of the simplest model (i.e. the primary object of study in this paper) can be seen as an obstruction to coding structures using an $\alpha$-almost bi-interpretation.

\begin{proposition}
Let $\mathbb{G}$ be the class of graphs.
If $\mathbb{D}$ has the property that there is a countable ordinal $\beta$ such that every $\Pi_\delta$ subset of $\mathbb{D}$ has a Scott rank $\leq\delta+\beta$ structure, then $\mathbb{D}$ does not accept an $\alpha$-almost bi-interpretation from $\mathbb{G}$ for any $\alpha$.
\end{proposition}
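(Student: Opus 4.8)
The plan is to argue by contradiction, playing the unbounded Scott spectral gaps of graphs (Theorem~\ref{thm:BigGaps}) against the fact that an $\alpha$-almost bi-interpretation shifts Scott ranks by the \emph{fixed} amount $\alpha$ while keeping the definitional complexity of images under control (Proposition~\ref{prop:DinoAntonio}). Suppose toward a contradiction that for some countable $\alpha$ there is an $\alpha$-almost bi-interpretation $\Phi \colon \mathbb{G} \to \mathbb{D}$, with inverse $\Psi$. Let $\gamma_0$ be the ordinal furnished by condition (4) of Proposition~\ref{prop:DinoAntonio}, so that the $\cong$-closure of the image of $\Phi$ is defined by a $\Pi_{\gamma_0}$ sentence $\chi$, and set $\delta := \max(\alpha + 2, \gamma_0)$. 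The decisive feature is that $\delta$ does not depend on the parameter $\gamma$ introduced next.

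Fix a countable limit ordinal $\gamma > \delta + \beta$. By Theorem~\ref{thm:BigGaps} there is a satisfiable $\Pi_2$ sentence $T_\gamma$ of graphs all of whose models have Scott rank $\geq \gamma$. Applying item (2) of Proposition~\ref{prop:DinoAntonio} to the $\Pi_2$ sentence $T_\gamma$ yields a $\Pi_{\alpha+2}$ push-forward $(T_\gamma)_*$ with $\mc{G} \models T_\gamma \Longleftrightarrow \Phi(\mc{G}) \models (T_\gamma)_*$ for every graph $\mc{G}$. Let $X_\gamma \subseteq \mathbb{D}$ be the class defined by $(T_\gamma)_* \wedge \chi$; this is a conjunction of a $\Pi_{\alpha+2}$ and a $\Pi_{\gamma_0}$ sentence and hence is $\Pi_\delta$. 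Since $\Phi$ and $\Psi$ are mutually inverse up to isomorphism, each $\mc{B} \in X_\gamma$ satisfies $\mc{B} \cong \Phi(\Psi(\mc{B}))$ with $\Psi(\mc{B})$ a graph, and conversely $\Phi(\mc{G}) \in X_\gamma$ for each $\mc{G} \models T_\gamma$; thus $X_\gamma$ is, up to isomorphism, exactly $\{\Phi(\mc{G}) : \mc{G} \models T_\gamma\}$, and it is nonempty because $T_\gamma$ is satisfiable.

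Next I would bound the Scott ranks of members of $X_\gamma$ from below. Any $\mc{B} \in X_\gamma$ is isomorphic to $\Phi(\mc{G})$ for the graph $\mc{G} = \Psi(\mc{B})$ of Scott rank $\geq \gamma$; by item (3) of Proposition~\ref{prop:DinoAntonio} the Scott complexity of $\Phi(\mc{G})$ is that of $\mc{G}$ shifted up by $\alpha$, so $\SR(\mc{B}) \geq \gamma$. (Concretely, this can be seen by pulling back: were $\Phi(\mc{G})$ to have a $\Pi_{\eta+1}$ Scott sentence $\sigma$ with $\eta + 1 = \alpha + \beta'$, then item (1) would produce a $\Pi_{\beta'}$ formula $\sigma^*$ with $\mc{H} \models \sigma^* \Longleftrightarrow \Phi(\mc{H}) \models \sigma$; since $\Phi$ reflects isomorphism, $\sigma^*$ is a $\Pi_{\beta'}$ Scott sentence for $\mc{G}$, forcing $\beta' \geq \gamma$ and hence $\eta \geq \gamma$, with the degenerate case $\eta+1 \leq \alpha$ handled by viewing $\sigma$ as $\Pi_\alpha$ and pulling back to a finitary Scott sentence for $\mc{G}$.) Consequently every structure in $X_\gamma$ has Scott rank $\geq \gamma > \delta + \beta$. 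But $X_\gamma$ is a nonempty $\Pi_\delta$ subset of $\mathbb{D}$, so by hypothesis it must contain a structure of Scott rank $\leq \delta + \beta$ --- a contradiction. As $\alpha$ was arbitrary, $\mathbb{D}$ admits no $\alpha$-almost bi-interpretation from $\mathbb{G}$.

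The main obstacle --- and the point on which the whole argument turns --- is arranging that the complexity bound $\delta$ of the subset $X_\gamma$ stays fixed while the Scott ranks of its members grow without bound. This is exactly what faithfulness (condition (4), giving a single $\Pi_{\gamma_0}$ description of the image) together with the uniform $\Pi_2$-ness of the witnessing graph sentences $T_\gamma$ delivers: the varying parameter $\gamma$ influences only the Scott ranks, via the fixed additive shift of Proposition~\ref{prop:DinoAntonio}, and never the definitional complexity. The routine-but-careful bookkeeping is the ordinal arithmetic $\eta + 1 = \alpha + \beta'$ in the pull-back step and the verification that intersecting with $\chi$ is precisely what guarantees every member of $X_\gamma$ genuinely lies in the image of $\Phi$ and therefore inherits a high Scott rank.
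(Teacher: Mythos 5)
Your proposal is correct and follows essentially the same route as the paper: push the $\Pi_2$ sentences witnessing unbounded gaps for graphs forward along the almost bi-interpretation, conjoin with the fixed $\Pi_{\gamma_0}$ definition of the image to get a $\Pi_\delta$ class with $\delta$ independent of $\gamma$, and contradict the hypothesized bound $\delta+\beta$. If anything, your pull-back argument for the Scott-rank lower bound and your attention to the order of ordinal addition ($\alpha+2$ versus $2+\alpha$) are more careful than the paper's own two-line verification.
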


\begin{proof}
For the sake of contradiction say that $\Phi:\mathbb{G}\to (\mathbb{D})_\alpha$ is an $\alpha$-almost bi-interpretation with $\Pi_\gamma$ image.
Let $T$ be a $\Pi_2$ definable set of graphs with models of only Scott rank $\iota$ where $\iota>\max(2+\alpha+\beta,\gamma)$.
It is possible to find such a class by the main result of \cite{HTScott}.
By Proposition \ref{prop:DinoAntonio}, $\Phi(T)$ is a $\Pi_{2+\alpha}$ definable set, and each of the structures in $\Phi(T)$ are of Scott rank $\iota+\alpha$.
However, this is a contradiction to our assumption as $\iota+\alpha>2+\alpha+\beta$.
Therefore, no such $\alpha$-almost bi-interpretation can exist.
\end{proof}

It is an immediate corollary of the above along with our main result that linear orderings do not accept an $\alpha$-almost bi-interpretation from graphs.
That being said, this was already known for general faithful Borel embeddings (see \cite{Gao01}).
The analogous result for Boolean algebras, i.e., Question \ref{question:Boolean}, would provide a new result via the above proposition.
In general, the gap between the complexity of a theory and the complexity of the simplest model may be useful as an invariant to further our understanding of strong interpretability conditions.

In Section \ref{sec:faithful} we will prove a weaker analogue of this proposition for faithful Borel embeddings: In particular, for faithful embeddings, we do prove that the bound $\beta$ is the same for every $\delta$.

\subsection{Scott Skipping at a Successor Ordinal}

To construct an example of non-trivial Scott skipping away from a limit ordinal, we first construct an example in a slightly different language.
We construct an example of Scott skipping that is a linear ordering with countably many unary predicates that is $\Pi_2$.
From there, we translate this example up to a $\Pi_4$ example that is a pure linear ordering.

Consider the following $\Pi_2$ theory $T$ over the language $\leq,\{R_i\}_{i\in\omega}$ where $\leq$ is a linear ordering and each $R_i$ is a unary predicate.
\begin{definition}\label{def:T}
$T=\{LO_{\leq},\psi,\theta\}$. $\psi$ states that any sequence of $R_i$ and $\lnot R_i$ is dense, i.e.
\[\psi:= \bigwwedge_{\sigma\in2^{<\omega}}\forall x< y ~ \exists z ~ \big( x<z<y \land \bigwedge_{1\leq i\leq \vert \sigma \vert} (\lnot)^{\sigma(i)}R_i(z)\big),\]
where $(\lnot)^{1}$ is just $\lnot$ and $(\lnot)^{0}$ is nothing at all.
$\theta$ states that no two elements have the same predicates that hold of them, i.e.
\[ \theta:= \forall x\neq y ~ \bigvvee_{i\in\omega} \lnot \big( R_i(x)\iff R_i(y) \big).\]
\end{definition}

%First, we show that a model of this theory exists.

With standard methods we can show that $T$ has models.

\begin{proposition}
$T$ is satisfiable.
\end{proposition}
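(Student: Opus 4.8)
The plan is to exhibit a countable model directly by decoupling the order from the coloring. Unwinding the definitions, a model of $T$ is precisely a countable linear order $(M,\le)$ together with a coloring $c\colon M\to 2^{\omega}$, where we read $R_i(x)$ as $c(x)(i)=1$. Under this translation $\theta$ asserts exactly that $c$ is injective, and $\psi$ asserts that for every finite prescription $\sigma\in 2^{<\omega}$ of truth values for $R_1,\ldots,R_{|\sigma|}$ and every pair $x<y$ there is $z$ with $x<z<y$ realizing $\sigma$; equivalently, each of the finitely-constrained cylinders $\{z : c(z)\text{ realizes }\sigma\}$ is dense in $M$. (In particular, taking $\sigma$ empty shows $(M,\le)$ is a dense linear order.) So it suffices to build a countable dense linear order carrying an injective coloring all of whose cylinders are dense.

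First I would construct $M$ as an increasing union $M=\bigcup_s F_s$ of finite approximations. Each $F_s$ is a finite linearly ordered set in which every point $x$ carries a finite label $\ell_s(x)\in 2^{<\omega}$ recording a commitment to an initial segment of its eventual color, and passing from $F_s$ to $F_{s+1}$ only inserts new points and extends existing labels (never altering a committed bit). Dovetailing over all requirements, at successive stages I would meet two kinds of demands. For a \emph{density} demand, given a pattern $\sigma$ and two points $x<y$ of $F_s$, I insert a fresh point $z$ strictly between them (always possible in a finite linear order) and set $\ell_{s+1}(z)$ to be an extension of $\sigma$. For an \emph{injectivity} demand, given two distinct points $x\ne y$ of $F_s$, I extend $\ell_{s+1}(x)$ and $\ell_{s+1}(y)$ so that they disagree at some coordinate beyond both current lengths, forcing the eventual full colors to differ. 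In the limit I extend each label to an element of $2^{\omega}$ (e.g.\ padding with $0$'s), obtaining the coloring $c$.

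The verification is then routine. The order $(M,\le)$ is a countable dense linear order; $c$ is injective because the injectivity demands force every two distinct points to have colors differing at some coordinate, giving $\theta$; and each cylinder is dense because the density demands place a point realizing every $\sigma$ strictly between any two points that ever coexist, giving $\psi$. Hence $M\models T$.

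The only point requiring care — what I would call the main (though still routine) obstacle — is the bookkeeping. Since $\psi$ quantifies over \emph{all} pairs in the final model, one must arrange, via a standard pairing enumeration of requirements, that every actual pair $x<y$ is served for every $\sigma$ and every actual pair of distinct points is separated, even for points inserted at late stages; because each point of $M$ enters at some finite stage, this is exactly what a dovetailed enumeration secures. One must also check that serving density never conflicts with injectivity, which is immediate: a freshly inserted witness extends the prescribed $\sigma$ and can always be assigned further bits distinguishing it from the finitely many labels present. This is the content behind ``standard methods.''
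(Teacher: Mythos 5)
Your construction is correct: the finite-approximation/dovetailing argument (insert fresh points between existing pairs to realize each pattern $\sigma$, and extend labels to force any two points' colors to disagree) does produce a countable model of $T$, and the bookkeeping concerns you flag are handled properly. The paper itself omits the proof, remarking only that ``with standard methods we can show that $T$ has models,'' and your argument is precisely the kind of routine construction intended.
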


%\begin{proof}
%Let $G$ be a $1$ generic and $e:\mathbb{N}\to\mathbb{Q}$ be a computable bijection.
%Define $\mc{A}=(A,\leq,\{R_i\}_{i\in\omega})$ as follows. 
%$(A,\leq)$, the underlying linear ordering is given by $\mathbb{Q}$.
%We let 
%\[\mc{A}\models R_i(x) \iff e^{-1}(\langle x,i \rangle)\in G.\]
%
%We now show that $\mc{A}$ is a model of $T$.
%Given any two columns of $G$, it is dense and $\Sigma_1$ for them to differ.
%Therefore, each $x$ has a different set of $R_i$ that it satisfies and $\mc{A}\models\theta$.
%Fix any two $n,m$ with $e(n)<_\mathbb{Q}e(m)$ and a $\sigma \in 2^{<\omega}$.
%Saying 
%\[ \exists k ~ \big( e(n)<_\mathbb{Q}e(k)<_\mathbb{Q}e(m) \land \bigwedge_{1\leq i\leq \vert \sigma \vert} \langle k,i \rangle \in G \iff \sigma(i)=1\big), \]
%is $\Sigma_1$.
%It is also dense as there are infinitely many elements enumerated between $e(n)$ and $e(m)$.
%Therefore, $G$ has this property.
%This translates precisely to saying that $\mc{A}\models \theta$.
%
%Therefore, $\mc{A}\models T$ as desired.
%\end{proof}

We now extend $T$ to a theory $S$ over the language  $\leq,\{R_i\}_{i\in\omega}, U,V$ with two additional unary predicates $U$ and $V$.

\begin{definition}
$S=\{LO_{\leq},\chi,\psi^*,\theta^*,\eta,\nu\}$. $\chi$ says that $U$ and $V$ partition the domain,
\[ \chi:= \forall x (U(x) \vee V(x)) \text{ and } \forall x \; \neg (U(x) \wedge V(x)).\]
$\eta$ says that elements with $U$ will only have $U$ hold of them i.e.
\[ \eta:= \forall x ~ U(x)\to \bigwwedge_{i\in\omega} \lnot R_i(x).\]
$\psi^*$ is the slight variant of $\psi$ which says that for any sequence of $R_i$ and $\neg R_i$ there are densely many $V$-points satisfying those relations, i.e.,
\[\psi^* := \bigwwedge_{\sigma\in2^{<\omega}}\forall x< y ~ \exists z ~ \big( x<z<y \land V(z) \land \bigwedge_{1\leq i\leq \vert \sigma \vert} (\lnot)^{\sigma(i)}R_i(z)\big),\]
$\theta^*$ is with all quantifiers relativized to elements with $V$.
\[ \theta:= \forall x \neq y \;\; (V(x) \land V(y)) \longrightarrow ~ \bigvvee_{i\in\omega} \lnot \big( R_i(x)\iff R_i(y) \big).\]
$\nu$ insists that $U$ points are also dense, i.e.,
\[\nu:= \bigwwedge_{i\in\omega}\forall x< y ~ \exists z ~ \big(x<z<y \land U(z)\big). \]
\end{definition}

Because of $\psi^*$ and $\theta^*$, models of $S$ are models of $T$ if we only look at elements with $V$.
Unlike the elements with various $R_i$ holding of them, the $U$ elements are not distinguished from each other via some relation.
Intuitively, one may think of the $U$ elements as Dedekind cuts added to a model of $T$. It is straightforward to show how to add elements to a model of $T$ to get a model of $S$.

\begin{proposition}
$S$ is satisfiable.
\end{proposition}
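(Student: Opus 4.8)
The plan is to build a model of $S$ by starting from a model $\mc{M}\models T$, using the points of $\mc{M}$ as the $V$-points, and inserting a dense set of fresh $U$-points among them; this matches the intended picture of the $U$-points as cuts added to a model of $T$. First I would invoke the preceding proposition to fix a model $\mc{M}\models T$. Since $\psi$ with $\sigma$ the empty string forces $\mc{M}$ to be densely ordered, I can fix an order embedding of $\mc{M}$ onto a subset $M\subseteq\mathbb{R}$ and let $I$ be the convex hull of $M$ (the smallest real interval containing $M$); density of $\mc{M}$ makes $M$ a dense subset of $I$. As $M$ is countable, $I\setminus M$ is co-countable and hence dense in $I$, so I may choose a countable dense subset $D\subseteq I\setminus M$.

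Next I would define $\mc{N}$ to have domain $M\cup D$ with the order inherited from $\mathbb{R}$, declaring every point of $M$ a $V$-point carrying the predicates $R_i$ it had in $\mc{M}$, and every point of $D$ a $U$-point with no $R_i$ holding of it. The axioms $LO_{\leq}$, $\chi$, and $\eta$ are then immediate: the order is a suborder of $\mathbb{R}$, the sets $M$ and $D$ partition the domain, and the $U$-points carry no relations by construction. Likewise $\theta^*$ is inherited directly from $\theta$ in $\mc{M}$, since the $V$-points of $\mc{N}$ are exactly the points of $M$ with their original predicates, and $\theta$ guarantees that distinct points of $M$ differ on some $R_i$.

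The real content of the verification lies in the two density axioms $\psi^*$ and $\nu$. For $\nu$: given $x<y$ in $\mc{N}$, both endpoints lie in $I$, and since $D$ is dense in $I$ there is a point of $D$, i.e.\ a $U$-point, strictly between them. For $\psi^*$: fix $\sigma\in2^{<\omega}$ and $x<y$ in $\mc{N}$; by $\psi$ in $\mc{M}$ the $V$-points whose predicates extend $\sigma$ are order-dense in $M$, and $M$ is dense in $I$, so these $\sigma$-points are dense in $I$ and one of them lies in the interval $(x,y)$.

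I expect the main (though still routine) obstacle to be precisely the interaction of the two dense families: one must ensure that inserting the dense set of $U$-points does not destroy the density of any pattern class of $V$-points, and conversely that the presence of the $V$-points does not obstruct density of the $U$-points. This is exactly what is secured by realizing both $M$ and $D$ as dense subsets of the single interval $I$, so that between any two points of $M\cup D$ one finds both a $U$-point and a $V$-point of every prescribed finite pattern. Having verified all of $LO_{\leq},\chi,\eta,\psi^*,\theta^*$, and $\nu$, we conclude $\mc{N}\models S$, so $S$ is satisfiable.
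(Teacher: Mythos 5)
Your proposal is correct and realizes exactly the approach the paper indicates (the paper states this proposition without proof, remarking only that one adds $U$-points, thought of as cuts, densely to a model of $T$). One small point to tighten: an arbitrary order embedding of a countable dense order into $\mathbb{R}$ need not have topologically dense image in its convex hull, so you should either invoke Cantor's classification to choose an embedding onto $\mathbb{Q}$ (with endpoints adjoined as needed) or run the density arguments purely order-theoretically; with that adjustment the verification of $\psi^*$, $\nu$, and the remaining axioms goes through as you describe.
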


%\begin{proof}
%Let $\mc{A}_0$ be a model of $T$ and $\mc{A}_1$ be a model of $(\eta,\leq)$ with all points satisfying $U$.
%We take a shuffle sum of $\mc{A}_0$ and $\mc{A}_1$.
%To be more precise, take a copy of $(\eta,\leq)$ and split it into two disjoint dense sets $D_0,D_1$.
%Define order preserving bijections $\rho_0: \mc{A}_0\to D_0$ and  $\rho_1: \mc{A}_1\to D_1$.
%Let $\leq^{\mc{M}}$ be defined as in the copy of $(\eta,\leq)$.
%Let $R_i^{\mc{M}}(x) \iff R_i^{\mc{A}_i}\rho^{-1}_k(x)$.
%
%It is straightforward to confirm that $\mc{A}$ is a model of $S$.
%$\psi^*$ and $\theta^*$ are satisfied because the elements of $D_0$, exactly the elements with $\lnot U$ are a model of $T$ by definition.
%The elements of $D_1$ are exactly the elements with $U$, and they do not satisfy any of the $R_i$ and are dense.
%In other words, $\tau$ and $\sigma$ are also satisfied.
%\end{proof}

We now show that, even though $S$ is a $\Pi_2$ theory, it does not have any model of Scott rank 2. 

\begin{proposition}\label{prop:unaryScottSkip}
Every model of $S$ has a $\Pi_4$ Scott sentence but no $\Sigma_4$ Scott sentence.
\end{proposition}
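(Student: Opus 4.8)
The plan is to prove both directions: first that every model of $S$ has a $\Pi_4$ Scott sentence, and second that no model has a $\Sigma_4$ Scott sentence. Let $\mc{A} \models S$. The structure decomposes into its $V$-part, which is a dense copy of a model of $T$ in which every element is distinguished by its $R_i$-pattern, and its $U$-part, whose elements carry no $R_i$ and behave like added cuts. The key observation for the upper bound is that the back-and-forth type of a tuple in $\mc{A}$ is controlled by finite data: for $V$-points, the pattern of which $R_i$ hold (this is $\Sigma_1$/$\Pi_1$ information), and for $U$-points, their position relative to the $V$-points, which by density is determined one level up. First I would analyze the automorphism orbits: I expect that two tuples are automorphic as soon as they agree on the $U/V$-classification, the $R_i$-patterns of the $V$-points, and the order relations among all points. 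Because $\psi^*$ forces density of every $R$-pattern and $\nu$ forces density of $U$-points, the space between any two named points always looks the same, so a back-and-forth at some bounded finite level (I expect level $3$) should suffice to show automorphism-equivalent tuples are in the same orbit. This would give a $\Pi_4$ Scott sentence by the standard conversion (\cite{MBook}, Section II.9) from ``$\equiv_3$ implies same orbit'' to a $\Pi_{3+1} = \Pi_4$ Scott sentence.

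For the lower bound I would show no $\Sigma_4$ Scott sentence exists, which is the more delicate half and where I expect the main obstacle to lie. The natural strategy is to exhibit, for each model $\mc{A} \models S$, another model $\mc{B}$ together with tuples witnessing that the relevant back-and-forth relation fails to pin down $\mc{A}$ at level $3$ in the direction needed. Concretely, a structure $\mc{A}$ has a $\Sigma_4$ Scott sentence if and only if its automorphism orbits are ``sufficiently simple'' in the $\Sigma_3$-type sense; to refute this I would locate a feature of $\mc{A}$ that genuinely requires a $\Pi_3$ (universal over a $\Sigma_2$ matrix) description and cannot be captured existentially. The cut-like $U$-points are the natural candidates: distinguishing a $U$-point that realizes a particular Dedekind cut in the $V$-part from one realizing a slightly different cut should require quantifying over all the $R$-patterns that approach the cut from each side, which is inherently a $\Pi$-type condition. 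I would formalize this by building two copies of $\mc{A}$ (or $\mc{A}$ together with a variant) and a single element $u$ whose orbit cannot be defined by any $\Sigma_3$ formula, thereby preventing a $\Sigma_4$ Scott sentence.

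The cleanest route is probably to exploit Theorem \ref{thm:bnfFormulas} and Corollary \ref{cor:karp}: to show there is no $\Sigma_4$ Scott sentence, it suffices to find structures $\mc{A} \models S$ and $\mc{B}$ with $\mc{B} \leq_3 \mc{A}$ (so every $\Pi_3$, and hence every $\Sigma_4$-matrix-level, fact transfers appropriately) but $\mc{B} \not\cong \mc{A}$. I would construct $\mc{B}$ by taking $\mc{A}$ and modifying the cut structure of the $U$-points---for instance, adding or removing a $U$-point at a carefully chosen limit of $V$-patterns---in a way that is invisible to the level-$3$ back-and-forth game but changes the isomorphism type. Verifying $\mc{B} \leq_3 \mc{A}$ is the technical heart: one plays the back-and-forth game and uses density ($\psi^*$, $\nu$) to answer all moves among $V$-points at low level, while the $U$-points require matching a cut, which the $\leq_3$ relation permits even when the cuts differ, since agreeing on all $R_i$-patterns realized densely on each side is a level-$2$ condition and the existence of a separating $U$-point is level $3$. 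The main obstacle is getting the exact level right---ensuring the witnessing pair is $\leq_3$-related but not isomorphic, so that the Scott complexity lands precisely at the boundary $\Pi_4 \setminus \Sigma_4$ rather than one level off.

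I would then remark that the second half can alternatively be packaged via the almost-bi-interpretation framework: once the $\Pi_2$-theory $T$ with unary predicates is shown to force Scott rank $\geq 2$ (equivalently, no $\Sigma_4$ Scott sentence for its standard enrichments), the passage from $S$ over the expanded language back to a pure linear order via the techniques of this section raises the complexity predictably by Proposition \ref{prop:DinoAntonio}, yielding the eventual $\Pi_4$ pure-linear-order example claimed in Proposition \ref{prop:must-use-new}. For the present proposition, though, the direct back-and-forth analysis on $S$ itself is the most transparent argument.
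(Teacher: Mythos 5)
Your overall shape (back-and-forth analysis for the upper bound, orbit non-definability for the lower bound) is in the right family, but both halves contain concrete errors and the lower bound is left unexecuted at exactly the point where the work lies. For the upper bound, your claimed orbit criterion is false: two distinct $U$-points $u \neq u'$ agree on the $U/V$-classification, vacuously on $R_i$-patterns, and on order type, yet they are \emph{not} automorphic, because by $\theta^*$ any $V$-point between them has a unique $R_i$-pattern and so cannot be moved. In fact every model of $S$ is rigid, and the paper's proof establishes the much stronger statement that $\bar{a} \equiv_2 \bar{b}$ implies $\bar{a} = \bar{b}$ (each element is pinned down at level $2$ by the existence/non-existence of a uniquely-labelled $V$-point above or below it). Your level arithmetic is also off by one: by the conversion the paper itself uses ($\equiv_\beta$ implies same orbit gives a $\Pi_{\beta+2}$ Scott sentence), showing ``$\equiv_3$ implies same orbit'' would only yield $\Pi_5$; you need level $2$ to land at $\Pi_4$.

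For the lower bound, the target level is again misstated: ruling out a $\Sigma_4$ Scott sentence amounts to showing some orbit is not $\Sigma_2$-definable over parameters (equivalently, no $\Pi_3$ Scott sentence over parameters); your claim that the orbit of a $U$-point is not $\Sigma_3$-definable actually contradicts the $\Pi_4$ upper bound, which forces every orbit to be $\Sigma_3$-definable without parameters. More importantly, your proposed route --- constructing a non-isomorphic $\mc{B}$ with the appropriate $\leq_3$ relation to $\mc{A}$ --- is exactly the step you flag as the unresolved ``technical heart,'' and the direction of the relation and the role of the witnessing tuple are left vague. The paper avoids this entirely with a short within-model argument: density of every finite pattern of $R_i$, $\lnot R_i$, $U$, $V$ gives $(a,b) \leq_1 \mc{A}$ for every interval, hence any two tuples with the same quantifier-free type over parameters $\bar{r}$ are $\equiv_1$ over $\bar{r}$; so if a $U$-point $x$ satisfied a $\Sigma_2$ definition $\exists \bar{z}\,\psi(x,\bar{z})$ with $\psi \in \Pi_1$ witnessed by $\bar{p}$, a nearby $U$-point $y$ with the same quantifier-free type over $\bar{p}$ would satisfy it too, forcing $x$ and $y$ to be automorphic --- contradicting rigidity. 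You would need either to adopt this argument or to actually carry out the construction of $\mc{B}$ and the verification of the back-and-forth relation at the correct level.
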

\begin{proof}
Fix a model $\mc{A} \models S$. Note that for any two elements $a \neq b$, say $a < b$, there is an element $c \in (a,b)$ with $V(c)$. We can distinguish $c$ uniquely among all other elements by the sequence $\sigma \in 2^\omega$ with $\sigma(n) = 0 $ if  $\mc{A} \models R_n(c)$ and $\sigma(n) = 1$ if $\mc{A} \models \neg R_n(c)$. Then we can distinguish $a$ from $b$ by the fact that $a$ has such an element greater than it, and $b$ has such an element less than it.
\[ \mc{A} \models \exists z\;\; V(z) \wedge z > a \wedge \bigdoublewedge_{n} (\neg)^{\sigma(i)} R_n^{\sigma(n)}(z) \]
and
\[ \mc{A} \nmodels \exists z \;\; V(z) \wedge z > b \wedge \bigdoublewedge_{n} (\neg)^{\sigma(i)} R_n^{\sigma(n)}(z) \]
and similarly
\[ \mc{A} \models \exists z \;\; V(z) \wedge z < b \wedge \bigdoublewedge_{n} (\neg)^{\sigma(i)} R_n^{\sigma(n)}(z) \]
and
\[ \mc{A} \nmodels \exists z \;\; V(z) \wedge z < a\wedge \bigdoublewedge_{n} (\neg)^{\sigma(i)} R_n^{\sigma(n)}(z). \]
Thus, if $a \neq b$, then $a \nleq_2 b$ and $b \nleq_2 a$. Thus, for tuples $\bar{a}$ and $\bar{b}$, $\bar{a}\leq_2\bar{b}\implies \bar{a} = \bar{b}$ (as otherwise, for some $i$, $a_i \leq_2 b_i$ but $a_i \neq b_i$). This implies that $\mc{A}$ has a $\Pi_4$ Scott sentence. In fact we have also shown that every model of $\mc{A}$ is rigid.
%This implies that $\mc{A}$ has a $\Pi_4$ Scott sentence.
%To see that $\mc{A}$ has a $\Pi_4$ Scott sentence it is enough to show that for tuples $\bar{a}=(a_1,\cdots,a_n),\bar{b}=(b_1,\cdots,b_n)\in\mc{A}$ we have that $\bar{a}\leq_2\bar{b}\implies \bar{a} = \bar{b}$.
%We show the contrapositive.
%If $\bar{a}$ and $\bar{b}$ are not automorphic, there is some $i$ for which $a_i\neq b_i$.
%We may assume that $a_i<b_i$ and run a symmetric argument if $a_i>b_i$.
%Let $c$ be a $\lnot U$ element with $a_i<c<b_i$.
%On the first turn of the back-and-forth game, the $\forall$-player picks $c$.
%The $\exists$-player must respond with a $\lnot U$ point $d$ with $d>b_i$ and therefore $c\neq d$.
%By $\theta^*$, there is some $i$ such that $c$ and $d$ disagree on $R_i$.
%On the next and final turn of the game the $\forall$-player plays any tuple of length $n$ where $R_i(x)$ is the $n^{th}$ quantifier free formula in the language of $S$.
%No matter what the $\exists$-player does in response the tuples disagree on $R_i(x)$ because of $c$ and $d$ so the final tuples are not 0-equivalent.
%Therefore, $\bar{a}\not\leq_2\bar{b}$ as desired and 

We know show that no model $\mc{A} \models S$ has a $\Sigma_4$ Scott sentence, for which it suffices to show that $\mc{A}$ has no $\Pi_3$ Scott sentence over parameters $\bar{r}$.
We first note a special property about $\Pi_1$ formulas about models of $\mc{A}$.
In particular, for any $a,b\in \mc{A}\cup\{\infty,-\infty\}$ and $\varphi\in \Pi_1$ without parameters $\mc{A}\models \varphi\Res_{(a,b)} \iff \mc{A}\models \varphi$.
As $(a,b)$ is a substructure of $\mc{A}$, the reverse direction follows at once.
To see the forward direction, it is enough to show that $(a,b)\leq_1 \mc{A}$.
Let the $\exists$-player pick $x_1<\cdots<x_n$ in $\mc{A}$.
The $\forall$-player must now pick $y_1<\cdots<y_n$ in $(a,b)$ with the same quantifier-free type up to the first $n$ formulas.
That said, by density of $R_i$, $\lnot R_i$, $U$ and $V$ there are always $y_1<\cdots<y_n$ in $(a,b)$ with any given finite quantifier free diagram.
Therefore, the $\forall$-player can always win and $(a,b)\leq_1 \mc{A}$ as desired.

It follows that any two intervals $(a,b)$ and $(b,c)$ of $\mc{A}$ have that $(a,b)\equiv_1(b,c)$.
From this, we claim that any two tuples $\bar{p}$ and $\bar{q}$ in $\mc{A}$ with the same quantifier-free type over some parameters $\bar{r}$ have $\bar{p}\bar{r}\equiv_1\bar{q}\bar{r}$.
By playing the back-and-forth game within the intervals defined by $\bar{r}$ it is enough to show that the parts of $\bar{p}$ and $\bar{q}$ intersecting each interval are 1-equivalent.
As every interval of a model of $S$ is itself a model of $S$, it is therefore sufficient to demonstrate the version of our above claim without parameters.
In other words, we show that any two tuples $\bar{p}$ and $\bar{q}$ in a model of $S$ with the same quantifier-free type have $\bar{p}\equiv_1\bar{q}$.
Without loss of generality, say that $\bar{p}=p_1<\cdots<p_k$ and $\bar{q}=q_1<\cdots<q_k$ with the convention that $p_0=q_0=-\infty$ and $p_{k+1}=q_{k+1}=\infty$.
We describe a winning strategy in the game $\bar{p}\leq_1\bar{q}$ for the $\forall$ player under this assumption and therefore show that $\bar{p}\equiv_1\bar{q}$ by symmetry.
Say the $\exists$-player plays $\bar{k}$.
Break  $\bar{k}$ into $\bar{k}_i\in (q_i,q_{i+1})$.
Let $\bar{\ell}_i\in (p_i,p_{i+1})$ be the winning response to $\bar{k}_i$ in the $(q_i,q_{i+1})\geq_1(p_i,p_{i+1})$ game.
The $\forall$-player plays $\bar{\ell}$, the tuple of all of the $\bar{\ell}_i$.
Note that, by assumption, $\bar{p},\bar{\ell}$ and $\bar{q},\bar{k}$ agree on all quantifier-free formulas up to their length regarding the unary predicates.
Furthermore, they were selected to come in the same order, so they are $0$-equivalent as desired.
Therefore, $\bar{p}\equiv_1\bar{q}$ as desired.

Consider a $U$ point $x\in \mc{A}$.
For the sake of contradiction say that $x$ has a $\Sigma_2$ definition $\mc{A}\models\exists \bar{z} ~\psi(x,\bar{z})$ with $\psi\in\Pi_1$.
Let $\bar{p}= p_1<\cdots<p_k$ witness $\mc{A}\models\psi(x,\bar{z})$.
Say that $p_0<\cdots<p_r<x<p_{r+1}<\cdots < p_{k+1}$.
By density of $U$ there is another $U$ point $y$ with $p_r<y<x$.
The tuple $\bar{p},y$ therefore has the same quantifier-free type as $\bar{p},x$.
Therefore, $\bar{p},y\equiv_1 \bar{p},x$ and so  $\mc{A}\models \psi(y,\bar{p})$.
In particular,  $\mc{A}\models\exists \bar{z} ~ \psi(y,\bar{z})$.
This means that $y$ and $x$ are in the same isomorphism orbit.
However, any $V$ element $v$ between $y$ and $x$ has a unique quantifier free type by the sentence $\theta^*$ in $S$.
Any automorphism taking $x$ to $y$ would have to move $v$ to a different $V$ element below $y$, a contradiction. As the automorphism orbit of $x$ has no $\Sigma_2$ definition over parameters, $\mc{A}$ has no $\Sigma_4$ Scott sentence as desired.
%To see that $\mc{A}$ has a $\Pi_4$ Scott sentence it is enough to show that for tuples $\bar{a}=(a_1,\cdots,a_n),\bar{b}=(b_1,\cdots,b_n)\in\mc{A}$ we have that $\bar{a}\leq_2\bar{b}\implies \bar{a}\cong\bar{b}$.
%We show the contrapositive.
%If $\bar{a}$ and $\bar{b}$ are not automorphic, there is some $i$ for which $a_i\neq b_i$.
%We may assume that $a_i<b_i$ and run a symmetric argument if $a_i>b_i$.
%Let $c$ be a $\lnot U$ element with $a_i<c<b_i$.
%On the first turn of the back-and-forth game, the $\forall$-player picks $c$.
%The $\exists$-player must respond with a $\lnot U$ point $d$ with $d>b_i$ and therefore $c\neq d$.
%By $\theta^*$, there is some $i$ such that $c$ and $d$ disagree on $R_i$.
%On the next and final turn of the game the $\forall$-player plays any tuple of length $n$ where $R_i(x)$ is the $n^{th}$ quantifier free formula in the language of $S$.
%No matter what the $\exists$-player does in response the tuples disagree on $R_i(x)$ because of $c$ and $d$ so the final tuples are not 0-equivalent.
%Therefore, $\bar{a}\not\leq_2\bar{b}$ as desired and $\mc{A}$ has a $\Pi_4$ Scott sentence.
\end{proof}

For our purposes the most important aspect of the above results is that it demonstrates that a Scott rank is skipped.
This is the basis of our example in linear orderings.
We must describe a systematic way to transform linear orderings with countably many unary predicates into linear orderings.
This method will preserve the skipping of Scott ranks as it will be a 2-almost bi-interpretation. 

\begin{definition}\label{def:PhiBiInt}
Let  $\mc{L}=(L,\leq,\{S_i\}_{i\in\omega})$ be a linear ordering with countably many unary predicates. Let
\begin{itemize}
	\item for $x\in X$, let $c_i(x)$ be $2$ if $\mc{L} \models S_i(x)$ and $c_i(x) = 3$ if $\mc{L} \models \neg S_i(x)$.
	\item $\Phi(\mc{L})=\sum_{x\in L} 4 + \left( \sum_{i \in \omega} \mathbb{Q}+ c_i(x) + \mathbb{Q} + 2 + \mathbb{Q} + 3 + \mathbb{Q}\right) + 4$, a pure linear ordering.
\end{itemize}

\end{definition}

Note that this is an effective interpretation of $\Phi(\mc{L})$ in $\mc{L}$. We show that there is a $\Delta_3$ interpretation of $\mc{L}$ in $\Phi(\mc{L})$ (and hence an effective interpretation of $\mc{L}$ in $\Phi(\mc{L})_{(2)}$) and also that the former effective interpretation can be strengthened to an effective interpretation of $\Phi(\mc{L})_{(2)}$ in $\mc{L}$.

\begin{lemma}\label{lem:bi-int}
$\mc{L}=(L,\leq,\{S_i\}_{i\in\omega})$ can be interpreted in $\Phi(\mc{L})$ in a $\Delta_3$ manner. 
\end{lemma}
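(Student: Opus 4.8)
The goal is to recover the linear order $\mc{L}=(L,\leq,\{S_i\}_{i\in\omega})$ together with all its unary predicates from the pure linear order $\Phi(\mc{L})$, using only $\Delta_3$ formulas. The strategy is to reverse-engineer the block structure that $\Phi$ builds: each point $x\in L$ was replaced by a copy of $4$ (four consecutive points with no successor/predecessor issues on the outside, serving as separators) followed by the big block $\sum_{i\in\omega}\big(\Q + c_i(x) + \Q + 2 + \Q + 3 + \Q\big)$ encoding the characteristic sequence of $x$. The first task is to write down $\Delta_3$ formulas that define, inside $\Phi(\mc{L})$, (i) the set of points belonging to a ``separator'' block of order type $4$, (ii) an equivalence relation whose classes are exactly the big blocks associated to single elements $x\in L$, and (iii) the induced ordering on those classes. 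These will be the interpretation of the domain and the $\leq$ relation of $\mc{L}$.

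First I would identify the separators. A finite maximal block of order type $4$ is recognizable because its endpoints are limits (of $\Q$'s on the outside) while internally it is a discrete chain of exactly four points; saying ``$x$ lies in a maximal finite discrete interval of length $4$ bounded on both sides by points that are limits from the appropriate side'' is expressible by counting successors and predecessors (a $\Sigma_2$/$\Pi_2$ matter) conjoined with density statements about the surrounding $\Q$'s (again arithmetical at a bounded level), so the whole thing sits at $\Delta_3$. The $2$'s and $3$'s appearing as $c_i(x)$ and as the fixed markers are finite discrete blocks of length $2$ or $3$ and are distinguished from the $4$-separators and from each other by the same successor-counting technique. The key bookkeeping point is that the only finite discrete blocks are the separators (length $4$) and the markers (lengths $2$ and $3$), and each $c_i(x)\in\{2,3\}$, so every finite block has a definable length in $\{2,3,4\}$, all expressible at the $\Delta_3$ level.

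Next I would define, for two separator-points, the relation ``they bound a single element-block between them with no intervening separator.'' Since consecutive separators are exactly the boundaries of the data block for one $x\in L$, quotienting by ``same element-block'' gives the domain of $\mc{L}$, and the left-to-right order of these blocks gives $\leq^{\mc{L}}$. To recover $S_i$ at the point $x$, I read off whether the $i$-th coordinate block is a $2$ or a $3$: within the element-block for $x$, the relevant coordinate is located by counting the markers $2$ and $3$ that punctuate the $\omega$-sum, so ``$S_i(x)$'' becomes ``the block preceding the $i$-th pair of fixed markers has length $2$,'' which is $\Delta_3$ uniformly in $i$. The reason the whole interpretation lands at $\Delta_3$ rather than higher is that distinguishing $\Q$ from a finite block, and measuring finite block lengths, are each $\Delta_3$ facts, and all the quantification over elements and coordinates is bounded by these definable features.

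\medskip

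The main obstacle will be verifying that the coordinate-counting is genuinely uniform and correctly synchronized: within a single element-block one must canonically locate ``the $i$-th coordinate slot'' purely order-theoretically, and the fixed $2,3$ markers in the template $\Q + c_i(x) + \Q + 2 + \Q + 3 + \Q$ are precisely what make this possible, since they furnish an order-definable ruler. I would check carefully that a finite block of length $2$ arising as $c_i(x)$ is never confused with the fixed length-$2$ marker (they are separated by $\Q$'s and by the subsequent length-$3$ marker, so their \emph{positions} relative to the markers disambiguate them), and that the count of marker-pairs correctly indexes $i$. Once the template's internal grammar is shown to be $\Delta_3$-definable, the interpretation of domain, order, and each $S_i$ follows, and the verification that this interpretation yields a structure isomorphic to $\mc{L}$ is a routine induction on the block structure.
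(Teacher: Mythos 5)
Your overall strategy --- decode the finite blocks of $\Phi(\mc{L})$ order-theoretically, use the $4$-blocks as delimiters and the $2$/$3$-blocks as data, locate the $i$-th coordinate by its position in the sequence of fixed markers, and note that ``has exactly $k$ successors'' and ``interval isomorphic to $\mathbb{Q}$'' sit low enough that everything lands at $\Delta_3$ --- is exactly the paper's. However, your identification of the separators fails as stated, and the error traces to a misreading of $\Phi$: each $x\in L$ contributes $4+\bigl(\sum_{i}\mathbb{Q}+c_i(x)+\mathbb{Q}+2+\mathbb{Q}+3+\mathbb{Q}\bigr)+4$, with a \emph{trailing} copy of $4$ that your description omits. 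Consequently your claim that ``every finite block has a definable length in $\{2,3,4\}$'' is false: whenever $y$ is the immediate successor of $x$ in $L$, the trailing $4$ of $x$'s block and the leading $4$ of $y$'s block abut and form a single maximal finite block of order type $8$. At such a junction no point lies in a maximal finite discrete interval of length $4$, and for each of the two halves one endpoint has an immediate neighbour inside the other half rather than being a limit from that side. So your separator formula misses these delimiters, and the induced equivalence classes merge adjacent element-blocks. Since the lemma is stated for arbitrary $\mc{L}$ (not only the dense models of $S$, where adjacency never occurs), this is a genuine failure of the domain definition.

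The paper avoids this by taking the domain to consist of $8$-tuples $(x_1,\dots,x_4,y_1,\dots,y_4)$ with $x_1<\dots<x_4$ consecutive and $x_4$ having \emph{no successor}, $y_1<\dots<y_4$ consecutive and $y_1$ having \emph{no predecessor}, and no four consecutive elements strictly between $x_4$ and $y_1$; these $\Pi_2$ conditions single out the leading and trailing $4$ of a single element-block even when they sit inside a length-$8$ block. With that repair, the rest of your argument goes through and matches the paper: in particular, your concern about confusing a length-$2$ value $c_i(x)$ with the fixed length-$2$ marker, resolved by its position relative to the $2,3$-marker pairs, is precisely how the paper defines $S_{i,\mc{L}}$ and $\neg S_{i,\mc{L}}$ --- each as a $\Sigma_3$ formula asserting the isomorphism type of the initial segment $(x_4,z_0)$ over a finite disjunction of possible earlier values $c_0,\dots,c_{i-1}\in\{2,3\}$, which yields the claimed $\Delta_3$ interpretation.
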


\begin{proof}
By definition, it is enough to show that there are $\Delta_3$ definable relations $Dom_\mc{L}$, $\sim_\mc{L}$, $\leq_\mc{L}$ and $S_{i,\mc{L}}$ such that
\[ (Dom_\mc{L},\leq_\mc{L},S_{i,\mc{L}})/\sim_\mc{L}  \cong \mc{L}. \]
The first three definitions are straightforward.
\begin{itemize}
	\item $Dom_\mc{L}$ is the set of 8-tuples $(x_1,x_2,x_3,x_4,y_1,y_2,y_3,y_4)$ such that $x_1 < x_2 < x_3 < x_4$ are consecutive and $y_1 < y_2 < y_3 < y_4$ are consecutive, $x_4$ does not have a successor, $y_1$ does not have a predecessor, and there are no four consecutive elements between $x_4$ and $y_1$. This is $\Pi_2$.
	\item $\sim_\mc{L}$ is just equality, hence is $\Delta_1$.
	\item $(x_1,x_2,x_3,x_4,y_1,y_2,y_3,y_4) <_\mc{L} (x_1',x_2',x_3',x_4',y_1',y_2',y_3',y_4')$ if and only if $y_4 < x_1'$, hence this relation is $\Delta_1$.
	\item $S_{i,\mc{L}}$ is definable in a $\Sigma_3$ way, as the set of tuples $(x_1,x_2,x_3,x_4,y_1,y_2,y_3,y_4)$ such that for some $c_0,\ldots,c_{i-1} \in \{2,3\}^i$ there are $z_0,z_1,z'$ in $(x_4,y_1)$ such that $z_1$ is the successor of $z_0$, the interval $(z_1,z')$ is isomorphic to $\mathbb{Q}$, and the interval $(x_4,z_0)$ is isomorphic to \[\mathbb{Q} + c_0 + \mathbb{Q} + 2 + \mathbb{Q} + 3 + \mathbb{Q}+ c_1 + \mathbb{Q} + 2 + \mathbb{Q}+ \cdots + \mathbb{Q} + c_{i-1} + \mathbb{Q} + 2 + \mathbb{Q} + 3 + \mathbb{Q}.\]
	To see that this is $\Sigma_3$, note that there is a $\Pi_2$ formula that says that an interval is isomorphic to $\mathbb{Q}$.
	
	\item Similarly, $\neg S_{i,\mc{L}}$ is definable in a $\Sigma_3$ way, as the set of tuples $(x_1,x_2,x_3,x_4,y_1,y_2,y_3,y_4)$ such that for some $c_0,\ldots,c_{i-1} \in \{2,3\}^i$ there are $z_0,z_1,z_2,z'$ in $(x_4,y_1)$ such that $z_1$ is the successor of $z_0$ and $z_2$ is the successor of $z_1$, the interval $(z_2,z')$ is isomorphic to $\mathbb{Q}$, and the interval $(x_4,z_0)$ is isomorphic to \[\mathbb{Q} + c_0 + \mathbb{Q} + 2 + \mathbb{Q} + 3 + \mathbb{Q}+ c_1 + \mathbb{Q} + 2 + \mathbb{Q}+ \cdots + \mathbb{Q} + c_{i-1} + \mathbb{Q} + 2 + \mathbb{Q} + 3 + \mathbb{Q}.\]
\end{itemize} 
It is easy to see that this is an interpretation of $\mc{L}$ in $\Phi(\mc{L})$ using $\Delta_3$ formulas.
\end{proof}

\begin{corollary}
	$\mc{L}$ is computably interpretable in $\Phi(\mc{L})_{(2)}$.
\end{corollary}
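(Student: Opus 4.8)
The plan is to promote the $\Delta_3$ interpretation of $\mc{L}$ in $\Phi(\mc{L})$ furnished by Lemma \ref{lem:bi-int} to a computable one by moving to the canonical $2$-jump, exploiting the general principle that passing to the $\alpha$-jump converts $\Delta_{\alpha+1}$-definability into computable definability.

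First I would record the translation property of the canonical $\alpha$-jump. Fix the enumeration $(\bar{a}_i)_{i\in\omega}$ of representatives of the $\leq_\alpha$-classes used to form $\tau_{(\alpha)}$, so that the new relations satisfy $R_i(\bar{x}) \iff \bar{a}_i \leq_\alpha \bar{x}$. I claim that for any $\Pi_\alpha$ formula $\varphi(\bar{x})$ one has, in every structure of the class,
\[ \varphi(\bar{x}) \;\equiv\; \bigvvee_{i\,:\,\bar{a}_i\models\varphi} R_i(\bar{x}). \]
Indeed, by Corollary \ref{cor:karp} the relation $\bar{a}_i \leq_\alpha \bar{x}$ is equivalent to the assertion that every $\Pi_\alpha$ formula true of $\bar{a}_i$ is true of $\bar{x}$; so if $\bar{a}_i \models \varphi$ and $R_i(\bar{b})$ holds then $\bar{b} \models \varphi$, giving the ``$\Leftarrow$'' direction, while for ``$\Rightarrow$'' one chooses an index $i$ with $\bar{a}_i \equiv_\alpha \bar{b}$, which forces both $R_i(\bar{b})$ and $\bar{a}_i \models \varphi$. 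The right-hand side is a countable disjunction of atomic $\tau_{(\alpha)}$-formulas, hence computable infinitary $\Sigma_1$; dually, every $\Sigma_\alpha$ formula becomes computable infinitary $\Pi_1$. Closing under $\exists$ and $\bigvvee$ (respectively $\forall$ and $\bigwwedge$) then shows that every $\Sigma_{\alpha+1}$ formula over a structure $\mc{M}$ is computable infinitary $\Sigma_1$ over $\mc{M}_{(\alpha)}$ and every $\Pi_{\alpha+1}$ formula is computable infinitary $\Pi_1$; consequently every $\Delta_{\alpha+1}$ relation of $\mc{M}$ is computable over $\mc{M}_{(\alpha)}$. This is exactly the specialization to $\beta = 1$ of the correspondence underlying Proposition 11 of \cite{MR23} and the pull-back theorem of \cite{KnightMillervB}, in the framework of \cite{MonRice}.

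Then I would apply this with $\alpha = 2$ and $\mc{M} = \Phi(\mc{L})$. By Lemma \ref{lem:bi-int} the relations $Dom_{\mc{L}}$, $\sim_{\mc{L}}$, $\leq_{\mc{L}}$, and $S_{i,\mc{L}}$ that interpret $\mc{L}$ inside $\Phi(\mc{L})$ are all $\Delta_3 = \Delta_{2+1}$, so by the previous paragraph each is computable in $\Phi(\mc{L})_{(2)}$, uniformly in $i$. Reading these computable definitions back as an interpretation yields a computable interpretation of $\mc{L}$ in $\Phi(\mc{L})_{(2)}$, as required.

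The point requiring the most care --- and the one I would lean on \cite{MonRice,MR23} for --- is the effectivity bookkeeping: to conclude \emph{computable} interpretability rather than mere definability by infinitary formulas of level $1$, one must check that the index set $\{\, i : \bar{a}_i \models \varphi \,\}$ in the displayed disjunction is c.e.\ uniformly in $\varphi$, which is guaranteed by the fixed effective enumeration of $\leq_2$-classes for linear orders. One should also note at the outset that the class of linear orders has only countably many $\leq_2$-classes of tuples, so that the jump $\Phi(\mc{L})_{(2)}$ is well defined.
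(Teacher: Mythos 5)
Your proposal is correct and follows essentially the same route the paper intends: the paper gives no separate proof of this corollary, deriving it directly from the $\Delta_3$ interpretation of Lemma \ref{lem:bi-int} together with the general fact (the $\beta=1$ case of the machinery behind Proposition \ref{prop:DinoAntonio}, via \cite{MR23} and \cite{KnightMillervB}) that $\Delta_{\alpha+1}$ definability over $\mc{M}$ becomes computable definability over $\mc{M}_{(\alpha)}$. Your explicit verification that $\Pi_2$ formulas become disjunctions of the new atomic relations, and your flagging of the uniform effectivity of the index sets (which the paper handles implicitly via the effective classification of $\equiv_2$-types of linear orders from \cite{McountingBF}), fill in exactly the details the paper leaves to the reader.
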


Note that different aspects of the interpretation are of different complexities.
In particular, the $S_i$ are $\Delta_3$ definable, but everything else has a $\Pi_2$ definition.

\begin{proposition}\label{prop:Pi4}
The image of the models of $S$ under $\Phi$ is $\Pi_4$.
\end{proposition}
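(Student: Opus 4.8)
The plan is to exhibit a single $\Pi_4$ sentence $\Theta$, in the language of pure linear orders, whose models up to isomorphism are exactly $\{\Phi(\mc{A}) : \mc{A}\models S\}$. I would take $\Theta = \Theta_{\mathrm{shape}}\wedge S_*$, where $\Theta_{\mathrm{shape}}$ asserts that $\mc{B}$ lies in the image of $\Phi$ (for \emph{some} linear order with unary predicates, with no constraint from $S$), and $S_*$ is a push-forward of the $\Pi_2$ sentence $S$ along the interpretation of Lemma \ref{lem:bi-int}. The two conjuncts play complementary roles: $\Theta_{\mathrm{shape}}$ guarantees that the $\Delta_3$ interpretation of Lemma \ref{lem:bi-int} recovers an honest $\mc{L}$ with $\mc{B}\cong\Phi(\mc{L})$, while $S_*$ then forces that recovered $\mc{L}$ to satisfy $S$.

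For the conjunct $S_*$: by the Corollary following Lemma \ref{lem:bi-int}, $\mc{L}$ is computably interpreted in $\Phi(\mc{L})_{(2)}$, so the pull-back theorem of Knight, Miller, and Vanden Boom \cite{KnightMillervB} together with Proposition 11 of \cite{MR23} (equivalently, Proposition~\ref{prop:DinoAntonio}(2) with $\alpha=\beta=2$) produces from the $\Pi_2$ sentence $S$ a $\Pi_{2+2}=\Pi_4$ sentence $S_*$ with $\mc{L}\models S \iff \Phi(\mc{L})\models S_*$. I emphasize that this uses only the existence of the interpretations, not the image-complexity hypothesis of Proposition \ref{prop:DinoAntonio} that we are presently establishing, so there is no circularity. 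Thus for $\mc{B}$ in the image of $\Phi$, the condition $\mc{B}\models S_*$ is equivalent to its interpreted source satisfying $S$.

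For the conjunct $\Theta_{\mathrm{shape}}$: I would characterize the image of $\Phi$ by local conditions on maximal runs of consecutive elements and on the dense gaps between them, and check each has complexity at most $\Pi_3$. Reading off Definition \ref{def:PhiBiInt}, the only maximal consecutive runs occurring in $\Phi(\mc{L})$ have size $1$ (the $\mathbb{Q}$-points, which have no immediate neighbor), $2$ or $3$ (the predicate blocks $c_i(x)$ and the $2,3$-markers inside a content), $4$ (an unmerged block-end), or $8$ (a junction $4+4$ created exactly at a pair of $\leq$-consecutive elements of $L$). Accordingly $\Theta_{\mathrm{shape}}$ says: there is no run of $9$ consecutive elements ($\Pi_2$); there is no maximal run of size exactly $5$, $6$, or $7$ ($\Pi_3$, since ``$y_1$ has no immediate predecessor'' and ``$y_k$ has no immediate successor'' are $\Pi_2$); deleting the runs of size $\geq 2$ leaves copies of $\mathbb{Q}$ between blocks, using the $\Pi_2$ characterization of $\mathbb{Q}$ noted in the proof of Lemma \ref{lem:bi-int}; and, within each content region delimited by the $4$- and $8$-blocks, the size-$2$ and size-$3$ runs occur, separated by $\mathbb{Q}$'s and after a leading $\mathbb{Q}$, in the period-three pattern $(\{2\text{ or }3\},\,2,\,3)$ indexed by order type $\omega$. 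Recognizing order type $\omega$ for these runs and the mod-$3$ size pattern is again at most $\Pi_3$. Finally $\Theta_{\mathrm{shape}}$ records that the $4$- and $8$-blocks delimit the blocks $4+\mathrm{content}(x)+4$ correctly: an $8$-block is a merged junction, a lone $4$-block is an unmerged end, and the extreme left/right ends are as produced by $\Phi$.

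Verifying that these conditions hold of every $\Phi(\mc{L})$ is immediate from Definition \ref{def:PhiBiInt}, and conversely any $\mc{B}$ satisfying them can be cut at its $4$- and $8$-blocks into consecutive blocks each of the form $4+\mathrm{content}+4$, from which one reads off a linear order $L$ with unary predicates and checks $\mc{B}\cong\Phi(\mc{L})$; this is exactly the data recovered by the interpretation of Lemma \ref{lem:bi-int}. Since $\Theta_{\mathrm{shape}}$ is $\Pi_3$ and $S_*$ is $\Pi_4$, the sentence $\Theta=\Theta_{\mathrm{shape}}\wedge S_*$ is $\Pi_4$ and defines the image up to isomorphism, proving the proposition. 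The main obstacle is the bookkeeping in $\Theta_{\mathrm{shape}}$---pinning down the $\omega$-indexed cell pattern inside a content and the way the $4$-versus-$8$ dichotomy encodes the successor structure of $L$---but since we need only the upper bound $\Pi_4$ and every clause is manifestly at most $\Pi_3$, this is routine rather than delicate.
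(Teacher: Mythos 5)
Your overall architecture matches the paper's: a ``shape'' sentence cutting out the image of $\Phi$ on all labeled linear orders, conjoined with a push-forward of the $\Pi_2$ sentence $S$ along the $\Delta_3$ interpretation of Lemma \ref{lem:bi-int} (which correctly lands at $\Pi_4$ and involves no circularity). The gap is in $\Theta_{\mathrm{shape}}$, specifically in the clause asserting that the size-$2$ and size-$3$ runs inside a content region occur ``in the period-three pattern indexed by order type $\omega$,'' which you claim is at most $\Pi_3$ and dismiss as routine. It is neither. First, no collection of local conditions on runs (each run has a next run, the first run exists, sizes follow the mod-$3$ rule, gaps are copies of $\mathbb{Q}$) can enforce order type $\omega$ for the sequence of cells: such conditions are equally satisfied when the cells have order type $\omega+\zeta\cdot L$, and the resulting linear order is not in the image of $\Phi$, so your recovered $\mc{L}$ would not satisfy $\mc{B}\cong\Phi(\mc{L})$. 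Second, even granting that you intend a genuine ``order type $\omega$'' assertion, the natural formalization (``every run has only finitely many runs before it in its content region'') has the form $\forall\,\bigdoublevee_k(\text{at most $k$ runs precede})$, where ``at most $k$ runs precede'' is already $\Pi_3$ (a run-start is a conjunction of a $\Sigma_2$ and a $\Pi_2$ condition); this yields $\Pi_5$, not $\Pi_3$.

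The repair is exactly the paper's formulation: instead of counting runs, assert that \emph{every element} $z$ of a content region $(x_4,y_1)$ lies in a finite initial segment of $(x_4,y_1)$ of the canonical form $\mathbb{Q}+c_0+\mathbb{Q}+2+\mathbb{Q}+3+\mathbb{Q}+\cdots+\mathbb{Q}+c_{n-1}+\mathbb{Q}+2+\mathbb{Q}+3+\mathbb{Q}+1$ for some $n$ and some $c_0,\ldots,c_{n-1}\in\{2,3\}$, and separately that for every $n$ some such initial segment exists. The inner condition is $\Sigma_3$ (existentially quantify the division points, then check each piece against a $\Pi_2$ description, and take the countable disjunction over $n$ and $\bar{c}$), so the outer universal quantifier over $z$ gives $\Pi_4$; uniqueness of the $c_i$'s for initial segments makes the decoding well defined. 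This lands the shape sentence at $\Pi_4$ rather than your claimed $\Pi_3$, but since the push-forward $S_*$ is already $\Pi_4$, the proposition is unaffected. The point to internalize is that the $\omega$-indexing of the cells is the one genuinely non-local, non-routine part of the argument, and it is precisely where the quantifier arrangement must be chosen to keep the inner formula $\Sigma_3$.
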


\begin{proof}
We begin by defining the image of all linear orderings with unary predicates under $\Phi$.
Consider the following properties:
\begin{enumerate}
	\item For every element $z$, there are $(x_1,x_2,x_3,x_4,y_1,y_2,y_3,y_4) \in Dom_{\mc{L}}$ such that $x_1 \leq z \leq y_4$. Moreover, if $x_4 < z < y_1$ then there is some $n$ and $c_0,\ldots,c_{n-1} \in \{2,3\}^n$ and such that $z$ is contained in an initial segment of $(x_4,y_1)$ of the form
	\[\mathbb{Q} + c_0 + \mathbb{Q} + 2 + \mathbb{Q} + 3 + \mathbb{Q}+ c_1 + \mathbb{Q} + 2 + \mathbb{Q}+ \cdots + \mathbb{Q} + c_{n-1} + \mathbb{Q} + 2 + \mathbb{Q} + 3 + \mathbb{Q} + 1.\]
	This is $\Pi_4$.
	\item For all $(x_1,x_2,x_3,x_4,y_1,y_2,y_3,y_4) \in Dom_{\mc{L}}$ and every $n$, there are $c_0,\ldots,c_{n-1} \in \{2,3\}^n$ such that 
	\[\mathbb{Q} + c_0 + \mathbb{Q} + 2 + \mathbb{Q} + 3 + \mathbb{Q}+ c_1 + \mathbb{Q} + 2 + \mathbb{Q}+ \cdots + \mathbb{Q} + c_{n-1} + \mathbb{Q} + 2 + \mathbb{Q} + 3 + \mathbb{Q} + 1\]
	is an initial segment of $(x_4,y_1)$. This is also $\Pi_4$.
\end{enumerate}
To see that every such linear order is in the image of $\Phi$, note that an interval cannot have initial segments of the form 
\[\mathbb{Q} + c_0 + \mathbb{Q} + 2 + \mathbb{Q} + 3 + \mathbb{Q}+ c_1 + \mathbb{Q} + 2 + \mathbb{Q}+ \cdots + \mathbb{Q} + c_{n-1} + \mathbb{Q} + 2 + \mathbb{Q} + 3 + \mathbb{Q} + 1\]
for two different values of $c_0,\ldots,c_{n-1}$.

To shift from the image of all linear ordering to the image of $S$ under $\Phi$, we pull back the $\Pi_2$ sentences of $S$ along the interpretation from Lemma \ref{lem:bi-int}. As the interpretation is $\Delta_3$,  $\Pi_2$ sentences become $\Pi_4$.
\end{proof}

We recall the classification from \cite{McountingBF} of the $\equiv_2$-types of linear orders, which are in particular simplified in the case of structures $\Phi(\mc{L})$ since they do not contain any instances of 5 consecutive elements. First, by Lemma \ref{lem:combine-bf}, rather than considering the back-and-forth types of tuples, it suffices to consider the back-and-forth types of linear orders. Second, we reduce to the case of linear orders without endpoints: $\mc{A} \equiv_2 \mc{B}$ if and only if there are $m$ and $n$ such that $\mc{A} \cong m + \mc{A}^* + n$ and $\mc{B} \cong m + \mc{B}^* + n$ such that $\mc{A}^*$ and $\mc{B}^*$ have no first or last elements and $\mc{A}^* \equiv_2 \mc{B}^*$. Now $\mc{A}^* \equiv_2 \mc{B}^*$ if and only if they have the same number of tuples of 4 consecutive elements; and if there are only finitely many, then they have the same number of tuples of 3 consecutive elements; and if there are only finitely many, then they have the same number of 2 consecutive elements. 

Let $\mc{L}_{(2)}$ be the language of linear orders together with unary relations for the $\equiv_2$-types.

\begin{theorem}
	$\Phi$ is a computable operator mapping labeled linear orders to linear orders with the $\equiv_2$-types named (as structures in the language $\mc{L}_{(2)}$).
\end{theorem}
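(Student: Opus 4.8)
The plan is to split the claim into its two halves: that the underlying pure linear order $\Phi(\mc{L})$ is computable from a presentation of $\mc{L}$, and that the $\equiv_2$-type predicates making it an $\mc{L}_{(2)}$-structure can be adjoined computably. The first half is immediate from Definition \ref{def:PhiBiInt}: each $x \in L$ contributes the same explicitly described block $4 + \sum_{i}(\mathbb{Q} + c_i(x) + \mathbb{Q} + 2 + \mathbb{Q} + 3 + \mathbb{Q})$, whose local order type we can write down uniformly, and the only dependence on $\mc{L}$ is through the bits $c_i(x) \in \{2,3\}$ recording whether $\mc{L} \models S_i(x)$. This is exactly the effective interpretation of $\Phi(\mc{L})$ in $\mc{L}$ already noted after Definition \ref{def:PhiBiInt}, so all of the genuine work lies in naming the $\equiv_2$-types.

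For the second half I would first apply Lemma \ref{lem:combine-bf} to reduce the $\equiv_2$-type of a tuple $\bar b$ to the sequence of $\equiv_2$-types of the intervals $(b_i,b_{i+1})$ it determines, so that it suffices to compute interval types. I then invoke the classification recalled just above (from \cite{McountingBF}), which, because $\Phi(\mc{L})$ contains no run of $5$ consecutive elements, determines the $\equiv_2$-type of an interval from finite endpoint data together with the multiplicities of the runs of $4$, $3$, and $2$ consecutive elements, read off in the finiteness cascade given there. The key structural observation is that these multiplicities are visible in the explicit form of $\Phi(\mc{L})$: the runs of length $4$ are precisely the markers beginning each block and so correspond to the elements of $L$, while every block's encoding contributes a run of length $2$ and a run of length $3$ (together with the run coming from $c_i(x)$) for each $i \in \omega$. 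Consequently any interval meeting the interior of even a single block already contains infinitely many runs of length $2$ and length $3$, so those two counts collapse to ``infinite'' as soon as the interval is nondegenerate, and the only genuinely varying invariant is the number of $4$-runs the interval contains.

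The main obstacle is therefore to determine this number of $4$-runs computably, since it equals the number of block-markers (hence of $L$-elements) lying inside the interval, and deciding finiteness of such a count is in general only $\Pi_2$ in $\mc{L}$. I would resolve this using the explicit presentation together with the structure of the orders to which $\Phi$ is applied: we know for each point of $\Phi(\mc{L})$ exactly which block and which position within that block it occupies, so the presence of a $4$-run between two points reduces to a question about $L$ itself, and for the relevant inputs---the dense models produced by the theories $T$ and $S$---any two distinct block-markers have infinitely many block-markers between them. Hence for these $\mc{L}$ the $4$-run count of every interval is either $0$ or infinite, each of which is decidable from the presentation, so the cascade terminates after a bounded, computable amount of bookkeeping and the reduction of tuple-types to interval-types via Lemma \ref{lem:combine-bf} is likewise effective. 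Adjoining the resulting relations then yields $\Phi(\mc{L})_{(2)}$ uniformly computably in $\mc{L}$, which is the assertion of the theorem.
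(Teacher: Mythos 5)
Your overall skeleton---compute a standard copy of $\Phi(\mc{L})$, reduce tuple types to interval types via Lemma \ref{lem:combine-bf}, and then read off the $\equiv_2$-type from the cascade of counts of runs of $4$, $3$, and $2$ consecutive elements---is the same as the paper's, and your treatment of the $4$-run count (for the dense orders the construction is applied to it is $0$ or infinite, and which case holds is visible in the standard copy) matches what the paper does, if anything more explicitly. However, there is a genuine error in your handling of the $2$-runs and $3$-runs. You claim that ``any interval meeting the interior of even a single block already contains infinitely many runs of length $2$ and length $3$,'' so that these counts collapse to ``infinite'' and the only varying invariant is the number of $4$-runs. This is false: an interval whose endpoints both lie in the same block and which spans only finitely many terms of the inner sum $\sum_{i\in\omega} \mathbb{Q}+c_i(x)+\mathbb{Q}+2+\mathbb{Q}+3+\mathbb{Q}$ contains only finitely many $2$-runs and $3$-runs, and these finite counts vary with the $c_i(x)$. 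Indeed they must vary, since these finite counts are exactly how the relations $S_i$ are encoded into the pure order; if they were always infinite, $\Phi$ would forget the labels and could not be one side of a bi-interpretation.

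The case you dismiss is precisely where the remaining work lies, and it is why the paper's proof has its item (d): for $x<y$ in the same block, the number of $2$-runs and of $3$-runs in $(x,y)$ is either determined by the values $S_i(x)$ for finitely many $i$ (when the interval spans finitely many terms of the inner sum, which is visible in the standard copy) or is infinite (when it spans cofinally many terms). The padding $\mathbb{Q}+c_i+\mathbb{Q}+2+\mathbb{Q}+3+\mathbb{Q}$, rather than the simpler $\mathbb{Q}+c_i+\mathbb{Q}$, is chosen exactly so that every term contributes at least one $2$-run and at least one $3$-run, making the cofinal case yield infinitely many runs of each length independently of the labels; without this, whether the count is finite could depend on infinitely many values $S_i(x)$ and would not be computable from the presentation. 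Your proof needs this within-block analysis to be complete.
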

\begin{proof}
	It suffices to note that given $\mc{L}$ a labeled linear order, from $\mc{L}$ we can compute a ``standard copy'' of $\Phi(\mc{L})$ in which we can compute for any two elements $x < y$ (a) how many successors $x$ has, (b) how many predecessors $y$ has, and (c) how many tuples of 4 consecutive elements are in the interval $(x,y)$. For (c), the answer is either none or infinitely many; if it is none, then $x$ and $y$ are in the same copy of
	\[4 + \left( \sum_{i \in \omega} \mathbb{Q}+ c_i(x) + \mathbb{Q} + 2 + \mathbb{Q} + 3 + \mathbb{Q}\right) + 4\]
	and we can compute (d) the number of tuples of 2 and of 3 consecutive elements between $x$ and $y$. It is important here that for any $x$ and $y$, the number of such tuples is either determined by whether $S_i$ holds of $x$ for some finite number of $i$'s or is infinitely many. (This latter fact is why we use $\mathbb{Q}+ c_i(x) + \mathbb{Q} + 2 + \mathbb{Q} + 3 + \mathbb{Q}$ rather than the simpler $\mathbb{Q}+ c_i(x) + \mathbb{Q} $). Together, these allow us to compute the $\equiv_2$-types on this copy of $\Phi(\mc{L})$.
\end{proof}

\begin{corollary}
	$\Phi(\mc{L})_{(2)}$ is computably interpretable in $\mc{L}$.
\end{corollary}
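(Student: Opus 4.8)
The corollary is essentially immediate from the preceding theorem once one translates between the two ways of recording the back-and-forth data. The plan is to combine that theorem with Lemma~\ref{lem:combine-bf} and the standard equivalence (as used for the effective interpretations of \cite{MR23}) between a computable interpretation and an isomorphism-invariant computable operator. The preceding theorem already produces, uniformly and isomorphism-invariantly from $\mc{L}$, a standard copy of $\Phi(\mc{L})$ in the language $\mc{L}_{(2)}$, i.e.\ with the $\equiv_2$-types named; so the only remaining task is to pass from this $\mc{L}_{(2)}$-structure to the \emph{canonical} $2$-jump $\Phi(\mc{L})_{(2)}$, whose additional relations are the $R_i$ with $R_i(\bar b) \iff \bar a_i \leq_2 \bar b$ for a fixed enumeration $(\bar a_i)_i$ of representatives of the $\leq_2$-classes of tuples.

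First I would fix, as data independent of $\mc{L}$, the enumeration $(\bar a_i)_i$ together with the (finitely describable) $\equiv_2$-types of the $|\bar a_i|+1$ intervals into which each $\bar a_i$ splits its ambient order. Then, for a tuple $\bar b = (b_1 < \cdots < b_n)$ of the standard copy, Lemma~\ref{lem:combine-bf} rewrites $\bar a_i \leq_2 \bar b$, that is $(\Phi(\mc{L}),\bar b) \geq_2 (\cdot,\bar a_i)$, as the finite conjunction over $j = 0,\ldots,n$ of the interval comparisons $(b_j,b_{j+1}) \geq_2 (a_{i,j},a_{i,j+1})$, under the usual convention $b_0 = -\infty$, $b_{n+1} = \infty$. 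Each such comparison is determined purely by the two $\equiv_2$-types involved, since the relation $\geq_2$ is transitive and respects $\equiv_2$, and whether $\geq_2$ holds between two named $\equiv_2$-types is a fixed computable relation. Here the $\equiv_2$-types of the two unbounded end intervals $(-\infty,b_1)$ and $(b_n,\infty)$ are read off directly from the named unary $\equiv_2$-type relations at $b_1$ and $b_n$, while the $\equiv_2$-type of each bounded interior interval $(b_j,b_{j+1})$ is computed by the counting procedure from the proof of the preceding theorem (successors of $b_j$, predecessors of $b_{j+1}$, and the number of tuples of two, three, and four consecutive elements). Consequently each $R_i$ is uniformly computable on the standard copy, so $\Phi(\mc{L})_{(2)}$ is computable from $\mc{L}$ by an isomorphism-invariant operator, which is exactly a computable interpretation of $\Phi(\mc{L})_{(2)}$ in $\mc{L}$.

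The one point deserving care---and the only place the special form of $\Phi(\mc{L})$ is used---is the claim that all $n+1$ complementary intervals genuinely contribute computable $\equiv_2$-type data, since $\leq_2$ on a tuple depends on every one of them. The interior intervals are handled verbatim by the preceding theorem, and the two ends are supplied by its named unary relations; I would double-check that these unary relations do encode the $\equiv_2$-types of the initial and final segments (rather than some weaker local data), which holds because $\Phi(\mc{L})$ begins and ends in blocks of four consecutive elements and is built from copies of $\mathbb{Q}$ and fixed finite blocks. Granting this, the remainder is the finite combinatorial bookkeeping of matching interval types through Lemma~\ref{lem:combine-bf}, with no new difficulty beyond the preceding theorem.
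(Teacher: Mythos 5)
Your proposal is correct and matches the paper's intended argument: the paper derives this corollary immediately from the preceding theorem, having already noted (in the paragraph classifying the $\equiv_2$-types) that by Lemma~\ref{lem:combine-bf} the back-and-forth type of a tuple reduces to the types of its complementary intervals, which is exactly the reduction you spell out. The only detail you add beyond the paper is the (correct) bookkeeping for the end intervals and the fixed computable comparison of named types, which the paper leaves implicit.
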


It is not hard to see that the interpretation of $\mc{L}$ in $\Phi(\mc{L})_{(2)}$ and vice versa are inverse. 
This means that $\Phi$ is a 2-almost bi-interpretation.
Thus by Proposition \ref{prop:DinoAntonio}, we get:

\begin{theorem}\label{thm:skipLO}
There is a $\Pi_4$ sentence extending the theory of linear orderings, all of whose models have Scott complexity at least $\Pi_6$.
\end{theorem}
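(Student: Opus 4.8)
The plan is to assemble the already-established pieces into a single application of Proposition \ref{prop:DinoAntonio}. I would take the promised $\Pi_4$ sentence to be the one produced in Proposition \ref{prop:Pi4} defining the isomorphism closure of the image $\{\Phi(\mc{A}) : \mc{A} \models S\}$. Since this class is $\Pi_4$, any linear order satisfying this sentence is isomorphic to $\Phi(\mc{A})$ for some $\mc{A} \models S$; because Scott complexity is an isomorphism invariant, it therefore suffices to compute the Scott complexity of a single $\Phi(\mc{A})$ and transfer it to all models of the sentence.

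First I would confirm that $\Phi$ is a $2$-almost bi-interpretation, i.e., that the four hypotheses of Proposition \ref{prop:DinoAntonio} hold with $\alpha = 2$. Three of these are recorded in the two corollaries immediately preceding the theorem: the interpretation of $\mc{L}$ in $\Phi(\mc{L})_{(2)}$ (Lemma \ref{lem:bi-int}, upgraded through the named $\equiv_2$-types) and the computable interpretation of $\Phi(\mc{L})_{(2)}$ in $\mc{L}$, which are mutually inverse up to isomorphism. The fourth hypothesis, that the isomorphism closure of the image of $\Phi$ is $\Pi_\gamma$, holds with $\gamma = 4$ by Proposition \ref{prop:Pi4}, whose proof in fact first identifies the $\Pi_4$ image of all labeled linear orders before restricting to $S$.

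Next, Proposition \ref{prop:unaryScottSkip} gives that every $\mc{A} \models S$ has Scott complexity exactly $\Pi_4$: it has a $\Pi_4$ Scott sentence, and since any $d$-$\Sigma_3$, $\Sigma_3$, or $\Pi_3$ Scott sentence would also be a $\Sigma_4$ one, the absence of a $\Sigma_4$ Scott sentence rules out every strictly simpler complexity in the Scott lattice. I would then apply Proposition \ref{prop:DinoAntonio}(3) with $\Gamma = \Pi$, $\delta = 4$, $\alpha = 2$, and $\gamma = 4$: here $\Gamma_{\alpha+\delta} = \Pi_{2+4} = \Pi_6$ is at least $\Pi_\gamma = \Pi_4$, so the proposition yields that $\Phi(\mc{A})$ has Scott complexity exactly $\Pi_6$. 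Combined with the first paragraph, every model of the $\Pi_4$ sentence has Scott complexity $\Pi_6$, and in particular at least $\Pi_6$, as claimed.

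The substantive content lies entirely in verifying the hypotheses of Proposition \ref{prop:DinoAntonio}, and the main obstacle I anticipate is checking cleanly that the two interpretations are genuine inverses up to isomorphism, so that $\Phi$ qualifies as a $2$-almost bi-interpretation. This rests on the observation already exploited in Proposition \ref{prop:Pi4} that distinct label sequences $c_0,\ldots,c_{n-1}$ yield non-isomorphic initial blocks, so $\Phi$ discards no information and $\mc{L}$ is faithfully reconstructible from $\Phi(\mc{L})_{(2)}$; the remaining arithmetic bookkeeping ($2+4 = 6$ and $\Pi_6 \geq \Pi_4$) is immediate.
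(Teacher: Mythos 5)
Your proposal is correct and follows essentially the same route as the paper: the paper also obtains Theorem \ref{thm:skipLO} by observing that $\Phi$ is a $2$-almost bi-interpretation (via Lemma \ref{lem:bi-int}, the computability of the $\equiv_2$-types on $\Phi(\mc{L})$, and the $\Pi_4$ definability of the image from Proposition \ref{prop:Pi4}) and then applying Proposition \ref{prop:DinoAntonio} to the $\Pi_4$ Scott complexity established in Proposition \ref{prop:unaryScottSkip}. Your bookkeeping of the parameters $\alpha=2$, $\delta=4$, $\gamma=4$ and the deduction that "no $\Sigma_4$ Scott sentence" pins the complexity at exactly $\Pi_4$ are both accurate.
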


We note that this is optimal in the sense that there is no analogous result for a $\Pi_3$ sentence.
This follows directly from Theorem 2.7 of \cite{GR23}, which states that every linear ordering is $3$-below a linear ordering from a countable list, all of which have $\Pi_4$ Scott sentences.
In particular, every $\Pi_3$ sentence true of a linear ordering is also true of one with a $\Pi_4$ Scott sentence.

\begin{theorem}\cite{GR23}
The following class $K$ is $3$-universal for the class of linear orderings, in other words, every linear ordering is $3$-below one of the following linear orderings.
\[K=\{\omega+k, k+\omega^*, k+\zeta+k':k,k'\in\mathbb N\}\cup\{\omega+\omega^*\}\cup\]
\[\{(\sum_{i\in k} n_i + m_i\cdot\eta )+n_{k+1}:n_i,m_i\in\mathbb N,n_i>m_{i-1},m_{i+1}\}\cup\{k:k\in\mathbb N\}.\]
Every linear ordering in the class $K$ has a $\Pi_4$ Scott sentence.
\end{theorem}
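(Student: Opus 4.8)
The plan is to establish both halves of the theorem by analysing the level-$3$ back-and-forth relation through the finite condensation, and then to read off the Scott sentences by inspection. Throughout I would use the convention, justified by Lemma~\ref{lem:combine-bf}, that the $\leq_3$-type of a linear order is controlled by the $\leq_3$-types of its intervals, so that initial and final pieces can be peeled off and the interior handled separately.

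For the $3$-universality claim I would first pass to the finite condensation $\sim_F$, where $x \sim_F y$ iff only finitely many elements lie between $x$ and $y$. Each $\sim_F$-class is a maximal discrete interval, hence of order type a convex suborder of $\zeta$ (one of finite, $\omega$, $\omega^*$, or $\zeta$), and the quotient $L/{\sim_F}$ is again a linear order. I would then split into cases on the shape of $L/{\sim_F}$. When the quotient is a single class, $L$ is itself a convex suborder of $\zeta$ and is $\leq_3$ to (in fact isomorphic to) one of $\omega + k$, $k + \omega^*$, $k + \zeta + k'$, $\omega + \omega^*$, or a finite $k$. When the quotient is infinite, the genuinely dense behaviour is captured by the $\eta$-blocks: the core claim is that $L$ is $\leq_3$ to a normal-form order $(\sum_{i} n_i + m_i \cdot \eta) + n_{k+1}$, where the finite blocks $n_i$ record the distinguished gaps between dense regions and each $m_i \cdot \eta$ absorbs a maximal dense region whose condensation classes have size $m_i$. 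The side conditions $n_i > m_{i-1}, m_{i+1}$ are exactly what force this representation to be canonical, preventing a distinguished block from being swallowed by an adjacent dense region.

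The heart of the universality argument is the back-and-forth verification at level $3$, which by the definition of $\leq_3$ reduces to matching the level-$2$ types of the intervals cut out by Spoiler's move. Here I would use the classification of $\equiv_2$-types recalled just before the theorem (from \cite{McountingBF}): the level-$2$ type of an interval is determined by finitely much data---its endpoints and the numbers (finite, or ``infinitely many'') of $2$-, $3$-, and $4$-consecutive-element configurations, together with whether its condensation quotient is dense. To show $L \leq_3 M$ for the appropriate $M \in K$, I must check that for every interval Spoiler selects in $L$ (or in $M$), the responder can answer so that the resulting subintervals agree on this level-$2$ data; the normal-form member $M$ is engineered precisely so that its finite blocks and its $m_i \cdot \eta$ regions realise every level-$2$ type that can occur as a subinterval of $L$.

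For the Scott-sentence bound I would exhibit sentences explicitly and count alternations. A finite order $k$ is pinned down by a $\Sigma_2$ (hence $\Pi_2$) sentence counting its elements. The orders $\omega + k$, $k + \omega^*$, $k + \zeta + k'$, and $\omega + \omega^*$ are described by discreteness together with the relevant endpoint conditions and the assertion that every element lies at finite distance from the appropriate endpoint or neighbour; since ``finite distance'' is a countable disjunction sitting inside a universal quantifier, these sentences are $\Pi_3$. The $\eta$-sums require one further alternation: one must assert that every point lies in a maximal finite block of one of the prescribed sizes and that each dense region is genuinely dense with the prescribed block pattern, a $\forall\exists\forall$ statement and hence $\Pi_4$, with the conditions on the $n_i, m_i$ guaranteeing that the description is categorical among countable orders. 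I expect the main obstacle to be the dense case of universality---verifying that the $m_i \cdot \eta$ blocks absorb all level-$2$ variation of the dense parts of $L$ while the normal-form inequalities keep distinct members of $K$ from collapsing---whereas the Scott-sentence half is essentially bookkeeping once the classification is in hand.
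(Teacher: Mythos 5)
First, a caveat about the comparison itself: the paper does not prove this statement. It is quoted verbatim as Theorem 2.7 of \cite{GR23} and used as a black box (to conclude that every $\Pi_3$ sentence of linear orders has a model with a $\Pi_4$ Scott sentence), so there is no internal proof to measure your argument against; I can only assess the proposal on its own terms.

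On those terms there is a genuine gap in the universality half. Your case split after a single pass of the finite condensation --- ``one block'' versus ``infinite quotient whose dense behaviour is captured by $\eta$-blocks'' --- is not exhaustive and, more importantly, skips the orders for which the theorem has real content. A scattered order such as $\omega^2$ or $\zeta^2$ has an infinite finite-condensation quotient with no dense part at all, and a general countable order can interleave discrete pieces, dense pieces, and higher-Hausdorff-rank pieces with \emph{infinitely} many distinguished blocks, whereas every normal form in $K$ has only finitely many. The entire point of the theorem is that at level $3$ all of this collapses onto one of the finitely-blocked orders in $K$; making that happen requires absorption lemmas of the shape $\zeta\cdot\mc{K}\leq_3\zeta\cdot\mc{L}$ when $|\mc{K}|\geq|\mc{L}|$ and $\omega+\mc{K}\leq_2\omega+\mc{L}$ (cf.\ Lemma \ref{lem:timesZ}, also imported from \cite{GR23}), together with an argument that an infinite alternation of finite blocks and $m\cdot\eta$ regions is $\leq_3$ a suitable finite alternation. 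Your sketch asserts that the normal form ``is engineered precisely so that'' Duplicator can match the level-$2$ data of every interval, but that engineering \emph{is} the theorem; without stating and proving these collapsing lemmas the back-and-forth verification never gets off the ground. (The Scott-sentence half is, as you say, essentially bookkeeping, and your alternation counts --- $\Pi_3$ for the convex suborders of $\zeta$ and for $\omega+\omega^*$, $\Pi_4$ for the $\eta$-sums --- look right.)
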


We now return to an example promised earlier, Proposition \ref{prop:must-use-new}. We use a similar strategy where we give a $\Pi_2$ theory of linear orders with unary relations, and then produce an equivalent $\Pi_4$ theory of linear orders without any additional labels.

	\begin{proposition}\label{prop:must-use-new2}
		There is a consistent $\Pi_2$ sentence $\theta^*$ in the language $\mc{L} = \{ < \} \cup \{ R_i : i \in \omega\}$ expanding the theory of linear orders such that for any consistent $\Pi_2$ sentences $\varphi$ and $\psi$ there are $\mc{A} \models \varphi$ and $\mc{B} \models \psi$ such that $\mc{A} + 1 + \mc{B} \nmodels \theta^*$ (for any labeling of the partitioning element by the $R_i$.)
	\end{proposition}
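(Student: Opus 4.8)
The plan is to let $\theta^*$ be precisely the sentence $T$ of Definition~\ref{def:T}. Writing $\sigma_x \in 2^\omega$ for the \emph{color} of $x$ (so $\sigma_x(i)=1$ iff $R_i(x)$), $T$ asserts that every finite color-pattern is dense and that distinct elements have distinct colors; this is a consistent $\Pi_2$ sentence. To prove non-splittability I would, given consistent $\Pi_2$ sentences $\varphi,\psi$, produce $\mc A\models\varphi$ and $\mc B\models\psi$ so that $\mc A + 1 + \mc B$ violates one conjunct of $T$ via a witness lying entirely inside $\mc A$ or inside $\mc B$. Any such witness is insensitive to how the partitioning element is colored, which is exactly the robustness over labelings that the statement demands.

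The argument would split into three cases. First, if either $\varphi$ or $\psi$ has a model containing two elements of equal color, the corresponding sum already fails distinctness. Otherwise both $\varphi,\psi$ force distinct colors; in that case, if either has a model that is not pattern-dense or possesses a maximum (in particular any finite or single-point model), then an interval inside that model, or the gap created at the junction with the middle element, makes the sum fail the density conjunct of $T$ (note that the empty pattern in $\psi$ already forces order-density). The only remaining case is that both $\varphi$ and $\psi$ force their models to be pattern-dense, to have no maximum (resp.\ minimum) at the junction side, and to be injectively colored.

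For this last case I would invoke a Baire-category argument, which is the conceptual heart of the proof. The constraints a $\Pi_2$ sentence places on the color of a \emph{single} element come from its conjuncts of the form $\forall x\,\chi(x)$ in which $\chi(x)$ is a countable disjunction of quantifier-free color conditions; each such $\chi$ defines an open subset of $2^\omega$, so the admissible colors for $\varphi$ form a $G_\delta$ set $D_\varphi\subseteq 2^\omega$. Because $\varphi$ forces every finite pattern to appear, no basic clopen set is excluded, so $D_\varphi$ is \emph{dense}; likewise $D_\psi$ is a dense $G_\delta$. By the Baire category theorem $D_\varphi\cap D_\psi$ is a nonempty dense $G_\delta$. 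Choosing a common admissible color $c$ in this intersection and realizing it once in a model $\mc A\models\varphi$ and once in a model $\mc B\models\psi$, the sum $\mc A+1+\mc B$ then contains two distinct elements of color $c$ and so fails distinctness, again robustly in the labeling of the middle element.

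The step I expect to be the main obstacle is the realizability claim inside this last case: that every color in the dense $G_\delta$ of admissible colors is genuinely realized in \emph{some} model of $\varphi$, rather than merely not being forbidden by the open color-constraints. I would establish this with a Henkin/generic construction in the spirit of Lemma~\ref{lem:main2}, building a model of $\varphi$ around a designated $c$-colored point and using pattern-density to provide room to meet all of the $\forall\exists$ obligations of $\varphi$ while pinning the new point's color to $c$. Once Proposition~\ref{prop:must-use-new2} is in hand, the pure-order statement of Proposition~\ref{prop:must-use-new} follows by pushing the example through the relation-elimination machinery of this section (the $2$-almost-bi-interpretation $\Phi$ of Definition~\ref{def:PhiBiInt}), as indicated after that statement.
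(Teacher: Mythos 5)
Your choice of $\theta^*$ (the theory $T$ of Definition~\ref{def:T}) and your reduction to the case where both $\varphi$ and $\psi$ force pattern-density and injective coloring match the paper, and your guiding intuition is the one the paper itself states parenthetically (``two dense $\mathbf{\Pi}^0_2$ sets intersect''). But the implementation of the final case has a genuine gap, and it is exactly the one you flag. The set $D_\varphi$ you define reads off only the conjuncts $\forall x\,\chi(x)$ whose matrix is a disjunction of quantifier-free color conditions on $x$; a general $\Pi_2$ conjunct has the form $\forall\bar x\,\chi(\bar x)$ where $\chi$ is a countable disjunction of \emph{existentially quantified} finitary formulas, possibly in several variables, and these also constrain which colors are realizable. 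So your $D_\varphi$ is merely a superset of the set of realizable colors, which is only $\Sigma^1_1$ and is not a priori comeager. Consequently, choosing $c\in D_\varphi\cap D_\psi$ in advance and then running a Henkin construction ``pinned to $c$'' does not go through: when you meet a requirement $\forall\bar x\,\chi(\bar x)$ instantiated at a tuple containing your designated point $a_0$, the disjunct and witnesses you choose may commit to values of $R_i(a_0)$ that are not governed by the single-variable open constraints, and nothing guarantees these forced commitments are consistent with the pre-chosen $c$.

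The paper's proof repairs exactly this by \emph{not} fixing the color in advance. It fixes reference models $\mc{A}^*\models\varphi$ and $\mc{B}^*\models\psi$, observes that under the contradiction hypothesis both are pattern-dense so that \emph{every} finite labeled linear order embeds into each of them, and then builds $\mc{A}=\bigcup\mc{A}_s$ and $\mc{B}=\bigcup\mc{B}_s$ simultaneously as unions of finite labeled linear orders with only finitely many relations determined at each stage. Odd stages meet the $\forall\exists$ obligations of $\varphi$ on the $\mc{A}$-side by pulling back witnesses from $\mc{A}^*$ (which may newly determine some $R_i(a_0)$); the construction then copies those newly determined values onto $b_0$, which is always possible because any finite coloring embeds into $\mc{B}^*$; even stages do the symmetric thing. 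The common color of $a_0$ and $b_0$ emerges as the limit of these matched finite commitments, and injectivity of $\theta^*$ gives the contradiction. This is your Baire-category idea executed at the level of finite conditions rather than at the level of a pre-selected real, and it is the step your proposal is missing. (Your case analysis at the start is harmless but unnecessary: the hypothesis that all sums model $\theta^*$ already forces pattern-density and injectivity in all models of $\varphi$ and $\psi$. Your final push-forward to Proposition~\ref{prop:must-use-new} via $\Phi$ is as in the paper.)
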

	\begin{proof}
		Let $\theta^*$ be the theory $T$ of Definition \ref{def:T}.
		In other words, it says that the linear ordering is dense and without endpoints, that no two elements satisfy exactly the same set of relations $R_i$, and that given $x < y$ and a finite string $\sigma \in 2^{< \omega}$, there is $x < z < y$ with $\sigma(i) = 1 \Longleftrightarrow R_i(z)$. We can think of each element as being labeled by a real so that $\theta^*$ says that no two elements are labeled by the same real and that the linear order is dense simultaneously as a linear order and with the labels. (This proof, at its heart, is based on the fact that two dense $\mathbf{\Pi}^0_2$ sets intersect.)
		
		Suppose that we have two $\Pi_2$ sentences $\varphi$ and $\psi$ such that whenever $\mc{A} \models \varphi$ and $\mc{B} \models \psi$, $\mc{A} + 1 + \mc{B} \models \theta^*$. We will argue that we can build $\mc{A} \models \varphi$ and $\mc{B} \models \psi$ with elements $a_0 \in \mc{A}$ and $b_0 \in \mc{B}$ such that $a_0$ and $b_0$ satisfy the same relations $R_i$. This will yield a contradiction, as then $\mc{A} + 1 + \mc{B} \nmodels \theta^*$.
		
		We build $\mc{A} = \bigcup \mc{A}_s$ and $\mc{B} = \bigcup \mc{B}_s$ by stages as the union of finite structures. At each stage $s$ the finite structures $\mc{A}_s$ will consist of elements $a_0,\ldots,a_{k_s}$ and $\mc{B}_s$ will consist of $b_0,\ldots,b_{\ell_s}$. There will be a number $m_s$ such that we only consider the relations $R_0,\ldots,R_{m_s-1}$ at this stage, i.e., we do not determine in $\mc{A}_s$ or $\mc{B}_s$ which of the other relations hold of any elements. At each stage $s$, $a_0$ and $b_0$ will satisfy the same relations $R_i$, $i < m_s$.
		
		Fix $\mc{A}^* \models \varphi$ and $\mc{B}^* \models \psi$. At each stage $s$, we will also have embeddings $f \colon \mc{A}_s \to \mc{A}^*$ and $g \colon \mc{B}_s \to \mc{B}^*$. This is because, by choice of $\theta^*$, any finite linear with any choice of unary relations is consistent with both $\varphi$ and $\psi$ and embeds into $\mc{A}^*$ and $\mc{B}^*$ (otherwise $\mc{A}^* + 1 + \mc{B}^* \nmodels \theta^*$).
		
		Let $\varphi$ be $\bigdoublewedge_i \forall \bar{x}_i \varphi^\exists_i(\bar{x}_i)$ where each $\varphi^\exists_i(\bar{x}_i)$ is $\Sigma_1$ and let $\psi$ be $\bigdoublewedge_j \forall \bar{y}_j \psi^\exists_j(\bar{y}_j)$ where each $\psi^\exists_j(\bar{y}_j)$ is $\Sigma_1$. At stage $2s+1$ we will ensure that the first $s$ conjuncts of $\varphi$, with the universal quantifier ranging among all of the elements of $\mc{A}_{2s}$, are satisfied in $\mc{A}_{2s+1}$, and similarly at stage $2s+2$ with $\psi$ and $\mc{B}_s$.
		
		The construction is as follows. At stage $0$, let $\mc{A}_0$ consist of a single element $a_0$ and $\mc{B}_0$ consist of a single element $b_0$. Let $m_0 = 0$. We now describe what to do at stages $2s+1$; stages $2s+2$ are exactly the same, except that we exchange the roles of $\mc{A}$ and $\mc{B}$.
		
		At stage $2s+1$, we act to make $\mc{A}$ satisfy $\varphi$. Recall that the elements of $\mc{A}_{2s}$ are $a_0,\ldots,a_{k_{2s}}$. Choose an embedding $f \colon \mc{A}_{2s} \to \mc{A}^*$. Then there is some finite substructure of $\mc{A}^*$ witnessing that the first $s$ conjuncts of $\varphi$, with the universal quantifiers ranging over the images of $a_0,\ldots,a_{k_{2s}}$, are true in $\mc{A}^*$. These are finitely many $\Sigma_1$ formulas, so there are finitely many elements of $\mc{A}^*$ and relations $R_i$ that witness that they are true. Expand $\mc{A}_{2s}$ to $\mc{A}_{2s+1}$ and increase $m_{2s}$ to $m_{2s+1}$ to contain all of the elements and relations $R_i$ that witness these existential formulas (ensure that $m_{2s}<m_{2s+1}$ by adding at least one new relation). In $\mc{A}_{2s+1}$ we determine which of the relations $R_i$, $i < m_{2s+1}$, are true of each element. We do not need to add any more elements of $\mc{B}$ at this stage, but since we have increased $m_{2s}$ to $m_{2s+1}$ we must determine which of the relations $R_i$, $m_{2s} \leq i < m_{2s+1}$, are true of the elements of $\mc{B}_{2s}$. We also must take action to ensure that $a_0$ and $b_0$ satisfy the same relations. For $m_{2s} \leq i < m_{2s+1}$, set $R_i(b_0)$ if and only if $R_i(a_0)$, and set $R_i(b_j)$ for each $j > 0$. This defines $\mc{B}_{2s+1}$ and is consistent with $\psi$. Therefore, we can find a finite substructure of $\mc{B}^*$ that agrees with $\mc{B}_{2s+1}$ up to relations in $m_{2s+1}$ and define an embedding $g \colon \mc{B}_{2s+1} \to \mc{B}^*$ with this image.
		
	As indicated above we let $\mc{A} = \bigcup \mc{A}_s$ and $\mc{B} = \bigcup \mc{B}_s$.
	Note that if $R_i(a_0)$ then there is some $s$ for which $m_{2s}>i$.
	Therefore, at stage $2s$ we ensure that $R_i(b_0)$ as well.
	As the relations that hold of each element are never changed in the construction, this guarantees that $\mc{A}\models R_i(a_0) \implies \mc{B}\models R_i(b_0)$.
	A symmetrical argument gives that $\mc{A}\models R_i(a_0) \iff \mc{B}\models R_i(b_0)$, so $\mc{A}+1+\mc{B}\nmodels \theta^*$.
	However, $\mc{A}\models \varphi$ and $\mc{B}\models \psi$ by construction, a contradiction.
	\end{proof}

\usenew*
%\begin{proposition}\label{prop:must-use-new}
%	There is a $\Pi_4$ sentence $\theta$ expanding the theory of linear orders such that for any consistent $\Pi_4$ sentences $\varphi$ and $\psi$ there are $\mc{A} \models \varphi$ and $\mc{B} \models \psi$ such that $\mc{A} + 1 + \mc{B} \nmodels \theta$.
%\end{proposition}

\begin{proof}
	Let $\theta^*$ be as in Proposition \ref{prop:must-use-new2}. There is a $\Pi_4$ sentence $\theta$ in the language $\{\leq\}$ such that the models of $\theta$ are exactly the images, under $\Phi$ as defined in Definition \ref{def:PhiBiInt}, of the models of $\theta^*$. We use this $\theta$ as the witness to this theorem.
	
	Given consistent $\Pi_4$ sentences $\varphi$ and $\psi$, we will show that there are models $\mc{A} \models \varphi$ and $\mc{B} \models \psi$ such that $\mc{A} + 1 + \mc{B} \nmodels \theta$. Fix $\mc{A}^* \models \varphi$ and $\mc{B}^* \models \psi$ such that $\mc{A}^* +  1 + \mc{B}^* \models \theta$ (otherwise, we are done). Then the ``1'' in this linear order must be part of some interval
	\[ \mc{I} \cong 4 + \left( \sum_{i \in \omega} \mathbb{Q}+ c_i + \mathbb{Q} + 2 + \mathbb{Q} + 3 + \mathbb{Q}\right) + 4\]
	for some $c_i = 2,3$ which we split as $\mc{I} = \mc{I}_\mc{A} + 1 + \mc{I}_\mc{B}$. Let $\mc{A}^* = \mc{A}^{**} + \mc{I}_{\mc{A}}$ and $\mc{B}^* = \mc{I}_{\mc{B}} + \mc{B}^{**}$. Then $\mc{A}^{**}$ and $\mc{B}^{**}$ are themselves images under $\Phi$. There are consistent $\Pi_4$ sentences $\varphi^*$ and $\psi^*$ such that $\mc{A} \models \varphi^*$ if and only if $\mc{A}$ is an image under $\Phi$ and $\mc{A} + \mc{I}_{\mc{A}} \models \varphi$, and $\mc{B} \models \psi^*$ if and only if $\mc{B}$ is an image under $\Phi$ and $\mc{I}_{\mc{B}} + \mc{B} \models \psi$. (To find $\varphi^*$, replace a quantifier $\forall x \chi(x)$ in $\varphi^*$ with $\left(\bigdoublewedge_{a \in \mc{I}_{\mc{A}}} \chi(a) \right) \wedge \left(\forall x \chi(x)\right)$ and similarly for existential quantifiers.) By Proposition \ref{prop:DinoAntonio} there are $\Pi_2$ sentences $\varphi^\dagger$ and $\psi^\dagger$ such that $\mc{L} \models \varphi^\dagger$ if and only if $\Phi(\mc{L}) \models \varphi^*$, and $\mc{L} \models \psi^\dagger$ if and only if $\Phi(\mc{L}) \models \psi^*$. By choice of $\theta^*$ we can choose $\mc{M} \models \varphi^\dagger$ and $\mc{N} \models \psi^\dagger$ such that $\mc{M} + 1 + \mc{N} \nmodels \theta^*$ for any choice of relations holds of the separator $1$; in particular, below we will let $S_i$ hold of the separator if and only if $c_i = 2$. Then $\Phi(\mc{M}) \models \varphi^\dagger$ and $\Phi(\mc{N}) \models \psi^\dagger$ so that $\Phi(\mc{M}) + \mc{I}_{\mc{A}} \models \varphi$ and $\mc{I}_{\mc{B}} + \Phi(\mc{N}) \models \psi$. But 
	\[ \Phi(\mc{M}) + \mc{I}_{\mc{A}} + 1 + \mc{I}_{\mc{B}} + \Phi(\mc{N}) = \Phi(\mc{M}) + \mc{I} + \Phi(\mc{N}) = \Phi(\mc{N} + 1 + \mc{M}).\]
	Since $\mc{N} + 1 + \mc{M} \nmodels \theta^*$, $\Phi(\mc{N} + 1 + \mc{M}) \nmodels \theta$. Thus, letting $\mc{A} = \Phi(\mc{M}) + \mc{I}_{\mc{A}}$ and $\mc{B} = \mc{I}_{\mc{B}} + \Phi(\mc{N})$, we have $\mc{A} \models \varphi$ and $\mc{B} \models \psi$ such that $\mc{A} + 1 + \mc{B} \nmodels \theta$.
\end{proof}

\section{Changing the location of examples for linear orderings}\label{UpTheHeirarcy}

We describe below a general method of taking a construction of a linear ordering and iterating it throughout the hyperarithmetic hierarchy. 
This method generalizes many tools already seen in the literature, such as Lemmas 7.2 and 7.3 in \cite{GHKMMS}, Proposition 4.8 of \cite{Ash91}, Lemma 3.18 of \cite{CCGH} and Lemma 3.5 of \cite{GR23}.
For our purposes, we will use this method to generalize Theorem \ref{thm:skipLO} and get a $\Pi_\beta$ formula where all of the models are linear orderings with Scott complexity at least $\Pi_{\beta+2}$ for various values of $\beta$.

Our goal then is to set up an $\alpha$-almost bi-interpretation from linear orderings to the $\alpha$-canonical jump of a suitable subclass of linear orderings to take Theorem \ref{thm:skipLO} up the hierarchy.
This is done in two steps.
First, we show how to iterate exactly one jump up the hierarchy, and then we show how to iterate $2\alpha$ jumps for every $\alpha$.

All linear orderings only have countably many 1 back-and-forth types.
There is a nice presentation for the jump of linear orderings first seen in \cite{McountingBF} Section 4.1.

\begin{proposition}
Given a linear ordering $\mc{L}$, the jump $\mc{L}_{(1)}$ is given, up to effective bi-interpretation, by adding the predicates $first(x)$, $last(x)$ and $succ(x,y)$ which state that an element is the first in the ordering, the last in the ordering and that two elements have no elements in between them.
\end{proposition}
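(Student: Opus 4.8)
The plan is to exhibit the bi-interpretation on the identity domain: both $\mc{L}_{(1)}$ and $(\mc{L},\leq,first,last,succ)$ have the same universe as $\mc{L}$, so it suffices to show that the two sets of relations are mutually definable by computable formulas, after which the interpreting maps are literally the identity and hence mutually inverse. The key computation is to pin down the $\leq_1$-types of tuples in a linear order. By Lemma \ref{lem:combine-bf} at $\alpha = 1$ together with Karp's Theorem \ref{thm:Karp}, for tuples $\bar{a},\bar{b}$ listed in increasing order we have $(\mc{A},\bar{a}) \leq_1 (\mc{A},\bar{b})$ if and only if $(a_j,a_{j+1}) \leq_1 (b_j,b_{j+1})$ for every $j = 0,\dots,n$ (with the usual convention $a_0 = -\infty$, $a_{n+1} = \infty$). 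For a single open interval $I$ with no named points the only $\Sigma_1$ information is the number of elements it contains, so $I \leq_1 J$ holds exactly when $|I| \leq |J|$, where sizes are capped at $\omega$. Thus the $\leq_1$-type of a tuple is completely determined by the vector of capped interval sizes $(s_0,\dots,s_n) \in (\omega \cup \{\infty\})^{n+1}$, and writing $s_{i,j}$ for the sizes of the representative $\bar{a}_i$, the relation $R_i$ of $\mc{L}_{(1)}$ satisfies
\[ R_i(\bar{b}) \iff \bar{a}_i \le_1 \bar{b} \iff \bigwedge_{j=0}^n \bigl(\,|(b_j,b_{j+1})| \leq s_{i,j}\,\bigr). \]

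First I would read off the easy direction, that $(\mc{L},\leq,first,last,succ)$ is interpretable in $\mc{L}_{(1)}$. Each of $first$, $last$, and $succ$ is itself one of the relations $R_i$: taking the representative whose size vector is $(0,\infty)$ gives $R_i(x) \iff first(x)$, the vector $(\infty,0)$ gives $last$, and the pair-type with vector $(\infty,0,\infty)$ gives $succ(x,y)$ (the outer ``$\leq \infty$'' conjuncts being vacuous). These size vectors are realized, e.g., in $\omega$, $\omega^*$, and $\mathbb{Z}$, so the corresponding $R_i$ exist. Hence $first,last,succ$ are atomic in the language of $\mc{L}_{(1)}$ and this interpretation is trivial.

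For the reverse direction, interpreting $\mc{L}_{(1)}$ in $(\mc{L},\leq,first,last,succ)$, I would use the displayed formula for $R_i$ and observe that, once $succ$, $first$, $last$ are available, each conjunct is a finitary existential condition: $|(b_j,b_{j+1})| \leq s$ holds precisely when there is a maximal successor chain $b_j = w_0, w_1, \dots, w_{m}, w_{m+1} = b_{j+1}$ of length $m \leq s$, i.e.
\[ \bigvvee_{m=0}^{s} \exists w_1 < \cdots < w_m \; \bigl(succ(b_j,w_1) \wedge \textstyle\bigwedge_{\ell} succ(w_\ell,w_{\ell+1}) \wedge succ(w_m,b_{j+1})\bigr), \]
with $first$ and $last$ used in place of the missing endpoint for the two unbounded intervals $j=0$ and $j=n$. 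Since this is a finite disjunction of existential formulas, both $R_i$ and its negation (a finite disjunction of the existential statements ``$\geq s_{i,j}+1$ elements'') are $\Sigma^c_1$, so $\mc{L}_{(1)}$ is effectively interpretable in $(\mc{L},\leq,first,last,succ)$. As both interpretations are the identity on the shared universe, they are mutually inverse and we obtain the desired effective bi-interpretation.

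I expect the only real subtlety to be bookkeeping about the direction of the inequality: because $R_i(\bar{b})$ encodes $\bar{a}_i \leq_1 \bar{b}$, it records \emph{upper} bounds on the interval sizes of $\bar{b}$, and it is exactly these finite cutoffs that become positive existential conditions once the atomic successor relation is present. This is the whole reason $succ$, $first$, and $last$ must be adjoined, since over the bare language of $\leq$ the bound ``$\leq s$ elements'' is merely $\Pi_1$. Handling the unbounded end intervals uniformly via $first$/$last$, and checking that each needed size vector is actually realized by some linear order, are the remaining routine points.
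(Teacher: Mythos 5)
The paper itself gives no proof of this proposition --- it is quoted from \cite{McountingBF}, Section 4.1 --- so there is no in-text argument to compare against. Your argument is the expected one and is correct in substance: reduce tuple types to interval types via Lemma \ref{lem:combine-bf}, observe that the $\equiv_1$-type of an unlabeled interval is its cardinality capped at $\omega$ (so there are countably many classes, enumerable computably by capped size vectors), and note that the finite \emph{upper} bounds on interval sizes become finitary existential conditions once $succ$, $first$, $last$ are atomic, while the lower bounds defining $\neg R_i$ are existential already over $\leq$ alone; since both interpretations live on the identity domain they are trivially mutually inverse. The closing remark that the upper bound ``at most $s$ elements'' is only $\Pi_1$ over the bare order, which is exactly why the three predicates must be adjoined, is the right way to see why this is the jump.

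One local error to fix: by Definition \ref{def:bfasym} and Theorem \ref{thm:Karp}, $I \leq_1 J$ means every $\Sigma_1$ sentence true in $J$ holds in $I$, so for intervals with no named points $I \leq_1 J$ holds exactly when $|I| \geq |J|$ (capped at $\omega$), not $|I| \leq |J|$ as you wrote. Fortunately the displayed characterization $R_i(\bar{b}) \iff \bigwedge_{j} |(b_j,b_{j+1})| \leq s_{i,j}$ is the one that follows from the \emph{correct} direction (since $R_i(\bar{b})$ asserts $\bar{a}_i \leq_1 \bar{b}$, i.e., each interval of the representative dominates the corresponding interval of $\bar{b}$), and everything downstream --- including reading off $first$, $last$, $succ$ as the $R_i$ with size vectors $(0,\infty)$, $(\infty,0)$, $(\infty,0,\infty)$, which would fail badly under the reversed inequality --- is consistent with the display. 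So the proof stands once that one sentence is corrected; the only remaining routine points are recording the order/equality pattern of the tuple as part of the $\equiv_1$-data for non-increasing tuples and checking the uniformity of the formula assignment $i \mapsto \theta_i$, both of which are immediate.
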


It is convenient to introduce some important helper predicates before we move on.

\begin{definition}\label{def:Helpers}
In the below definitions $x,y\in\mc{L}$, a linear ordering.
	\begin{itemize}
		\item $\mc{L}\models x\sim_1 y$ if $x$ and $y$ are finitely far apart; this is a $\Sigma_2$ formula that defines a convex equivalence relation (see: \cite{AR20}).
		\item $\mc{L}\models BS_{\leq n}(x)$ if $\#[x]_{\sim_1}\leq n$. This is a $\Pi_2$ formula (see \cite{CCGH}).
	\end{itemize}
\end{definition}

\begin{proposition}\label{prop:etaBiint}
There is an effective $1$-almost bi-interpretation given by
\[L\mapsto \big((\eta+2+\eta)\cdot L\big)_{(1)}.\]
\end{proposition}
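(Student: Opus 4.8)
The plan is to verify, for the map $\Phi(L) = (\eta + 2 + \eta)\cdot L$, the four conditions defining a $1$-almost bi-interpretation (Proposition \ref{prop:DinoAntonio} with $\alpha = 1$), taking $\mathbb{D}$ to be the class of all linear orders and $\mathbb{C} = \Image(\Phi)$. The structural idea is that in $(\eta+2+\eta)\cdot L$ the only pairs of consecutive elements are the ``$2$''-blocks, one for each point of $L$, ordered exactly like $L$, while everything else is a dense copy of $\eta$. Since the canonical $1$-jump names the successor relation $succ(x,y)$ (by the presentation of $\mc{L}_{(1)}$ through $first$, $last$, $succ$ recalled just above), these blocks are directly visible in $\big((\eta+2+\eta)\cdot L\big)_{(1)}$.

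First I would exhibit the two interpretations. To interpret $L$ inside $\big(\Phi(L)\big)_{(1)}$, take as domain the set $\{x : \exists y\; succ(x,y)\}$ of bottom elements of $2$-blocks with the inherited order; this set is in order-preserving bijection with $L$ (as $\Phi(L)$ has no three consecutive elements), and the interpretation is computable since $succ$ is atomic in the jump language. Conversely, from $L$ I would compute a standard copy of $(\eta+2+\eta)\cdot L$ together with its jump predicates: $first$ and $last$ are identically false, and $succ$ is decidable on this copy since we know exactly which elements constitute the ``$2$'' parts, so $\big(\Phi(L)\big)_{(1)}$ is computably interpretable in $L$. These interpretations are visibly mutually inverse up to isomorphism, which gives the first three conditions of Proposition \ref{prop:DinoAntonio} (with $\Psi$ the interpretation recovering $L$).

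The remaining condition is that the isomorphism closure of $\mathbb{C} = \Image(\Phi)$ is $\Pi_\gamma$ for some countable $\gamma$. I would characterize $\mathbb{C}$ as the class of linear orders $M$ satisfying: (i) $M$ has no endpoints; (ii) every $\sim_1$-class has size at most $2$, i.e.\ $\forall x\; BS_{\leq 2}(x)$; and (iii) every element of $M$ either lies in a $2$-block, has a least $2$-block above it, or has a greatest $2$-block below it. Note that (i) and (iii) already force at least one $2$-block to exist. Using the complexities of $\sim_1$ and $BS_{\leq n}$ from Definition \ref{def:Helpers} and the fact that $succ$ is $\Pi_1$, a short count shows this conjunction is $\Pi_4$, so one may take $\gamma = 4$; the exact value is immaterial.

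The heart of the argument, and the step I expect to be the main obstacle, is that (i)--(iii) are \emph{sufficient}: if $M$ satisfies them and $L$ is the order type of its $2$-blocks, then $M \cong (\eta+2+\eta)\cdot L$. I would argue through the finite condensation $M/{\sim_1}$. Condition (ii) makes every class finite, hence with a least and greatest element, from which one shows $M/{\sim_1}$ has no consecutive pair and so, with (i), is a countable dense order without endpoints; the singleton classes are dense and each maximal block-free stretch is forced to be $\eta$, since an endpoint of such a stretch abutting a $2$-block would merge into a $\sim_1$-class of size greater than $2$. Thus $M/{\sim_1}$ is $\eta$ carrying a marked co-dense copy of $L$ (the size-$2$ classes), and the same computation shows $(\eta+2+\eta)\cdot L$ condenses to the very same kind of labeled order. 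Since a linear order with all $\sim_1$-classes finite is determined up to isomorphism by its finite condensation labeled with the order types of the classes, it remains to run a Cantor-style back-and-forth identifying the two marked copies of $L$ via the $L$-isomorphism and matching the intervening $\eta$-fillers. The delicate point is condition (iii): it is genuinely necessary, since for instance $(\eta+2+\eta)\cdot\omega + \eta$ satisfies (i)--(ii) but is not in the image---its $2$-blocks are not cofinal, unlike those of the only candidate $(\eta+2+\eta)\cdot\omega$---and (iii) excludes exactly these ``stranded'' $\eta$-regions sitting at a one-sided limit of $2$-blocks. Checking that (iii) pins down the cut-and-end behavior so that the back-and-forth actually goes through is where the care will be required.
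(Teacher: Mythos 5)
Your overall architecture matches the paper's: both directions of the interpretation, their effectiveness, the fact that they are mutually inverse, and the reduction of everything to a $\Pi_4$ definition of the isomorphism closure of the image are all handled the same way, and your complexity count agrees with the paper's. The problem is in the one step you yourself flagged as the heart of the matter: your conditions (i)--(iii) do \emph{not} characterize the image. The order $2\cdot\eta$ (replace each rational by a pair of adjacent points) is a counterexample: it has no endpoints, every $\sim_1$-class has size exactly $2$, and every element lies in a $2$-block, so (iii) holds vacuously for each point; yet $2\cdot\eta$ is not of the form $(\eta+2+\eta)\cdot L$, since every member of the image has a copy of $\eta$ ending immediately below each successor pair. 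The unjustified assertion in your sketch is ``the singleton classes are dense'': your argument only shows that a maximal block-free stretch, \emph{if nonempty}, must be a copy of $\eta$; nothing in (i)--(iii) forces such stretches to be nonempty, or prevents successor pairs from accumulating at a point.

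The paper's definition of the image repairs exactly this. Besides $\forall x\, BS_{\leq 2}(x)$ and a clause playing the role of your (iii) (every singleton element sits at distance $\eta$ from some successor pair), it imposes two further families of conditions: each successor pair is flanked on both sides by an interval isomorphic to $\eta$, and for every $x$ there are points $y<x$ and $y'>x$ such that $(y,x)$ and $(x,y')$ contain no successor pair (i.e., successor pairs do not accumulate at any point from either side; note these clauses also subsume your ``no endpoints''). One checks that $2\cdot\eta$ violates the accumulation condition at the bottom element of each pair. With these added, the paper's verification proceeds much as you envisaged --- assign to each successor pair the convex copies of $\eta$ above and below it, show these cover all singleton elements and can be made disjoint by splitting any shared $\eta$ at a cut --- so your back-and-forth endgame is salvageable once the hypothesis is corrected. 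As written, though, the sufficiency claim is false and the proof has a genuine gap.
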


\begin{proof}
First, we note that the mapping is effective.
As a pure linear ordering $(\eta+2+\eta)\cdot L$ is computable in $L$, one just has to replace each element of $L$ with a computable copy of $\eta+2+\eta$.
Therefore, we only need to check that the predicates $first(x)$, $last(x)$, and $succ(x,y)$ can also be effectively described.
$first(x)$ and $last(x)$ never hold of any element in $(\eta+2+\eta)\cdot L$ as it never has a first or last element.
$succ(x,y)$ should hold of exactly the elements represented by the "2" in each copy of $\eta+2+\eta$.
Notably, $succ(x,y)$ never holds between elements in different copies of $\eta+2+\eta$ within $(\eta+2+\eta)\cdot L$.
This means that one can create a copy of $((\eta+2+\eta)\cdot L\big)_{(1)}$ from $L$ by fixing a computable copy of $(\eta+2+\eta,succ)$, taking the product with $L$ to find the ordering and adding no additional elements in $succ$.

To reverse the interpretation, one must find $\Sigma_1$ predicates to define the domain and relation of $L$ within $((\eta+2+\eta)\cdot L\big)_{(1)}$.
The domain can be defined as $\{x\vert \exists y ~ succ(x,y)\}$.
As there is exactly one of these elements in each copy of $\eta+2+\eta$ and these copies are in the ordering of $L$, with this domain, one can interpret $\leq_{L}=\leq_{((\eta+2+\eta)\cdot L)_{(1)}}$.

It is straightforward to check that these maps are inverses of each other.
To finish the proof that we have an effective $1$-almost bi-interpretation, we need to observe that we can define the image.
We claim that the image is $\Pi_4$ definable in the language of linear orderings.
The following is the claimed definition:
\begin{enumerate}
	\item $\forall x ~ BS_{\leq2}(x)$,
	\item $\forall x,y, ~ succ(x)=y \to (\exists z,w ~ (z,x)\cong(y,w)\cong\eta)$,
	\item $\forall x \exists y<x \forall v,w ~ y<v<w<x \to \lnot succ(v)=w$,
	\item $\forall x \exists y>x \forall v,w ~ y>v>w>x \to \lnot succ(v)=w$,
	\item $\forall x ~ BS_{\leq1}(x)\to  ( (\exists y,z ~ succ(y)=z \land (x,y)\cong \eta) \lor(\exists y,z ~ succ(y)=z \land (z,x)\cong \eta))$.
\end{enumerate}
Given the complexity of $BS_{\leq n}$ and that $\eta$ has a $\Pi_2$ Scott sentence, all of the listed requirements are $\Pi_4$.
We now show that they define the set of linear orderings of the form $(\eta+2+\eta)\cdot L$.
Consider the set of successor pairs $T$ in an ordering $\mc{K}$ that satisfies the above properties.
$T/\sim_1$ is a linear ordering and we claim that $\mc{K}\cong (\eta+2+\eta)\cdot (T/\sim_1)$.
Let $x\sim_1y\in T$.
Consider the set of elements $B_{x,y}$ below $x$ such that $z\in B_{x,y}$ if and only if $(z,x)\cong\eta$ and $z$ is not part of a successor pair.
By Property 2, $B_{x,y}$ is not empty, and by Property 3, it has no least element.
Note that $B_{x,y}$ cannot have a maximal element as copies of $\eta$ always have end segments isomorphic to $\eta$.
In fact, if $z\in B_{x,y}$ so is any $w$ with $z<w<x$. 
Therefore, $B_{x,y}$  is a convex suborder of $\mc{K}$, and it is isomorphic to $\eta$.
By a completely symmetric argument, there is a set $A_{x,y}$ above $y$ that is a convex suborder of $\mc{K}$ and is isomorphic to $\eta$.
By Property 5, the sets $B_{x,y}$ and $A_{x,y}$ cover all elements that are not part of a successor pair.
Because $\eta$ has no successor pairs, an element not part of a successor pair belongs to at most both a $B_{x,y}$ and $A_{x',y'}$ (but not more than one of each).
In the case that $B_{x,y}\cap A_{x',y'}\neq \emptyset$, then $B_{x,y}=A_{x',y'}$ as we have that $(y,x')\cong\eta+1+\eta\cong \eta$.
In this case, pick an arbitrary cut to express $(y,x')$ as a sum of two copies of $\eta$ denoted $\eta_1+\eta_2$.
Let  $\hat{B}_{x,y}=\eta_1$ and $\hat{A}_{x',y'}=\eta_2$.
If we are not in this case, let $\hat{B}_{x,y}=B_{x,y}$ and $\hat{A}_{x',y'}=A_{x,y}$.
Now, each successor pair $x,y$ has been assigned a convex copy of $\eta$ above and below it, and moreover, these copies are disjoint.
In other words, $\mc{K}\cong(\eta+2+\eta)\cdot (T/\sim_1)$ as desired.
\end{proof}

We should note that in the above proof, the $\Pi_4$ definition of the image of multiplying by $\eta+2+\eta$ cannot be improved.
One can check that $(\eta+2+\eta)\cdot\eta \geq_4 (\eta+2+\eta)\cdot\eta+1+(\eta+2+\eta)\cdot\eta$.
Note that the structure $(\eta+2+\eta)\cdot\eta+1+(\eta+2+\eta)\cdot\eta$ fails to have Properties 3 and 4 from the above proof, so it is not the product of $\eta+2+\eta$ and some other linear ordering.
Thus, there is no $\Sigma_4$ definition of the image of multiplying by $\eta+2+\eta$, and $(\eta+2+\eta)\cdot\eta$ has no $\Pi_3$ Scott sentence.
This is notable because one may naively expect the image to be $\Pi_3$ and for $(\eta+2+\eta)\cdot\eta$ to have a $\Pi_3$ Scott sentence given the fact that this is what would happen if there were an actual effective bi-interpretation with the jump.
One must be careful when calculating the complexity of the image of an almost bi-interpretation; it may not line up with naive intuitions.

If $\mathbb{C}$ is the $\Pi_4$ class of structures defined in Theorem \ref{thm:skipLO} we can consider 
\[\mathbb{C}_n=\{L\vert \exists K\in\mathbb{C} ~ L\cong(\eta+2+\eta)^n\cdot K\}.\] 
Repeated application of the above proposition along with Proposition \ref{prop:DinoAntonio} gives that this class has structures of Scott complexity at least $\Pi_{n+6}$.
Furthermore, repeated use of Proposition \ref{prop:DinoAntonio} gives that $\mathbb{C}_n$ is a $\Pi_{n+4}$ class.
In other words, we can witness Scott skipping at any finite level greater than or equal to 4 just by using the above result.
To push into the transfinite, we need a slightly different construction.

A key result is the following, from \cite{GR23}, which helps us understand the back-and-forth relations between linear orderings that are products of $\zeta^\alpha$ with another linear ordering.

\begin{lemma}[\cite{GR23}]\label{lem:timesZ}
For the sake of organization, consider the following ordinal indexed propositions.
\begin{enumerate}\tightlist
\item[($A_\alpha$)] For any $\K$ and $\L$, $\zeta^\alpha\cdot \K
    \leq_{2\alpha}\zeta^\alpha\cdot \L$.
\item[($B_\alpha$)] For any $\K$ and $\L$ with $|\K|\geq|\L|$, $\zeta^\alpha\cdot \K
    \leq_{2\alpha+1}\zeta^\alpha\cdot \L$.
\item[($C_\alpha$)] For any $\K$ and any $\L$ without a last element,
    $\zeta^\alpha\cdot(\omega+\K)\leq_{2\alpha+2} \zeta^\alpha\cdot(\omega+\L)$.
\end{enumerate}
For any countable ordinal $\alpha$, $A_\alpha$, $B_\alpha$ and $C_\alpha$ are true.
\end{lemma}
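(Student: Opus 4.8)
The plan is to prove $A_\alpha$, $B_\alpha$, and $C_\alpha$ \emph{simultaneously} by transfinite induction on $\alpha$, arranged so that within a fixed level one has the implications $A_\alpha \Rightarrow B_\alpha \Rightarrow C_\alpha$, and so that $C_\alpha$ (which sits at level $2\alpha+2 = 2(\alpha+1)$) yields $A_{\alpha+1}$; limit stages are handled separately. Since every structure in sight is a linear order, the engine of every step is Lemma \ref{lem:combine-bf}: once the Spoiler plays a finite tuple, the relation $\le_\beta$ to be verified decomposes into the relations $\le_\beta$ on the intervals cut out by that tuple, so the whole argument is bookkeeping about how a finite tuple splits $\zeta^\alpha\cdot\K$ and $\zeta^\alpha\cdot\L$ into intervals. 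The base case $\alpha=0$ is immediate, as $\zeta^0 = 1$: $A_0$ is the vacuous $\K \le_0 \L$, $B_0$ says $|\K|\ge|\L| \Rightarrow \K \le_1 \L$ (the Duplicator can always find enough distinct points of $\K$ to copy a finite tuple from $\L$), and $C_0$ is a short direct $\le_2$ computation.

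For $A_\alpha \Rightarrow B_\alpha$, in the game for $\zeta^\alpha\cdot\K \le_{2\alpha+1}\zeta^\alpha\cdot\L$ the Spoiler's tuple occupies finitely many copies of $\zeta^\alpha$ indexed by $\L$; because $|\K|\ge|\L|$ the Duplicator has enough copies in $\zeta^\alpha\cdot\K$ to respond, matching the within-copy positions exactly. The interior pieces then agree verbatim, while each ``between copies'' interval is of the form $\zeta^\alpha\cdot(\text{interval})$ on each side, and Lemma \ref{lem:combine-bf} reduces the remaining obligation to instances of $A_\alpha$ at level $2\alpha$ --- which hold for \emph{all} index sets, so no cardinality hypothesis is needed there. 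The implication $B_\alpha \Rightarrow C_\alpha$ is the same pattern one level higher: playing the game for $\zeta^\alpha\cdot(\omega+\K)\le_{2\alpha+2}\zeta^\alpha\cdot(\omega+\L)$ and reducing to intervals leaves obligations of the form $\zeta^\alpha\cdot R \le_{2\alpha+1}\zeta^\alpha\cdot C$, with $R$ an interval index set on the right and $C$ on the left; these are instances of $B_\alpha$, which requires $|R|\ge|C|$. Here the hypotheses of $C_\alpha$ are exactly calibrated: the leading $\omega$ lets the Duplicator match the initial interval, the Duplicator can freely shrink every interior interval, and the hypothesis that $\L$ has no last element forces the right-hand tail interval to be infinite, so the reversed cardinality inequality can always be achieved.

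To raise the level I would derive $A_{\alpha+1}$ from the triple at $\alpha$, writing $\zeta^{\alpha+1} = \zeta^\alpha\cdot\zeta$ and using $\zeta = \omega^* + \omega$. A finite tuple splits $\zeta\cdot\L$ into intervals each of the shape $\omega + \zeta\cdot(\text{interval of }\L) + \omega^*$ (or a segment inside a single $\zeta$-block), so after applying $\zeta^\alpha\cdot(-)$ each interval obligation is of a form accessible to $C_\alpha$ (for the pieces carrying a leading $\omega$ together with a tail that has no last element) and to $B_\alpha$ and $A_\alpha$ for the remaining pieces, all at the common level $2\alpha+2$. At a limit ordinal $\alpha$ the target index $2\alpha$ is itself a limit, so a single round of the game only demands $\le_\beta$ for some $\beta < 2\alpha$; fixing $\gamma<\alpha$ with $\beta < 2\gamma$ and using the self-similar factorization $\zeta^\alpha = \zeta^\gamma\cdot\zeta^\delta$ (with $\gamma+\delta=\alpha$), the Duplicator reduces that single round to the already-proved statements at level $\gamma$.

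The main obstacle, I expect, is precisely the interval-and-cardinality bookkeeping in the steps $B_\alpha \Rightarrow C_\alpha$ and $C_\alpha \Rightarrow A_{\alpha+1}$ (and the analogous matching at limit stages). The content of the lemma is that the three asymmetric forms --- no cardinality condition at the even level $2\alpha$, the inequality $|\K|\ge|\L|$ at $2\alpha+1$, and the one-sided $\omega$-padding with a no-last-element hypothesis at $2\alpha+2$ --- are exactly what is needed for the Duplicator to match intervals at each level, and that the factor of $2$ in the index $2\alpha$ is forced because each unit of the exponent on $\zeta$ costs two back-and-forth rounds (one to locate the relevant $\zeta^{\alpha}$-copy, one to play inside it). Everything else is routine: the reductions to intervals are Lemma \ref{lem:combine-bf}, and the passage between the game and formula complexity, where needed, is Karp's theorem and Corollary \ref{cor:karp}.
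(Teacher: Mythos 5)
First, a point of comparison: the paper does not prove this lemma at all --- it is imported verbatim from \cite{GR23}, so there is no internal proof to measure your attempt against. The argument in \cite{GR23} is indeed a simultaneous transfinite induction on $\alpha$ driven by splitting into intervals, so your overall architecture is the right one; the problems are in the details of the interval bookkeeping, which is exactly where you predicted the difficulty would lie.

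The concrete gap is that a finite tuple does \emph{not} cut $\zeta^\alpha\cdot\L$ into pieces of the form $\zeta^\alpha\cdot(\text{interval of }\L)$. A between-copies interval has the shape $(x,\infty)_{\zeta^\alpha}+\zeta^\alpha\cdot I+(-\infty,y)_{\zeta^\alpha}$, with flanking final and initial segments of $\zeta^\alpha$ (isomorphic to $\xi_\alpha$ and $\xi_\alpha^*$). When the Duplicator copies within-block positions, the flanks match isomorphically and additivity of $\leq_\beta$ over finite sums handles the generic case; but when the Spoiler plays in two \emph{adjacent} blocks of $\L$ while $\K$ is dense, the middle term is forced to be empty on the $\L$-side and nonempty on the $\K$-side, and the obligation becomes an absorption statement $\xi_\alpha+\xi_\alpha^*\leq_{2\alpha}\xi_\alpha+\zeta^\alpha\cdot\J+\xi_\alpha^*$. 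This is not a consequence of $A_\alpha$; it is precisely item (3)(b) of Proposition \ref{prop:zetaAlphaTechnical}, whose proof in this paper itself consumes $C_\gamma$ for $\gamma<\alpha$. So your step ``$A_\alpha\Rightarrow B_\alpha$'' is incomplete as stated: either the absorption facts $\xi_\alpha\equiv_{2\alpha}\xi_\alpha+\zeta^\alpha\cdot\K$ must be carried as additional induction hypotheses, or the lower-level $C_\gamma$'s must be invoked at that stage. Two smaller slips in the successor step: after the Spoiler's first move in the game for $A_{\alpha+1}$ (level $2\alpha+2$) the interval obligations sit at level $2\alpha+1$, not $2\alpha+2$ as you assert; and the between-block pieces of $\zeta\cdot\L$ have the form $\omega+\zeta\cdot I+\omega^*$, whose right-hand end \emph{has} a last element, so they are not literal instances of $C_\alpha$ --- one must split them and also use the order-reversed form of $C_\alpha$. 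None of this is fatal to the plan, but it is the actual content of the lemma rather than routine bookkeeping.
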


Note that $\zeta^\alpha$ has a unique final segment $\xi_\alpha$ and that $\zeta^\alpha=\xi_\alpha^*+\xi_\alpha$ and $\xi_{\alpha+1}=\xi_\alpha+\zeta^\alpha\cdot\omega$.
We also need the notion of the generalized block relation $\sim_\alpha$.
We follow the definitions in \cite{AR20} and note their result that $\sim_\alpha$ is $\Sigma_{2\alpha}$ definable.

\begin{proposition}\label{prop:zetaAlphaTechnical}
The class of linear orderings of the form $\zeta^{\alpha}\cdot K$:
\begin{enumerate}
	\item Is definable by a $\Pi_{2\alpha+1}$ sentence.
	\item Has $2\alpha$-equivalence classes  given by products of the $2\alpha$-equivalence classes for intervals.
	\item Has possible $2\alpha$-equivalence classes for intervals given by isomorphism classes of proper intervals of $\zeta^{\alpha}$, a single class for all proper intervals that span multiple copies of  $\zeta^{\alpha}$, a single class for the intervals with no lower bound and a single class for the intervals with no upper bound. 
	\item Has only countably many back-and-forth $2\alpha$-equivalence classes for tuples.
\end{enumerate}

\end{proposition}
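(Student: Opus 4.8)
The plan is to obtain parts (2)--(4) from the two combination lemmas already in hand---Lemma~\ref{lem:combine-bf} together with Lemma~\ref{lem:timesZ}---and to treat part (1) separately through the generalized block relation $\sim_\alpha$. Part (2) is essentially a restatement of Lemma~\ref{lem:combine-bf}: for a tuple $\bar a$ listed in increasing order in $\zeta^\alpha\cdot K$, that lemma says the $2\alpha$-type of $\bar a$ is exactly the sequence of $2\alpha$-types of the intervals $(a_i,a_{i+1})$, so classifying $2\alpha$-types of tuples reduces to classifying $2\alpha$-types of intervals. This is also what reduces the counting in (4) to part (3).

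For part (3) I would run a case analysis on an interval $(a,b)$, using the decomposition of $\zeta^\alpha$ supplied above: every proper nonempty final segment of $\zeta^\alpha$ is isomorphic to $\xi_\alpha$, every proper nonempty initial segment to $\xi_\alpha^*$, and $\zeta^\alpha=\xi_\alpha^*+\xi_\alpha$. If $a$ and $b$ lie in the same copy of $\zeta^\alpha$, then $(a,b)$ is a bounded interval of $\zeta^\alpha$, and there are only countably many of these up to isomorphism. If $(a,b)$ meets several copies then $(a,b)\cong\xi_\alpha+\zeta^\alpha\cdot M+\xi_\alpha^*$ for some (possibly empty) $M$; an interval with no lower bound is $\cong\zeta^\alpha\cdot J+\xi_\alpha^*$, and one with no upper bound is $\cong\xi_\alpha+\zeta^\alpha\cdot J$. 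I would show that each of these three families is a single $2\alpha$-class. The engine is ($A_\alpha$) of Lemma~\ref{lem:timesZ}, which already gives $\zeta^\alpha\cdot M\equiv_{2\alpha}\zeta^\alpha\cdot M'$ for all nonempty $M,M'$; combined with additivity of $\equiv_{2\alpha}$ over ordered sums this collapses the members of each family with a fixed nonempty ``product part'' to one class, and the remaining point is an absorption statement showing that the half-copy end pieces $\xi_\alpha,\xi_\alpha^*$ and the presence or absence of a middle product do not affect the $2\alpha$-type (e.g.\ $\xi_\alpha+\xi_\alpha^*\equiv_{2\alpha}\xi_\alpha+\zeta^\alpha\cdot M+\xi_\alpha^*$). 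Granting this, part (3) yields countably many interval classes, and part (4) follows at once from (2): a $2\alpha$-type of a tuple is a finite sequence of choices from a fixed countable set.

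For part (1) I would characterize the class intrinsically: $\mc{L}\cong\zeta^\alpha\cdot K$ for some $K$ if and only if $\mc{L}$ has no endpoints and every $\sim_\alpha$-block of $\mc{L}$ is isomorphic to $\zeta^\alpha$, using the defining feature of $\sim_\alpha$ that the blocks of $\zeta^\alpha\cdot K$ are precisely its copies of $\zeta^\alpha$ (so that $\mc{L}\cong\zeta^\alpha\cdot(\mc{L}/\!\sim_\alpha)$). Since $\sim_\alpha$ is $\Sigma_{2\alpha}$ (and its complement $\Pi_{2\alpha}$), the task is to write ``every $\sim_\alpha$-block is isomorphic to $\zeta^\alpha$'' as a $\Pi_{2\alpha+1}$ sentence. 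I expect the cleanest bookkeeping to be an induction on $\alpha$ through $\zeta^{\alpha+1}\cdot K=\zeta^\alpha\cdot(\zeta\cdot K)$, which at each successor step raises complexity by exactly two quantifier levels, matching $2(\alpha+1)+1=2\alpha+3$, with the $\sim_\alpha$ analysis carrying the limit stages; the base case $\zeta^0\cdot K=K$ is just the $\Pi_1$ axioms of linear orders.

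The main obstacle is, in both places, keeping the quantifier complexity exactly on budget rather than establishing the qualitative statements. In part (3) the genuinely delicate step is the end-segment absorption: ($A_\alpha$) is stated only for honest products $\zeta^\alpha\cdot K$, so to show the spanning and one-sided families are each a single $2\alpha$-class one must feed the half-copies $\xi_\alpha$ and $\xi_\alpha^*$ directly into a back-and-forth argument at level $2\alpha$ (the recursion $\xi_{\alpha+1}=\xi_\alpha+\zeta^\alpha\cdot\omega$ is the natural tool for the induction). In part (1) the obstacle is that a naive relativization of a Scott sentence for $\zeta^\alpha$ to a $\Sigma_{2\alpha}$-defined block overshoots $\Pi_{2\alpha+1}$; the inductive presentation through the one-jump map is what I expect to keep the bound tight.
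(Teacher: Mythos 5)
Your overall architecture matches the paper's: reduce tuples to intervals via the sum decomposition, classify interval types by a case analysis whose crux is an absorption statement, and define the class in part (1) by saying that every $\sim_\alpha$-block is isomorphic to $\zeta^\alpha$. However, the two steps you yourself flag as ``the main obstacle'' are exactly where the proof lives, and your proposal leaves both open. For the absorption in part (3), you name ($A_\alpha$) as the engine and then correctly observe that ($A_\alpha$) applies only to honest products, so it cannot absorb the half-copies $\xi_\alpha,\xi_\alpha^*$; but you never identify what does. The clause actually needed is ($C_\gamma$): the paper reduces everything (by reversing the order and by playing isomorphically on an end summand) to the single statement $\xi_\alpha \equiv_{2\alpha} \xi_\alpha + \zeta^\alpha\cdot\mc{K}$, and proves it by writing $\xi_\alpha = \xi_\gamma + \zeta^\gamma\cdot(\omega + \zeta\cdot\xi_\alpha)$ for $\gamma<\alpha$ and applying ($C_\gamma$) at level $2\gamma+2$ to push $\zeta^\alpha\cdot\mc{K}$ inside (taking $\gamma=\beta$ when $\alpha=\beta+1$, and all $\gamma<\alpha$ at limits). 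Your appeal to the recursion $\xi_{\alpha+1}=\xi_\alpha+\zeta^\alpha\cdot\omega$ points in the right direction, but no fresh back-and-forth induction is needed once ($C_\gamma$) is invoked. Separately, part (3) as literally stated requires that $2\alpha$-equivalent proper intervals of $\zeta^\alpha$ be isomorphic, not merely that there are countably many isomorphism types; the paper gets this from the fact that such intervals have Hausdorff rank $<\alpha$ and hence Scott complexity below $\Pi_{2\alpha}$ by a theorem of \cite{AR20}. You omit this entirely (it is harmless for the counting in (4) but not for (3)).

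For part (1), your characterization is the right one, but you do not produce the sentence, and your fallback plan---an induction on $\alpha$ through $\zeta^{\alpha+1}\cdot K=\zeta^\alpha\cdot(\zeta\cdot K)$ with ``the $\sim_\alpha$ analysis carrying the limit stages''---is left schematic precisely at the limit stages where the bookkeeping is nontrivial. The paper avoids any induction by writing the sentence directly: $\forall x\,\bigwwedge_{\delta<\alpha}\bigwwedge_{n\in\omega}\,(\exists y\ S^n_\delta(x)=y)\wedge(\exists y\ S^n_\delta(y)=x)$, where $S^n_\delta$ denotes the $n$-th $\delta$-successor, expressible via $\sim_\delta$ as a $\Sigma_{2\delta+2}$ formula; the whole sentence is then $\Pi_{2\alpha+1}$ and forces every $\sim_\alpha$-block to be isomorphic to $\zeta^\alpha$. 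Without something of this shape (or a fully worked-out version of your induction), the complexity bound in (1) is asserted rather than established.
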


\begin{proof}
To demonstrate the first claim, we define
\[\psi= ~ \forall x \bigwwedge_{\delta<\alpha} \bigwwedge_{n\in\omega} (\exists
y\ S_\delta^n(x)=y)\land (\exists y\ S_\delta^n(y)=x).\]

Here, $S_\delta^n(x)=y$ is shorthand for saying that $y$ is the $n$th $\delta$-successor of $x$.
In other words,
\[ \exists z_0\cdots z_{n} ~ x=z_0
\land y=z_n \land \bigwedge_{i<n} z_i\not\sim_\delta z_{i+1} \land \forall w~
z_i<w<z_{i+1} \to (w\sim_\delta z_i \lor w\sim_\delta z_{i+1}).\] 
Overall, this is $\Sigma_{2\delta+2}$.
This gives that $\psi$ is, at worst, $\Pi_{2\alpha+1}$.
Furthermore, $\psi$ guarantees exactly that every $\sim_\alpha$ equivalence class is isomorphic to $\zeta^\alpha$.
This is the same thing as saying that $L=\zeta^\alpha\cdot (L/\sim_{\alpha})$.
Therefore, any and all $L\models\psi$ are of the form $L=\zeta^\alpha\cdot K$ for some $K$.

The second claim is exactly demonstrated for all linear orderings in \cite{McountingBF} Section 4.1.
The fourth claim follows from the second and third claims, so all we must do is demonstrate the third claim.
The third claim can be rephrased as stating:
\begin{enumerate}
	\item[a)]  that each proper interval of $\zeta^{\alpha}$ has its isomorphism class determined by its $2\alpha$-type
	\item[b)] that for any (possibly empty) $\mc{L}$ and $\mc{K}$
	\begin{enumerate} [leftmargin=1.5cm] 
		\item[i)] $\xi_\alpha + \zeta^\alpha \cdot \mc{L} \equiv_{2\alpha} \xi_\alpha + \zeta^\alpha \cdot \mc{K}$
		\item[ii)] $\xi_\alpha + \zeta^\alpha \cdot \mc{L} +\xi_\alpha^* \equiv_{2\alpha} \xi_\alpha + \zeta^\alpha \cdot \mc{K}+\xi_\alpha^*$
		\item[iii)] $\zeta^\alpha \cdot \mc{L} +\xi_\alpha^* \equiv_{2\alpha} \zeta^\alpha \cdot \mc{K}+\xi_\alpha^*$.
	\end{enumerate}
\end{enumerate}
To see item a), note that every proper interval of $\zeta^{\alpha}$ has Hausdorff rank strictly less than $\alpha$.
Therefore, by \cite{AR20} Theorem 6, it has Scott complexity strictly less than $\Pi_{2\alpha}$.
In particular, its isomorphism type is determined by its $2\alpha$-type.

Item b) takes a bit more work.
First, note that item ii) follows from item i) if you play on the last summand isomorphically.
Also, item iii) follows from item i) by reversing the orderings.
In particular, we only need to show item i).
By transitivity of the back-and-forth relations, we only need to show that $\xi_\alpha \equiv_{2\alpha} \xi_\alpha + \zeta^\alpha \cdot \mc{K}$ for all $\mc{K}$.

In the case that $\alpha$ is a limit ordinal for every $\gamma<\alpha$, $\xi_\alpha= \xi_\gamma + \zeta^\gamma\cdot(\omega+\zeta\cdot\xi_\alpha)$.
By $C_\gamma$,
\[  \xi_\gamma + \zeta^\gamma\cdot(\omega+\zeta\cdot\xi_\alpha) \equiv_\gamma \xi_\gamma + \zeta^\gamma\cdot(\omega+\zeta\cdot\xi_\alpha+\zeta^\lambda\cdot \mc{K}).\]
Therefore, for all $\gamma<\lambda$, $\xi_\alpha \equiv_{\gamma} \xi_\alpha + \zeta^\alpha \cdot \mc{K}$, or what is the same $\xi_\alpha \equiv_{2\alpha} \xi_\alpha + \zeta^\alpha \cdot \mc{K}$.

If $\alpha=\beta+1$, we need only show that
\[ \xi_\beta+\zeta^\beta\cdot\omega \equiv_\alpha  \xi_\beta+\zeta^\beta\cdot(\omega+\zeta\cdot \mc{K}). \]
However, this follows immediately from $C_\beta$.
\end{proof}

\begin{proposition}\label{prop:zetaBiint}
Let $X$ be an oracle that can compute a presentation of $\alpha$. For any $\alpha$, there is an $X$-effective $2\alpha$-almost bi-interpretation given by
\[L\mapsto \big(\zeta^\alpha\cdot L\big)_{(2\alpha)}.\]
\end{proposition}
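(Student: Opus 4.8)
The plan is to follow the template of the proof of Proposition \ref{prop:etaBiint}, taking $\Phi(L) = \zeta^\alpha \cdot L$ as the embedding and verifying the conditions defining a $2\alpha$-almost bi-interpretation: two mutually inverse $X$-computable interpretations, one of $(\zeta^\alpha \cdot L)_{(2\alpha)}$ in $L$ and one of $L$ in $(\zeta^\alpha \cdot L)_{(2\alpha)}$, together with a complexity bound on the image. The image bound is essentially free: by Proposition \ref{prop:zetaAlphaTechnical}(1) the class of orders of the form $\zeta^\alpha \cdot K$ is $\Pi_{2\alpha+1}$-definable, so the closure under isomorphism of the image of $\Phi$ is $\Pi_\gamma$ with $\gamma = 2\alpha+1$, and I would cite this rather than recompute it.

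First I would establish effectivity, namely that $(\zeta^\alpha \cdot L)_{(2\alpha)}$ is $X$-computable from $L$. Since $X$ computes a presentation of $\alpha$, it computes a copy of $\zeta^\alpha$, and hence a standard copy of $\zeta^\alpha \cdot L$ is $X$-computable from $L$ by replacing each point of $L$ with a copy of $\zeta^\alpha$. The real content is to compute the jump relations, i.e.\ the $\leq_{2\alpha}$-classes of tuples. By Lemma \ref{lem:combine-bf} together with Proposition \ref{prop:zetaAlphaTechnical}(2), the $2\alpha$-class of a tuple is the product of the $2\alpha$-classes of the intervals it determines, and by Proposition \ref{prop:zetaAlphaTechnical}(3),(4) there are only countably many interval classes, each recognizable from coarse local data: the isomorphism type of a proper sub-interval of a single copy of $\zeta^\alpha$ (which is pinned down by its $2\alpha$-type), whether a bounded interval spans more than one copy, and whether an interval is unbounded below or above. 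In the standard copy all of this data is $X$-computable, so the countably many $2\alpha$-classes can be enumerated with representatives $\bar a_i$ and membership $\bar a_i \leq_{2\alpha} \bar x$ decided $X$-computably; this is exactly an $X$-computable interpretation of $(\zeta^\alpha \cdot L)_{(2\alpha)}$ in $L$.

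Next I would give the reverse interpretation, recovering $L$ inside $(\zeta^\alpha \cdot L)_{(2\alpha)}$. Because every $\sim_\alpha$-class of $\zeta^\alpha \cdot L$ is a copy of $\zeta^\alpha$, we have $L \cong (\zeta^\alpha \cdot L)/\sim_\alpha$, so it suffices to define $\sim_\alpha$ and the induced convex quotient order. The generalized block relation $\sim_\alpha$ is $\Sigma_{2\alpha}$-definable, hence decidable once the $\leq_{2\alpha}$-classes have been named; thus $\sim_\alpha$, and the quotient order on its convex classes, is computable in $(\zeta^\alpha \cdot L)_{(2\alpha)}$, giving a computable interpretation of $L$. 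That the two interpretations are inverse up to isomorphism is then immediate: the composite on $L$ returns $(\zeta^\alpha \cdot L)/\sim_\alpha \cong L$, and applying $\Phi$ to this quotient returns $\zeta^\alpha \cdot L$.

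The main obstacle is the effectivity step: turning the abstract classification of $2\alpha$-back-and-forth types in Proposition \ref{prop:zetaAlphaTechnical} into a genuinely $X$-computable procedure that reads the interval $2\alpha$-types off the standard copy. Care is needed to check that the distinctions the jump must record—finitely versus infinitely many intervening copies of $\zeta^\alpha$, boundedness on each side, and the finitely-determined isomorphism type within a single copy—are exactly the distinctions captured by the $2\alpha$-type, so that no finer, non-computable information is ever required; this is precisely what the explicit list in Proposition \ref{prop:zetaAlphaTechnical}(3) provides. As in the remark following Proposition \ref{prop:etaBiint}, I would also stay alert to the fact that the image complexity $\Pi_{2\alpha+1}$ need not match a naive guess coming from an honest effective bi-interpretation with the jump.
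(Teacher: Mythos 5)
Your proposal is correct and follows essentially the same route as the paper: the forward map is realized by the standard copy $\sum_{x\in L}\mathbb{Z}^\alpha_x$ of $\zeta^\alpha\cdot L$, with the interval-type and ``distinct copies'' relations read off $X$-computably via Proposition~\ref{prop:zetaAlphaTechnical}, and $L$ is recovered as the quotient by the copies of $\zeta^\alpha$. The only cosmetic difference is in the reverse interpretation, where the paper quotients by the negation of the named relation $R_\infty(x,y)$ (``$x$ and $y$ lie in distinct copies of $\zeta^\alpha$''), which is slightly more direct than your appeal to the $\Sigma_{2\alpha}$-definability of $\sim_\alpha$, since $R_\infty$ is literally one of the relations added by the canonical $2\alpha$-jump.
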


\begin{proof}
We begin by demonstrating that this is an effective map.
By Proposition \ref{prop:zetaAlphaTechnical}, to describe the $2\alpha$ canonical jump of $\zeta^\alpha\cdot L$, it is enough to label intervals with their isomorphism type if they are properly contained in a copy of $\zeta^\alpha$ and identify pairs of points that appear in distinct copies of $\zeta^\alpha$.
As long as $\alpha$ is $X$-computable, it is straightforward to make a copy of $\zeta^\alpha$ where the isomorphism type of each interval is also $X$-computable.
In fact, the standard construction of $\zeta^\alpha$ that we call $\mathbb{Z}^\alpha$ has this property.
Given $L$, map it to $\sum_{x\in L} \mathbb{Z}^\alpha_x$ where the relations labeling the isomorphism type of intervals of $\zeta^\alpha$ are exactly those within each  $\mathbb{Z}^\alpha_x$ and the relation that states that the elements are in distinct copies of $\zeta^\alpha$ holds exactly of elements from $\mathbb{Z}^\alpha_x$ and $\mathbb{Z}^\alpha_y$ for $x\neq y$.
This is a copy of $\big(\zeta^\alpha\cdot L\big)_{(2\alpha)}$ by construction.

Now, we define the inverse interpretation.
Let $R_\infty(x,y)$ be the relation that states that $x$ and $y$ are in different copies of $\zeta^\alpha$.
It is immediate that $L\cong(\zeta^\alpha\cdot L)/\lnot R_\infty$.
This is an effective interpretation, and it is straightforward to check that it forms an $X$-computable $2\alpha$-almost bi-interpretation with the map described above.
\end{proof}

\begin{corollary}
There is a $\Pi_\beta$ sentence where every structure that satisfies it is a linear ordering with Scott complexity at least $\Pi_{\beta+2}$ as long as $\beta$ is at least four away from a limit ordinal.
\end{corollary}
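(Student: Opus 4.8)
The plan is to start from the base example of Theorem \ref{thm:skipLO}---a $\Pi_4$ class $\mathbb{C}$ of linear orders all of whose members have Scott complexity at least $\Pi_6$---and to transport it up the hierarchy by composing the two almost bi-interpretations already built: the single jump $L \mapsto (\eta+2+\eta)\cdot L$ of Proposition \ref{prop:etaBiint} and the transfinite jump $L \mapsto \zeta^\alpha \cdot L$ of Proposition \ref{prop:zetaBiint}. First I would write the target level as $\beta = \lambda + n$, where $\lambda$ is a limit ordinal or $0$ and $n \geq 4$; this is exactly what ``$\beta$ is at least four away from a limit ordinal'' means. The case $\lambda = 0$ is the finite case, already recorded via the classes $\mathbb{C}_{n-4}$, so the new content is $\lambda > 0$.

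The key point is to apply the jumps in the right order so the ordinal offsets do not collapse. Since $1 + \lambda = \lambda$ for a limit $\lambda$, a single jump performed \emph{after} the $\zeta^\lambda$ jump would be invisible; so I would perform the $n-4$ copies of the $(\eta+2+\eta)$ jump \emph{first}, producing the class $\mathbb{C}_{n-4}$, and only then apply the $\zeta^\lambda$ jump. Concretely, set
\[ \Phi(K) = \zeta^\lambda \cdot (\eta+2+\eta)^{n-4} \cdot K \qquad (K \in \mathbb{C}). \]
By Proposition \ref{prop:zetaBiint} the outer jump is a $2\lambda$-almost bi-interpretation, and $2\lambda = \lambda$ because $\lambda$ is a limit; composing with the $n-4$ single jumps (each a $1$-almost bi-interpretation, applied first) and using that the shift of the outer map is prepended to that of the inner map, $\Phi$ is a $(\lambda + (n-4))$-almost bi-interpretation.

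Next I would pin down the defining sentence through the push-forward of Proposition \ref{prop:DinoAntonio}(2). Pushing the $\Pi_4$ sentence $T$ defining $\mathbb{C}$ through $\Phi$ gives a sentence at level $(\lambda + (n-4)) + 4 = \lambda + n = \beta$; intersecting it with the $\Pi_{2\lambda+1} = \Pi_{\lambda+1}$ description of the image $\{\zeta^\lambda \cdot L\}$ supplied by Proposition \ref{prop:zetaAlphaTechnical}(1) leaves the bound at $\Pi_\beta$, since $\lambda + 1 \leq \beta$. This is the desired $\Pi_\beta$ sentence, defining exactly the class of all $\Phi(K)$. For the Scott complexity I would apply Proposition \ref{prop:DinoAntonio}(3) to $\Phi$: every $K \in \mathbb{C}$ has Scott complexity at least $\Pi_6$, and $\Phi$ shifts Scott complexity by $\lambda + (n-4)$ on the left, so every $\Phi(K)$ has Scott complexity at least $\Pi_{(\lambda+(n-4))+6} = \Pi_{\lambda + n + 2} = \Pi_{\beta+2}$. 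The hypothesis of part (3)---that the shifted complexity is at least the image complexity $\Pi_\beta$---holds because $\beta + 2 > \beta$.

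The main obstacle I anticipate is bookkeeping rather than a new idea. One must verify that almost bi-interpretations compose so that the shifts genuinely add, and that restricting the maps of Propositions \ref{prop:etaBiint} and \ref{prop:zetaBiint} to the subclass $\mathbb{C}$ still satisfies the almost-bi-interpretation conditions---in particular that the image of the restricted composite remains $\Pi_\beta$-definable rather than jumping higher. Getting the ordinal arithmetic exactly right---doing the finite jumps before the limit jump, and tracking that $1 + \lambda = \lambda$ while $\lambda + m$ behaves additively for finite $m$---is precisely what makes the reachable levels land on the ordinals at least four away from a limit, and I would double-check the hypothesis of Proposition \ref{prop:DinoAntonio}(3) at each application to be safe.
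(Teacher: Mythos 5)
Your proposal is correct and follows essentially the same route as the paper: start from the $\Pi_4$ class of Theorem \ref{thm:skipLO} and transport it upward by composing the $\zeta^{(\cdot)}$ jump of Proposition \ref{prop:zetaBiint} with the $(\eta+2+\eta)$ jump of Proposition \ref{prop:etaBiint}, invoking Proposition \ref{prop:DinoAntonio} for the complexity bookkeeping. The only (cosmetic) difference is the parametrization: the paper writes $\beta=2\delta+4$ or $2\delta+5$ and uses $\zeta^\delta\cdot K$ or $\zeta^\delta\cdot(\eta+2+\eta)\cdot K$ with $\delta$ possibly a successor, whereas you write $\beta=\lambda+n$ and use $\zeta^\lambda\cdot(\eta+2+\eta)^{n-4}\cdot K$ with $\lambda$ the limit part and several single jumps applied innermost so their shifts are not absorbed--both cover exactly the same set of ordinals $\beta$.
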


\begin{proof}
Let $\mathbb{C}$ be the $\Pi_4$ class of structures defined in Theorem \ref{thm:skipLO}. 
Write $\beta=2\delta+4$ or  $\beta=2\delta+5$.
In the first case, let
\[\mathbb{C}_\beta=\{L\vert \exists K\in\mathbb{C} ~ L\cong\zeta^\delta\cdot K\}.\] 
In the second case, let
\[\mathbb{C}_\beta=\{L\vert \exists K\in\mathbb{C} ~ L\cong\zeta^\delta\cdot(\eta+2+\eta)\cdot K\}.\] 
By Proposition \ref{prop:etaBiint} and Proposition \ref{prop:zetaBiint} along with Proposition \ref{prop:DinoAntonio}, $\mathbb{C}_\beta$ has structures of Scott complexity at least $\Pi_{\beta+2}$.
Proposition \ref{prop:DinoAntonio} gives that $\mathbb{C}_n$ is a $\Pi_{\beta}$ class.
\end{proof}

\section{Scott spectral gaps and faithful Borel reductions}\label{sec:faithful}

In this section, we will show that if a class of structures is faithfully Borel complete, then it has unbounded Scott spectral gaps.

\begin{definition}
	Let $T$ be a sentence of $\mc{L}_{\omega_1,\omega}$. We say that $T$ has \textit{bounded Scott spectral gaps} if there is some function $D:\omega_1\to\omega_1$ such that for every $\alpha\in\omega_1$ if $T'$ is a $\Pi_\alpha$ extension of $T$, there there is a model $\mc{A}$ with $\mc{A}\models T'$ and $\SR(\mc{A})\leq D(\alpha)$.
\end{definition}

If $T$ does not have bounded Scott spectral gaps, we say that it has \textit{unbounded Scott spectral gaps}.
The main result of this paper can be phrased in the above framework as follows.
The function $D(\alpha)=\alpha+3$ witnesses that $T=LO$ has bounded Scott spectral gaps, and this contrasts with graphs that have unbounded Scott spectral gaps.\footnote{Linear orders have constantly bounded Scott spectra gaps, in the sense that $D(\alpha) = \alpha + \beta$. We do not know whether there is a theory with bounded Scott spectral gaps, but without a constant bound.}

Recall that a faithful Borel embedding is one where the $\cong$-saturation of the image is Borel, or equivalently, the $\cong$-saturation of the image of any Borel set is Borel. We show that faithful Borel embeddings preserve the property of having unbounded Scott spectral gaps.

\begin{theorem}
	Let $S$ and $T$ be sentences (possibly in different languages). If $\Phi:\Mod(T)\to \Mod(S)$ is a faithful Borel embedding and $T$ has unbounded Scott spectral gaps, then so does $S$.
\end{theorem}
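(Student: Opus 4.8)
The plan is to prove the contrapositive: assuming $S$ has \emph{bounded} Scott spectral gaps, I will show that $T$ does too. So fix a witness $D_S \colon \omega_1 \to \omega_1$ for $S$ (which we may take to be monotone), and fix the Borel rank of the embedding: since $\Phi$ is Borel there is a countable ordinal $c$ such that for every $\bfPi^0_\beta$ set $B \subseteq \Mod(S)$ the preimage $\Phi^{-1}(B)$ is $\bfPi^0_{c+\beta}$. The key observation linking Scott ranks across $\Phi$ is the reduction identity: for $\mc{A} \models T$, writing $\mc{B} = \Phi(\mc{A})$, we have
\[ \text{Copies}(\mc{A}) = \{\mc{C} \in \Mod(T) : \mc{C} \cong \mc{A}\} = \{\mc{C} \in \Mod(T) : \Phi(\mc{C}) \cong \Phi(\mc{A})\} = \Phi^{-1}\big([\mc{B}]_{\cong}\big).\]
Since $[\mc{B}]_{\cong} = \text{Copies}(\mc{B})$ is $\bfPi^0_{\SR(\mc{B})+1}$, its preimage $\text{Copies}(\mc{A})$ is $\bfPi^0_{c+\SR(\mc{B})+1}$, so that $\SR(\mc{A}) \leq c + \SR(\Phi(\mc{A}))$. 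Thus a low-complexity model on the $S$-side always pulls back to a low-complexity model on the $T$-side.

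Now let $T'$ be any $\Pi_\alpha$ extension of $T$, so $C := \Mod(T')$ is an invariant $\bfPi^0_\alpha$ set. By faithfulness, the saturation $I := [\Phi(C)]_{\cong}$ is invariant and Borel, hence definable by an $\mc{L}_{\omega_1\omega}$ sentence $S'$ extending $S$. Granting the complexity estimate that $S'$ is $\bfPi^0_{g(\alpha)}$ for a fixed monotone function $g$ depending only on $\Phi$ (the obstacle discussed below), we apply $D_S$ to the $\Pi_{g(\alpha)}$ extension $S'$ of $S$ to obtain $\mc{B} \models S'$ with $\SR(\mc{B}) \leq D_S(g(\alpha))$. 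Because $\mc{B} \in I = [\Phi(C)]_{\cong}$, there is $\mc{A} \models T'$ with $\Phi(\mc{A}) \cong \mc{B}$, and the inequality from the previous paragraph gives $\SR(\mc{A}) \leq c + \SR(\mc{B}) \leq c + D_S(g(\alpha))$. Setting $D_T(\alpha) := c + D_S(g(\alpha))$ produces a bound witnessing that $T$ has bounded Scott spectral gaps, contradicting the hypothesis. This completes the contrapositive.

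The main obstacle is the uniform complexity estimate: that the saturated image $[\Phi(C)]_{\cong}$ of an invariant $\bfPi^0_\alpha$ set has complexity bounded by a function $g(\alpha)$ of $\alpha$ \emph{alone}, uniformly over all $\Pi_\alpha$ extensions $T'$. Faithfulness by itself only guarantees that this saturation is \emph{some} Borel set; a naive effective-boundedness argument would bound its rank below $\omega_1^{p}$ for a real $p$ coding $T'$, which is not uniform in $\alpha$ since distinct $\bfPi^0_\alpha$ sets carry arbitrarily complex codes. The way I would secure uniformity is to extract, from the faithfulness hypothesis, a decoding of fixed Borel rank on the saturated range $J := [\Phi(\Mod(T))]_{\cong}$: concretely, I would seek a Borel-rank-$c'$ map $\Theta$ (or its effect on invariant sets, e.g.\ via Vaught transforms of $\Phi(C)$ relative to the $S_\infty$-action) so that $[\Phi(C)]_{\cong} = \Theta^{-1}(C) \cap J$, whence the bound $g(\alpha) = \max(c'+\alpha,\ \delta_0)$ follows, with $\delta_0$ the complexity of the fixed set $J$. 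The delicate point is that $\Phi$ is only injective up to isomorphism, so this decoding must be carried out at the level of $\cong$-invariant sets rather than by a literal Borel transversal; I expect the correct formulation to use Vaught transforms together with a reflection argument showing that the saturation operator, applied to images under the fixed map $\Phi$, raises potential Borel complexity by a bounded amount. Establishing this uniform-in-$\alpha$ bound is where essentially all the work lies; the rest of the argument is the bookkeeping above.
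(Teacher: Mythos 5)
Your overall architecture is sound and, after taking the contrapositive, matches the paper's: one needs (i) a bound transferring low Scott rank of $\Phi(\mc{A})$ back to low Scott rank of $\mc{A}$, and (ii) a uniform bound on the Borel complexity of the ($\cong$-saturated) image of an invariant $\bfPi^0_\alpha$ set. For (i), your argument via $\text{Copies}(\mc{A}) = \Phi^{-1}\big([\Phi(\mc{A})]_{\cong}\big)$ and the Borel rank $c$ of $\Phi$ is correct, and is in fact more quantitative than the paper's corresponding claim (which instead observes that isomorphism restricted to $\{\mc{B} : \SR(\mc{B}) \leq \beta\}$ is Borel, so its $\Phi$-preimage cannot contain models of unboundedly large Scott rank); that part is fine.

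The genuine gap is exactly where you place it: step (ii) carries essentially all of the content, and you do not prove it. Moreover, the route you sketch --- extracting a bounded-rank Borel map $\Theta$ with $[\Phi(C)]_{\cong} = \Theta^{-1}(C) \cap J$ --- asks for more than faithfulness provides: such a $\Theta$ would amount to a Borel inverse of $\Phi$ of fixed rank on invariant sets, and nothing in the hypotheses produces one. The paper's proof goes differently and directly answers your (correct) worry that a naive effectivity argument only bounds the rank below $\omega_1^{p}$ for a real $p$ coding $T'$. Fix $X$ with $\Phi$ and $\Image(\Phi)$ lightface $\Delta^1_1(X)$. First, for every Borel $A \subseteq \Mod(T)$ the image $\Phi(A)$ is $\Delta^1_1(A,X)$: it is $\Sigma^1_1(A,X)$ as an image, and its complement equals $\Image(\Phi)^c \cup \Phi(A^c)$, which is again $\Sigma^1_1(A,X)$ --- this is precisely where faithfulness (Borelness of the range) enters. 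Hence $\Phi(A)$ has a $\Pi^0_\gamma(A,X)$ code for some $\gamma < \omega_1^{A,X}$. To make this uniform over all $\bfPi^0_\beta$ codes $A$ simultaneously, the paper forms the set $\mathbb{B}_\beta$ of initial segments of Harrison orders $\mathcal{H}^{A,X}$ cut at ordinals $\alpha$ for which $\Phi(A)$ still has no $\Pi^0_\alpha(A,X)$ code; this set is $\boldsymbol{\Sigma}^1_1$ and consists only of well-orders, so $\boldsymbol{\Sigma}^1_1$-bounding yields a single countable $H(\beta)$ with $\Phi(A)$ always $\bfPi^0_{H(\beta)}$. Without this (or an equivalent) boundedness argument your function $g$ is not known to exist, and the proof does not close.
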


\begin{proof}
	To prove the theorem, we must understand the way that $\Phi$ changes the Scott rank of the models it acts on.
	
	\begin{claim}\label{claim:largeScott}
		For any Borel embedding $\Phi:\Mod(T)\to \Mod(S)$ there is a function $G:\omega_1\to\omega_1$ such that if $\mc{A}\models T$ and $\SR(\mc{A})\geq G(\alpha)$, then $\SR(\Phi(\mc{A}))>\alpha$.
	\end{claim}
	
	\begin{proof}
		Let $U_\beta:=\{\B : \B\models S \text{ and } \SR(\B)\leq \beta\}$. 
		Note that for any $\beta$, $U_\beta$ is a Borel set (see for example \cite{MBook} Lemma II.67).
		This means that for all $\beta$, $\Phi^{-1}(U_\beta)$ is also Borel.
		Say that $\Phi^{-1}(U_\beta)$ contains models of arbitrarily high Scott rank.
		This is equivalent to saying that the isomorphism relation restricted to $\Phi^{-1}(U_\beta)$ is not Borel (see for example \cite{GaoBook} Chapter 12).
		However, note that the isomorphism relation on $U_\beta$ is equivalent to the back-and-forth relation $\equiv_{\beta+1}$, so it is Borel.
		This is a contradiction as $f$ is a Borel embedding.
		Therefore, $G(\alpha)=\sup\{\SR(\mc{A}) : \SR(\Phi(\mc{A})) \leq\alpha\}$ is a well defined function with the desired properties.
	\end{proof}

	\begin{claim}\label{claim:uniformlyFaitful}
		For any faithful Borel embedding $\Phi:\Mod(T)\to \Mod(S)$ there is a function $H:\omega_1\to\omega_1$ such that if $A \subseteq \Mod(T)$ is a $\mathbf{\Pi}^0_\alpha$ set then $\Phi(A)$ is a $\mathbf{\Pi}^0_{H(\alpha)}$ set.
	\end{claim}
	
	\begin{proof}
		Because $\Phi$ is faithful, $\Image(\Phi)$ is Borel. Say that $\Phi$ and $\Image(\Phi)$ are lightface $\Delta^1_1$ relative to $X \in 2^\omega$.
		
		Given $Y \in 2^\omega$, let $\mathcal{H}^Y$ denote the Harrison ordering relative to $Y$.
		We define $\mathbb{B}_\beta$ to be the set of linear orderings $L$ (in the Polish space of linear orderings) such that there are:
		\begin{itemize}
			\item a (code for) a $\mathbf{\Pi}^0_\beta$ set $A \subseteq \Mod(T)$ and
			\item $\alpha \in \mathcal{H}^{A,X}$, an element of the Harrison linear order relative to $A$ and $X$,
		\end{itemize}
		such that
		\begin{itemize}
			\item $L=\mathcal{H}^{A,X}_{\leq \alpha}$, the restriction to the elements below $\alpha$ in $\mathcal{H}^{A,X}$, and
			\item for all $y \in \omega$ a (lightface) $\Pi^0_\alpha(A,X)$ code for a set $Y \subseteq \Mod(S)$ we have $\Phi(A)\neq Y$.
		\end{itemize}
		In the above, we equivocate between a code for $A$ and $A$ itself. We claim that for each $\beta$, $\mathbb{B}_\beta$ is
		\begin{enumerate}
			\item contained in the well-orders and
			\item $\mathbf{\Sigma}_1^1$.
		\end{enumerate}
		
		To see (1), we will first show that for each (code for) a $\mathbf{\Pi}^0_\beta$ set $A \subseteq \Mod(T)$, we have that $\Phi(A)$ is $\Delta_1^1(A,X)$. To show this we show that $\Phi(A)$ is both $\Sigma_1^1(A,X)$ and $\Pi_1^1(A,X)$.
		It is $\Sigma_1^1(A,X)$ because, by definition, it is the image of an $\Delta^1_1(A)$ set under a $\Delta^1_1(X)$ map.
		To see that it is $\Pi_1^1(A,X)$, consider $A^c$.
		Like $A$, $A^c$ is $\Delta^1_1(A)$, so $\Phi(A^c)$ is also $\Sigma_1^1(A,X)$.
		Note that $\Phi(A^c) = \Image(\Phi) - \Phi(A)$.
		In other terms $\Phi(A)^c = \Image(\Phi)^c \cup \Phi(A^c)$.
		$\Image(\Phi)^c$, like $\Image(\Phi)$, is $\Delta^1_1(X)$ so, in particular, $\Sigma_1^1(A,X)$.
		This means that $\Phi(A)^c$ is $\Sigma_1^1(A,X)$, and $\Phi(A)$ is $\Pi_1^1(A,X)$, as desired.
		
		Now given that $\Phi(A)$ is $\Delta^1_1(A,X)$, $\Phi(A)$ will be $\Pi^0_\gamma(A,X)$ for some $\gamma$ in the well-founded part of $\mathcal{H}^{A,X}$.
		So, $\Phi(A)$ will have a $\Pi^0_\gamma(A,X)$ code for every $\alpha \geq \gamma$, including every $\alpha$ in the ill-founded part of $\mathcal{H}^{A,X}$.
		Thus if $\alpha \in \mc{H}^{A,X}$ is such that $\Phi(A)$ has no $\Pi^0_\alpha(A,X)$ code, then $\alpha < \gamma$ must be in the well-founded part of $\mc{H}^{A,X}$. Hence, $L$ is a well-order.
		
		We now show (2).
		This comes down to analyzing the syntax in the definition of $\mathbb{B}_\beta$ term by term.
		In particular, we need to show that everything after the existence of $A$ is a $\mathbf{\Sigma}_1^1$ condition (in fact, we show it is $\Sigma_1^1(A,X,\beta)$).
		Checking that $A$ is a $\mathbf{\Pi}^0_\beta$ code is $\Delta^1_1(A,\beta)$; it is equivalent to checking that a coded tree has rank less than or equal to $\beta$.
		The quantifications over $\alpha$ and $y$ are of no concern as they only range over natural numbers. The suborder $\mathcal{H}^{A,X}_{\leq \alpha}$ is computable in a copy of $\mathcal{H}^{A,X}$, which is itself computable from $A$ and $X$, and it is easy to check whether $L$ is equal to it.
		Testing that $y$ is a $\Pi^0_\alpha(A,X)$ code is $\Delta^1_1(A,X)$ as $\alpha$ is an $(A,X)$ pseudo-ordinal.
		Finally, checking that $\Phi(A) \neq Y$ comes down to asserting the existence of an element in one set and not the other.
		However, each of these sets is $\Delta^1_1(A,X)$ (the first by the analysis for (1) and the second by construction), so this is a $\Sigma_1^1(A,X)$ condition, as desired.
		
		\medskip
		
		With (1) and (2) established, we now demonstrate the claim.
		For each $\mathbb{B}_\beta$ it follows from $\mathbf{\Sigma}_1^1$ bounding (see, e.g., \cite{MBook} Theorem VI.13) that there is some bound $H(\beta)$ to the ordinals appearing in $\mathbb{B}_\beta$.
		As $H(\beta)$ is not in $\mathbb{B}_\beta$, every $\mathbf{\Pi}^0_\beta$ set $A$ has a $\Pi^0_{H(\beta)}(A,X)$ set $Y$ such that $\Phi(A) = Y$.
		In less verbose terms, this means that $\Phi(A)$ is always $\mathbf{\Pi}^0_{H(\alpha)}$, as desired.
	\end{proof}
	
	With the two claims proven, we are now ready to prove the theorem.
	Given a faithful Borel embedding $\Phi:\Mod(T)\to \Mod(S)$, we let $G,H:\omega_1\to\omega_1$ be defined as above.
	By assumption, $T$ has unbounded Scott spectral gaps.
	For a contradiction, we assume that $S$ has bounded Scott spectral gaps witnessed by $D:\omega_1\to\omega_1$.
	As $T$ has unbounded Scott spectral gaps, there is some $\alpha$ such that for any $\beta$ there is a $\Pi_\alpha$ extension $T_\beta$ of $T$ such that $\mathcal{M}\models T_\beta\implies \SR(\mathcal{M})\geq \beta$.
	Let $S_\beta=\Phi(T_\beta)$; note that for any $\beta$, $S_\beta$ is $\mathbf{\Pi}^0_{H(\alpha)}$, so it must have a model of Scott rank at most $D(H(\alpha))$.
	Take $\beta=G(D(H(\alpha)))$.
	By definition of $T_{G(D(H(\alpha)))}$ and $G$, this means that every structure in $S_{G(D(H(\alpha)))}$ has Scott rank strictly greater than $D(H(\alpha))$, a contradiction.
\end{proof}

From our Theorem \ref{thm:SmallGaps} combined with the analysis of this section, we get another proof of the following fact that was first proved by Gao \cite{Gao01}.

\begin{corollary}[Gao]
	Linear orderings are not faithfully Borel complete.
\end{corollary}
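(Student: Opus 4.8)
The plan is to derive the corollary by contradiction, combining three ingredients already in hand: the theorem just proved (faithful Borel embeddings preserve unbounded Scott spectral gaps), Theorem \ref{thm:SmallGaps} (linear orders have bounded Scott spectral gaps), and Theorem \ref{thm:BigGaps} (graphs have unbounded Scott spectral gaps). Let $LO$ denote the theory of linear orders and $G$ the theory of graphs.

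First I would record that $LO$ has \emph{bounded} Scott spectral gaps. This is immediate from Theorem \ref{thm:SmallGaps}: if $T'$ is any $\Pi_\alpha$ extension of $LO$, then $T'$ is in particular a satisfiable $\Pi_\alpha$ sentence of linear orders, so it has a model $\mc{B} \models T'$ of Scott rank at most $\alpha + 3$. Thus the function $D(\alpha) = \alpha + 3$ witnesses that $LO$ has bounded Scott spectral gaps.

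Next I would record that the class of graphs has \emph{unbounded} Scott spectral gaps. Fixing $\alpha = 2$, Theorem \ref{thm:BigGaps} provides, for every countable ordinal $\beta$, a satisfiable $\Pi_2$ extension of $G$ all of whose models have Scott rank at least $\beta$. Hence no function can bound the least Scott rank of a model across all $\Pi_2$ extensions of $G$, so $G$ has unbounded Scott spectral gaps.

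Finally, suppose toward a contradiction that linear orders are faithfully Borel complete. Since graphs form a Borel class, there is then a faithful Borel embedding $\Phi \colon \Mod(G) \to \Mod(LO)$. Applying the theorem proved above, since $G$ has unbounded Scott spectral gaps, so does $LO$; this contradicts the bounded-gaps conclusion of the first step. Therefore linear orders are not faithfully Borel complete. The only point requiring a moment of care is the second step: one must ensure that the $\Pi_2$ witnesses of Theorem \ref{thm:BigGaps} can be taken to be genuine extensions of the theory of graphs (rather than sentences over an arbitrary signature), which is precisely what is recorded by the statement that graphs have unbounded Scott spectral gaps and follows from the construction in \cite{HTScott}. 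No genuine obstacle arises here, as all the substantive work is already carried out in Theorem \ref{thm:SmallGaps} and in the preceding theorem of this section.
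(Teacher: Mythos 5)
Your proposal is correct and is essentially the argument the paper intends: the paper derives the corollary by combining Theorem \ref{thm:SmallGaps} (giving the bound $D(\alpha)=\alpha+3$ for linear orders) with the preceding theorem on faithful Borel embeddings and the unbounded gaps for graphs from Theorem \ref{thm:BigGaps}. Your added remark about checking that the $\Pi_2$ witnesses are genuine extensions of the theory of graphs is a reasonable point of care, and it is indeed handled by the construction in \cite{HTScott}.
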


On the other hand, since torsion-free abelian groups are faithfully Borel complete as shown by Paolini and Shelah \cite{PaoliniShelahFaithful}, we cannot bound their Scott spectra gaps.

\begin{corollary}
	Torsion-free Abelian groups do not have bounded Scott spectral gap.
\end{corollary}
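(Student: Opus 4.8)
The plan is to obtain this as an immediate application of the preceding theorem, which states that a faithful Borel embedding transfers the property of having unbounded Scott spectral gaps from its domain to its codomain. Accordingly, I would only need to supply two inputs: that the class of graphs has unbounded Scott spectral gaps, and that there is a faithful Borel embedding of graphs into torsion-free abelian groups.

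First I would verify that graphs have unbounded Scott spectral gaps, and crucially that this is witnessed at a \emph{fixed} complexity level. This is exactly what Harrison-Trainor's Theorem \ref{thm:BigGaps} provides: for each countable ordinal $\beta$ there is a satisfiable $\Pi_2$ sentence $T_\beta$ (extending the theory of graphs) all of whose models have Scott rank $\geq \beta$. Taking $\alpha = 2$ as the fixed level, this matches precisely the form of unboundedness extracted in the proof of the main theorem of this section---namely, that there is some $\alpha$ such that for every $\beta$ there is a $\Pi_\alpha$ extension of the theory of graphs whose every model has Scott rank $\geq \beta$. Hence the theory of graphs does not admit a bounding function $D$, i.e.\ it has unbounded Scott spectral gaps.

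Next I would invoke the result of Paolini and Shelah \cite{PaoliniShelahFaithful} that torsion-free abelian groups are faithfully Borel complete. Since the class of graphs forms a Borel class of structures, faithful Borel completeness yields in particular a faithful Borel embedding $\Phi \colon \Mod(\text{graphs}) \to \Mod(\text{TFAG})$. Applying the preceding theorem with $T$ the theory of graphs and $S$ the theory of torsion-free abelian groups then gives that $S$ has unbounded Scott spectral gaps, which is the assertion of the corollary.

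The argument involves no real obstacle, as all the substance lives in the transfer theorem and in the two cited inputs; the only point requiring care is the bookkeeping ensuring that the witnessing sentences $T_\beta$ all sit at one fixed quantifier level, so that the hypothesis ``$T$ has unbounded Scott spectral gaps'' is literally met rather than merely the weaker statement that graphs realize arbitrarily high Scott ranks. Theorem \ref{thm:BigGaps} supplies this uniformity at level $\Pi_2$ for free, so the verification is routine.
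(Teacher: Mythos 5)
Your proposal is correct and is exactly the argument the paper intends: the corollary is stated as an immediate consequence of the transfer theorem, using Harrison-Trainor's $\Pi_2$ examples to witness unbounded gaps for graphs and Paolini--Shelah's faithful Borel completeness of torsion-free abelian groups to supply the faithful embedding. Your extra care about the witnessing sentences all sitting at the fixed level $\Pi_2$ is the right point to check, and Theorem \ref{thm:BigGaps} does supply it.
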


\bibliographystyle{alpha}
\bibliography{references}

\end{document}